\newcommand{\R}{\mathbb{R}}
\newcommand{\C}{\mathbb{C}}
\newcommand{\deriv}[2][]{\frac{d #1}{d #2}}
\newcommand{\pderiv}[2][]{\frac{\partial #1}{\partial #2}}
\newcommand{\codim}{\mathrm{codim}}
\newcommand{\id}{\mathrm{id}}
\newcommand{\ii}{\sqrt{-1}}
\newcommand{\cob}[1]{C^{#1}_{\mathrm{COB}}}
\newcommand{\vmv}{\overline{C^\infty}}
\newcommand{\orientedprojective}{\mathbb{P}^+}
\DeclareMathOperator{\real}{Re}
\DeclareMathOperator{\imaginary}{Im}
\DeclareMathOperator{\vol}{vol}
\DeclareMathOperator{\crit}{Crit}
\newcommand{\diff}{\mathrm{Diff}}
\newcommand{\imm}{\mathrm{Imm}}
\newcommand{\iml}{\mathrm{LI}}
\newcommand{\ilf}{\mathrm{FLI}}
\DeclareMathOperator{\im}{Im}
\newcommand{\Cs}{C^{\infty}_{cone}}
\theoremstyle{plain}
\newtheorem{thm}{Theorem}[section]
\newtheorem{lemma}[thm]{Lemma}
\newtheorem{prop}[thm]{Proposition}
\newtheorem{cor}[thm]{Corollary}
\theoremstyle{definition}
\newtheorem{dfn}[thm]{Definition}
\newtheorem{ntn}[thm]{Notation}
\theoremstyle{remark}
\newtheorem{rem}[thm]{Remark}
\begin{document}
    \title[Geodesics from special Lagrangians with boundary]{Geodesics of positive Lagrangians from special Lagrangians with boundary}
	\author[J. Solomon]{Jake P. Solomon}
	\address{Institute of Mathematics\\ Hebrew University, Givat Ram\\Jerusalem, 91904, Israel}
	\email{jake@math.huji.ac.il}
	\author[A. Yuval]{Amitai M. Yuval}
	\address{Institute of Mathematics\\ Hebrew University, Givat Ram\\Jerusalem, 91904, Israel}
	\email{amitai.yuval@mail.huji.ac.il}
	
	\keywords{geodesic, positive Lagrangian, special Lagrangian, elliptic, boundary value problem, unobstructed, deformation}
	\subjclass[2010]{53D12, 35J66 (Primary) 53C38, 35J70, 58B20 (Secondary)
	}
	\date{March 2026}

	\begin{abstract}
		Geodesics in the space of positive Lagrangian submanifolds are solutions of a fully non-linear degenerate elliptic PDE.
We show that a geodesic segment in the space of positive Lagrangians corresponds to a one parameter family of special Lagrangian cylinders, called the cylindrical transform. The boundaries of the cylinders are contained in the positive Lagrangians at the ends of the geodesic. The special Lagrangian equation with positive Lagrangian boundary conditions is elliptic and the solution space is a smooth manifold, which is one dimensional in the case of cylinders. A geodesic can be recovered from its cylindrical transform by solving the Dirichlet problem for the Laplace operator on each cylinder.

Using the cylindrical transform, we show the space of pairs of positive Lagrangian spheres connected by a geodesic is open. Thus, we obtain the first examples of strong solutions to the geodesic equation in arbitrary dimension not invariant under isometries. In fact, the solutions we obtain are smooth away from a finite set of points.
	\end{abstract}

\maketitle
		
\tableofcontents

	\section{Introduction}
\subsection{Overview}	
	Let $(X,\omega,J,\Omega)$ be a \emph{Calabi-Yau manifold}. Namely, $X$ is a K\"ahler manifold with symplectic form $\omega$ and complex structure $J$, and $\Omega$ is a non-vanishing holomorphic volume form on $X$. We denote by $g$ the K\"ahler metric and by $n$ the complex dimension.
	
	An oriented Lagrangian submanifold $\Lambda\subset X$, possibly immersed, is said to be \emph{positive} if $\real \Omega|_\Lambda$ is a positive volume form. A positive Lagrangian submanifold is \emph{special} if $\imaginary \Omega|_\Lambda = 0.$ An oriented Lagrangian submanifold is called \emph{imaginary special} if $\real \Omega|_\Lambda = 0$ and $\imaginary \Omega|_\Lambda$ is a positive volume form.
	
	Let $\mathcal{O}$ be a Hamiltonian isotopy class of closed smoothly embedded positive Lagrangians diffeomorphic to a given manifold $L.$ Then $\mathcal{O}$ is naturally a smooth Fr\'echet manifold, and for $\Lambda\in\mathcal{O}$, there is a natural isomorphism
	\begin{equation}
	\label{equation: tangent space to O}
	T_\Lambda\mathcal{O}\cong\vmv(\Lambda):=\left\{h\in C^\infty(\Lambda)\left|\int_\Lambda h\real\Omega=0\right.\right\}.
	\end{equation}
	Following~\cite{solomon}, we define a Riemannian metric on $\mathcal{O}$ by
	\begin{equation}
	\label{equation: the metric}
	(h,k):=\int_\Lambda hk\real\Omega,\qquad h,k\in\vmv(\Lambda).
	\end{equation}
	It is shown in~\cite{solomon-curv} that the metric $(\cdot,\cdot)$ has a Levi-Civita connection and the associated sectional curvature is non-positive. The Levi-Civita connection, which we describe in detail in Section~\ref{subsection: calabi-yau manifolds}, gives rise to the notion of geodesics. If $\mathcal{O}$ is geodesically connected then there can exist at most one special Lagrangian in $\mathcal{O}$~\cite{solomon}. If two Lagrangians $\Lambda_0,\Lambda_1 \in \mathcal{O}$ are connected by a geodesic, the cardinality of $\Lambda_0 \cap \Lambda_1$ is bounded below by the number of critical points of a function on $\Lambda_0$ \cite{rubinstein-solomon}.
	
	The geodesic equation is a fully non-linear partial differential equation. It is shown in~\cite{rubinstein-solomon} that the geodesic equation is degenerate elliptic and the associated boundary value problem in the Euclidean setting has unique weak solutions. In~\cite{solomon-yuval} there are examples of smooth geodesics in arbitrary dimension, which are preserved by an isometric action of the orthogonal group $O(n).$ The group action allows the geodesic equation to be reduced to the one dimensional case, where it becomes an ODE. Further results on geodesics can be found in~\cite{dellatorre,xu}.

An analog to the space $\mathcal{O}$ under mirror symmetry is the space $\mathcal{H}$ of almost calibrated $(1,1)$-forms on a K\"ahler manifold. The geodesic equation in $\mathcal{H}$ is a degenerate form of the deformed Hermitian Yang-Mills equation. The space $\mathcal{H}$ and its geodesics have been studied in~\cite{chu2020space,collins2018moment}. Another analog to the space $\mathcal{O}$ is the space $\mathcal{K}$ of K\"ahler metrics in a fixed cohomology class on a K\"ahler manifold. The geodesic equation in $\mathcal{K}$ is the homogenous complex Monge-Ampere equation. The space $\mathcal{K}$ and its geodesics have been studied, for example, in~\cite{chen-kaehlermetrics,chen-tian,Donaldson-symmetric,donaldson-holomorphicdiscs,Mabuchi-K-energy,Mabuchi,semmes}.
	
	In the present work, we establish a correspondence between geodesics of positive Lagrangians and one parameter families of imaginary special Lagrangian cylinders. See Theorems~\ref{theorem: family} and~\ref{theorem: geodesic-cylinder correspondence}. We call this correspondence the cylindrical transform.
By cylinders, we mean manifolds of the form $N \times [0,1],$ where $N$ is a manifold of dimension $n-1.$
The boundary components of the cylinders corresponding to a geodesic $(\Lambda_t)_{t \in [0,1]}$ are contained in $\Lambda_0$ and $\Lambda_1$ respectively. Positive Lagrangian submanifolds such as $\Lambda_0$ and $\Lambda_1$ are an elliptic boundary condition for the imaginary special Lagrangian equation. In Theorem~\ref{theorem: space of ISL cylinders is 1-dim} we show that the space of imaginary special Lagrangian cylinders with positive Lagrangian boundary conditions is a smooth $1$-dimensional manifold.

Using the cylindrical transform and the ellipticity of the imaginary special Lagrangian equation, we establish in Theorem~\ref{theorem: perturbation of geodesic} a perturbation result for solutions of the geodesic equation. Namely, the space of pairs of positive Lagrangian spheres intersecting transversally at two points that are connected by a geodesic is open. In particular, we show the existence of geodesics connecting positive Lagrangians of arbitrary dimension without any symmetry. The geodesics of Theorem~\ref{theorem: perturbation of geodesic} are smooth away from a finite number of points. In a sequel~\cite{spider}, we strengthen Theorem~\ref{theorem: perturbation of geodesic} so that it produces geodesics of positive Lagrangians that are $C^{1,1}$ submanifolds even at the non-smooth locus.

\subsection{Statement of results}
To set up the cylindrical transform in its natural generality, we consider geodesics of possibly immersed positive Lagrangians that are smooth away from a finite number of cone point singularities. For the rest of the paper, unless otherwise specified, the term geodesic allows such cone point singularities.
We define the \emph{critical locus} of a geodesic of positive Lagrangians $(\Lambda_t)_{t \in [0,1]}$ by
	\[
	\crit((\Lambda_t)_t):=\bigcap_{t\in[0,1]}\Lambda_t.
	\]
The non-smooth cone points of the geodesics in this paper are contained in their critical loci. The possibly limited regularity at the critical locus is consistent with the result of~\cite{rubinstein-solomon} that the symbol of the linearized geodesic equation has a 1-dimensional kernel except at the critical locus, where the kernel is $(n-1)$-dimensional.
A full account of our notion of geodesics is given in Definition~\ref{definition: geodesic}.	

	Define a positive function $\rho : X \to \R$ by
	\begin{equation}
	\label{equation: rho}
	\rho^2\omega^n/n! = (-1)^{\frac{n(n-1)}2}\left(\frac{\ii}{2}\right)^n \Omega \wedge \overline \Omega.
	\end{equation}
	Let $f: L \to X$ be an immersion. Define
\[
\Delta_\rho: C^\infty(L)\to C^\infty(L)
\]
by $u\mapsto*d((\rho \circ f)*du)$, where $*$ is the Hodge star operator associated to the Riemannian metric $f^*g.$ Then $\Delta_\rho$ is elliptic (see Lemma~\ref{lemma: Delta rho is a cool operator}). A geodesic $(\Lambda_t)_{t\in[0,1]}$ has an associated \emph{Hamiltonian}, which is the family of functions $h_t \in \vmv(\Lambda_t)$ such that $h_t =  \deriv{t} \Lambda_t.$
The functions $h_t, t \in [0,1],$ are related by parallel transport of the Levi-Cevita connection on $\mathcal{O}$ and, in particular, have the same image and diffeomorphic level sets.

	\begin{thm}\label{theorem: family}
		Let $(\Lambda_t)_{t\in [0,1]}$ be a geodesic of positive Lagrangians and let $(h_t)_{t\in[0,1]}$ denote the associated Hamiltonian. For $c \in \R,$ let
		\[
		L_c := \{(p,t)| t \in [0,1],\; p \in h_t^{-1}(c)\setminus \crit((\Lambda_t)_t) \subset \Lambda_t\}.
		\]
		Then $L_c$ is a smooth immersed submanifold of $X \times [0,1]$ diffeomorphic to the cylindrical manifold $\left(h_0^{-1}(c)\setminus \crit((\Lambda_t)_t)\right)\times [0,1]$, and the map
		\[
		\Phi_c : L_c \to X
		\]
		given by $\Phi_c(p,t) = p$ is an imaginary special Lagrangian immersion mapping the boundary components of $L_c$ to $\Lambda_0$ and $\Lambda_1$. See Figure~\ref{fig:theorem family}. Moreover, the map
		\[
		\sigma_c : L_c \to [0,1]
		\]
		given by $\sigma_c(p,t) = t$ satisfies $\Delta_\rho \sigma_c =  0.$
	\end{thm}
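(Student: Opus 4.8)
The plan is to realize $L_c$ as a regular level set of a single function on the spacetime swept out by the geodesic, and to extract every assertion by rewriting the geodesic equation $\tfrac{D}{dt}h_t=0$ in terms of a Hamiltonian flow. Put $\mathcal{S}:=\{(p,t)\in X\times[0,1]:p\in\Lambda_t\}$ and define $h\colon\mathcal{S}\to\R$ by $h(p,t)=h_t(p)$, so that $L_c=h^{-1}(c)$ with $\crit((\Lambda_t)_t)\times[0,1]$ deleted. For each $t$ choose a Hamiltonian $H_t$ defined near $\Lambda_t$ in $X$ with $H_t|_{\Lambda_t}=h_t$ and with $X_{H_t}$ normal to $\Lambda_t$ (the normal gauge); its flow $\psi_t$ realizes $\Lambda_t=\psi_t(\Lambda_0)$, identifies $\mathcal{S}$ with $\Lambda_0\times[0,1]$, and satisfies $\partial_tH_t|_{\Lambda_t}\circ\psi_t=\tfrac{d}{dt}(h_t\circ\psi_t)$. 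Off $\crit((\Lambda_t)_t)$ the differential $dh_t$ never vanishes — the critical locus of a geodesic is the common critical set of the $h_t$, which is also where the symbol of the linearized equation degenerates — so $c$ is a regular value of $h$ on $\mathcal{S}\setminus(\crit((\Lambda_t)_t)\times[0,1])$ and $L_c$ is a smooth $n$-manifold. The parallel-transport diffeomorphisms among the level sets $h_t^{-1}(c)\setminus\crit((\Lambda_t)_t)$ vary smoothly in $t$ and trivialize $\sigma_c$ over the contractible interval, giving the asserted cylinder structure; that the two boundary components map into $\Lambda_0$ and $\Lambda_1$ is immediate from the definition of $L_c$.

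For the geometric assertions, work in the frame of $T_{(p,t)}L_c$ given by vectors $e_1,\dots,e_{n-1}$ spanning $T_p(h_t^{-1}(c))\subset T_p\Lambda_t$ together with the unique $w\in T_{(p,t)}L_c$ projecting to $\partial_t$. Its image $Y:=\Phi_{c*}w$ is $X_{H_t}(p)$ corrected inside $\Lambda_t$ so as to stay on $L_c$, i.e. $Y=N_t-\bigl(\partial_tH_t|_{\Lambda_t}\bigr)|\nabla h_t|^{-2}\nabla h_t$, where $N_t$ is the nonzero normal part of $X_{H_t}$ along $\Lambda_t$ and $\nabla h_t$ is the $\Lambda_t$-gradient; since $N_t\ne0$ is normal, $e_1,\dots,e_{n-1},Y$ are independent and $\Phi_c$ is an immersion. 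Because each $e_i$ lies in $T_p\Lambda_t\cap\ker dh_t$ while $\omega(\,\cdot\,,N_t)|_{T_p\Lambda_t}=\pm dh_t$, we get $\omega(e_i,Y)=0$, so $\Phi_c^{*}\omega=0$ and $\Phi_c(L_c)$ is Lagrangian. Computing $\Phi_c^{*}\Omega(e_1,\dots,e_{n-1},w)$ via $\Omega(v_1,\dots,v_{n-1},Jv)=\ii\,\Omega(v_1,\dots,v_{n-1},v)$, the normality of $N_t$, and $\Omega|_{\Lambda_t}=\rho\,e^{\ii\theta_t}\vol_{\Lambda_t}$ (with $\theta_t\in(-\tfrac{\pi}{2},\tfrac{\pi}{2})$ the Lagrangian angle), one finds the real part to be a nonzero factor times $\sin\theta_t+(\partial_tH_t|_{\Lambda_t})|\nabla h_t|^{-2}\cos\theta_t$; this vanishes exactly when $\partial_tH_t|_{\Lambda_t}=-|\nabla h_t|^2\tan\theta_t$, which, using $\partial_tH_t|_{\Lambda_t}\circ\psi_t=\tfrac{d}{dt}(h_t\circ\psi_t)$ and the Levi-Civita connection formula of Section~\ref{subsection: calabi-yau manifolds}, is precisely the geodesic equation. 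When it holds the imaginary part becomes a nonzero multiple of $(\cos\theta_t)^{-1}$, so $\Phi_c^{*}\real\Omega=0$ and $\Phi_c^{*}\imaginary\Omega$ is a nowhere-vanishing volume form, positive for the induced orientation: $\Phi_c$ is imaginary special Lagrangian, and $\Phi_c^{*}\imaginary\Omega=(\rho\circ\Phi_c)\vol_{L_c}$.

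For $\Delta_\rho\sigma_c=0$: since $Y$ is orthogonal to all the $e_i$ (its normal part is normal to $\Lambda_t$, its tangential part is a multiple of $\nabla h_t$), the $\Phi_c^{*}g$-gradient of $\sigma_c$ is $|w|^{-2}w$, whence $(\rho\circ\Phi_c)*d\sigma_c=|w|^{-2}\,\iota_w\Phi_c^{*}\imaginary\Omega$ by the imaginary special identity above. A further computation using the geodesic equation rewrites the right-hand side as the restriction to $L_c$ of $\iota_{\mathcal{V}}\widetilde{\real\Omega}$, where $\widetilde{\real\Omega}$ is the pullback of $\real\Omega$ under $X\times[0,1]\to X$ and $\mathcal{V}$ is the vector field on $\mathcal{S}$ equal to $|\nabla h_t|^{-2}\nabla h_t$ along each $\Lambda_t$, so that $dh(\mathcal{V})=1$. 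Now the vanishing $\Phi_c^{*}\real\Omega=0$, which the previous paragraph establishes for every regular value $c$ simultaneously, says exactly that $\widetilde{\real\Omega}$ is divisible by $dh$ on $\mathcal{S}$; since $\real\Omega$ is closed, writing $\widetilde{\real\Omega}=dh\wedge\beta$ and using $d\widetilde{\real\Omega}=0$ forces $d\beta$ to be divisible by $dh$ as well, so $d(\iota_{\mathcal{V}}\widetilde{\real\Omega})=d\beta+dh\wedge d(\iota_{\mathcal{V}}\beta)$ is divisible by $dh$ and restricts to $0$ on $L_c$. Hence $d\bigl((\rho\circ\Phi_c)*d\sigma_c\bigr)=0$, i.e. $\Delta_\rho\sigma_c=0$.

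I expect the main obstacle to be the bridge between the abstract geodesic equation $\tfrac{D}{dt}h_t=0$ for the Levi-Civita connection of $(\cdot,\cdot)$ and the scalar identity $\partial_tH_t|_{\Lambda_t}=-|\nabla h_t|^2\tan\theta_t$ used throughout: this needs the explicit Christoffel formula and the freedom to impose the normal gauge. A secondary point is upgrading ``$\sigma_c$ is a submersion'' to the full cylinder structure for possibly immersed, non-compact $\Lambda_t$, which one obtains from the smooth family of level-set diffeomorphisms supplied by parallel transport together with the fact that the geodesic, hence $\psi_t$, is defined on all of $[0,1]$.
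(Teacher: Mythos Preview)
Your argument is essentially correct, but it takes a noticeably different and more laborious route than the paper's.

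\textbf{The paper's approach.} The paper works directly with a \emph{horizontal lifting} $(\Psi_t)_t$, which by definition of a geodesic (equation~\eqref{equation: geodesic}) simultaneously satisfies $i_{\dot\Psi_t}\omega = d(h_t\circ\Psi_t)$, $i_{\dot\Psi_t}\real\Omega = 0$, and $\tfrac{d}{dt}(h_t\circ\Psi_t) = 0$. The last condition says $\Psi_t$ carries $h_0^{-1}(c)$ to $h_t^{-1}(c)$, giving the cylinder structure for free; the first two make the immersion Lagrangian and imaginary special by inspection (Lemma~\ref{geodesic yields unique cylinder}). For $\Delta_\rho\sigma_c = 0$ the paper differentiates the family $c\mapsto *\Phi_c^*\real\Omega$ in $c$: since each $\Phi_c$ is imaginary special, this derivative vanishes, and by the linearization formula (Lemma~\ref{lemma: linearized operator}~\ref{more general lemma}) it equals $\Delta_\rho$ of the generating function of the variation, which is $-\sigma_c$ (Lemma~\ref{lemma: time function}). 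The whole proof is a few lines.

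\textbf{Your approach.} You instead choose a normal-gauge lifting $\psi_t$, which does \emph{not} preserve level sets, and then compute the tangential correction needed to stay on $L_c$. Verifying imaginary special then reduces to the scalar identity $\partial_t H_t|_{\Lambda_t} = -|\nabla h_t|^2\tan\theta_t$, which you correctly recognize as equivalent to the geodesic equation (this is the content of the formula $\dot\Psi_t = -J\nabla h_t - \tan\theta_t\,\nabla h_t$ for the horizontal lifting, cited from~\cite{solomon} in the paper). So you are rederiving what the horizontal lifting already encodes. For $\Delta_\rho\sigma_c = 0$ you give a closedness argument on the spacetime $\mathcal{S}$: write $\widetilde{\real\Omega}|_{\mathcal{S}} = dh\wedge\beta$ and deduce $(d\beta)|_{L_c} = 0$ from $d(dh\wedge\beta) = 0$. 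This is a genuinely different argument; it works (I checked the key identity $(\rho\circ\Phi_c)*d\sigma_c = \pm\iota_{\mathcal{V}}\widetilde{\real\Omega}|_{L_c}$, and the other components vanish because $\Omega(\nabla h_t,\cdots,J\nabla h_t) = 0$), though your phrase ``a further computation'' hides the work, and there is a sign to track.

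\textbf{Comparison.} The paper's choice of gauge makes the first part trivial and, for the second, reuses a lemma needed anyway for the deformation theory of Theorem~\ref{theorem: space of ISL cylinders is 1-dim}. Your spacetime argument for the harmonicity is self-contained and avoids the linearization lemma, which has some appeal, but your route to the special Lagrangian property is unnecessarily indirect: the concern you flag at the end about bridging the Levi-Civita formulation to your scalar identity is exactly what the horizontal-lifting formulation \eqref{equation: geodesic} is designed to eliminate.
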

\begin{figure}[ht]
\centering
\includegraphics[width=12cm]{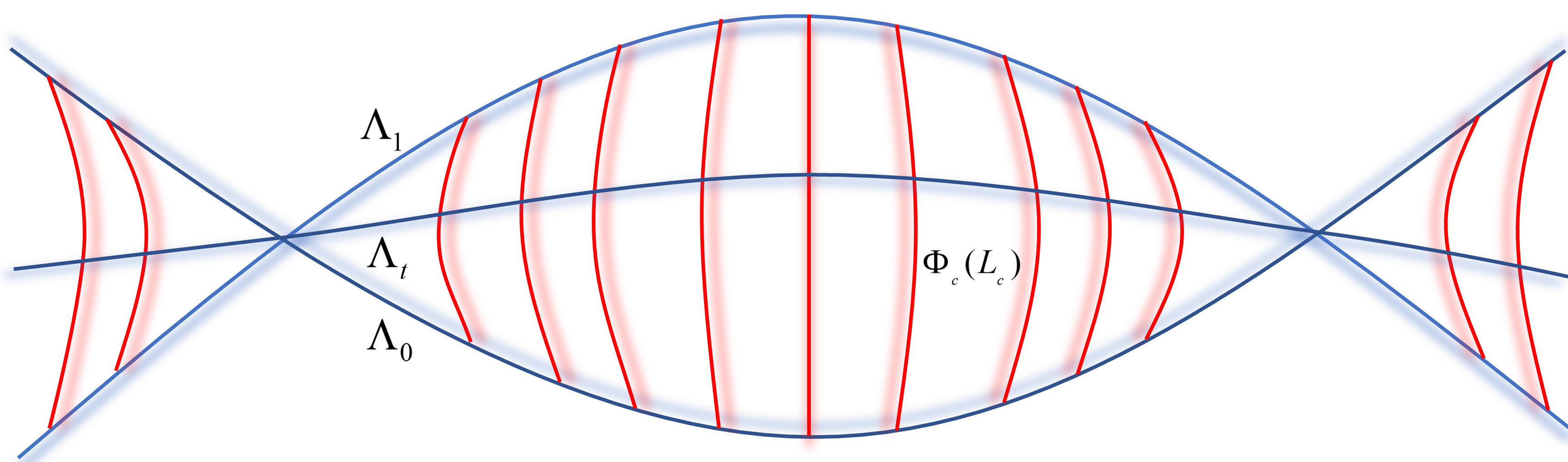}
\caption{The imaginary special Lagrangian cylinders corresponding to a geodesic $(\Lambda_t)_t$ according to Theorem~\ref{theorem: family}.}
\label{fig:theorem family}
\end{figure}
	
Let $\Lambda_0,\Lambda_1\subset X$ be smoothly embedded positive Lagrangians and let $N$ be a manifold of dimension $n-1$. We denote by $\mathcal{SLC}(N;\Lambda_0,\Lambda_1)$ the space of imaginary special Lagrangian submanifolds of $X,$ perhaps immersed, diffeomorphic to $N\times[0,1],$ such that the boundary corresponding to $N \times \{i\}$ is embedded in $\Lambda_i$ for $i = 0,1.$ We denote by $\mathcal{SLC}(\Lambda_0,\Lambda_1)$ the union of the spaces $\mathcal{SLC}(N;\Lambda_0,\Lambda_1)$ as $N$ varies.
	
	\begin{thm}
		\label{theorem: space of ISL cylinders is 1-dim}
		Given two smoothly embedded positive Lagrangians $\Lambda_0,\Lambda_1,$ and a connected closed $(n-1)$-manifold $N,$ the space of imaginary special Lagrangian cylinders $\mathcal{SLC}(N;\Lambda_0,\Lambda_1)$ is a smooth $1$-dimensional manifold.
	\end{thm}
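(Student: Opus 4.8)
The plan is to realize $\mathcal{SLC}(N;\Lambda_0,\Lambda_1)$, near each of its points, as the zero set of a smooth map between Banach manifolds whose differential is Fredholm, surjective, and has one-dimensional kernel, and then to invoke the implicit function theorem together with elliptic regularity. If $\mathcal{SLC}(N;\Lambda_0,\Lambda_1)$ is empty there is nothing to prove, so fix $C_0$ in it and write $C_0 \cong N \times [0,1]$, with boundary components $\partial_i C_0 \subset \Lambda_i$ for $i = 0,1$.

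First I would build a local model for the cylinders near $C_0$. Since $C_0$ is Lagrangian and meets $\Lambda_i$ cleanly along the hypersurface $\partial_i C_0$, a boundary-adapted version of the Weinstein neighbourhood theorem identifies a neighbourhood of $C_0$ in $X$ with a neighbourhood of the zero section of $T^*C_0$, carrying $\Lambda_i$, near $\partial_i C_0$, to a fixed model Lagrangian. Under this identification a submanifold near $C_0$ with boundary on $\Lambda_0 \cup \Lambda_1$ is the graph of a small $1$-form $\alpha$ on $C_0$; it is Lagrangian exactly when $d\alpha = 0$, and has the correct boundary behaviour exactly when $B_i(\alpha) = 0$ for a first-order operator $B_i$ along $\partial_i C_0$ with $B_i(0) = 0$. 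Because $\Lambda_i$ is Lagrangian and meets $C_0$ cleanly, the image of $T\Lambda_i$ in the normal bundle of $C_0$ along $\partial_i C_0$ is exactly the conormal line of $\partial_i C_0$ in $C_0$; reading this off shows that the linearisation of $B_i$ at $0$ is the pullback $\alpha \mapsto \iota^*_{\partial_i C_0}\alpha$, the relative (Dirichlet) boundary operator of Hodge theory. Consequently the Lagrangian submanifolds near $C_0$ with boundary on $\Lambda_0 \cup \Lambda_1$ form a smooth Banach manifold $\mathcal L$ with $T_{C_0}\mathcal L = \{\alpha \mid d\alpha = 0,\ \iota^*_{\partial_0 C_0}\alpha = \iota^*_{\partial_1 C_0}\alpha = 0\}$.

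Next, identifying top-degree forms on the nearby cylinders with functions on $C_0$ via the graph diffeomorphisms, I would define $\Psi \colon \mathcal L \to C^{k-1,\gamma}(C_0)$ by letting $\Psi(C)$ represent $\real\Omega|_C$, so that $\mathcal{SLC}(N;\Lambda_0,\Lambda_1)$ agrees with $\Psi^{-1}(0)$ near $C_0$. Using $d\Omega = 0$, $\real\Omega|_{C_0} = 0$, and the identity $\iota_{v_\alpha}\real\Omega|_{C_0} = \pm(\rho\circ f)\,{*}\alpha$ for the normal field $v_\alpha$ attached to $\alpha$ (the imaginary-special counterpart of McLean's formula for deformations of special Lagrangians), one computes $d\Psi_{C_0}\alpha = \pm\,{*}\,d\bigl((\rho\circ f)\,{*}\alpha\bigr)$ for $\alpha \in T_{C_0}\mathcal L$, hence $d\Psi_{C_0}\,du = \pm\Delta_\rho u$. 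As $\Delta_\rho$ is elliptic (Lemma~\ref{lemma: Delta rho is a cool operator}) and $\Lambda_0,\Lambda_1$ constitute an elliptic boundary condition for the imaginary special Lagrangian equation, $d\Psi_{C_0}$ is the linearisation of an elliptic boundary value problem and hence Fredholm. Its kernel is the space of $1$-forms $\alpha$ on $C_0$ with $d\alpha = 0$, $d((\rho\circ f)\,{*}\alpha) = 0$, and $\iota^*_{\partial_i C_0}\alpha = 0$, that is, the $\rho$-harmonic fields satisfying the relative boundary condition; by Hodge--Morrey--Friedrichs theory this space is isomorphic to $H^1(C_0, \partial C_0; \R)$. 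Since $C_0 \cong N \times [0,1]$ with $N$ connected, each inclusion $\partial_i C_0 \hookrightarrow C_0$ is a homotopy equivalence, so $H^1(C_0;\R) \to H^1(\partial C_0;\R)$ is injective, and the long exact sequence of the pair gives $H^1(C_0, \partial C_0;\R) \cong \operatorname{coker}\bigl(H^0(C_0;\R) \to H^0(\partial C_0;\R)\bigr) \cong \R$; the generator is $du_0$ for the $\Delta_\rho$-harmonic function $u_0$ on $C_0$ with $u_0|_{\partial_0 C_0} = 0$ and $u_0|_{\partial_1 C_0} = 1$, which is the function $\sigma_c$ of Theorem~\ref{theorem: family} when $C_0$ arises from a geodesic.

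For the surjectivity of $d\Psi_{C_0}$, given $f \in C^{k-1,\gamma}(C_0)$ I would solve the Dirichlet problem $\Delta_\rho u = \pm f$, $u|_{\partial C_0} = 0$; this is uniquely solvable because $\int_{C_0} u\,\Delta_\rho u\,\vol = -\int_{C_0}(\rho\circ f)\,|du|^2\,\vol$ when $u$ vanishes on $\partial C_0$, so $\Delta_\rho$ with Dirichlet data has trivial kernel, and then $\alpha := du$ lies in $T_{C_0}\mathcal L$ with $d\Psi_{C_0}\alpha = \pm f$. Thus $d\Psi_{C_0}$ is surjective with one-dimensional kernel; the implicit function theorem for Banach manifolds presents $\Psi^{-1}(0)$, hence $\mathcal{SLC}(N;\Lambda_0,\Lambda_1)$, as a smooth $1$-manifold near $C_0$, and elliptic regularity for the imaginary special Lagrangian equation shows its elements are smooth and the manifold structure independent of $k$ and $\gamma$. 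I expect the main obstacle to be the first two steps---constructing the boundary-adapted Weinstein model and verifying that $\mathcal L$ is a Banach manifold with the stated tangent space and that $\Psi$ is a genuine Fredholm section, the decisive point being that the positive Lagrangians $\Lambda_0,\Lambda_1$ really do furnish an elliptic boundary condition for the imaginary special Lagrangian equation. Once that analytic scaffolding and the McLean-type identity are in place, the index computation together with the identification $\ker d\Psi_{C_0} \cong H^1(C_0,\partial C_0;\R) \cong \R$---where the connectedness of $N$ is used---completes the argument.
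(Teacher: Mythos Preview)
Your proposal is correct and follows essentially the paper's approach: compatible Weinstein neighbourhood, linearisation $\alpha\mapsto{*}d((\rho\circ f){*}\alpha)$, implicit function theorem, elliptic regularity. The one difference is that the paper first observes (Lemma~\ref{lemma: closed forms on cylinder}) that on $N\times[0,1]$ with $N$ connected every closed $1$-form annihilating $N\times\{0\}$ is exact, so from the outset it parameterises nearby Lagrangian cylinders by scalar functions $u$ vanishing on $N\times\{0\}$ and constant on $N\times\{1\}$; the linearisation is then literally $\Delta_\rho$ acting on this space of functions, and the one-dimensionality of its kernel is read off directly from the maximum principle without any Hodge theory. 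Your route through $\rho$-harmonic fields with relative boundary conditions and the isomorphism $\ker\cong H^1(C_0,\partial C_0;\R)\cong\R$ is equivalent for cylinders and, as the paper itself notes (Remark~\ref{remark: not only cylinders}), is precisely the formulation that generalises to arbitrary $L$ with boundary in positive Lagrangians, giving a deformation space of dimension $\dim H^1(L,\partial L;\R)$.
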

	\begin{rem}
		More generally, positive Lagrangians are natural elliptic boundary conditions for imaginary special Lagrangians with boundary of arbitrary topology, and the associated deformation theory is unobstructed. See Remark~\ref{remark: not only cylinders}. The deformation theory of closed special Lagrangians was shown to be unobstructed in~\cite{mclean,salur}.
	\end{rem}

	The next result is a refinement and partial converse to Theorem~\ref{theorem: family} in the case where $\Lambda_i,\;i=0,1,$ are smoothly embedded spheres intersecting transversally at exactly two points.
To formulate the result we need the following definition.	
	\begin{dfn}
		\label{definition: cylindrical transform}
		The \emph{cylindrical transform} of a geodesic of positive Lagrangians $(\Lambda_t)_{t \in [0,1]}$ is the subset of $\mathcal{SLC}(\Lambda_0,\Lambda_1)$ parameterized by the family of imaginary special Lagrangian immersions $\Phi_c : L_c \to X$ from Theorem~\ref{theorem: family} for $c \in \R$ such that $L_c \neq \emptyset$.
	\end{dfn}
We refer the reader to Definition~\ref{definition: regularity} for the notion of regularity of a connected component in $\mathcal{SLC}\left(S^{n-1};\Lambda_0,\Lambda_1\right).$
	\begin{thm}
		\label{theorem: geodesic-cylinder correspondence}
		Let $\Lambda_0,\Lambda_1\subset X$ be smoothly embedded positive Lagrangian spheres intersecting transversally at exactly two points. The cylindrical transform of a geodesic between $\Lambda_0$ and $\Lambda_1$ is a regular connected component in $\mathcal{SLC}\left(S^{n-1};\Lambda_0,\Lambda_1\right).$ Conversely, given a regular connected component $\mathcal{Z}\subset\mathcal{SLC}\left(S^{n-1};\Lambda_0,\Lambda_1\right),$ there exists a unique up to reparameterization geodesic between $\Lambda_0$ and $\Lambda_1$ with cylindrical transform $\mathcal{Z}.$
	\end{thm}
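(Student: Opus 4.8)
The plan is to prove the two directions of the correspondence separately, using Theorem~\ref{theorem: family} for the forward direction and a gluing-type construction for the converse, with Theorem~\ref{theorem: space of ISL cylinders is 1-dim} controlling the target space.

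For the forward direction, start with a geodesic $(\Lambda_t)_{t\in[0,1]}$ between the spheres $\Lambda_0$ and $\Lambda_1$. The critical locus $\crit((\Lambda_t)_t)$ contains the two transverse intersection points, and since the Hamiltonian $h_t$ has diffeomorphic level sets related by parallel transport, the image of $h_0$ is an interval $[a,b]$ with the two intersection points corresponding to the extreme values $a$ and $b$; for $c$ in the open interval $(a,b)$, the level set $h_0^{-1}(c)$ is an embedded $(n-1)$-sphere (this uses that $\Lambda_0 = S^{n-1}$ and that a Morse-type function on a sphere with exactly two critical points has spherical regular level sets — I would make this precise using the structure of $h_t$ near the cone points from Definition~\ref{definition: geodesic}). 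Then Theorem~\ref{theorem: family} produces for each such $c$ an imaginary special Lagrangian cylinder $\Phi_c : L_c \to X$ with $L_c \cong S^{n-1}\times[0,1]$, and these sweep out a subset $\mathcal{Z}$ of $\mathcal{SLC}(S^{n-1};\Lambda_0,\Lambda_1)$. To see $\mathcal{Z}$ is a full connected component, one checks it is both open and closed: openness follows from Theorem~\ref{theorem: space of ISL cylinders is 1-dim}, since $\mathcal{Z}$ is a connected $1$-manifold sitting inside a $1$-manifold; for closedness one must show that as $c \to a^+$ or $c\to b^-$ the cylinders $L_c$ degenerate (collapsing to the intersection points) and do not limit onto another family of genuine cylinders — this is where the hypothesis of exactly two transverse intersection points is essential, as it forces the degeneration to happen precisely at the ends of the parameter interval. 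Regularity of the component, in the sense of Definition~\ref{definition: regularity}, should be read off from the fact that the map $c \mapsto \Phi_c$ together with the harmonic coordinate $\sigma_c$ gives an explicit smooth parameterization with the required nondegeneracy, inherited from the geodesic.

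For the converse, start with a regular connected component $\mathcal{Z}\subset\mathcal{SLC}(S^{n-1};\Lambda_0,\Lambda_1)$, which by Theorem~\ref{theorem: space of ISL cylinders is 1-dim} is (a component of) a smooth $1$-manifold; parameterize it by $c$ in an interval. For each cylinder in $\mathcal{Z}$, solve the Dirichlet problem for $\Delta_\rho$ with boundary data $0$ on the $\Lambda_0$-boundary and $1$ on the $\Lambda_1$-boundary — by Lemma~\ref{lemma: Delta rho is a cool operator} the operator $\Delta_\rho$ is elliptic and this Dirichlet problem has a unique solution $\sigma_c$. The level sets of $\sigma_c$, as $c$ varies over $\mathcal{Z}$, are to be reassembled into submanifolds $\Lambda_t$ for $t\in[0,1]$: concretely, $\Lambda_t := \{\Phi_c(p) : c \in \mathcal{Z},\ \sigma_c(p) = t\} \cup \crit$, where the two intersection points are added back in at the degenerate ends. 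One then verifies that this family is a geodesic. The verification has three parts: (i) each $\Lambda_t$ is a positive Lagrangian sphere smooth away from the two cone points, which follows because the $\Phi_c$ are Lagrangian immersions and the level sets of $\sigma_c$ vary smoothly, with the cone point structure emerging from the collapse of the cylinders at the ends of $\mathcal{Z}$; (ii) $\Lambda_0$ and $\Lambda_1$ are the prescribed boundary Lagrangians, which is immediate from the boundary conditions $\sigma_c \in \{0,1\}$; and (iii) the family solves the geodesic equation, which I would establish by running the computation behind Theorem~\ref{theorem: family} in reverse: the condition $\Delta_\rho \sigma_c = 0$ together with the imaginary special Lagrangian condition on each cylinder is precisely equivalent to the geodesic equation for $(\Lambda_t)_t$ — this equivalence is the technical heart of the cylindrical transform and presumably is extracted from the proof of Theorem~\ref{theorem: family}. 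Uniqueness up to reparameterization follows because the Hamiltonian $h_t$ is determined up to an orientation-preserving change of the $c$-parameter by the foliation structure of $\mathcal{Z}$, and a geodesic is determined by its Hamiltonian together with its endpoint $\Lambda_0$.

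The main obstacle I anticipate is the analysis at the two ends of the parameter interval $\mathcal{Z}$, i.e., the degeneration of the cylinders near the transverse intersection points of $\Lambda_0$ and $\Lambda_1$. One must show both that the forward-constructed family $\mathcal{Z}$ genuinely closes up into a whole component (no escape to infinity or bubbling) and that in the converse direction the reassembled family $(\Lambda_t)_t$ acquires exactly the cone point singularities allowed by Definition~\ref{definition: geodesic} at those two points and nowhere else. This is a local model computation near a transverse Lagrangian intersection, and I expect it requires understanding the precise asymptotics of $\sigma_c$ and of the cylinders $L_c$ as $c$ approaches the extreme values — matching the $(n-1)$-dimensional kernel of the linearized symbol at the critical locus mentioned in the introduction. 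The regularity hypothesis on $\mathcal{Z}$ in Definition~\ref{definition: regularity} is presumably designed exactly to rule out pathological degenerations, so the work is in showing that this hypothesis delivers the $C^{1,1}$-type control (or at least the cone point control) needed to recognize the output as a legitimate geodesic, and conversely that every geodesic's transform satisfies it.
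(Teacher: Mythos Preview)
Your proposal is correct and follows essentially the same approach as the paper: the forward direction is assembled from interior regularity of the cylindrical transform (Lemma~\ref{lemma: non-singular cylindrical transform}) together with regular convergence of the ends to the two intersection points (Lemma~\ref{lemma:rocking lemma}, which relies on non-degeneracy of the critical cone points deduced from transversality in Lemma~\ref{lemma: critical points are non-degenerate}), while the converse is obtained by reassembling the level sets of the fundamental harmonics into a geodesic via Proposition~\ref{proposition: interior-regular family gives rise to geodesic} on the interior and Lemma~\ref{lemma: geodesic of small Lagrangians} near the ends, then gluing. One small organizational difference: rather than arguing that the cylindrical transform is ``open and closed'' in $\mathcal{SLC}$, the paper simply verifies the conditions of Definition~\ref{definition: regularity} directly --- once both ends converge regularly to $q_0,q_1$ the family is automatically a full component, so your open/closed step is subsumed by the end analysis you already identify as the main obstacle.
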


	Finally, we apply Theorem~\ref{theorem: geodesic-cylinder correspondence} to prove that geodesics of positive Lagrangian spheres with endpoints intersecting transversally at two points are stable under $C^{2,\alpha}$-small Hamiltonian perturbations. 		Let $\mathcal{O}$ be a Hamiltonian isotopy class of smoothly embedded positive Lagrangian spheres, and let $\mathfrak{G}_\mathcal{O}$ denote the space of geodesics $(\Lambda_t)_{t \in [0,1]}$ with $\Lambda_0,\Lambda_1 \in \mathcal{O}$ intersecting transversally at two points. We refer the reader to Definition~\ref{dfn:topology on geodesics} for the strong and weak $C^{1,\alpha}$ topologies on $\mathfrak{G}_\mathcal{O}.$ Roughly speaking, the strong topology controls closeness of all cylinders in the cylindrical transform of a geodesic while the weak topology controls closeness of a single cylinder.

	\begin{thm}
		\label{theorem: perturbation of geodesic}
Let $\Lambda_0,\Lambda_1\in\mathcal{O}$ intersect transversally at exactly two points. Suppose there exists a geodesic $(\Lambda_t)_{t\in[0,1]}$ between $\Lambda_0$ and $\Lambda_1.$ Let $\alpha\in(0,1).$ Then, there exists a $C^{2,\alpha}$-open neighborhood $\mathcal{Y}$ of $\Lambda_1$ in $\mathcal{O}$ and a weak $C^{1,\alpha}$-open neighborhood $\mathcal X$ of $(\Lambda_t)_{t \in [0,1]}$ in $\mathfrak{G}_\mathcal{O}$ such that for every $\Lambda\in\mathcal{Y}$ there exists a unique geodesic between $\Lambda_0$ and $\Lambda$ in $\mathcal{X}.$ This geodesic depends continuously on $\Lambda$ with respect to the $C^{2,\alpha}$ topology on $\mathcal{Y}$ and the strong $C^{1,\alpha}$ topology on $\mathcal{X}.$
	\end{thm}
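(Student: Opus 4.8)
The plan is to transport the problem to the moduli space of imaginary special Lagrangian cylinders via the cylindrical transform, where it becomes a statement about the persistence of a connected component under a perturbation of one of the boundary conditions. By Theorem~\ref{theorem: geodesic-cylinder correspondence}, the geodesic $(\Lambda_t)_{t\in[0,1]}$ corresponds to a regular connected component $\mathcal{Z}_0\subset\mathcal{SLC}\left(S^{n-1};\Lambda_0,\Lambda_1\right),$ and, conversely, to produce the desired geodesic between $\Lambda_0$ and a perturbed sphere $\Lambda$ it suffices to exhibit a regular connected component $\mathcal{Z}_\Lambda\subset\mathcal{SLC}\left(S^{n-1};\Lambda_0,\Lambda\right)$ that is close to $\mathcal{Z}_0$ and depends continuously on $\Lambda.$ The continuity of $\Lambda\mapsto\mathcal{Z}_\Lambda$ for the strong $C^{1,\alpha}$ topology will come from uniformity of the construction of $\mathcal{Z}_\Lambda$ along $\mathcal{Z}_0,$ and uniqueness inside $\mathcal{X}$ will follow once we show any two such components agree near $\mathcal{Z}_0,$ together with the uniqueness up to reparameterization in Theorem~\ref{theorem: geodesic-cylinder correspondence}.

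First I would enlarge the elliptic boundary value problem underlying Theorem~\ref{theorem: space of ISL cylinders is 1-dim} so as to allow the Lagrangian boundary condition at $t=1$ to vary. Fixing a $C^{2,\alpha}$-neighborhood $\mathcal{U}$ of $\Lambda_1$ in $\mathcal{O},$ consider the parametrized moduli space
\[
\mathcal{M}:=\left\{(\Lambda,\Sigma)\ \middle|\ \Lambda\in\mathcal{U},\ \Sigma\in\mathcal{SLC}\left(S^{n-1};\Lambda_0,\Lambda\right)\right\},
\]
with its projection $\pi:\mathcal{M}\to\mathcal{U}.$ Exactly as in the proof of Theorem~\ref{theorem: space of ISL cylinders is 1-dim}, the imaginary special Lagrangian equation with these boundary conditions is an elliptic boundary value problem with unobstructed deformation theory; allowing $\Lambda$ to move contributes precisely the tangent directions to $\mathcal{O},$ so $\mathcal{M}$ is a smooth Banach manifold and $\pi$ is a submersion whose fiber over $\Lambda$ is $\mathcal{SLC}\left(S^{n-1};\Lambda_0,\Lambda\right),$ a smooth $1$-manifold by Theorem~\ref{theorem: space of ISL cylinders is 1-dim}. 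In particular $\mathcal{Z}_0$ is a connected component of $\pi^{-1}(\Lambda_1),$ and near any single cylinder of $\mathcal{Z}_0$ the space $\mathcal{M}$ is locally a product, so $\pi^{-1}(\Lambda)$ contains a $1$-manifold close to the corresponding piece of $\mathcal{Z}_0,$ varying continuously with $\Lambda.$

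The main obstacle is that $\mathcal{Z}_0$ is non-compact: the cylinders $L_c$ of Theorem~\ref{theorem: family} degenerate to points as $c$ approaches the extreme values of the Hamiltonian, and those points are exactly the two transverse intersection points of $\Lambda_0$ and $\Lambda_1.$ There the elliptic estimates degenerate, so the local product structure above is not uniform all the way to the ends of $\mathcal{Z}_0.$ To handle this I would invoke the local analysis near a transverse intersection of positive Lagrangians already used in the converse direction of Theorem~\ref{theorem: geodesic-cylinder correspondence} and built into the notion of a regular component (Definition~\ref{definition: regularity}): a $C^{2,\alpha}$-small perturbation $\Lambda$ of $\Lambda_1$ still meets $\Lambda_0$ transversally at exactly two points, which move slightly, and the local family of small imaginary special Lagrangian cylinders collapsing onto each of these points persists. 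Gluing these two local families to the continuation of the bulk of $\mathcal{Z}_0$ furnished by the submersion $\pi$ yields $\mathcal{Z}_\Lambda,$ which is regular because regularity is an open condition governed by the behavior at the ends.

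Finally I would translate back. Applying the converse part of Theorem~\ref{theorem: geodesic-cylinder correspondence} to $\mathcal{Z}_\Lambda$ produces a geodesic between $\Lambda_0$ and $\Lambda,$ unique up to reparameterization; since every cylinder of $\mathcal{Z}_\Lambda$ converges to the corresponding cylinder of $\mathcal{Z}_0$ as $\Lambda\to\Lambda_1,$ shrinking $\mathcal{U}$ to a neighborhood $\mathcal{Y}$ places this geodesic in a prescribed weak $C^{1,\alpha}$-neighborhood $\mathcal{X}$ of $(\Lambda_t)_{t\in[0,1]},$ and the uniformity of the construction gives convergence to $(\Lambda_t)_{t\in[0,1]}$ in the strong $C^{1,\alpha}$ topology. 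Uniqueness within $\mathcal{X}$ follows because any geodesic between $\Lambda_0$ and $\Lambda$ lying in $\mathcal{X}$ has cylindrical transform a regular component of $\mathcal{SLC}\left(S^{n-1};\Lambda_0,\Lambda\right)$ close to $\mathcal{Z}_0$; two such components coincide on the bulk by the local product structure of $\mathcal{M}$ and on the ends by the local uniqueness near the transverse double points, hence are equal, and then Theorem~\ref{theorem: geodesic-cylinder correspondence} forces the two geodesics to agree.
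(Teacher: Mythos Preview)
Your outline matches the paper's approach: transport via Theorem~\ref{theorem: geodesic-cylinder correspondence} to the cylinder moduli space, perturb the component $\mathcal{Z}_0$ using the unobstructed deformation theory of Theorem~\ref{theorem: space of ISL cylinders is 1-dim} for the bulk, handle the two ends separately, glue, and translate back. Your parametrized moduli space $\mathcal{M}$ with its submersion $\pi$ is equivalent to what the paper does in Proposition~\ref{proposition: interior big family of cylinders} via the implicit function theorem applied to the operator $\mathcal{F}(u,h)=*\,j_u^*\varphi_{h,\chi}^*\real\Omega$.

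The genuine gap is your treatment of the ends. You say you would ``invoke the local analysis near a transverse intersection of positive Lagrangians already used in the converse direction of Theorem~\ref{theorem: geodesic-cylinder correspondence}''---but no such analysis exists there. The converse direction of Theorem~\ref{theorem: geodesic-cylinder correspondence} \emph{assumes} a regular parameterization of each end (this is the content of Definition~\ref{definition: regularity}); it does not construct one, and Lemma~\ref{lemma: geodesic of small Lagrangians} merely shows how to extract a geodesic once such a parameterization is given. What you actually need is that a regular parameterization of an end of $\mathcal{Z}_0$ about $q_j$ perturbs to a regular parameterization of an end of $\mathcal{Z}_\Lambda$ about the nearby intersection point. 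This is the content of Proposition~\ref{proposition: end big family of cylinders}, and it requires a specific idea: rescale by $s^{-1}$ so that the degenerating cylinders $\Phi_s$ become a smooth family $\Psi_s=s^{-1}\Phi_s$ of non-degenerate cylinders, imaginary special Lagrangian for the rescaled structures $(J_{h,s},\Omega_{h,s})=(M_s^*J_h,\,s^{-n}M_s^*\Omega_h)$, which converge as $s\to 0$ to a flat Calabi--Yau structure on $\mathbb{C}^n$. One then runs the implicit function theorem on the three-parameter family $(u,h,s)$ at $s=0$, where the problem is uniformly elliptic. Without this rescaling the elliptic constants blow up as $s\to 0$ and your persistence claim for the small cylinders has no foundation.

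A minor point: your uniqueness argument is more elaborate than needed. Once two connected components of the $1$-manifold $\mathcal{SLC}(S^{n-1};\Lambda_0,\Lambda)$ share a single cylinder they coincide; the weak $C^{1,\alpha}$ neighborhood $\mathcal{X}$ is chosen (as $\mathcal{X}_{\mathcal{V}^1}$ in the paper) precisely so that any geodesic in $\mathcal{X}$ has some cylinder in the bulk of $\mathcal{Z}_\Lambda$, and that alone forces equality of the cylindrical transforms. No separate end argument is required for uniqueness.
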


	In the sequel~\cite{spider}, we strengthen Theorem~\ref{theorem: perturbation of geodesic} to show that if the geodesic $(\Lambda_t)_t$ is of regularity $C^{1,1}$ at the cone points, so are the geodesics connecting $\Lambda_0$ to any $\Lambda~\in~\mathcal{Y}.$
In~\cite{solomon-yuval}, there are examples of geodesics of positive Lagrangians of arbitrary dimension, many of which satisfy the conditions of Theorem~\ref{theorem: perturbation of geodesic}. However, they are all preserved by an isometric action of $O(n)$ on the ambient manifold $X.$ From Theorem~\ref{theorem: perturbation of geodesic} we obtain the following.
\begin{cor}
There exist geodesics of positive Lagrangians in arbitrary dimension that are not invariant under any isometries of the ambient manifold.
\end{cor}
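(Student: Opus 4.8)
The plan is to apply Theorem~\ref{theorem: perturbation of geodesic} to the $O(n)$-invariant geodesics of positive Lagrangian spheres constructed in~\cite{solomon-yuval}, and then to break all isometric symmetry by a sufficiently generic perturbation of one endpoint. Fix a complex dimension $n.$ By~\cite{solomon-yuval} there is a Calabi--Yau manifold $X$ of dimension $n$ carrying an isometric $O(n)$-action, together with an $O(n)$-invariant geodesic $(\Lambda_t)_{t\in[0,1]}$ of positive Lagrangian spheres whose endpoints $\Lambda_0,\Lambda_1$ lie in a common Hamiltonian isotopy class $\mathcal{O}$ and intersect transversally at exactly two points. Theorem~\ref{theorem: perturbation of geodesic} provides a $C^{2,\alpha}$-open neighborhood $\mathcal{Y}$ of $\Lambda_1$ in $\mathcal{O}$ such that every $\Lambda\in\mathcal{Y}$ is joined to $\Lambda_0$ by a geodesic $(\Lambda^\Lambda_t)_{t\in[0,1]}.$ It then suffices to produce a single $\Lambda\in\mathcal{Y}$ for which $(\Lambda^\Lambda_t)_t$ is not invariant under any nontrivial isometry of $X.$

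Any isometry $\psi$ of $X$ mapping the parameterized curve $t\mapsto\Lambda^\Lambda_t$ to itself, up to reversing the parameterization, maps the endpoint set $\{\Lambda_0,\Lambda\}$ to itself. If $\psi$ interchanges $\Lambda_0$ and $\Lambda,$ then $\Lambda$ lies on the orbit $G\cdot\Lambda_0$ of the isometry group $G=\mathrm{Isom}(X);$ if $\psi$ fixes both, then $\psi$ lies in the stabilizer $G_0:=\{\psi\in G\mid\psi(\Lambda_0)=\Lambda_0\}$ and satisfies $\psi(\Lambda)=\Lambda.$ By the Myers--Steenrod theorem, $G$ is a finite-dimensional Lie group and $G_0$ a closed, hence Lie, subgroup, so $G\cdot\Lambda_0$ is a finite-dimensional immersed submanifold of $\mathcal{O}.$ It therefore remains to find $\Lambda\in\mathcal{Y}$ lying neither on $G\cdot\Lambda_0$ nor in the set of positive Lagrangians fixed by some nontrivial element of $G_0.$ This I would do by a transversality argument: I would choose a finite-dimensional family $\{\Lambda_s\}_{s\in S}\subset\mathcal{Y}$ of positive Lagrangians near $\Lambda_1$ and show that, for a generic choice of family, no $\Lambda_s$ lies on $G\cdot\Lambda_0$ or is fixed by any $\psi\in G_0\setminus\{\id\}.$ The crucial structural fact enabling this is that, for each fixed nontrivial isometry $\psi,$ the positive Lagrangians $\Lambda'$ with $\psi(\Lambda')=\Lambda'$ form a set of infinite codimension in $\mathcal{O}$: if $\psi|_{\Lambda'}=\id$ then $\Lambda'\subset\mathrm{Fix}(\psi),$ a proper totally geodesic submanifold of $X,$ which imposes infinitely many conditions; while if $\psi|_{\Lambda'}\ne\id$ then the infinitesimal deformations of $\Lambda'$ that preserve $\psi$-invariance form a subspace of $T_{\Lambda'}\mathcal{O}\cong\vmv(\Lambda')$ of infinite codimension, since a function on $\Lambda'$ invariant under the nontrivial diffeomorphism $\psi|_{\Lambda'}$ lies in a subspace of infinite codimension. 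Since $G_0$ is only finite-dimensional, sweeping this infinite-codimension locus over all $\psi\in G_0\setminus\{\id\}$ still yields a set that a generic finite-dimensional family of positive Lagrangians misses entirely, and any member $\Lambda_s$ of such a family is the required $\Lambda.$

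The main obstacle is exactly this final genericity step: one must handle the union over the possibly positive-dimensional group $G_0,$ not just a single symmetry. I expect it to be controlled by a parametric transversality argument pitting the infinite codimension of the symmetric locus against the finite dimension of $G_0$---informally, filling out an open set of Lagrangians by isometry-invariant ones would require an infinite-dimensional supply of isometries, whereas $G_0$ supplies only finitely many parameters. The remaining ingredients are immediate: the existence of the symmetric examples in~\cite{solomon-yuval}, the direct application of Theorem~\ref{theorem: perturbation of geodesic}, and the classical fact that the isometry group of a Riemannian manifold is a finite-dimensional Lie group.
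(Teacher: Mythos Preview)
Your approach is correct and matches the paper's: start from the $O(n)$-invariant geodesics of~\cite{solomon-yuval}, apply Theorem~\ref{theorem: perturbation of geodesic}, and perturb one endpoint to destroy all isometric symmetry. The paper in fact gives no further argument beyond stating the corollary as a consequence of Theorem~\ref{theorem: perturbation of geodesic}, so your sketch of the genericity step (finite-dimensionality of the isometry group versus infinite codimension of the symmetric locus in $\mathcal{O}$) supplies detail the paper omits rather than deviating from it.
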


	It should be possible to extend Theorems~\ref{theorem: geodesic-cylinder correspondence} and~\ref{theorem: perturbation of geodesic} to Lagrangians with more complicated topology and more critical points. Furthermore, it should be possible to extend the techniques of this paper to prove the existence of geodesics a priori. In fact, to show the existence of a geodesic and hence an isotopy between two positive Lagrangians, it may not be necessary to assume the existence of a Hamiltonian isotopy between them, but only an intersection point of Maslov index zero. In the sequel~\cite{spider}, we show that there exists a one parameter family of imaginary special Lagrangian cylinders near any such intersection point. It remains to identify situations in which this family of cylinders can be extended until it terminates at an intersection point of Maslov index $n.$ When $n = 2$ and $X$ is hyperk\"ahler, the relation between special Lagrangian submanifolds and holomorphic curves should simplify the analysis. It would also be interesting to study the analogy between the results of this paper and the work relating geodesics in the space of K\"ahler metrics with families of holomorphic disks~\cite{chen-tian,donaldson-holomorphicdiscs,ross-witt-nystrom,semmes}. We plan to address these points in future work.

\subsection{Outline}
In Section~\ref{section:background}, we collect relevant background material. First, we set up a framework for working with immersed submanifolds with boundary. An immersed submanifold is defined to be an orbit of the action of the diffeomorphism group of a manifold acting on immersions of that manifold. When a point of the orbit has trivial stabilizer,
the immersed submanifold is called free. We show that the space of free immersed Lagrangian submanifolds satisfying appropriate Lagrangian boundary conditions is a Frechet manifold. This result provides the functional analytic framework to study the families of imaginary special Lagrangian cylinders that arise from the cylindrical transform. An important step in constructing the Frechet manifold structure is a refinement of the Weinstein neighborhood theorem for Lagrangians with boundary in a collection of Lagrangians submanifolds. We conclude by recalling results from~\cite{solomon,solomon-curv} on the space of positive Lagrangian submanifolds in a Calabi-Yau manifold.

Section~\ref{subsection: oriented blowups and cone-immersed submanifolds} develops a formalism for differential analysis on immersed submanifolds with cone-points. The formalism is based on the notion of cone-smoothness. A cone-smooth map is smooth away from a finite collection of points, where the derivative does not exist, but still a ``cone-derivative'' can be defined, which is not linear, but merely $1$-homogeneous. The precise definition of cone-smoothness is formulated in terms of the oriented blowup at the finite collection of points. Cone-immersions and cone-diffeomorphisms are defined and their basic properties are established. Cone-smooth vector fields, Riemannian metrics and differential forms are defined. Pull-backs and integrals of differential forms are discussed in the cone-smooth context, as well as contractions of differential forms by vector fields. The Hessian of a cone-smooth function is defined and its properties at extrema are studied.

Section~\ref{subsection: cone-immersed Lagrangians} uses the cone-smooth analysis developed in Section~\ref{subsection: oriented blowups and cone-immersed submanifolds} to establish a theory of geodesics of cone-immersed positive Lagrangians. Integration of cone-smooth differential forms is used to extend the metric~\eqref{equation: the metric} to cone-immersed positive Lagrangians. The flow of a cone-smooth vector field is used to construct a canonical parameterization of a family of cone-smooth positive Lagrangians given a parameterization of one member of the family. Following~\cite{solomon} in the smooth case, the canonical parameterizations are called horizontal liftings and the family is a geodesic if its Hamiltonian is time independent in a horizontal lifting.

In Section~\ref{section: lagrangian and special lagrangian cylinders}, we study spaces of Lagrangian and imaginary special Lagrangian cylinders. It is shown in Section~\ref{subsection: the space of lagrangian cylinders} that the linearization of positive Lagrangian boundary conditions for the imaginary special Lagrangian equation gives rise to the boundary conditions for the Laplace-Beltrami operator on $1$-forms that arise in the Hodge theorem for manifolds with boundary. A cohomological argument then shows the space of imaginary special Lagrangian cylinders with boundary in positive Lagrangians is $1$-dimensional proving Theorem~\ref{theorem: space of ISL cylinders is 1-dim}. In the rest of Section~\ref{section: lagrangian and special lagrangian cylinders}, additional tools for working with imaginary special Lagrangian cylinders are introduced for future use in Section~\ref{section: the relation between cylinders and geodesics} in the construction of the inverse of the cylindrical transform. Section~\ref{subsection: relative Lagrangian Flux} gives the definition of relative Lagrangian flux between two Lagrangian cylinders with boundary in Lagrangian submanifolds.
Section~\ref{subsection: regular families of special Lagrangian cylinders} defines two types of ``regular'' parameterizations of open intervals in the space of imaginary special Lagrangian cylinders. Interior regular parameterizations pertain to any interval. If at one end of the interval the cylinders shrink to an intersection point of the positive Lagrangian boundary conditions, an interior regular parameterization satisfying a uniform regularity condition under rescaling is called a regular parameterization about the intersection point.
It is shown that both types of regular parameterization can be modified so that they satisfy a condition called ``compatibility with harmonics.''

Section~\ref{section: the relation between cylinders and geodesics} begins with the proof of Theorem~\ref{theorem: family}, which shows how imaginary special Lagrangian cylinders arise from a geodesic of positive Lagrangians. The proof consists of a direct calculation combined with the fact that the linearization of the imaginary special Lagrangian equation is the Laplace equation. The remainder of Section~\ref{section: the relation between cylinders and geodesics} is devoted to Theorem~\ref{theorem: geodesic-cylinder correspondence}, which asserts that the cylindrical transform is one-to-one from geodesics of positive Lagrangians onto regular components of the space of imaginary special Lagrangian cylinders. A regular component is an interval that admits an interior regular parameterization and a regular parameterization about an intersection point at each end of the interval.
Given a regular component of the space of imaginary special Lagrangian cylinders, the regular parameterizations compatible with harmonics constructed in Section~\ref{section: lagrangian and special lagrangian cylinders} are used to construct a family of cone-smooth positive Lagrangians. The cone-smooth immersions giving these positive Lagrangians are shown to be a horizontal lift. Finally, the relative Lagrangian flux of the family of cylinders, as defined in Section~\ref{section: lagrangian and special lagrangian cylinders}, is shown to be a time independent Hamiltonian function of this horizontal lift. Thus, the family of positive Lagrangians is a geodesic. Conversely, using properties of the Hessian of a cone-smooth function proved in Section~\ref{subsection: oriented blowups and cone-immersed submanifolds}, it is shown that the cylindrical transform of a cone-smooth geodesic of positive Lagrangians is a regular component of the space of imaginary special Lagrangian cylinders.

Section~\ref{section: proof of perturbation theorem} gives the proof of Theorem~\ref{theorem: perturbation of geodesic} showing that geodesics persist under perturbations of the endpoints. By Theorem~\ref{theorem: geodesic-cylinder correspondence}, it suffices to show that regular components of the space of imaginary special Lagrangian cylinders are stable under perturbation of the positive Lagrangian boundary conditions. Given any cylinder in a regular component, an implicit function theorem argument constructs a family of small intervals of imaginary special Lagrangian cylinders depending continuously on the positive Lagrangian boundary conditions. The construction gives also interior regular parameterizations of these intervals. Next we consider an intersection point of the positive Lagrangian boundary conditions to which a regular family converges. An argument combining rescaling and the implicit function theorem constructs a family of small intervals of imaginary special Lagrangian cylinders depending continuously on the positive Lagrangian boundary conditions and shrinking to intersection points thereof. The construction gives regular parameterizations of these intervals about the intersection points. Finally, the compactness of the closed interval is used to obtain a finite collection of families of small intervals that can be patched together to obtain a family of regular components of the space of imaginary special Lagrangian cylinders depending continuously on the positive boundary conditions.

\subsection{Acknowledgments}
The authors would like to thank T. Collins, I. Haran, Y. Rubinstein, P. Seidel, and G. Tian, for helpful conversations.
The authors were partially supported by ERC starting grant 337560 and ISF grant 569/18. J.~S. was partially supported by ISF grant 1127/22 and the Miriam and Julius Vinik
Chair in Mathematics. A.~Y. was partially supported by the Adams Fellowship Program of the Israel Academy of Sciences and Humanities. The authors would like to thank the Institute for Advanced Study for its hospitality in the initial stages of the research that led to this paper, funded by the Erik Ellentuck Fellowship and the IAS Fund for Math.
	
	\section{Background}
\label{section:background}
The present section recalls and establishes results on Lagrangian submanifolds that form the basis for the main results of the paper.
Section~\ref{sssec:immsubm} defines an immersed submanifold to be an orbit of the diffeomorphism group of a manifold on the space of immersions of that manifold. Points, tangent spaces, differential forms and maps of immersed submanifolds are discussed. Immersed submanifolds represented by an immersion with trivial stabilizer group are called free. Criteria for being free are established, and being free is shown to be an open condition. Along the way, the action of the diffeomorphism group on the space of immersions is shown to be proper.

Section~\ref{sssec:spimmLag} establishes notation for the space of free immersed Lagrangians with boundary in a given collection of Lagrangian submanifolds. Furthermore, a relative Weinstein neighborhood theorem for an immersed Lagrangian submanifold satisfying Lagrangian boundary conditions is proved. Namely, a neighborhood of the submanifold is the image of a local diffeomorphism from a neighborhood of the zero section of its cotangent bundle. Moreover, the local diffeomorphism carries the conormal bundle of the boundary to the boundary conditions.

Section~\ref{sssec:Frechet} proves that the space of free immersed Lagrangians satisfying Lagrangian boundary conditions is a Frechet manifold. The proof uses the relative Weinstein theorem and the properness of the diffeomorphism group action on the space of immersions. The tangent space of this Frechet manifold is described and the notion of exact paths of immersed Lagrangians is defined. Section~\ref{subsection: calabi-yau manifolds} recalls definitions and results from~\cite{solomon} concerning the space of positive Lagrangian submanifolds in a Calabi-Yau manifold and geodesics therein.
	
	\subsection{Weinstein neighborhoods and immersed Lagrangian submanifolds with boundary}
	\label{subsection: Weinstein neighborhoods}

\subsubsection{Immersed submanifolds}\label{sssec:immsubm}
Let $N,M$ be smooth manifolds, $M$ perhaps with boundary. Denote by $\diff(M)$ the diffeomorphisms of
$M$	preserving each boundary component. That is, if $\varphi \in \diff(M)$ and $B \subset \partial M$ is a component, then $\varphi(B) = B.$
\begin{dfn}
		\label{definition: immersed submanifold}
		An \emph{immersed (resp. embedded) submanifold of $N$ of type $M$} is an equivalence class of immersions (resp. embeddings),
		\[
		K=[f:M\to N],
		\]
		where the equivalence is with respect to the $\diff(M)$-action: The immersions $f$ and $f'$ are equivalent if there exists $\varphi\in\diff(M)$ such that
		\[
		f'=f\circ\varphi.
		\]
We say that an immersion $f$ or an immersed submanifold $K=[f]$ is \emph{free} if $f$ has trivial isotropy subgroup. We say that $K$ has boundary if $M$ does. In this case, to each boundary component of $M$ we associate a boundary component of $K,$ which is itself an immersed submanifold.
	\end{dfn}

\begin{dfn}\label{definition:points}
	Let $K$ be an immersed submanifold of $N$ of type $M$. A \emph{point} $p\in K$ is an equivalence class of pairs, $p=[(f,q)]$, where $f:M\to N$ represents $K$ and $q\in M$. The pairs $(f,q)$ and $(f',q')$ are equivalent if there exists $\varphi\in\diff(M)$ such that
	\[
	f'=f\circ\varphi,\qquad\varphi(q')=q.
	\]
	For a point $p=[f,q]$ of  $K=[f]$, we let $p_0$ denote the image of $p$ in $N$,
	\[
	p_0:=f(q).
	\]
We say that $p$ is \emph{embedded} if $f^{-1}(p_0) = \{q\}.$
By abuse of notation, we abbreviate $p = f(q)$ when this does not lead to confusion. That is, we may consider $f(q)$ as a point of $K$ instead of as a point of $N.$

We define the \emph{tangent space} at $p$ by
	\[
	T_pK:=df_q(T_qM)\subset T_{p_0}N.
	\]
	The tangent bundle $TK$ is naturally an immersed submanifold of $TM$. A \emph{differential form} on $K$ is an equivalence class of pairs $\eta = [(f,\tau)]$ where $f$ is a representative of $K$ and $\tau \in \Omega^*(M).$ The pairs $(f,\tau)$ and $(f',\tau')$ are equivalent if there exists $\varphi \in \diff(M)$ such that $f' = f \circ \varphi$ and $\tau' = f^* \tau.$
An \emph{open subset} of $K$ is an immersed submanifold of $N$ of the form $[f|_U]$ where $f : M \to N$ represents $K$ and $U \subset M$ is open. For a differential form $\eta = [(f,\tau)]$ on $K,$ we may write $\tau = f^*\eta.$ When $\eta$ is a zero form, that is, a function, we also write $\tau = \eta\circ f.$

Let $K_i$ be immersed submanifolds of $N_i$ of type $M_i$ for $i = 0,1.$ A \emph{smooth map} $g : K \to K'$ is an equivalence class of triples $(f_0,f_1,h)$ where $h : M_0 \to M_1$ is smooth and $f_i : M_i \to N_i$ represents $K_i.$ Two such triples $(f_0,f_1,h)$ and $(f_0',f_1',h')$ are equivalent if there exist $\varphi_i \in \diff(M_i)$ such that $f_i' = f_i \circ \varphi_i$ and $h' = \varphi_1^{-1} \circ h \circ \varphi_0.$ We say $g$ is a diffeomorphism, embedding, and so on, if it is represented by a triple $(f_0,f_1,h)$ where $h$ is a diffeomorphism, embedding, and so on.
\end{dfn}

\begin{rem}\label{remark:tomakejakehappybutdeepdownalsoamitai}
Every point of an immersed submanifold has a neighborhood that is embedded. An embedded submanifold $K = [f : M \to N]$ is canonically diffeomorphic to the submanifold of type $f(M)$ represented by the inclusion $f(M) \hookrightarrow N.$ So, the usual notions of points, maps, and so on apply, and we do not need the added complexity of Definition~\ref{definition:points}.
\end{rem}

Lemma~\ref{lemma: michor's lemma} below is a rephrasing of~\cite[Lemma~1.4]{cervera-mascaro-michor}, which provides a sufficient condition for an immersed submanifold to be free. Lemma~\ref{lemma: embedded boundary point implies freeness} is a variant. Figure~\ref{fig:ebp implies freeness} shows an immersed submanifold satisfying the hypothesis of Lemma~\ref{lemma: embedded boundary point implies freeness}.

\begin{lemma}
	\label{lemma: michor's lemma}
	Let $K$ be a connected immersed submanifold of $N$ of type $M,$ and suppose $K$ has an embedded point. Then $K$ is free.
\end{lemma}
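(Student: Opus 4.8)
The plan is to fix a representative $f\colon M\to N$ of $K$ — freeness does not depend on this choice, since changing the representative conjugates the isotropy subgroup of $f$ — take an arbitrary $\varphi\in\diff(M)$ with $f\circ\varphi=f$, and prove $\varphi=\id_M$ by studying the fixed-point set
\[
A:=\{q\in M\mid \varphi(q)=q\}.
\]
The strategy is to show $A$ is nonempty, closed, and open; since $M$ is connected (as $K$ is), this forces $A=M$, hence $\varphi=\id_M$, and since $\varphi$ was an arbitrary element of the isotropy subgroup of $f$, the submanifold $K$ is free.

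For nonemptiness I would use the embedded point directly. Write it as $[(f,q_0)]$ with $p_0:=f(q_0)$ and $f^{-1}(p_0)=\{q_0\}$. Then $f\circ\varphi=f$ gives $f(\varphi(q_0))=p_0$, so $\varphi(q_0)\in f^{-1}(p_0)=\{q_0\}$, i.e.\ $q_0\in A$. Closedness is immediate, since $A$ is the preimage of the diagonal under the continuous map $q\mapsto(q,\varphi(q))$.

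The only step that uses the immersion hypothesis — and the main, though modest, obstacle — is openness of $A$. Given $q\in A$, I would invoke that $f$, being an immersion, is locally injective: there is an open neighborhood $U\ni q$ with $f|_U$ injective, and this holds whether $q$ is an interior or a boundary point of $M$. By continuity of $\varphi$ and $\varphi(q)=q\in U$, the set $U':=U\cap\varphi^{-1}(U)$ is an open neighborhood of $q$ with $\varphi(U')\subseteq U$. For every $x\in U'$ both $x$ and $\varphi(x)$ lie in $U$, and $f(\varphi(x))=f(x)$, so injectivity of $f|_U$ yields $\varphi(x)=x$; hence $U'\subseteq A$. In effect, $\varphi$ restricts to the identity on a whole neighborhood of each of its fixed points, which is exactly the openness of $A$.

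Putting these together, connectedness of $M$ gives $A=M$, so $\varphi=\id_M$ and $K$ is free. I note that the argument uses neither the derivative of $\varphi$ nor the boundary-component-preservation built into $\diff(M)$; those features enter only in the boundary-point variant, Lemma~\ref{lemma: embedded boundary point implies freeness}, where the given point lies on $\partial M$, and a first-order argument (using that $df_{q_0}$ is injective together with $\varphi(\partial M\text{-component})=\partial M\text{-component}$) replaces the appeal to local injectivity.
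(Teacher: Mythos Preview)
Your proof is correct. The paper does not actually prove this lemma itself; it cites it as a rephrasing of \cite[Lemma~1.4]{cervera-mascaro-michor}. However, your argument is precisely the one the paper gives for the variant Lemma~\ref{lemma: embedded boundary point implies freeness}: fix a representative $f$, take $\varphi$ in the isotropy group, and show the fixed-point set $\{q\mid\varphi(q)=q\}$ is nonempty (via the embedded point), closed, and open (via local injectivity of the immersion $f$), hence all of $M$ by connectedness.

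One small inaccuracy in your closing remark: in the paper's proof of Lemma~\ref{lemma: embedded boundary point implies freeness}, the openness step still appeals to local injectivity of $f$, not to a derivative argument. The boundary-component preservation in $\diff(M)$ is used only for the nonemptiness step, to force $\varphi(q)\in B$ so that the embeddedness of $q$ \emph{within the boundary component} $[f|_B]$ can be invoked. After that, the open-closed argument is identical to yours.
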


\begin{figure}[ht]
\centering
\includegraphics[width=12cm]{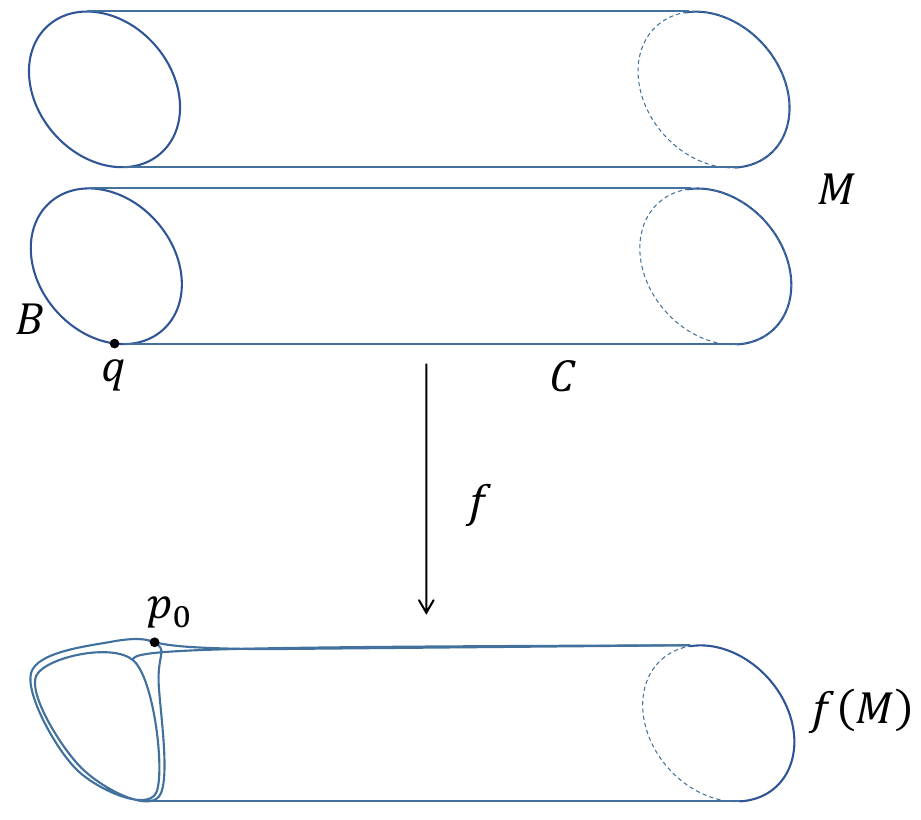}
\caption{The immersed manifold $K = [f: M \to N]$ satisfies the hypothesis of Lemma~\ref{lemma: embedded boundary point implies freeness} and therefore is free. The restriction of $f$ to the component $C \subset M$ is injective near the point $q$ of the boundary component $B \subset \partial C,$ but away from $B$ it is two-to-one. The restriction of $f$ to $M\setminus C$ is given by $f|_C$ composed with a diffeomorphism $M \setminus C \to C.$ So $K$ has no embedded points. However, the point $p = [f|_B,q]$ is an embedded point of the boundary component $[f|_B: B \to N]$.}
\label{fig:ebp implies freeness}
\end{figure}

	 \begin{lemma}
	 	\label{lemma: embedded boundary point implies freeness}
	 	Let $K$ be an immersed submanifold of $N$ modeled on $M.$ Suppose that for each connected component $C \subset M$ there is a boundary component $B \subset \partial C$ such that the corresponding boundary component of $K$ has an embedded point. Then $K$ is free.
	 \end{lemma}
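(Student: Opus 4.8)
The plan is to fix a representative $f : M \to N$ of $K$, take an arbitrary $\varphi \in \diff(M)$ with $f \circ \varphi = f$, and show $\varphi = \id_M$; this is exactly the freeness of $K$. I would argue one connected component of $M$ at a time. Since $\varphi$ preserves each boundary component of $M$ setwise, and since by hypothesis every connected component $C \subset M$ has a boundary component, for a chosen boundary component $B \subset \partial C$ the image $\varphi(C)$ is a connected component of $M$ containing the connected set $\varphi(B) = B$; as $C$ is the only component containing $B$, it follows that $\varphi(C) = C$. Hence it suffices to prove $\varphi|_C = \id_C$ for each component $C$.

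Next I would fix $C$ together with a boundary component $B \subset \partial C$ for which the corresponding boundary component $[f|_B]$ of $K$ has an embedded point, and pick $q \in B$ representing it, so that $(f|_B)^{-1}(f(q)) = \{q\}$. Since $\varphi(B) = B$ and $f|_B \circ (\varphi|_B) = (f \circ \varphi)|_B = f|_B$, the point $\varphi(q)$ lies in $B$ and satisfies $f|_B(\varphi(q)) = f|_B(q)$, whence $\varphi(q) = q$ by embeddedness of $q$. Now I would consider the set $S := \{x \in C \mid \varphi(x) = x\}$: it is nonempty because $q \in S$, and closed because $\varphi$ is continuous. The crucial observation is that $S$ is also open. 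Given $x \in S$, choose an open neighborhood $U \subset C$ of $x$ with $f|_U$ injective (immersions are locally injective, including at boundary points), then a neighborhood $U' \subset U$ of $x$ with $\varphi(U') \subset U$; for any $y \in U'$ the equality $f(\varphi(y)) = f(y)$ together with injectivity of $f|_U$ forces $\varphi(y) = y$, so $U' \subset S$. Since $C$ is connected, $S = C$, i.e. $\varphi|_C = \id_C$. Letting $C$ range over all components of $M$ gives $\varphi = \id_M$.

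I do not expect a genuine obstacle here; the argument is a streamlined variant of the reasoning behind Lemma~\ref{lemma: michor's lemma}. The one point that needs care is the reduction to a single component: it is essential that the hypothesis forces every component of $M$ to have nonempty boundary, so that elements of $\diff(M)$ cannot permute the components of $M$. Unlike the covering-space argument underlying Lemma~\ref{lemma: michor's lemma}, here the embedded boundary point is used only to guarantee that the fixed-point locus $S$ meets each component, while openness of $S$ comes for free from local injectivity of $f$.
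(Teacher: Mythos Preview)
Your proof is correct and follows essentially the same approach as the paper's: fix a point $q$ in a boundary component $B$ that is embedded for $f|_B$, use $\varphi(B)=B$ to get $\varphi(q)=q$, and then use local injectivity of $f$ to show the fixed-point set of $\varphi$ is open, hence all of $C$ by connectedness. The only cosmetic difference is that you first argue $\varphi(C)=C$ and then work with the fixed-point set inside $C$, whereas the paper takes the fixed-point set in all of $M$ and deduces $\varphi|_C=\id_C$ directly from $q\in C$ and connectedness; your preliminary step is valid but unnecessary.
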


 	\begin{proof}
 	Let $f: M \to N$ be a representative of $K$ and let $\varphi \in \diff(M)$ be such that $f \circ \varphi = f.$ Let $C \subset M,$ and $B \subset \partial C$ be components. Suppose $q \in B$ is such that $p = [(f|_B,q)]$ is an embedded point of the immersed submanifold $[f|_B].$  As $\varphi$ preserves each boundary component of $M$, we have $\varphi(q)\in B$. Since $p$ is an embedded point of $[f|_B],$ the equality
 		\[
 		f(\varphi(q))=f(q)
 		\]
 		implies
 		\[
 		\varphi(q)=q.
 		\]
 		Now, by local injectivity of $f$, continuity of $\varphi$ and the equality $f\circ\varphi=f,$ the subset
 		\[
 		U:=\{r\in M\;|\varphi(r)=r\}\subset M
 		\]
 		is open. As $U$ is also closed and $q\in U,$ we deduce $\varphi|_C=\id_{C}$ and complete the proof.
 	\end{proof}

Let $\imm(N,M)$ denote the Frechet manifold of smooth immersions $M \to N.$
\begin{lemma}\label{lm:proper}
Suppose $M$ is compact. Then the action of $\diff(M)$ on $\imm(N,M)$ by composition is proper.
\end{lemma}
\begin{proof}
In the following, we work with the topology of $C^\infty$ convergence on maps between manifolds. We fix Riemannian metrics on $M$ and $N$, which give rise to norms and covariant derivatives of tensors.  We need to show that if $f_i : M \to N$ is a sequence of immersions converging to an immersion $f$ and $h_i : M \to M$ is a sequence of diffeomorphisms such that $f_i \circ h_i$ converges to $g$, then after passing to a subsequence $h_i$ converges to a diffeomorphism $h.$ Indeed, consider the sequence of differentials $df_i : TM \to f_i^* TN.$ Since $f_i \to f$ and
$f$ is an immersion, there is a sequence of maps $L_i : f_i^*TN \to TM$ such that $L_i \circ df_i = \id_{TM}$ and there exists a uniform bound $|L_i|_{C^0} \leq C.$ On the other hand, since $f_i \circ h_i \to g,$ we have a uniform bound on $|d(f_i \circ h_i)|_{C^0} = |df_i \circ dh_i|_{C^0}$. Composing with $L_i,$ we obtain a uniform bound on $|dh_i|_{C^0}.$ Similarly, we have uniform bounds $|df_i|_{C^k} \leq C_k$ and $|df_i \circ dh_i|_{C^k} \leq D_k$ for all $k.$ Taking covariant derivatives, we obtain
\begin{align*}
|df_i \circ \nabla^k dh_i|_{C^0} &\leq |\nabla^k(df_i \circ dh_i)|_{C^0} + \star \\
 &\leq D_k + \text{a polynomial in $C_k$ and $|dh_i|_{C^{k-1}}$},
\end{align*}
where $\star$ denotes terms involving compositions of $\nabla^l df$ for $l \leq k$ and $\nabla^m dh$ for $m \leq k-1$.
Composing with $L_i$, by induction on $k$ we obtain uniform bounds on $|dh_i|_{C^k}$ for all $k.$ Thus, by the Arzela-Ascoli theorem and a diagonal argument, after passing to a subsequence, $h_i \to h$ in the $C^\infty$ topology.

It remains to show that $h$ is a diffeomorphism. Write $g_i = f_i \circ h_i,$ so $f_i = g_i \circ h_i^{-1}.$ Repeating the above argument with $g_i$ in place of $f_i$ and $h_i^{-1}$ in place of $h_i$ shows that after passing to a subsequence we have also $h_i^{-1} \to \hat h.$ By the continuity of composition, $\hat h \circ h = \id_M$ and $h \circ \hat h = \id_M,$ so $h$ is a diffeomorphism as required.
\end{proof}

The following is part of~\cite[Theorem 1.5]{cervera-mascaro-michor}, but we include it here for completeness.
\begin{lemma}\label{lm:freeopen}
Suppose $M$ is compact. Then free immersions form an open subset of $\imm(N,M).$
\end{lemma}
\begin{proof}
We show the complement of free immersions is closed. Let $f_i$ be a sequence of non-free immersions in $\imm(N,M)$ that converges to $f.$ Let $h_i \in \diff(M)$ be such that $h_i \neq \id_M$ and $f_i \circ h_i = f_i.$ Lemma~\ref{lm:proper} implies that after passing to a subsequence, $h_i$ converges to $h$. Moreover, by continuity $f \circ h = f.$ It remains to show that $h \neq \id_M.$ Indeed, let $\{U_j\}_{j = 1}^N$ be an open cover of $M$ such that $U_j$ is connected and $f|_{\overline{U_j}}$ is an embedding. Since $f_i \to f,$ after dropping a finite number of $i,$ we may assume $f_i|_{U_j}$ is an embedding.

We claim that if $h_i(U_j) \cap U_j \neq \emptyset$ then $h_i(x) = x$ for all $x \in U_j.$ The proof follows the idea of~\cite[Section 3.1]{cervera-mascaro-michor}: Suppose $x \in U_j$ and $h_i(x) \in U_j.$ Since $f_i(x) = f_i(h_i(x))$ and $f|_{U_j}$ is injective, it follows that $x = h_i(x).$ Thus
\begin{equation}
U_j \cap h_i(U_j) = \{x \in U_j|h_i(x) = x\}. \label{eq:ujhiuj}
\end{equation}
In particular, $U_j \cap h_i(U_j)$ is closed and open in the connected set $U_j$. Therefore, $U_j = U_j \cap h_i(U_j)$ and the claim follows from equation~\eqref{eq:ujhiuj}.

By the preceding claim, since $h_i \neq \id_M,$ after passing to a subsequence, there exists $j_0$ such that $h_i(U_{j_0}) \cap U_{j_0} = \emptyset$ for all $i.$ If $x \in U_{j_0},$ then $h(x) = \lim_i h_i(x) \notin U_{j_0}.$ It follows that $h \neq \id_M$ as desired.
\end{proof}

\subsubsection{The space of immersed Lagrangian submanifolds} \label{sssec:spimmLag}
	Let $(X,\omega)$ be a symplectic manifold of dimension $2n,$ and let $L$ be a smooth manifold of dimension $n,$ perhaps with boundary.

	\begin{dfn}
		\label{definition: Lagrangian submanifold}
		An immersion $f:L\to X$ is said to be \emph{Lagrangian} if it satisfies $f^*\omega=0$. Let $f,f':L\to X$ be immersions such that, for some $\varphi\in\diff(L),$ we have $f'=f\circ\varphi.$ Then $f$ is Lagrangian if and only if $f'$ is. We say the associated immersed submanifold $[f]$ is \emph{Lagrangian} if $f$ is Lagrangian.
	\end{dfn}

\begin{ntn}	\label{notation:space of Lagrangians}
	If $L$ has no boundary, we let $\mathcal{L}(X,L)$ denote the space of free immersed Lagrangian submanifolds of $X$ of type $L$. Suppose now that $L$ has $k$ boundary components $B_1,\ldots,B_k,$ for some $k\in\mathbb{N}$, and let $\Lambda_1,\ldots,\Lambda_k\subset X$ be fixed Lagrangians. Let $\mathcal{L}(X,L;\Lambda_1,\ldots,\Lambda_k)$ denote the space of free immersed Lagrangian submanifolds $Z\subset X$ of type $L$ with boundary components $C_1,\ldots,C_k,$ corresponding to the boundary components $B_1,\ldots,B_k,$ of $L$ such that the following hold.
	\begin{enumerate}[label=(\alph*)]
		\item For $i=1,\ldots,k,$ the boundary component $C_i$ is an immersed submanifold of $\Lambda_i$.
		\item \label{not tangent} For $i=1,\ldots,k,$ and $p\in C_i,$ we have $T_pZ\ne T_{p_0}\Lambda_i$.
	\end{enumerate}
\end{ntn}
	It is proved in Theorem~\ref{thm: frechet} below that the spaces $\mathcal{L}(X,L;\Lambda_1,\ldots,\Lambda_k)$ and, in particular, $\mathcal{L}(X,L)$ are Fr\'echet manifolds locally parameterized by spaces of closed $1$-forms. It was observed previously in~\cite[Section~2]{akveld-salamon} that the open subspace of $\mathcal{L}(L,X)$ consisting of embedded Lagrangian submanifolds is a Frechet manifold. The proof of Theorem~\ref{thm: frechet} uses the following version of the Weinstein neighborhood theorem for immersed Lagrangian submanifolds with boundary that satisfy Lagrangian boundary conditions. For a smooth manifold $M$ and a submanifold $Q\subset M$ we let $\nu_Q$ denote the conormal bundle of $Q$ in $T^*M$.
	
	\begin{lemma}
		\label{lemma: Weinstein with boundary}
		Suppose $L$ is compact with $k$ boundary components, $B_1,\ldots,B_k.$ Let $\Lambda_1,\ldots,\Lambda_k\subset X$ be fixed embedded Lagrangian submanifolds and let
\[
Z=[f:L\to X]\in\mathcal{L}(X,L;\Lambda_1,\ldots,\Lambda_k).
\]
Then there exist an open neighborhood of the zero section, $V\subset T^*L$, and a local symplectomorphism $\varphi:V\to X$, such that the following hold.
		\begin{enumerate}[label=(\alph*)]
			\item Identifying $L$ with the zero section in $T^*L,$ we have
			\[
			\varphi|_L=f.
			\]
			\item\label{it:bdrycond} For $i=1,\ldots,k$, a point $p\in B_i$ and a covector $\xi\in T^*_pL,$ we have
			\[
			\varphi(p,\xi)\in\Lambda_i\Leftrightarrow(p,\xi)\in \nu_{B_i}.
			\]
		\end{enumerate}
	\end{lemma}

	\begin{proof}
		We follow the lines of the argument by Moser presented in~\cite[Section~3.2]{babymcduff-salamon}, making the necessary adaptations. The usual argument treats embedded Lagrangians without boundary. The modification for immersed Lagrangians is relatively straightforward. More work is required to show the boundary condition~\ref{it:bdrycond} can be satisfied. The usual argument has three steps and each one has to be adapted to satisfy the boundary condition. In the first step, one constructs a Lagrangian subbundle of $f^*TX$ complementary to $TL.$ This sub-bundle must be adapted to the tangent bundles of $\Lambda_i$ along the boundary components. In the second step, one uses the exponential map to construct a local diffeomorphism $\tilde \varphi$ satisfying the requirements for $\varphi$ except that it is not a local symplectomorphism. The connection used to define the exponential map must be chosen carefully for the boundary condition~\ref{it:bdrycond} to hold. The third step uses Moser's argument to modify $\tilde \varphi$ to a local symplectomorphism. It is necessary to check that this modification preserves the boundary condition~\ref{it:bdrycond}.

First we construct a smooth subbundle $E\subset f^*TX\to L$ satisfying the following.
		\begin{enumerate}
			\item\label{fiber is Lag} For $p\in L,$ the fiber $E_p\subset T_{f(p)}X$ is Lagrangian.
			\item\label{direct sum} We have $f^*TX=df(TL)\oplus E,$ where $df : TL \to f^*TX$ is the differential of the immersion $f.$
			\item\label{compatible with boundary} For $p\in B_i,\;i=1,\ldots,k,$ we have
			\[
			E_p\cap T_{f(p)}\Lambda_i\ne\{0\}.
			\]
			It then follows that $\dim\left(E_p\cap T_{f(p)}\Lambda_i\right)=1.$
		\end{enumerate}
		To do so, let $J$ be an $\omega$-compatible almost complex structure. Note that, if the Lagrangian $Z$ were closed, the bundle $JTZ\subset f^*TX$ would be a good choice for~$E.$ In our case, however, $JTZ$ does not necessarily satisfy condition~\eqref{compatible with boundary}. We thus perform the following perturbation. For $p\in L$ and any number $a\in\R,$ the linear space
		\[
		E_{p,a}:=(J+a)df(T_{p}L)\subset T_{f(p)}X
		\]
		satisfies conditions~\eqref{fiber is Lag} and~\eqref{direct sum}. Condition~\eqref{compatible with boundary} determines $a$ uniquely for $p \in \partial L$. Here, we use condition~\ref{not tangent} in the definition of $\mathcal{L}(X,L;\Lambda_1,\ldots,\Lambda_k).$ Hence, constructing $E$ amounts to extending the smooth assignment $p\mapsto a(p)$ from $\partial L$ to all~$L$.
		
		Contraction with $\omega$ along the immersion $f$ yields an isomorphism of vector bundles $E\cong T^*L.$ We thus think of $T^*L$ as a subbundle of $f^*TX$. Condition~\eqref{compatible with boundary} in the construction of $E$ implies that for $p\in B_i,i=1,\ldots,k,$ the annihilator $(T_pB_i)^0\subset T_p^*L$ is identified with a line in $T_{f(p)}\Lambda_i.$
		
		For $i=1,\ldots,k,$ let $\nabla^i$ be a connection on $TX,$ with respect to which the Lagrangian $\Lambda_i$ is totally geodesic. Let $\left(\alpha_i:L\to\mathbb{R}\right)_{i=1}^k$ be a smooth partition of unity with
		\[
		\alpha_i|_{B_i}\equiv1,\quad i=1,\ldots,k.
		\]
		For a connection $\nabla$ on $TX,$ let $\exp^\nabla$ denote the exponential map of $\nabla.$ To each $p\in L$ we assign a connection $\nabla^p$ on $TX$ given by
		\[
		\nabla^p:=\sum_i\alpha_i(p)\nabla^i.
		\]
		Set
		\[
		\widetilde{V}:=\left\{(p,\xi)\in T^*L\,\left|\,\exp^{\nabla^p}_{f(p)}(\xi)\;\mathrm{exists}\right.\right\}
		\]
		and define
		\[
		\widetilde{\varphi}:\widetilde{V}\to X,\qquad(p,\xi)\mapsto\exp^{\nabla^p}_{f(p)}(\xi).
		\]
		Shrinking $\widetilde{V}$ if necessary, $\widetilde{\varphi}$ is an immersion and one-to-one on each fiber. By construction, we have
\begin{equation}
\label{equation: conormal mapped into Lambda}
\widetilde{\varphi}\left(\nu_{B_i}\cap\widetilde{V}\right)\subset\Lambda_i,\quad i=1,\ldots,k.
\end{equation}

Let $\omega_0$ denote the canonical symplectic form on $\widetilde{V}\subset T^*L$. Write
		\[
		\omega_1:=\widetilde{\varphi}^*\omega.
		\]
		Then $\omega_0$ and $\omega_1$ coincide on $T\widetilde{V}|_L$. Also, by~\eqref{equation: conormal mapped into Lambda}, the conormal bundle $\nu_{B_i}$ is Lagrangian with respect to both forms for $i=1,\ldots,k.$ We now construct the desired $V\subset T^*L$ and an open embedding $\chi:V\hookrightarrow\widetilde{V},$ satisfying the following:
		\begin{enumerate}
			\item $\chi^*\omega_1=\omega_0$.
			\item For $q\in V$ and $i=1,\ldots,k,$ we have
			\[
			\chi(q)\in\nu_{B_i}\Leftrightarrow q\in\nu_{B_i}.
			\]
			\item For $q\in L\subset V$ we have $\chi(q)=q$.
		\end{enumerate}
		This will complete the proof since we can take $\varphi:=\widetilde{\varphi}\circ\chi$. The embedding $\chi$ is obtained as the flow of a time-dependent vector field as follows. Let
		\[
		H:\widetilde{V}\times[0,1]\to\widetilde{V},\qquad((p,\xi),t)\mapsto(p,t\xi)
		\]
		and
		\[
		\pi:\widetilde{V}\times[0,1]\to\widetilde{V},\qquad((p,\xi),t)\mapsto(p,\xi).
		\]
		Let $\eta$ be the 1-form on $\widetilde{V}$ given by
		\[
		\eta:=\pi_*H^*(\omega_1-\omega_0),
		\]
		where $\pi_*$ denotes pushing forward by integration along the fibers of $\pi$. For a discussion of the properties of integration along the fiber, see~\cite[Section~3.1]{kupferman-solomon}. Then $\eta$ satisfies
		\begin{equation}
		\label{equation: properties of eta}
		d\eta=\omega_1-\omega_0,\quad\eta|_{T_L\widetilde{V}}=0,\quad\eta|_{\nu_{B_i}}=0,\;i=1,\ldots,k.
		\end{equation}
		Shrinking $\widetilde{V}$ again, we may assume that, for $t\in[0,1],$ the closed 2-form $\omega_t:=t\omega_1+(1-t)\omega_0$ is non-degenerate. For $t\in[0,1],$ define a vector field $u_t$ on $\widetilde{V}$ by
		\[
		i_{u_t}\omega_t=-\eta.
		\]
		Then $u_t$ vanishes on $L$ and is tangent to $\nu_{B_i}$ for $i=1,\ldots,k.$ Let $\chi_t,\;t\in[0,1],$ denote the flow of $u_t$. That is,
		\[
		\chi_0=\id,\qquad\deriv{t}\chi_t=u_t\circ\chi_t.
		\]
		Let $V$ be the domain of $\chi:=\chi_1$. By the Cartan formula we have
		\begin{align*}
		\deriv{t}\chi_t^*\omega_t
		&=\chi_t^*\left(\deriv{t}\omega_t+i_{u_t}d\omega_t+di_{u_t}\omega_t\right)\\
		&=\chi_t^*((\omega_1-\omega_0)-d\eta)\\
		&=0.
		\end{align*}
		Hence we have $\chi^*\omega_1=\omega_0,$ as desired.
	\end{proof}
	
	\begin{dfn}
		We call the pair $(V,\varphi)$ of Lemma~\ref{lemma: Weinstein with boundary} an \emph{immersed Weinstein neighborhood} of $Z$ compatible with $\Lambda_1,\ldots,\Lambda_k$. Alternatively, we may call $(V,\varphi)$ an immersed Weinstein neighborhood of $f$ compatible with $\Lambda_1,\ldots,\Lambda_k$.
	\end{dfn}

\subsubsection{Frechet manifold}\label{sssec:Frechet}
	We now show the space $\mathcal{L}(X,L;\Lambda_1,\ldots,\Lambda_k)$ is a Fr\'echet manifold. For a manifold $M$ with boundary, we let $\Omega^1(M)$ denote the space of smooth 1-forms on $M$ and $\Omega_B^1(M)\subset\Omega^1(M)$ the Fr\'echet subspace consisting of closed forms annihilating the boundary.
	
	\begin{thm}
		\label{thm: frechet}
		Suppose $L$ is compact with $k$ boundary components and let $\Lambda_i\subset X,\; i=1,\ldots,k,$ be fixed embedded Lagrangians. Then $\mathcal{L}(X,L;\Lambda_1,\ldots,\Lambda_k)$ is a Fr\'echet manifold. In fact, for $Z \in \mathcal{L}(X,L;\Lambda_1,\ldots,\Lambda_k)$ an immersed Weinstein neighborhood of $Z$ compatible with $\Lambda_1,\dots,\Lambda_k,$ gives rise to a local parameterization
		\[ \mathbf{X}:\mathcal{U}\subset\Omega_B^1(L)\to\widetilde{\mathcal{U}}\subset\mathcal{L}(X,L;\Lambda_1,\ldots,\Lambda_k).
		\]
	\end{thm}

	\begin{proof}
Let $B_1,\ldots,B_k,$ denote the components of $\partial L$. Let $\iml(X,L;\Lambda_1,\ldots,\Lambda_k)$ denote the Frechet manifold of Lagrangian immersions $g: L \to X$ such that
\[
g(B_i)\subset \Lambda_i, \qquad i = 1,\ldots,k.
\]
The diffeomorphism group $\diff(L)$ acts on $\iml(X,L;\Lambda_1,\ldots,\Lambda_k)$ by composition, and this action is proper by Lemma~\ref{lm:proper}. The subset
\[
\ilf(X,L;\Lambda_1,\ldots,\Lambda_k) \subset \iml(X,L;\Lambda_1,\ldots,\Lambda_k)
\]
consisting of free immersions is open by Lemma~\ref{lm:freeopen} and, therefore, also a Frechet manifold. It is preserved by the action of $\diff(L).$ By definition,
\[
\mathcal{L}(X,L;\Lambda_1,\ldots,\Lambda_k) = \ilf(X,L;\Lambda_1,\ldots,\Lambda_k)/\diff(L).
\]
The proof of the theorem broadly follows the proof~\cite[Theorem~9.16]{Lee} that the quotient of a finite dimensional manifold by the free proper action of a Lie group is a manifold. The main step is to construct coordinate charts on $\ilf(X,L;\Lambda_1,\ldots,\Lambda_k)$ in which the orbits of the $\diff(L)$ action are the fibers of a linear projection. These charts are called adapted. Unlike in the finite dimensional setting, where the inverse function theorem is used, adapted coordinate charts on $\ilf(X,L;\Lambda_1,\ldots,\Lambda_k)$ are constructed explicitly using immersed Weinstein neighborhoods. The construction is based on ideas from~\cite[Theorem 1.5]{cervera-mascaro-michor}.

We start with the proof that $\mathcal{L}(X,L;\Lambda_1,\ldots,\Lambda_k)$ is Hausdorff. Indeed, let
\[
a: \ilf(X,L;\Lambda_1,\ldots,\Lambda_k) \times \diff(L) \to \ilf(X,L;\Lambda_1,\ldots,\Lambda_k) \times \ilf(X,L;\Lambda_1,\ldots,\Lambda_k)
\]
be given by $a(f,h) = (f,f\circ h).$ So, $\im a$ is the set of pairs identified under the quotient map $\ilf(X,L;\Lambda_1,\ldots,\Lambda_k) \to \mathcal{L}(X,L;\Lambda_1,\ldots,\Lambda_k).$ Lemma~\ref{lm:proper} asserts that $a$ is proper and hence $\im a$ is closed. It follows that $\mathcal{L}(X,L;\Lambda_1,\ldots,\Lambda_k)$ is Hausdorff.

We turn to the construction of an adapted coordinate chart on $\ilf(X,L;\Lambda_1,\ldots,\Lambda_k)$ centered at $f \in \ilf(X,L;\Lambda_1,\ldots,\Lambda_k)$. Let $(V,\varphi)$ be an immersed Weinstein neighborhood of $f$ compatible with $\Lambda_1,\ldots,\Lambda_k,$ which is guaranteed to exist by Lemma~\ref{lemma: Weinstein with boundary}. Let $\pi : T^*L \to L$ denote the projection. After possibly shrinking $V,$ we may assume that each point of $L$ has a neighborhood $U$ such that $\varphi|_{V \cap \pi^{-1}(U)}$ is an embedding. Set
\[
		\mathcal{U}^1:=\left\{\left.\alpha\in\Omega_B^1(L)\;\right|\;\mathrm{Graph}(\alpha)\subset V\right\}.
\]
Then $\mathcal{U}^1$ is open in $\Omega_B^1(L).$ Thinking of $\alpha\in\mathcal{U}^1$ as a map $\alpha : L \to V,$ we can associate to $\alpha$ the immersion
\[
		\varphi \circ \alpha : L \to X.
\]
It follows from the compatibility of $(V,\varphi)$ with the Lagrangians $\Lambda_1,\ldots,\Lambda_k,$ that $\varphi \circ \alpha(B_i) \subset \Lambda_i.$ So,
\[
\varphi \circ \alpha \in \iml(X,L;\Lambda_1,\ldots,\Lambda_k).
\]

First, we show that there exists a neighborhood $\mathcal{V}$ of the identity in $\diff(L)$ such that the map
\[
\Phi: \mathcal{U}^1 \times \mathcal{V} \longrightarrow \iml(X,L;\Lambda_1,\ldots,\Lambda_k)
\]
given by $(\alpha,h) \mapsto \varphi\circ\alpha\circ h$ is a diffeomorphism onto its image. Since each point of $L$ has a neighborhood $U$ such that $\varphi|_{V \cap \pi^{-1}(U)}$ is an embedding, and $L$ is compact, we can choose $\{U_j\}_{j = 1}^N$ an open cover of $L$ such that $\varphi|_{V \cap \pi^{-1}(U_j)}$ is an embedding. Abbreviate $\widetilde U_j = V \cap \pi^{-1}(U_j).$ Let $\{T_j\}_j$ be an open cover of $L$ such that $\overline T_j \subset U_j.$ Let $\mathcal{V} \subset \diff(L)$ be a neighborhood of the identity such that for $h \in \mathcal{V}$ we have $h(\overline T_j) \subset U_j$ for $j = 1,\ldots,N.$ Let $\mathcal{W}^1 \subset \iml(X,L;\Lambda_1,\ldots,\Lambda_k)$ be the open set of Lagrangian immersions $g : L \to X$ satisfying the given boundary conditions and such that $g(\overline T_j) \subset \varphi(\widetilde U_j).$ Let $\iml(L,V;\nu_{B_1},\ldots,\nu_{B_k})$ denote the Frechet manifold of Lagrangian immersions $e: L \to V$ such that $e(B_i) \subset \nu_{B_i}$ for $i = 1,\ldots k.$ Define
\[
\Psi : \mathcal{W}^1 \longrightarrow \iml(L,V;\nu_{B_1},\ldots,\nu_{B_k})
\]
by
\[
\Psi(g)|_{T_j} = (\varphi|_{\widetilde U_j})^{-1}\circ g|_{T_j}.
\]
Let
\[
\mathcal{W} = \{g \in \mathcal{W}^1 \,|\, \pi \circ \Psi(g) : L \to L \text{ is a diffeomorphism}\}.
\]
Let
\[
\Theta : \mathcal{W} \to \mathcal{U}^1 \times \mathcal{V}
\]
be given by
\[
\Theta(g) = (\Psi(g) \circ (\pi \circ \Psi(g))^{-1},\pi \circ \Psi(g)).
\]
Since $\Theta$ and $\Phi$ are inverse to each other, we conclude that $\Phi$ is a diffeomorphism onto its image $\mathcal{W}.$

Next, we prove that there exists $\mathcal{U} \subset \mathcal{U}^1$ such that for every orbit $\mathcal{O}$ of $\diff(L),$ either
$(\Phi|_{\mathcal{U}\times \mathcal{V}})^{-1}(\mathcal{O}) = \emptyset$ or there exists $\alpha \in \mathcal{U}$ such that $(\Phi|_{\mathcal{U}\times \mathcal{V}})^{-1}(\mathcal{O}) = \{\alpha\} \times \mathcal{V}.$ Suppose not. Then there exists a neighborhood basis $\mathcal{U}^i$ of $\Omega^1_B(L)$ at $0$ with
\[
\mathcal{U}^{i+1} \subset \mathcal{U}^i,
\]
distinct $\alpha_i,\alpha_i' \in \mathcal{U}^i$ and $h_i \in \diff(L)$ such that $\Phi(\alpha_i,\id_L) \circ h_i = \Phi(\alpha_i',\id_L).$ In particular, $\alpha_i,\alpha_i' \to 0 \in \Omega^1_B(L).$ It follows that $\varphi\circ \alpha_i \to f$ and $\varphi \circ \alpha_i' \to f$ in $\imm(L,X).$ Moreover, $\varphi\circ\alpha_i \circ h_i = \varphi\circ \alpha_i'.$ By Lemma~\ref{lm:proper} the action of $\diff(L)$ on $\imm(L,X)$ is proper, so after passing to a subsequence, we can assume $h_i \to h \in \diff(L).$ By the continuity of composition, $f \circ h = f.$ Since $f$ is a free immersion, it follows that $h = \id_L.$ So, for $i$ sufficiently large, $h_{i} \in \mathcal{V}$ and $\Phi(\alpha_i,h_i) = \Phi(\alpha_i',\id_L).$ But this contradicts the injectivity of $\Phi$ since $\alpha_i,\alpha_i'$ are distinct.

By Lemma~\ref{lm:freeopen}, perhaps after shrinking $\mathcal{U}$, we may assume
\[
\Phi(\mathcal U \times \mathcal V) \subset \ilf(X,L;\Lambda_1,\ldots,\Lambda_k).
\]
Replacing $\Phi$ with its restriction to $\mathcal U \times \mathcal V,$ we obtain
\[
\Phi : \mathcal U \times \mathcal V \to \ilf(X,L;\Lambda_1,\ldots,\Lambda_k), \qquad \Phi(0,e) = f,
\]
that is a diffeomorphism onto its image and such that for every $\diff(L)$ orbit
\[
\mathcal{O} \subset \ilf(X,L;\Lambda_1,\ldots,\Lambda_k)
\]
of $\diff(L),$
\begin{equation}\label{eq:adapted}
\Phi^{-1}(\mathcal O) = \emptyset \qquad \text{or} \qquad \exists \alpha \in \mathcal U, \qquad \Phi^{-1}(\mathcal O) = \{\alpha\} \times \mathcal{V}.
\end{equation}
We call such a map $\Phi$ an \emph{adapted chart} on $\ilf(X,L;\Lambda_1,\ldots,\Lambda_k)$ centered at $f.$

To such an adapted chart $\Phi,$ associate a map
\[
\hat \Phi : \mathcal{U} \to \ilf(X,L;\Lambda_1,\ldots,\Lambda_k)
\]
given by $\hat \Phi(\alpha) = \Phi(\alpha,e)$ and
a map
\[
\mathbf{X}:\mathcal{U}\to \mathcal{L}(X,L;\Lambda_1,\ldots,\Lambda_k)
\]
given by $\mathbf{X}(\alpha) = [\hat \Phi(\alpha)].$
It follows from~\eqref{eq:adapted} that $\mathbf{X}$ is injective. We claim that $\mathbf{X}$ is a homeomorphism onto its image. Indeed, it suffices to show that $\mathbf{X}$ is open. Let
\[
p : \ilf(X,L;\Lambda_1,\ldots,\Lambda_k) \longrightarrow \mathcal{L}(X,L;\Lambda_1,\ldots,\Lambda_k)
\]
denote the quotient map. Observe that $p$ is open~\cite[Prop.~9.15]{Lee}. So, if $U \subset \mathcal{U}$ is open, then
\[
\mathbf{X}(U) = p(\hat\Phi(U)) \overset{\eqref{eq:adapted}}{=} p(\Phi(U \times \mathcal{V}))
\]
is open as desired.

We claim that the collection of all maps $\mathbf{X}$ associated to adapted charts $\Phi$ is a smooth structure on $\mathcal{L}(X,L;\Lambda_1,\ldots,\Lambda_k).$ Indeed, let
\[
\Phi_i : \mathcal U_i \times \mathcal V_i \longrightarrow \ilf(X,L;\Lambda_1,\ldots,\Lambda_k), \qquad i = 1,2,
\]
be adapted charts, and let
\[
\hat \Phi_i : \mathcal{U}_i \to \ilf(X,L;\Lambda_1,\ldots,\Lambda_k), \qquad \mathbf{X}_i:\mathcal{U}_i \longrightarrow \mathcal{L}(X,L;\Lambda_1,\ldots,\Lambda_k)
\]
be the associated maps. It suffices to show that
\begin{equation}\label{eq:cc}
\mathbf X_1^{-1} \circ \mathbf X_2 : \mathbf X_2^{-1}(\mathbf X_1(\mathcal U_1)) \longrightarrow \mathcal U_1
\end{equation}
is smooth. Let $\alpha \in \mathbf X_2^{-1}(\mathbf X_1(\mathcal U_1)).$ Let $h \in \diff(L)$ be such that
\[
\hat \Phi_2(\alpha) \circ h \in \hat \Phi_1(\mathcal U_1).
\]
After composing $\Phi_2$ with the diffeomorphism of $\ilf(X,L;\Lambda_1,\ldots,\Lambda_k)$ given by the action of $h,$ which does not change $\mathbf{X}_2,$ we may assume without loss of generality that $\hat\Phi_2(\alpha) \in \hat\Phi_1(\mathcal U_1).$ Let
\[
W = \hat\Phi_2^{-1}(\Phi_1(\mathcal{U}_1\times \mathcal{V}_1)).
\]
Thus, $W$ is an open set containing $\alpha.$ Let $\varpi : \mathcal{U}_1 \times \mathcal{V}_1 \to \mathcal{U}_1$ denote the projection. It follows from~\eqref{eq:adapted} that
\[
\mathbf{X_1}^{-1} \circ \mathbf{X}_2|_W = \varpi \circ \Phi_1^{-1} \circ \hat\Phi_2|_W,
\]
which is clearly smooth. Since $\alpha$ was arbitrary, we conclude that the composition~\eqref{eq:cc} is smooth everywhere.
\end{proof}

	Lemma~\ref{lemma: tangent space} and Remark~\ref{remark: tangent space hands on} below describe tangent vectors in spaces of Lagrangian submanifolds explicitly. Lemma~\ref{lemma: tangent space} \ref{tangent space of closed lag} is stated and proved in~\cite[Lemma~2.1]{akveld-salamon}, while part \ref{tangent space of lag with boundary} is the analogue for Lagrangians with boundary. Remark~\ref{remark: tangent space hands on} provides a hands-on approach.

	\begin{lemma}
		\label{lemma: tangent space}$\;$
		\begin{enumerate}[label=(\alph*)]
			\item\label{tangent space of closed lag} Suppose $L$ is closed. For $Z\in\mathcal{L}(X,L)$, the tangent space $T_Z\mathcal{L}(X,L)$ is canonically isomorphic to the space of closed $1$-forms on $Z$.
			\item\label{tangent space of lag with boundary} Suppose $L$ has $k$ boundary components. For $Z\in\mathcal{L}(X,L;\Lambda_1,\ldots,\Lambda_k)$, the tangent space $T_Z\mathcal{L}(X,L;\Lambda_1,\ldots,\Lambda_k)$ is canonically isomorphic to the space $\Omega_B^1(Z).$
		\end{enumerate}
	\end{lemma}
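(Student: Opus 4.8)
Part~\ref{tangent space of closed lag} is proved in~\cite{akveld-salamon}, with the extension to free immersed Lagrangians handled as in~\cite[Theorem~1.5]{cervera-mascaro-michor}; I would prove part~\ref{tangent space of lag with boundary} by the same method, paying attention to the boundary condition, and recover part~\ref{tangent space of closed lag} as the special case with no boundary. Fix a Lagrangian immersion $f : L \to X$ representing $Z,$ and recall that forms on $Z$ are identified via $f$ with forms on $L.$ The point is to describe the isomorphism in a chart-free way; the local parameterization of Corollary~\ref{corollary: frechet} is then used only to check that it is bijective. Given $\dot Z \in T_Z\mathcal{L}(X,L;\Lambda_1,\ldots,\Lambda_k),$ represent it by a smooth path $s \mapsto Z_s$ with $Z_0 = Z,$ and lift this path to a smooth family of Lagrangian immersions $f_s : L \to X$ with $f_0 = f$ and $f_s(B_j) \subset \Lambda_j$ for each boundary component $B_j$ of $L.$ Put $v := \left.\tfrac{d}{ds}\right|_{s=0} f_s \in \Gamma(f^*TX)$ and define $\alpha \in \Omega^1(L)$ by $\alpha_p(w) := \omega(v(p), df_p(w)),$ for $p \in L,$ $w \in T_pL.$ The assignment $\dot Z \mapsto \alpha,$ read via $f$ as a form on $Z,$ is the claimed isomorphism.

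\emph{Well-definedness in $\Omega_B^1(Z)$.} Replacing the lift $f_s$ by another changes $v$ by a term $df(\xi)$ with $\xi$ a vector field on $L,$ and $\omega(df(\xi), df(w)) = (f^*\omega)(\xi, w) = 0,$ so $\alpha$ depends only on $\dot Z.$ Differentiating the identity $f_s^*\omega = 0$ at $s = 0$ and using $d\omega = 0,$ the Cartan formula yields $d\alpha = 0.$ For $p \in B_j$ the condition $f_s(p) \in \Lambda_j$ forces $v(p) \in T_{f(p)}\Lambda_j,$ and for $w \in T_pB_j$ one has $df_p(w) \in T_{f(p)}C_j \subset T_{f(p)}\Lambda_j,$ where $C_j$ is the boundary component of $Z$ corresponding to $B_j$; since $\Lambda_j$ is Lagrangian, $\alpha_p(w) = 0.$ Hence $\alpha \in \Omega_B^1(L) \cong \Omega_B^1(Z),$ and $\dot Z \mapsto \alpha$ is linear and independent of all auxiliary choices.

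\emph{Bijectivity.} Choose an immersed Weinstein neighborhood $(V,\varphi)$ of $Z$ compatible with $\Lambda_1,\ldots,\Lambda_k$ and let $\mathbf{X} : \mathcal{U} \subset \Omega_B^1(L) \to \widetilde{\mathcal{U}}$ be the chart of Corollary~\ref{corollary: frechet}, a diffeomorphism onto a neighborhood of $Z$; thus $d\mathbf{X}_0 : \Omega_B^1(L) \to T_Z\mathcal{L}(X,L;\Lambda_1,\ldots,\Lambda_k)$ is a linear isomorphism. For $\beta \in \Omega_B^1(L),$ lift the path $s \mapsto \mathcal{G}_{s\beta}$ by $f_s(p) = \varphi(p, s\beta_p)$; its velocity at $s = 0$ is $d\varphi$ applied to the vertical vector field along the zero section of $T^*L$ determined by $\beta,$ so, since $\varphi$ is a symplectomorphism with $\varphi|_L = f,$ the resulting $1$-form is the pullback to the zero section of the contraction of that vertical vector field with the canonical symplectic form $\omega_0.$ By the defining property of the tautological $1$-form on $T^*L,$ this equals $\beta.$ Therefore the canonical map composed with $d\mathbf{X}_0$ is the identity of $\Omega_B^1(L) \cong \Omega_B^1(Z),$ whence the canonical map is an isomorphism. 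The case of closed $L$ is the same argument with no boundary and $\Omega_B^1$ replaced by the full space of closed $1$-forms.

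\emph{Main obstacle.} The only step with genuine content is the last computation, that $d\mathbf{X}_0(\beta)$ maps to $\beta$: this is the infinitesimal form of the fact that graphs of closed $1$-forms in $T^*L$ are Lagrangian, and it hinges on the tautological $1$-form. The remaining points --- independence of the lift, closedness of $\alpha,$ and its vanishing on $\partial Z$ --- are routine given that each $\Lambda_j$ is Lagrangian and, via Corollary~\ref{corollary: frechet}, that $\mathcal{L}(X,L;\Lambda_1,\ldots,\Lambda_k)$ is locally modeled on $\Omega_B^1(L).$
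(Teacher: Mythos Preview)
Your proof is correct and follows exactly the approach the paper indicates: the paper does not give a self-contained proof of this lemma but cites \cite{akveld-salamon} for part~\ref{tangent space of closed lag}, declares part~\ref{tangent space of lag with boundary} to be the analogue with boundary, and then records the explicit isomorphism $\dot Z \mapsto i_{\frac{d}{dt}\Psi_t}\omega$ in Remark~\ref{remark: tangent space hands on}. Your argument spells out precisely this map, verifies closedness via the Cartan formula and the boundary condition via the Lagrangian property of the $\Lambda_j$, and checks bijectivity against the Weinstein chart of Corollary~\ref{corollary: frechet}; this is the expected proof, just made explicit.
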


	\begin{rem}
		\label{remark: tangent space hands on}
		The canonical isomorphism of Lemma~\ref{lemma: tangent space} can be understood as follows. Let $(Z_t)_{t\in(-\epsilon,\epsilon)}$ be a smooth path of Lagrangians diffeomorphic to~$L,$ which may or may not have boundary. Let $\Psi_t:L\to X$ for $t\in(-\epsilon,\epsilon)$ be a smooth family of immersions such that $\Psi_t$ represents $Z_t.$ Then, recalling Definition~\ref{definition:points}, we have
		\[
		\deriv{t}Z_t=\left[\left(\Psi_t,i_{\deriv{t}\Psi_t}\omega\right)\right] \in \Omega_B^1(Z_t).
		\]
	\end{rem}

	\begin{dfn}$\;$
		\begin{enumerate}[label=(\alph*)]
			\item Suppose $L$ is closed. Let $\Lambda=(\Lambda_t)_{t\in[0,1]}\subset\mathcal{L}(X,L)$ be a smooth path. We say $\Lambda$ is \emph{exact} if $\deriv{t}\Lambda_t$ is exact for $t\in[0,1]$.
			\item Suppose $L$ has $k$ boundary components. Let
\[
Z=(Z_t)_{t\in[0,1]}\subset\mathcal{L}(X,L;\Lambda_1,\ldots,\Lambda_k)
\]
be a smooth path. We say $Z$ is \emph{exact relative to the boundary} if $\deriv{t}Z_t$ is exact relative to the boundary for $t\in[0,1]$, that is, if there exists a function $h_t\in C^\infty(Z_t)$ vanishing on $\partial Z_t$ such that $\deriv{t}Z_t=dh_t.$
		\end{enumerate}
	\end{dfn}

	\begin{rem}
		\label{remark: exact is Hamiltonian}
		Recall that a smooth path of embedded closed Lagrangians is exact if and only if it is induced by a Hamiltonian flow on $X$~\cite[Lemma~2.3]{akveld-salamon}.
	\end{rem}

	\subsection{Calabi-Yau manifolds, special Lagrangians and the space of positive Lagrangians}
	\label{subsection: calabi-yau manifolds}
	
	\begin{dfn}
		A \emph{Calabi-Yau manifold} is a quadruple $(X,\omega,J,\Omega)$, where $(X,\omega)$ is a symplectic manifold, $J$ is an $\omega$-compatible integrable complex structure, and $\Omega$ is a nowhere-vanishing holomorphic $(n,0)$ form (with respect to $J$). In particular, $X$ is a K\"ahler manifold with the metric
		\[
		g=g_J=\omega(\cdot,J\cdot).
		\]
	\end{dfn}

	\begin{rem}
		In the literature, various definitions of Calabi-Yau manifolds can be found. In previous work by the authors, the notion used here would be called an \emph{almost Calabi-Yau manifold}. A more restrictive definition adds the requirement $\rho\equiv1,$ where $\rho$ is the function defined in~\eqref{equation: rho}.
	\end{rem}

	In what follows, we fix a Calabi-Yau manifold $(X,\omega,J,\Omega)$. Recall the following observation of~\cite[Theorem~III.1.7]{harvey-lawson}.
	
	\begin{lemma}
		\label{lemma: Omega does not vanish on Lags}
		Let $p\in X$ and let $S\subset T_pX$ be an oriented Lagrangian subspace. Then $\Omega$ does not vanish on $S$. In fact, for an oriented basis $v_1,\ldots,v_n\in S,$ we have
		\[
		|\Omega(v_1,\ldots,v_n)|=\rho\vol(v_1,\ldots,v_n),
		\]
		where $\vol$ denotes the Riemannian volume form.
	\end{lemma}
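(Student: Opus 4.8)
The plan is to reduce the statement to linear algebra in the standard Hermitian vector space $\C^n$. Fix $p\in X$. Since $J$ is $\omega$-compatible, $(T_pX,g_p,J_p)$ is a Hermitian vector space, so there is a complex-linear isomorphism $\Psi:T_pX\to\C^n$ that is an isometry from $(T_pX,g_p)$ to the standard Euclidean space $(\C^n,g_0)$; such a $\Psi$ automatically carries $\omega_p$ to the standard K\"ahler form $\omega_0$ and $J_p$ to multiplication by $\ii$. Because $\Psi$ is complex-linear, $(\Psi^{-1})^*\Omega_p$ is an $(n,0)$-form on $\C^n$, and since such forms span a one-dimensional space and $\Omega$ is nowhere vanishing, $(\Psi^{-1})^*\Omega_p=\lambda\,dz^1\wedge\cdots\wedge dz^n$ for some $\lambda\in\C\setminus\{0\}$.

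Next I would identify $\rho(p)$ with $|\lambda|$. Using $dz^j\wedge d\bar z^j=-2\ii\,dx^j\wedge dy^j$ and reordering the wedge factors, which contributes the sign $(-1)^{n(n-1)/2}$, a direct computation gives
\[
(-1)^{\frac{n(n-1)}{2}}\Bigl(\tfrac{\ii}{2}\Bigr)^n\,\Omega_p\wedge\overline{\Omega_p}=|\lambda|^2\,\omega_0^n/n!,
\]
so comparison with the defining relation~\eqref{equation: rho} and the positivity of $\rho$ yield $\rho(p)=|\lambda|$.

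Now let $S\subset T_pX$ be an oriented Lagrangian subspace and choose an oriented $g_p$-orthonormal basis $u_1,\ldots,u_n$ of $S$, so $\vol(u_1,\ldots,u_n)=1$. For any oriented basis $v_1,\ldots,v_n$ of $S$, write $v_j=\sum_k A_{jk}u_k$; then $\det A>0$, and since $\Omega_p$ and the volume form of $S$ are both alternating $n$-forms, $\Omega_p(v_1,\ldots,v_n)=(\det A)\,\Omega_p(u_1,\ldots,u_n)$ and $\vol(v_1,\ldots,v_n)=\det A$. Hence it suffices to show $|\Omega_p(u_1,\ldots,u_n)|=\rho(p)$. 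The crucial point is that $\Psi(u_1),\ldots,\Psi(u_n)$ is a unitary frame of $\C^n$: the standard Hermitian inner product $h$ on $\C^n$ has $\real h=g_0$ and $\imaginary h=\pm\omega_0$, and since $\omega_p$ vanishes on $S$ we get $h(\Psi u_j,\Psi u_k)=g_0(\Psi u_j,\Psi u_k)=\delta_{jk}$. Thus the matrix $U$ with columns $\Psi(u_1),\ldots,\Psi(u_n)$ lies in $\mathrm{U}(n)$, and $\Omega_p(u_1,\ldots,u_n)=\lambda\,(dz^1\wedge\cdots\wedge dz^n)(\Psi u_1,\ldots,\Psi u_n)=\lambda\det U$, which has modulus $|\lambda|=\rho(p)$. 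In particular it is nonzero, so $\Omega$ does not vanish on $S$.

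I do not anticipate a serious obstacle: the mathematical content is the observation that a $g_p$-orthonormal frame of a Lagrangian subspace becomes a unitary frame of $\C^n$, and the only points requiring care are the bookkeeping of the normalizing constant $(-1)^{n(n-1)/2}(\ii/2)^n$ in the computation of $\rho(p)$ and the orientation conventions that make $\vol(u_1,\ldots,u_n)=1$ and $\vol(v_1,\ldots,v_n)=\det A$.
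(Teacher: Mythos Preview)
Your argument is correct and is essentially the standard Harvey--Lawson computation. The paper does not supply its own proof of this lemma; it simply attributes the statement to \cite{harvey-lawson}, so there is nothing to compare against beyond noting that your linear-algebra reduction to $\C^n$ is exactly the classical proof.
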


	In particular, it follows from Lemma~\ref{lemma: Omega does not vanish on Lags} that if $\imaginary \Omega|_\Lambda = 0,$ then $\real\Omega|_\Lambda$ is non-vanishing. If $\real\Omega|_\Lambda = 0,$ then $\imaginary\Omega|_\Lambda$ is non-vanishing.

\begin{dfn}
Let $p \in X$ and let $S \subset T_pX$ be an oriented Lagrangian subspace. The \emph{phase} $\theta_S \in S^1$ of $S$ is the argument of $\Omega(v_1,\ldots,v_n)$ for $v_1,\ldots,v_n \in S$ an oriented basis. We say that $S$ is positive if $\theta_S \in \left(-\frac{\pi}{2},\frac{\pi}{2}\right).$ An oriented Lagrangian submanifold $\Lambda\subset X$ has a \emph{phase function} $\theta_\Lambda : \Lambda \to S^1$ given by $\theta_\Lambda(p) = \theta_{T_p\Lambda}.$ The Lagrangian $\Lambda$ is positive if all its tangent spaces are positive.
\end{dfn}
	
	Let $\mathcal{O}$ be a Hamiltonian isotopy class of closed embedded positive Lagrangians in $X$ of type $L$, and let $\Lambda\in\mathcal{O}$. By Remark~\ref{remark: exact is Hamiltonian}, the tangent space $T_\Lambda\mathcal{O}$ consists of exact 1-forms on $\Lambda$. Positivity of $\Lambda$ yields the isomorphism~\eqref{equation: tangent space to O}, which in turn gives rise to the Riemannian metric $(\cdot,\cdot)$ defined in~\eqref{equation: the metric}. As shown in~\cite[Theorem~4.1]{solomon-curv}, the metric $(\cdot,\cdot)$ has a Levi-Civita connection.
	
	\begin{dfn}
		Let $\Lambda=(\Lambda_t)_{t\in[0,1]}$ be a smooth path in $\mathcal{O}$. A \emph{lifting} of $\Lambda$ is a smooth path of embeddings $\Psi_t:L\to X,\;t\in[0,1],$ such that $\Psi_t$ represents $\Lambda_t$. A lifting $(\Psi_t)$ is \emph{horizontal} if it satisfies
		\[
		i_{\deriv{t}\Psi_t}\real\Omega=0,\quad t\in[0,1].
		\]
	\end{dfn}

	It is shown in~\cite[Section~5.3]{solomon} that, for a path $\Lambda$ as above, every embedding representing $\Lambda_0$ extends uniquely to a horizontal lifting. This allows us to describe the Levi-Civita connection of $(\cdot,\cdot)$ as follows. Let $u_t,\;t\in[0,1],$ be a vector field along $\Lambda$. That is,
	\[
	u_t\in\vmv(\Lambda_t),\qquad t\in[0,1].
	\]
	Let $(\Psi_t)$ be a horizontal lifting of $\Lambda$. Then the covariant derivative of $u_t$  is given by
	\[
	\left(\frac{D}{dt}u_t\right)\circ\Psi_t=\deriv{t}(u_t\circ\Psi_t).
	\]
	Thus, the path $\Lambda$ is a \emph{geodesic} if and only if it admits a lifting $(\Psi_t)_t$ and a family of functions $h_t:\Lambda_t\to\R$ satisfying the equations
	\begin{equation}
	\label{equation: geodesic}
	i_{\deriv{t}\Psi_t}\omega = d(h_t\circ\Psi_t), \qquad i_{\deriv{t}\Psi_t} \real \Omega = 0, \qquad \deriv{t}(h_t \circ \Psi_t) = 0.
	\end{equation}
	This equivalent definition allows us to extend the notion of geodesics to a larger class of Lagrangian paths. Indeed, it makes sense for paths of immersed Lagrangians which are not necessarily embedded or closed. In the present article we also consider geodesics of non-smooth Lagrangians with singular locus consisting of finitely many cone points. We provide an explicit definition of geodesics in a more general form after setting up the necessary theory of submanifolds with cone points.
	
	\section{Lagrangians with cone points}
	\label{section: Lagrangians with Cone Points}
	
	For the purposes of this article it is natural to consider Lagrangians with \emph{cone points} and paths thereof. In Section~\ref{subsection: oriented blowups and cone-immersed submanifolds} below we discuss differentiability on general submanifolds with cone points, where no additional geometric structure is present. In Section~\ref{subsection: cone-immersed Lagrangians} we discuss spaces of positive Lagrangians with cone points in a Calabi-Yau manifold.

Section~\ref{subsection: oriented blowups and cone-immersed submanifolds} begins with a discussion of the oriented blowup of a manifold at a finite collection of points. A map is defined to be cone-smooth if its composition with the blowup projection is smooth. Such a map may not be differentiable at the blowup locus, but we define the cone-derivative as a substitute for the usual derivative, which is a $1$-homogeneous map instead of a linear map. The notion of cone-immersion is defined using the cone-derivative. Lemma~\ref{lemma: cone-immersions} shows that cone immersions lift locally to a map from the oriented blowup of the source to the oriented blowup of the target. This result plays an important role in the special case of cone diffeomorphisms, which are used in the definition of cone-immersed submanifolds.

In order to define cone-smooth differential forms, vector fields and Riemannian metrics, we make extensive use of the pull-back of the tangent bundle to the oriented blowup, which is called the blowup tangent bundle. Lemma~\ref{lemma: induced tangent frame} gives a convenient class of local frames for the blowup tangent bundle. These local frames are used in Lemma~\ref{lemma:blowupdifferential} to show that the differential of a cone-smooth map induces a map of the blowup tangent bundle to the pull-back of the tangent bundle of the target. The induced map is called the blowup differential, and plays a crucial role in the pull-back of differential forms and Riemannian metrics by cone-smooth maps and in the definition of cone-Hessian. Integration of cone-smooth differential forms is discussed. The local frames of Lemma~\ref{lemma: induced tangent frame} are used also to prove important properties of cone-smooth vector fields in Lemma~\ref{lemma:cone smooth vanishing tangent} and the cone-Hessian in Lemma~\ref{lemma: vanishing second derivative at extremum point}. A cone-smooth version of non-degenerate critical point is defined using the cone-Hessian and it is shown in Lemma~\ref{lemma: nice level sets} that level sets near a non-degenerate local extremum behave similarly to the smooth case.

Section~\ref{subsection: cone-immersed Lagrangians} begins by defining cone-immersed Lagrangian submanifolds and the positivity thereof. The Riemannian metric~\eqref{equation: the metric} is extended to positive cone-immersed Lagrangians. Lemma~\ref{lemma: path of cone-Lags admits horizontal liftings} gives the existence of a horizontal lifting of a path of cone-immersed Lagrangian submanifolds. The horizontal lifting is a canonical path of Lagrangian cone-immersions representing the cone-immersed Lagrangian submanifolds determined uniquely by a representative of any one of them. The path is a geodesic if the composition of the Hamiltonian with the cone-immersions of a horizontal lifting is time independent. Section~\ref{subsection: cone-immersed Lagrangians} makes extensive use of the tools developed in Section~\ref{subsection: oriented blowups and cone-immersed submanifolds}.
	
	\subsection{Oriented blowups and cone-smooth differential topology}
	\label{subsection: oriented blowups and cone-immersed submanifolds}
	
	\begin{dfn}
		\label{definition: blowup}
		$\;$
		\begin{enumerate}
			\item Let $n\in\mathbb{N}$ and let $V$ be a real vector space of dimension $n.$ For $0\ne v\in V,$ the \emph{ray} spanned by $v$ is the subset $\{\lambda v\;|\;\lambda\ge0\}\subset V.$ The \emph{oriented projective space} $\orientedprojective(V)$ is defined to be the set of rays in $V.$ As a set, $\orientedprojective(V)$ is naturally identified with the quotient $(V\setminus\{0\})/\mathbb{R}^+.$ We equip $\orientedprojective(V)$ with the smooth structure making this identification a diffeomorphism. In particular, $\orientedprojective(V)$ is diffeomorphic to the sphere $S^{n-1}.$ The \emph{oriented blowup} of $V$ is defined by
			\[
			\widetilde{V}:=\left\{\left.(q,r)\in V\times\orientedprojective(V)\;\right|\;q\in r\right\}.
			\]
			The \emph{blowup projection} $\pi:\widetilde{V}\to V$ is given by $(q,r)\mapsto q.$ We call $E=\pi^{-1}(0)$ the \emph{exceptional sphere}. Then $\widetilde{V}$ is a smooth manifold with boundary $E.$ The restricted blowup projection $\pi|_{\widetilde{V}\setminus E}:\widetilde{V}\setminus E\to V\setminus\{0\}$ is a diffeomorphism.
			\item \label{blowup at a single point} Let $M$ be an $n$-dimensional smooth manifold and let $p\in M.$ Let $p\in U\subset M$ be open with a diffeomorphism $\varphi:U\to\mathbb{R}^n$ carrying $p$ to $0.$ Let $\widetilde{U}$ denote the oriented blowup of $U$ with respect to the vector space structure induced by $\varphi$, let $\pi_U$ denote the blowup projection and let $E$ denote the exceptional sphere. We define the \emph{oriented blowup of $M$ at $p$} by gluing $\widetilde{U}$ and $M\setminus\{p\}$ along $\pi_U:$
			\[
			\widetilde{M}_p:=\left(\widetilde{U}\cup (M\setminus\{p\})\right)/(q,r)\sim q,\;(q,r)\in\widetilde{U}\setminus E.
			\]
			The projection $\pi_U$ then extends to the global blowup projection $\pi:\widetilde{M}_p\to M.$ We call $E_p=\pi^{-1}(p)$ the \emph{exceptional sphere over $p$.} One verifies that everything defined here is independent of $U$ and $\varphi,$ and the exceptional sphere $E_p$ is naturally identified with $\orientedprojective(T_pM).$ Once again, the blowup projection $\pi$ restricts to a diffeomorphism
			\[
			\pi|_{\widetilde{M}_p\setminus E}:\widetilde{M}_p\setminus E\to M\setminus\{p\}.
			\]
			\item\label{blowup at discrete set} Let $M$ be as in~\eqref{blowup at a single point}, and let $S\subset M$ be finite. For $p\in S,$ let $\widetilde{M}_p$ denote the oriented blowup of $M$ at $p.$ The \emph{oriented blowup of $M$ at $S$,} denoted by $\widetilde{M}_S,$ is obtained by gluing all the oriented blowups $\widetilde{M}_p\setminus\{S\setminus\{p\}\},\;p\in S,$ in the obvious manner. For $p\in S,$ the exceptional sphere over $p$ is naturally identified with $\orientedprojective(T_pM).$ The oriented blowup $\widetilde{M}_S$ is a smooth manifold with boundary equal to the disjoint union of the exceptional spheres. It comes with a blowup projection $\pi : \widetilde M_S \to M$ as before. We write \[
\widetilde{M}_S^\circ = \widetilde{M}_S\setminus\partial\widetilde{M}_S
\]
for the \emph{interior} of $\widetilde{M}_S.$
\item\label{cone coordinates}
Let $M$ and $S$ be as in \eqref{blowup at discrete set}. Let $p \in S$ and let $\widetilde{p} \in E_p.$ \emph{Cone coordinates} at $\widetilde{p}$ are a triple $(U,\mathbf{X},\alpha)$ where $\widetilde{p} \in U \subset E_p$ is open,
\[
\mathbf{X} = (x^1,\ldots,x^m): U \to \R^m
\]
are local coordinates, and $\alpha : U \times [0,\epsilon) \to \widetilde{M}_S$ is a smooth open embedding such that $\alpha(\widetilde q,0) = \widetilde q$ for $\widetilde q \in U.$ We generally denote the coordinate on $[0,\epsilon)$ by $s.$ Given a map $f : M \to N,$ we abbreviate
\[
\qquad\qquad f\circ \pi \circ \alpha(\widetilde q,s) = f(\widetilde q,s) = f(x_1,\ldots,x_m,s), \qquad  (x_1,\ldots,x_m) = \mathbf{X}(\widetilde q).
\]
		\end{enumerate}
	\end{dfn}

\begin{rem}\label{remark:polar coordinates}
Let $V$ be an $n$-dimensional real vector space. Given a smooth section~$\sigma$ of the quotient map $V\setminus\{0\} \to \orientedprojective(V),$ we identify
\[
\orientedprojective(V) \times [0,\infty) \overset{\sim}{\longrightarrow} \widetilde{V}
\]
by $(r,s) \mapsto (s \sigma(r),r)$. Since $\orientedprojective(V) \cong S^{n-1},$ we identify $S^{n-1} \times [0,\infty) \cong \widetilde{V}.$
\end{rem}

\begin{dfn}\label{definition:polar coordinates}
Let $M$ be a smooth manifold, let $p \in M.$ Let $\sigma$ be a smooth section of the quotient map $T_pM\setminus\{0\} \to \orientedprojective(T_pM) \cong S^{n-1}.$ A \emph{polar coordinate map} centered at $p$ associated with $\sigma$ is a smooth map
\[
\kappa : S^{n-1} \times [0,\epsilon) \to M
\]
satisfying the following.
\begin{enumerate}
\item
$\kappa|_{S^{n-1}\times\{0\}}$ is the constant map to $p.$
\item
$\kappa|_{S^{n-1}\times (0,\epsilon)}$ is an open embedding.
\item\label{item:sigma}
For $r \in S^{n-1},$
\[
\pderiv[\kappa]{s}(r,0) = \sigma(r).
\]
\end{enumerate}
An elementary argument shows that for every $\sigma$ there exist many polar coordinate maps. We may sometimes speak of a polar-coordinate map $\kappa$ without mentioning $\sigma$ explicitly. In this case, we refer to the section $\sigma$ determined by condition~(\ref{item:sigma}) as the section associated with $\kappa.$
\end{dfn}

	The oriented blowup gives rise to the following notions associated with differentiability, which are weaker than the usual ones.
	
	\begin{dfn}
		\label{definition: cone differentiability}
		Let $M$ and $N$ be smooth manifolds, let $p\in M,$ and let $\Psi:M\to N$ be continuous. Let $\widetilde{M}_p,E_p$ and $\pi:\widetilde{M}_p\to M$ be as in Definition~\ref{definition: blowup}.
		\begin{enumerate}
			\item The map $\Psi$ is said to be \emph{cone-smooth} at $p$ if there exists an open $E_p\subset\widetilde{U}\subset\widetilde{M}_p$ such that the composition $\Psi\circ\pi|_{\widetilde{U}}:\widetilde{U}\to N$ is smooth.
			\item Suppose $\Psi$ is cone-smooth at $p.$ The \emph{cone-derivative} of $\Psi$ at $p$ is the unique map
			\[
			d\Psi_p:T_pM\to T_{\Psi(p)}N
			\]
			satisfying the equality
			\[
			d(\Psi\circ\pi)_{\widetilde{p}}=d\Psi_p\circ d\pi_{\widetilde{p}}
			\]
			for $\widetilde{p}\in E_p.$ One verifies that the cone-derivative is well-defined and homogeneous of degree 1. Also, the restricted map $d\Psi_p|_{T_pM\setminus\{0\}}$ is smooth. Nevertheless, the cone-derivative is not linear in general.
			\item Suppose $\Psi$ is cone-smooth at $p.$ We say $\Psi$ is \emph{cone-immersive} at $p$ if the restricted map
			\[
			d\Psi_p|_{T_pM\setminus\{0\}}:T_pM\setminus\{0\}\to T_{\Psi(p)}N
			\]
			is a smooth immersion. In particular, in this case we have $d\Psi_p(v)\ne0$ for $0\ne v\in T_pM.$
		\end{enumerate}
	\end{dfn}

\begin{rem}
Let $\Psi : M \to N$ be cone-smooth at $p \in M,$ and let $\kappa : S^{n-1}\times [0,\epsilon) \to M$ be a polar coordinate map centered at $p.$ It follows from Remark~\ref{remark:polar coordinates} and the definition of the oriented blowup that, possibly after shrinking $\epsilon,$ the composition $\Psi \circ \kappa$ is smooth.
\end{rem}
	
	Recall that the \emph{Euler vector field} on a real vector space is the radial vector field that integrates to rescaling by $e^t.$ We let $\varepsilon$ denote the Euler vector field.
\begin{lemma}\label{lemma:immersion criterion}
Let $\Psi : M \to N$ be cone-smooth at $p$ and let $\kappa: S^{n-1}\times [0,\epsilon) \to M$ be a polar-coordinate map at $p.$ Then $\Psi$ is cone-immersive at $p$ if and only if
\[
\left.\pderiv{s}(\Psi \circ \kappa)\right|_{s = 0} : S^{n-1} \to T_{\Psi(p)}N
\]
is an immersion nowhere tangent to the Euler vector field.
\end{lemma}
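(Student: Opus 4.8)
The plan is to reduce both sides of the equivalence to a single condition on the smooth map
\[
g := d\Psi_p\circ\sigma : S^{n-1}\to T_{\Psi(p)}N,
\]
where $\sigma$ is the section of the quotient map $T_pM\setminus\{0\}\to\orientedprojective(T_pM)\cong S^{n-1}$ associated with $\kappa$ (Definition~\ref{definition:polar coordinates}). \emph{Step 1: identify $\pderiv{s}(\Psi\circ\kappa)$ at $s=0$ with $g$.} By Remark~\ref{remark:polar coordinates} and the construction of the oriented blowup, $\kappa$ lifts to a smooth map $\widetilde{\kappa} : S^{n-1}\times[0,\epsilon)\to\widetilde{M}_p$ with $\pi\circ\widetilde{\kappa}=\kappa$ such that $\widetilde{\kappa}(r,0)\in E_p$ is the ray spanned by $\sigma(r)$: in a chart carrying $p$ to $0$ in a vector space one writes $\kappa(r,s)=s\bigl(\sigma(r)+s\beta(r,s)\bigr)$ with $\beta$ smooth and pairs $\kappa(r,s)$ with the ray of $\sigma(r)+s\beta(r,s)$, which extends smoothly to $s=0$ once $\epsilon$ is shrunk, using compactness of $S^{n-1}$, so that $\sigma(r)+s\beta(r,s)\ne0$. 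Since $\Psi$ is cone-smooth at $p$, the composition $\Psi\circ\pi$ is smooth near $E_p$, hence $\Psi\circ\kappa=(\Psi\circ\pi)\circ\widetilde{\kappa}$ is smooth; differentiating in $s$ at $s=0$ via the chain rule, the defining identity $d(\Psi\circ\pi)_{\widetilde q}=d\Psi_p\circ d\pi_{\widetilde q}$ of the cone-derivative (Definition~\ref{definition: cone differentiability}), and $d\pi_{\widetilde{\kappa}(r,0)}\bigl(\pderiv{s}\widetilde{\kappa}(r,0)\bigr)=\pderiv{s}\kappa(r,0)=\sigma(r)$, gives
\[
\left.\pderiv{s}(\Psi\circ\kappa)\right|_{s=0}(r)=d\Psi_p(\sigma(r))=g(r).
\]

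\emph{Step 2: rewrite $d\Psi_p|_{T_pM\setminus\{0\}}$ using homogeneity.} The map $(r,s)\mapsto s\,\sigma(r)$ is a diffeomorphism $S^{n-1}\times(0,\infty)\overset{\sim}{\to}T_pM\setminus\{0\}$, and since $d\Psi_p$ is homogeneous of degree $1$ it is identified, under this diffeomorphism, with the map $F(r,s)=s\,g(r)$ into the vector space $T_{\Psi(p)}N$; here $F$ is smooth because $\sigma$ and $d\Psi_p|_{T_pM\setminus\{0\}}$ are. Thus $\Psi$ is cone-immersive at $p$ if and only if $F$ is an immersion. A direct computation gives, for $v\in T_rS^{n-1}$ and $a\in\R$,
\[
dF_{(r,s)}(v,a)=a\,g(r)+s\,dg_r(v),
\]
and, since $s>0$, one checks that $dF_{(r,s)}$ is injective if and only if $dg_r$ is injective and $g(r)\notin dg_r(T_rS^{n-1})$ — a condition not involving $s$. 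Hence $F$ is an immersion if and only if $g$ is an immersion and $g(r)\notin dg_r(T_rS^{n-1})$ for every $r$.

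\emph{Step 3: translate into Euler-tangency and conclude.} Once $g$ is an immersion, the tangent space to its image at $g(r)$ is $dg_r(T_rS^{n-1})$, while the Euler vector field $\varepsilon$ on $T_{\Psi(p)}N$ satisfies $\varepsilon(w)=w$; so the image of $g$ is tangent to $\varepsilon$ at $g(r)$ precisely when $g(r)\in dg_r(T_rS^{n-1})$. Combining this with Step 2 and the identification of Step 1 yields the claimed equivalence. The one genuinely delicate point is Step 1 — producing the smooth lift $\widetilde{\kappa}$ and matching the chain rule with the (nonlinear) cone-derivative; everything afterward is routine calculus with degree-one homogeneous maps. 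As a byproduct, the argument shows that the condition on $\left.\pderiv{s}(\Psi\circ\kappa)\right|_{s=0}$ does not depend on the chosen polar-coordinate map, as it must not.
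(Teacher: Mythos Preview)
Your proof is correct and follows essentially the same approach as the paper: identify $\left.\pderiv{s}(\Psi\circ\kappa)\right|_{s=0}$ with $g=d\Psi_p\circ\sigma$, then use the decomposition of $T_{\sigma(r)}T_pM$ into the Euler direction and the image of $d\sigma_r$ together with degree-one homogeneity of $d\Psi_p$. The paper's proof is terser (it invokes the chain rule and the identity $d(d\Psi_p)_{\sigma(r)}(\varepsilon)=\varepsilon(g(r))$ directly, leaving the scale-invariance of the injectivity condition implicit), while your Step~2 makes the $s$-independence explicit via $F(r,s)=s\,g(r)$ and your Step~1 spells out the construction of the smooth lift~$\widetilde\kappa$.
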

\begin{proof}
Let $\sigma$ be the section associated with $\kappa.$ Let $r \in S^{n-1}.$ Observe that
\[
\pderiv{s}(\Psi \circ \kappa)(r,0) = d\Psi_p(\sigma(r)).
\]
So, by the chain rule,
\[
d(d\Psi_p)_{\sigma(r)}\circ d\sigma_r = d\left(\left.\pderiv{s}(\Psi \circ \kappa)\right|_{s=0}\right)_r.
\]
Also,
\[
d(d\Psi_p)_{\sigma(r)}(\varepsilon(\sigma(r))) = \varepsilon(d\Psi_p(\sigma(r))).
\]
Finally, observe that
\[
T_{\sigma(r)}T_pM  = \R\langle\varepsilon(\sigma(r))\rangle \oplus d\sigma_r\left(T_rS^{n-1}\right).
\]
The lemma follows.
\end{proof}

	\begin{lemma}
		\label{lemma: cone-immersions}
		In the setting of Definition~\ref{definition: cone differentiability}, suppose $\Psi$ is cone-immersive at $p.$
		\begin{enumerate}[label=(\alph*)]
			\item \label{sort of locally one-to-one} There exists an open neighborhood $p\in V\subset M$ such that, for $p\ne q\in V,$ we have $\Psi(q)\ne\Psi(p).$
			\item \label{existence of lifting} Let $p\in V\subset M$ as in \ref{sort of locally one-to-one} and let $\widetilde{V}:=\pi^{-1}(V)\subset\widetilde{M}_p.$ Then there exists a unique continuous $\widetilde{\Psi}:\widetilde{V}\to\widetilde{N}_{\Psi(p)}$ satisfying
			\begin{equation}
			\label{equality: strict transform}
			\pi_N\circ\widetilde{\Psi}=\Psi\circ\pi|_{\widetilde{V}}\;,
			\end{equation}
			where $\widetilde{N}_{\Psi(p)}$ denotes the oriented blowup of $N$ at $\Psi(p)$ and $\pi_N$ is the associated blowup projection. Moreover, for some open subset $E_p\subset \widetilde{V}'\subset\widetilde{V},$ the restricted map $\widetilde{\Psi}|_{\widetilde{V}'}$ is a smooth immersion carrying $E_p$ into $E_{\Psi(p)}$ and satisfying, for $\widetilde{p}\in E_p$ and $v\in T_{\widetilde{p}}\widetilde{M}_p,$
			\begin{equation}\label{implication:notangent}
			d\widetilde{\Psi}_{\widetilde{p}}(v)\in T_{\widetilde{\Psi}\left(\widetilde{p}\right)}E_{\Psi(p)}\Leftrightarrow v\in T_{\widetilde{p}}E_p.
			\end{equation}
		\end{enumerate}
	\end{lemma}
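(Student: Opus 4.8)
The plan is to establish both parts by transferring the problem to the oriented blowup, where $\Psi$ becomes a genuine smooth map. Throughout, fix a polar coordinate map $\kappa : S^{n-1} \times [0,\epsilon) \to M$ centered at $p$ with associated section $\sigma,$ so that by the remark following Definition~\ref{definition: cone differentiability} the composition $\Psi \circ \kappa$ is smooth after shrinking $\epsilon.$ Using Remark~\ref{remark:polar coordinates}, I identify a neighborhood of $E_p$ in $\widetilde M_p$ with $S^{n-1}\times[0,\epsilon)$ via $\kappa,$ and similarly for $\widetilde N_{\Psi(p)}.$

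For part \ref{sort of locally one-to-one}: by Lemma~\ref{lemma:immersion criterion}, cone-immersivity means $g := \partial_s(\Psi\circ\kappa)|_{s=0} : S^{n-1} \to T_{\Psi(p)}N$ is an immersion nowhere tangent to the Euler field $\varepsilon.$ The nowhere-tangency says that $g$ composed with the projection $T_{\Psi(p)}N\setminus\{0\} \to \orientedprojective(T_{\Psi(p)}N) \cong S^{n-1}$ is still an immersion; since $\dim S^{n-1}$ matches, this projected map is a local diffeomorphism. I would use this to show the map $(r,s) \mapsto \Psi(\kappa(r,s))$, read in blowup coordinates, sends distinct radial directions at $s=0$ to distinct points for small $s,$ hence is injective on a punctured neighborhood; combined with the fact that $\Psi$ away from $p$ is an honest immersion (again by Lemma~\ref{lemma:immersion criterion}, since the restricted cone-derivative is smooth and immersive) and thus locally injective, a standard compactness argument on $S^{n-1}$ produces the desired $V.$

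For part \ref{existence of lifting}: uniqueness of $\widetilde\Psi$ satisfying \eqref{equality: strict transform} is immediate on $\widetilde V \setminus E_p$ since $\pi_N$ is a diffeomorphism there, and then on all of $\widetilde V$ by continuity, using part \ref{sort of locally one-to-one} to guarantee $\Psi(\pi(\widetilde q)) \neq \Psi(p)$ for $\widetilde q \notin E_p.$ For existence and smoothness, I write $\widetilde\Psi$ in coordinates: a point of $\widetilde V$ near $E_p$ is $(r,s)$ with $r\in S^{n-1},$ and I need $\widetilde\Psi(r,s) = \big(\text{direction of }\Psi(\kappa(r,s))-\text{stuff},\ \text{radial parameter}\big)$; the key point is that the smooth function $(r,s)\mapsto \Psi(\kappa(r,s))$, viewed in linear coordinates on $N$ near $\Psi(p)$, vanishes at $s=0$ with $s$-derivative $g(r) \neq 0,$ so by a Hadamard-type factorization it equals $s\cdot G(r,s)$ with $G$ smooth and $G(r,0) = g(r) \neq 0.$ Thus the direction $G(r,s)/|G(r,s)|$ extends smoothly across $s=0,$ defining $\widetilde\Psi$ smoothly on a neighborhood $\widetilde V'$ of $E_p,$ with $\widetilde\Psi(E_p) \subset E_{\Psi(p)}$ given by $r \mapsto [g(r)].$ That $\widetilde\Psi|_{\widetilde V'}$ is an immersion follows from two facts: tangent to $E_p$ it is the immersion $r\mapsto[g(r)]$ (here nowhere-tangency to $\varepsilon$ is what makes $[g]$ an immersion, not merely $g$), and transverse to $E_p$ the $s$-derivative of $\widetilde\Psi$ has a nonzero component in the $E_{\Psi(p)}$-normal ($s$) direction because $G(r,0)\neq 0$; together these span. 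Finally, \eqref{implication:notangent}: the forward implication is exactly the statement that the $s$-derivative of $\widetilde\Psi$ is not tangent to $E_{\Psi(p)},$ which I just noted; the reverse is the already-established $\widetilde\Psi(E_p)\subset E_{\Psi(p)}.$

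The main obstacle I anticipate is part \ref{existence of lifting}: making the Hadamard factorization $\Psi\circ\kappa = s\,G$ genuinely coordinate-independent — that is, checking it produces a well-defined map into $\widetilde N_{\Psi(p)}$ rather than something tied to a choice of linear chart on $N$ — and then verifying the immersion claim cleanly, since the tangent space $T_{\widetilde p}\widetilde M_p$ splits as $T_{\widetilde p}E_p \oplus \R\langle\partial_s\rangle$ and one must track what $d\widetilde\Psi$ does on each summand. The subtlety is precisely that the cone-derivative $d\Psi_p$ is \emph{not} linear, so one cannot simply say "$\widetilde\Psi$ on $E_p$ is the projectivization of $d\Psi_p$" without invoking the homogeneity and the explicit polar-coordinate description.
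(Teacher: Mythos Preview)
Your plan for part~\ref{existence of lifting} is correct and essentially matches the paper: the paper also works in local linear coordinates, applies the Hadamard factorization (its Lemma~\ref{lemma: milnor lemma}), and defines $\widetilde\Psi$ on $E_p$ as the oriented projectivization $\orientedprojective(d\Psi_p)$ of the cone-derivative. Your worry about coordinate-independence is unnecessary precisely because the statement is local; the paper simply declares $M=\R^{m+1}$, $N=\R^{n+1}$, and proceeds. Your verification of \eqref{implication:notangent} via the $\partial_s$-component is fine.

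Your plan for part~\ref{sort of locally one-to-one}, however, has a genuine gap. First, you misread the statement: it asserts only that $\Psi^{-1}(\Psi(p))\cap V=\{p\}$, not that $\Psi|_V$ or $\Psi|_{V\setminus\{p\}}$ is injective. Second, your claim that $\orientedprojective(T_{\Psi(p)}N)\cong S^{n-1}$ and hence $[g]$ is a local diffeomorphism requires $\dim M=\dim N$, which is not assumed. Third, even if it were, local injectivity of $\Psi$ on a punctured neighborhood says nothing about whether some $q\ne p$ maps to $\Psi(p)$. The actual argument is much simpler and you already have the ingredients: cone-immersivity gives $g(r)=d\Psi_p(\sigma(r))\ne 0$ for every $r$ (this is stated in Definition~\ref{definition: cone differentiability} itself), so your own Hadamard factorization $\Psi\circ\kappa(r,s)=s\,G(r,s)$ with $G(r,0)=g(r)\ne 0$ immediately yields $\Psi\circ\kappa(r,s)\ne\Psi(p)$ for $s>0$ small, uniformly in $r$ by compactness of $S^{n-1}$. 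The paper does exactly this via a mean-value argument rather than the factorization, but the content is the same. Drop the projectivized-local-diffeomorphism detour entirely.
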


	The proof of Lemma~\ref{lemma: cone-immersions} relies on the following lemma (compare with~\cite[Lemma~2.1]{milnor}), which is also used extensively below.
	
	\begin{lemma}
		\label{lemma: milnor lemma}
		Let $f:\mathbb{R}^k\times\mathbb{R}\to\mathbb{R}$ be smooth with $f(x,0)=0,\;x\in\mathbb{R}^k.$ Then we have $f(x,s)=s\cdot g(x,s)$ for $(x,s)\in\mathbb{R}^k\times\mathbb{R},$ where $g:\mathbb{R}^k\times\mathbb{R}\to\mathbb{R}$ is smooth and satisfies
		\[
		g(x,0)=\pderiv[f]{s}(x,0),\;\pderiv[g]{s}(x,0)=\frac{1}{2}\frac{\partial^2f}{\partial s^2}(x,0),\quad x\in\mathbb{R}^k.
		\]
	\end{lemma}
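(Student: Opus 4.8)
The plan is to use the standard integral (Hadamard-type) formula for the factor $g.$ Define
\[
g(x,s) := \int_0^1 \pderiv[f]{s}(x,ts)\,dt, \qquad (x,s)\in\mathbb{R}^k\times\mathbb{R}.
\]
Since $f$ is smooth and the domain of integration is compact, differentiation under the integral sign shows that $g$ is smooth on $\mathbb{R}^k\times\mathbb{R}.$ The key identity is the fundamental theorem of calculus applied along the ray $t\mapsto(x,ts)$: for fixed $(x,s),$
\[
s\cdot g(x,s) = \int_0^1 s\,\pderiv[f]{s}(x,ts)\,dt = \int_0^1 \deriv{t}\bigl(f(x,ts)\bigr)\,dt = f(x,s)-f(x,0) = f(x,s),
\]
where we used the hypothesis $f(x,0)=0$ in the last step. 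This establishes the factorization $f(x,s)=s\cdot g(x,s).$

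It remains to compute the two derivative values. Setting $s=0$ in the definition of $g$ gives $g(x,0)=\int_0^1 \pderiv[f]{s}(x,0)\,dt = \pderiv[f]{s}(x,0),$ since the integrand no longer depends on $t.$ For the second identity, differentiate under the integral sign in $s$:
\[
\pderiv[g]{s}(x,s) = \int_0^1 t\,\frac{\partial^2 f}{\partial s^2}(x,ts)\,dt,
\]
and evaluating at $s=0$ yields $\pderiv[g]{s}(x,0) = \left(\int_0^1 t\,dt\right)\frac{\partial^2 f}{\partial s^2}(x,0) = \tfrac{1}{2}\frac{\partial^2 f}{\partial s^2}(x,0),$ as claimed.

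There is no real obstacle here: the only points requiring care are the justification of differentiation under the integral sign (routine, by smoothness of $f$ and compactness of $[0,1]$) and keeping track of the chain-rule factors of $s$ and $t.$ If one prefers, smoothness of $g$ away from $s=0$ can instead be read off directly from $g(x,s)=f(x,s)/s,$ with smoothness across $s=0$ supplied by the integral formula; but the integral formula handles both cases uniformly, so I would use it throughout.
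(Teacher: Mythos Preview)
Your proof is correct and is precisely the standard Hadamard--Milnor integral argument; the paper does not give its own proof but simply refers to \cite[Lemma~2.1]{milnor}, where exactly this integral formula is used.
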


	\begin{proof}[Proof of Lemma~\ref{lemma: cone-immersions}]
		As the lemma is local, we may assume
\[
M=\mathbb{R}^{m+1}, \qquad N=\mathbb{R}^{n+1}, \qquad p = 0, \qquad \Psi(p) = 0.
\]
Keeping in mind Remark~\ref{remark:polar coordinates}, we have
		\[
		\Psi\circ\pi:S^m\times[0,\infty)\to\mathbb{R}^{n+1},\quad\Psi\circ\pi|_{S^m\times\{0\}}=0.
		\]
		Let $s$ denote the $[0,\infty)-$coordinate. Since $\Psi$ is cone-immersive, for $r \in S^m,$ the one-sided directional derivative $\pderiv[(\Psi\circ\pi)]{s}|_{(r,0)}$ is non-vanishing. So, there exist $\epsilon~>~0,$ an open half space $H \subset \R^{n+1},$ and a neighborhood $r \in U \subset S^m,$ such that $\pderiv[(\Psi\circ\pi)]{s}|_{(r',s')} \in H$ for $(r',s') \in U  \times [0,\epsilon).$ By the mean-value theorem,
$\Psi\circ\pi(r',s') \neq 0$
for $(r', s') \in U\times(0,\epsilon).$  Part \ref{sort of locally one-to-one} now follows from compactness of~$S^m.$
		
By definition of cone-immersion, the cone differential $d\Psi_p : T_pM \setminus \{0\} \to T_{\Psi(p)}N$ is an immersion, which necessarily takes no non-zero vector to zero. Thus, the oriented projectivization
\[
\orientedprojective(d\Psi_p):\orientedprojective\left(T_pM\right)\to \orientedprojective\left(T_{\Psi(p)}N\right)
\]
is a well-defined smooth immersion. The equality~\eqref{equality: strict transform} determines the desired $\widetilde{\Psi}$ of part \ref{existence of lifting} uniquely away from the exceptional sphere $E_p.$ Since the complement of $E_p$ is dense, a continuous extension to $E_p$ is unique if it exists. For $c \in E_p,$ we set
		\[
		\widetilde{\Psi}(c):= \orientedprojective(d\Psi_p)(c)  \in E_{\Psi(p)} = \orientedprojective(T_{\Psi(p)}N),
		\]
and prove that $\widetilde{\Psi}$ is smooth as follows. Let $\tilde p \in E_p$ and let $(U,\mathbf{X},\alpha)$ be cone coordinates at $\widetilde{p}$ as in Definition~\ref{definition: blowup}(\ref{cone coordinates}).
Then
\[
\Psi \circ \pi \circ \alpha : U \times [0,\epsilon) \to \R^{n+1} = N
\]
vanishes on $U \times \{0\}.$ So, by Lemma~\ref{lemma: milnor lemma}, there exists $g : U \times [0,\epsilon) \to \R^{n+1}$ such that
\[
\Psi \circ \pi \circ \alpha(u,s) = s \cdot g(u,s), \qquad u \in U, \quad s \in [0,\epsilon),
\]
and
\[
g(u,0) = \frac{\partial (\Psi\circ\pi \circ \alpha)}{\partial s}(u,0).
\]
For $v \in T_pM,$ we write $[v] \in \orientedprojective(T_pM) \simeq  E_p$ for the corresponding point in the oriented projectification. For $u = [v] \in U,$
it follows from the definition of the blowup and cone coordinates that there exists $\sigma = \sigma(u) \in \R$ such that $\frac{\partial (\pi \circ \alpha)}{\partial s}(u, 0) = \sigma v.$ So,
\[
g(u,0) = d \Psi_p \left(\frac{\partial \pi \circ \alpha}{\partial s}(u,0)\right) = \sigma d\Psi_p(v),
\]
and consequently
\[
[g(u,0)] = \orientedprojective(d\Psi_p)(u).
\]
Thus,
\[
\widetilde \Psi(u,s) = (s \cdot g(u,s),[g(u,s)]) \in \widetilde N_{\Psi(p)} = \left\{\left.(q,r)\in \R^{n+1}\times\orientedprojective(\R^{n+1})\;\right|\;q\in r\right\},
\]
so $\widetilde \Psi$ is smooth on the image of $\alpha.$ Since $\tilde p \in E_p$ was arbitrary, it follows that $\widetilde \Psi$ is smooth in a neighborhood of $E_p.$ Away from $E_p,$ the projection $\pi$ is a local diffeomorphism, so smoothness of $\widetilde \Psi$ follows from~\eqref{equality: strict transform}.

We show that the derivative $d\widetilde{\Psi}_{\tilde p}$ is one-to-one for all $\tilde p\in E_p.$ Indeed, since $\widetilde \Psi|_{E_p} = \orientedprojective(d\Psi_p)$ is an immersion,
\begin{equation}\label{eq:einj}
d\widetilde{\Psi}_{\tilde p}|_{T_{\tilde p} E_p}: T_{\tilde p}E_p \to T_{\widetilde{\Psi}(\tilde p)} E_{\Psi(p)}
\end{equation}
is injective. On the other hand, if $v \in T_pM$ is such that $\tilde p = [v],$ then
\[
d\pi : T_{\tilde p} \widetilde M_p/T_{\tilde p}E_p \to \R \cdot v \subset T_pM
\]
is an isomorphism. Additionally, if $w \in T_{\Psi(p)}N$ is such that
\[
[w] = \widetilde{\Psi}(\tilde p) = \orientedprojective(d\Psi_p)([v]),
\]
then
\[
d\Psi_p : \R \cdot v \to \R \cdot w \subset T_{\Psi(p)}N
\]
is an isomorphism. It follows from~\eqref{equality: strict transform} that the map
\begin{equation}\label{eq:qinj}
T_{\tilde p} \widetilde M_p/T_{\tilde p}E_p \to T_{\widetilde{\Psi}(\tilde p)} \widetilde N_{\Psi(p)}/T_{\widetilde{\Psi}(\tilde p)} E_{\Psi(p)}
\end{equation}
induced by $d\widetilde{\Psi}_{\tilde p}$ is an isomorphism. Combining the injectivity of~\eqref{eq:einj} and~\eqref{eq:qinj}, we deduce that
$d\widetilde{\Psi}_{\tilde p}$ is injective. Since injectivity of the differential is an open property, we can find $E_p \subset \widetilde{V}' \subset \widetilde V$ such that $\widetilde \Psi|_{\widetilde{V}'}$ is an immersion as claimed in~\ref{existence of lifting}. Finally,~\eqref{implication:notangent} follows from the injectivity of~\eqref{eq:qinj}.
	\end{proof}

	\begin{dfn}
		\label{definition: strict transform}
		The map $\widetilde{\Psi}$ of Lemma~\ref{lemma: cone-immersions} \ref{existence of lifting} is called the \emph{strict transform} of $\Psi|_V.$
	\end{dfn}
	
	\begin{dfn}\label{definition:cone immersed submanifold}
		Let $M$ and $N$ be smooth manifolds. Let $S\subset M$ be a finite subset and $\Psi:M\to N$ a continuous map.
		\begin{enumerate}[label=(\alph*)]
        \item
    We say the pair $(\Psi,S)$ is \emph{cone-smooth} if $\Psi$ is smooth away from $S$ and cone-smooth at every element of $S.$
			\item We say the pair $(\Psi, S)$ is a \emph{cone-immersion} from $(M,S)$ to $N$ if $\Psi$ is a smooth immersion away from $S$ and cone-immersive at every element of $S.$
    \item
    Suppose $N = M$ and $\Psi(p) = p$ for all $p \in S.$ The pair $(\Psi,S)$ is a \emph{cone-diffeomorphism} of $(M,S)$ if $\Psi$ is a smooth diffeomorphism away from $S$ and for $p \in S,$ the cone derivative $d\Psi_p|_{T_pM\setminus \{0\}} : T_pM\setminus \{0\} \to T_pM$ is a diffeomorphism onto $T_pM \setminus \{0\}.$ We let $\diff(M,S)$ denote the group of cone-diffeomorphisms of $(M,S)$ that act trivially on the set of connected components.
			\item Let the diffeomorphism group $\diff(M,S)$ act on cone-immersions from $(M,S)$ to $N$ by composition. A \emph{cone-immersed submanifold of $N$ of type $(M,S)$} is an orbit of the $\diff(M,S)$-action.
    \item\label{item:orientation} Suppose $M$ is orientable and let $\diff^+(M,S)\triangleleft \diff(M,S)$ denote the normal subgroup of orientation preserving cone-smooth diffeomorphisms. An \emph{orientation} on a cone-immersed submanifold $K$ of $N$ of type $(M,S)$ is an equivalence class of pairs $(O,C)$ where $O$ is an orientation of $M$ and $C$ is a $\diff^+(M,S)$ orbit inside the $\diff(M,S)$ orbit $K.$ There is a natural $\diff(M,S)/\diff^+(M,S)$ action on such pairs and this gives rise to the desired equivalence relation.
		\end{enumerate}
	\end{dfn}

\begin{rem}\label{remark: cone-diffeomorphism lifting}
It follows from Lemma~\ref{lemma: cone-immersions} that a map $\Psi : (M,S) \to (M,S)$ is a cone-diffeomorphism if and only if it lifts to a diffeomorphism $\widetilde\Psi : \widetilde{M}_S \to \widetilde{M}_S$ such that $\widetilde{\Psi}(E_p) = E_p$ for $p \in S.$
\end{rem}

\begin{rem}
		\label{remark: embedded cone locus}
		In this article, all cone-immersions are assumed to have embedded cone locus. That is, for a cone-immersion $(\Psi,S)$ and points $c\ne c'\in S$ we have $\Psi(c)\ne\Psi(c').$
	\end{rem}

\begin{cor}\label{cor:cone-immersion open mapping}
Let $M,N,$ be smooth manifolds of equal dimension, let $S \subset M$ be finite and let $(\Psi : M \to N,S)$ be a cone-immersion. Then $\Psi$ is an open map.
\end{cor}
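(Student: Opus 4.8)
The plan is to verify openness of $\Psi$ near each point of $M,$ distinguishing $M\setminus S$ from $S.$ On $M\setminus S$ the map $\Psi$ is a smooth immersion between manifolds of equal dimension, hence a local diffeomorphism by the inverse function theorem, hence open there. So fix $p\in S$ and an open neighborhood $U$ of $p;$ it suffices to show $\Psi(U)$ is a neighborhood of $\Psi(p).$ Shrinking $U,$ we may assume $U\subseteq V$ with $V\ni p$ as in Lemma~\ref{lemma: cone-immersions}~\ref{sort of locally one-to-one}; then $\pi^{-1}(U)\subseteq\widetilde V:=\pi^{-1}(V),$ and the strict transform $\widetilde\Psi:\widetilde V\to\widetilde N_{\Psi(p)}$ from Lemma~\ref{lemma: cone-immersions}~\ref{existence of lifting} satisfies $\pi_N\circ\widetilde\Psi=\Psi\circ\pi$ on $\widetilde V.$ By the same lemma there is an open $\widetilde V'\subseteq\widetilde V$ containing $E_p$ on which $\widetilde\Psi$ is a smooth immersion carrying $E_p$ into $E_{\Psi(p)}.$

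The first step is to see that $\widetilde\Psi|_{\widetilde V'}$ is in fact a local diffeomorphism of manifolds with boundary. Since $\widetilde M_p$ and $\widetilde N_{\Psi(p)}$ have equal dimension, the injective differential $d\widetilde\Psi$ is an isomorphism at every point of $\widetilde V'.$ At a boundary point $\widetilde p\in E_p,$ the inclusion $\widetilde\Psi(E_p)\subseteq E_{\Psi(p)}$ forces this isomorphism to restrict to an isomorphism between $T_{\widetilde p}E_p$ and $T_{\widetilde\Psi(\widetilde p)}E_{\Psi(p)}$ (which are of equal dimension), and it carries inward-pointing vectors to inward-pointing ones because $\widetilde\Psi$ takes values in $\widetilde N_{\Psi(p)}.$ The inverse function theorem for manifolds with boundary now gives the claim; in particular $\widetilde\Psi|_{\widetilde V'}$ is an open map.

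The crucial step is that $\widetilde\Psi(E_p)=E_{\Psi(p)}.$ By the first step $\widetilde\Psi|_{E_p}:E_p\to E_{\Psi(p)}$ is a local diffeomorphism, and $E_p\cong\orientedprojective(T_pM)$ is a sphere, hence compact and connected, so $\widetilde\Psi|_{E_p}$ is a covering map and in particular surjects onto the connected manifold $E_{\Psi(p)}.$ Set $\Omega:=\pi^{-1}(U)\cap\widetilde V',$ which is open in $\widetilde M_p$ and contains $E_p$ since $p\in U.$ Then $\widetilde\Psi(\Omega)$ is open in $\widetilde N_{\Psi(p)}$ and contains $\widetilde\Psi(E_p)=E_{\Psi(p)}.$ Using the identification $\widetilde N_{\Psi(p)}\cong\orientedprojective(T_{\Psi(p)}N)\times[0,\infty)$ of Remark~\ref{remark:polar coordinates}, under which $E_{\Psi(p)}=\orientedprojective(T_{\Psi(p)}N)\times\{0\},$ compactness of the exceptional sphere yields $\delta>0$ with $\orientedprojective(T_{\Psi(p)}N)\times[0,\delta)\subseteq\widetilde\Psi(\Omega).$

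Finally I would descend through the blowup projections. Since $\pi(\Omega)\subseteq U,$ we have $\Psi(U)\supseteq\Psi(\pi(\Omega))=\pi_N(\widetilde\Psi(\Omega))\supseteq\pi_N(\orientedprojective(T_{\Psi(p)}N)\times[0,\delta)).$ In a chart on $N$ centered at $\Psi(p),$ the projection $\pi_N$ is given on $\orientedprojective(T_{\Psi(p)}N)\times[0,\infty)$ by $(r,s)\mapsto s\,\sigma(r)$ for a continuous section $\sigma$ of $T_{\Psi(p)}N\setminus\{0\}\to\orientedprojective(T_{\Psi(p)}N),$ whose norm is bounded below by compactness; hence $\pi_N(\orientedprojective(T_{\Psi(p)}N)\times[0,\delta))$ contains an open ball about $\Psi(p).$ Thus $\Psi(U)$ is a neighborhood of $\Psi(p),$ and $\Psi$ is open. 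The main obstacle is the third step: since $\pi_N$ collapses $E_{\Psi(p)}$ and is far from being an open map, the $\pi_N$-image of an arbitrary open set meeting $E_{\Psi(p)}$ is merely a cone sector at $\Psi(p),$ and it is exactly the surjectivity of $\widetilde\Psi|_{E_p}$ --- resting on connectedness of the exceptional sphere --- that promotes it to a full neighborhood.
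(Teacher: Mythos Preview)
Your proof is correct and follows essentially the same route as the paper: pass to the strict transform $\widetilde\Psi$, use that $\widetilde\Psi|_{E_p}$ surjects onto $E_{\Psi(p)}$ (the paper phrases this as ``$\widetilde\Psi(E_p)\subset E_{\Psi(p)}$ is open and closed,'' which is equivalent to your covering-map argument), and then observe that an open set in $\widetilde N_{\Psi(p)}$ containing all of $E_{\Psi(p)}$ has open $\pi_N$-image. Your treatment spells out more details, in particular the inward-pointing check for the boundary inverse function theorem and the explicit $\delta$-collar via polar coordinates, while the paper compresses these into one sentence.
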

\begin{proof}
It suffices to prove that for $p \in S$ and every open $p \in V \subset M$ as in Lemma~\ref{lemma: cone-immersions}~\ref{sort of locally one-to-one}, the image $\Psi(V)$ contains an open neighborhood of $\Psi(p).$ Indeed, consider the strict transform $\widetilde \Psi : \widetilde V \to \widetilde N_{\Psi(p)}$ as in Lemma~\ref{lemma: cone-immersions}~\ref{existence of lifting}. Then, $\widetilde \Psi(E_p) \subset E_{\Psi_p}$ is open and closed, so $\widetilde \Psi(E_p) = E_{\Psi_p}.$ By the inverse function theorem, there is an open $E_p \subset U$ with $E_{\Psi(p)} \subset \widetilde \Psi(U)$ and $\widetilde \Psi(U)$ open. So, the image under the blowup projection $\pi_N\left(\widetilde \Psi(U)\right)$ is open, which implies the claim.
\end{proof}

\begin{dfn}
	Let $K=[(\Psi:M\to N,S)]$ be a cone-immersed submanifold of type $(M,S).$ As in Definition~\ref{definition:points}, a \emph{point} $p$ in $K$ is an equivalence class of pairs $((\chi,S),q),$ where $(\chi: M \to N,S)$ is a representative of $K$ and $q\in M.$ We let $p_0$ denote the image of $p$ in $N.$ That is, $p_0 = \chi(q)$ for $((\chi,S),q)$ a representative of $p.$ The cone-immersed submanifold $K$ thus has a well-defined \emph{cone locus}
	\[
	K^C:=\{[((\Psi,S),c)]\;|\;c\in S\}.
	\]
A \emph{cone point} is an element of the cone locus. We define the \emph{tangent cone} of $K$ at a point $p = [((\Psi,S),q)]$ to be the cone-immersed submanifold
\[
TC_pK := [(d\Psi_c : T_cM \to T_{p_0}N,\{0\})]
\]
of $T_{p_0}N.$ The tangent cone $TC_pK$ is indeed a cone, that is, invariant under scalar multiplication. Moreover, it is independent of the choice of $\Psi.$ If $\Psi$ is smooth at $q,$ then $TC_pK$ is smoothly embedded and recovers the usual notion of tangent space. We define the \emph{projective tangent cone} of $K$ at $p$ by
	\[
	\orientedprojective(TC_pK):=\left[\orientedprojective(d\Psi_c) :\orientedprojective(T_cM)\to \orientedprojective(T_{p_0}N)\right].
	\]
	This is a smooth immersed sphere in $\orientedprojective(T_{p_0}N).$

A \emph{function} on $K$ is an equivalence class of pairs $((\chi,S),f),$ where $(\chi,S)$ is a representative of $K$ and $f$ is a function on $M.$ We say the function $h=[((\Psi,S),f)]$ is \emph{cone-smooth} at the point $p=[((\Psi,S),q)]$ if $f$ is cone-smooth at $q.$ In this case $h$ has a well-defined \emph{cone-derivative} $dh_p : TC_pK \to \R,$ which is a degree-1 homogeneous function.
\end{dfn}

	\begin{lemma}
		\label{lemma: induced tangent frame}
Let $M,S,\pi : \widetilde M_S \to M,p$ and $\tilde p$ be as in Definition~\ref{definition: blowup}(\ref{blowup at discrete set}) and~(\ref{cone coordinates}).
Let $(U,\mathbf{X},\alpha)$ be cone coordinates at $\widetilde{p}$ and abbreviate $V = \mathrm{Im}(\alpha) \subset \widetilde M_S.$ Define sections $e_i$ of the vector bundle $\pi^*TM|_V$ by
		\[
		e_i(\tilde q, s) :=
		\left\{
		\begin{array}{ll}
		\frac{1}{s}\pderiv[\pi]{x^i}(\tilde q, s),
		&s\neq 0,\\
		\nabla_s \pderiv[\pi]{x^i}(\tilde q, s),
		&s =0,
		\end{array}
		\right.\quad i=1,\ldots,m,
		\]
where $\nabla$ is an arbitrary connection.
		Define $e_0:= \pderiv[\pi]{s}.$ Then $e_0,\ldots,e_m,$ are independent of the choice of $\nabla,$ smooth, and everywhere linearly independent.
	\end{lemma}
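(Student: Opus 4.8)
The plan is to reduce the statement to an explicit computation in a chart. Since each of the three conclusions --- independence of the choice of $\nabla$, smoothness, and pointwise linear independence of $e_0,\dots,e_m$ --- is local on $V$, it suffices to verify them near each point of $V$, and I treat two cases. As $\alpha$ is an open embedding of manifolds with boundary, it carries $U\times\{0\}$ onto $V\cap\partial\widetilde{M}_S\subset E_p$ and $U\times(0,\epsilon)$ into the interior $\widetilde{M}_S^\circ$, where $\pi$ is a diffeomorphism. Hence over a point with $s\neq0$ the map $\pi\circ\alpha$ is an embedding, no connection enters the definition of $e_i$, and $e_0=\frac{\partial\pi}{\partial s}$ together with $s\,e_i=\frac{\partial\pi}{\partial x^i}$ are the images, under the injective differential of $\pi\circ\alpha$, of the coordinate frame $\partial_s,\partial_{x^1},\dots,\partial_{x^m}$; so they, and therefore $e_0,\dots,e_m$, are smooth and linearly independent there. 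All the content is thus concentrated at points of the exceptional sphere $E_p$.

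Near such a point I choose a chart on $M$ centered at $p$ and, after shrinking $U$ and $\epsilon$, use it both to identify a neighborhood of $p$ with an open subset of $\R^n$ carrying $p$ to $0$ and to trivialize $\pi^*TM$, so sections become $\R^n$-valued functions; by Remark~\ref{remark:polar coordinates}, a neighborhood of $E_p$ in $\widetilde{M}_S$ is identified with an open subset of $S^{n-1}\times[0,\infty)$ on which $\pi(u,\rho)=\rho u$. Set $F:=\pi\circ\alpha:U\times[0,\epsilon)\to\R^n$. Since $\alpha(\widetilde{q},0)=\widetilde{q}\in E_p$, we have $F(\widetilde{q},0)=0$ for every $\widetilde{q}$, so Hadamard's lemma gives $F(\widetilde{q},s)=s\,G(\widetilde{q},s)$ with $G$ smooth and $G(\widetilde{q},0)=\frac{\partial F}{\partial s}(\widetilde{q},0)$. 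Writing $\alpha=(a,b)$ in the $S^{n-1}\times[0,\infty)$ picture, immersivity of $\alpha$, which restricts to the identity on $U\times\{0\}$, forces $\beta(\widetilde{q}):=\frac{\partial b}{\partial s}(\widetilde{q},0)\neq0$, and since $b\geq0$ with $b(\widetilde{q},0)=0$ we get $\beta(\widetilde{q})>0$; as $d\pi_{\widetilde{q}}$ annihilates $T_{\widetilde{q}}E_p$ and sends the $\rho$-direction to $\widetilde{q}$, this yields $G(\widetilde{q},0)=\frac{\partial F}{\partial s}(\widetilde{q},0)=\beta(\widetilde{q})\,\widetilde{q}$.

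The formulas then become transparent. For $s\neq0$, $\frac{\partial F}{\partial x^i}=s\,\frac{\partial G}{\partial x^i}$, hence $e_i=\frac1s\frac{\partial F}{\partial x^i}=\frac{\partial G}{\partial x^i}$, whose right-hand side is smooth up to $s=0$; likewise $e_0=\frac{\partial F}{\partial s}=G+s\,\frac{\partial G}{\partial s}$ is smooth. Thus $e_0,\dots,e_m$ are smooth, with $e_0(\widetilde{q},0)=\beta(\widetilde{q})\,\widetilde{q}$ and $e_i(\widetilde{q},0)=\frac{\partial G}{\partial x^i}(\widetilde{q},0)=\frac{\partial\beta}{\partial x^i}\,\widetilde{q}+\beta\,\frac{\partial\widetilde{q}}{\partial x^i}$, where $\frac{\partial\widetilde{q}}{\partial x^i}$ is the partial of the inclusion $U\hookrightarrow S^{n-1}\subset\R^n$. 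Independence of $\nabla$ follows because $\nabla_s\frac{\partial\pi}{\partial x^i}$ at $s=0$ is the covariant derivative, along $s\mapsto F(\widetilde{q},s)$, of a section vanishing at $s=0$; such a derivative does not depend on the connection and equals $\lim_{s\to0}\frac1s\frac{\partial F}{\partial x^i}(\widetilde{q},s)=\frac{\partial G}{\partial x^i}(\widetilde{q},0)$, matching the smooth extension above, while $e_0$ involves no connection. For linear independence at $s=0$: the vectors $\frac{\partial\widetilde{q}}{\partial x^i}$, $i=1,\dots,m$, form a basis of $T_{\widetilde{q}}S^{n-1}=\widetilde{q}^{\perp}\subset\R^n$, so $\widetilde{q},\frac{\partial\widetilde{q}}{\partial x^1},\dots,\frac{\partial\widetilde{q}}{\partial x^m}$ is a basis of $\R^n$, and in it the matrix with columns $e_0(\widetilde{q},0),\dots,e_m(\widetilde{q},0)$ is upper triangular with every diagonal entry equal to $\beta(\widetilde{q})\neq0$.

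The one genuinely delicate point is the behavior at $s=0$: isolating the factor $s$ in $F$ via Hadamard's lemma, identifying the boundary values $e_0(\widetilde{q},0)$ and $e_i(\widetilde{q},0)$ correctly in the blowup model, checking that the connection-dependent prescription at $s=0$ agrees with the $s\to0$ limit, and verifying that the resulting vectors remain linearly independent over $E_p$. Everything on $\{s>0\}$ is immediate from $\pi$ being a local diffeomorphism there.
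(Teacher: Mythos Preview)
Your proof is correct and follows essentially the same approach as the paper's: both deduce connection-independence from the vanishing of $\partial\pi/\partial x^i$ at $s=0$, obtain smoothness from Hadamard's lemma (the paper's Lemma~\ref{lemma: milnor lemma}), and appeal to the explicit model of the blowup projection for linear independence. Your argument for linear independence is in fact more detailed than the paper's, which simply asserts it follows ``by using the definition of the blowup projection $\pi$.''
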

	\begin{proof}
		As the blowup projection $\pi$ maps the exceptional sphere $E_{p}$ to the point $p,$ we have
		\[
		 \pderiv[\pi]{x^i}(\tilde q, 0) = 0, \qquad \tilde q \in U, \quad i=1,\ldots,m.
		\]
		The sections $e_i,\;i=1,\ldots,m,$ are thus well-defined on $U$ independently of the choice of connection $\nabla.$ They are smooth by Lemma~\ref{lemma: milnor lemma}. One shows the sections $e_0,\ldots,e_m,$ are everywhere linearly independent by using the definition of the blowup projection $\pi.$
	\end{proof}

Let $(\Psi : M \to N, S)$ be a cone-smooth map and let $p \in S.$ Recall that the cone derivative $d\Psi_p : T_pM \to T_{\Psi(p)}N$ is homogeneous of degree $1$ and $d\Psi_p|_{T_pM \setminus\{0\}}$ is smooth. It follows that for $0 \neq v \in T_pM$ and $\lambda > 0,$ we have $d(d\Psi_p)_{v} = d(d\Psi_p)_{\lambda v}$ under the canonical identification $T_v T_pM \simeq T_pM \simeq T_{\lambda v}T_pM.$ Thus, for $\widetilde p = [v] \in \orientedprojective(T_pM),$ we define
\[
d(d\Psi_p)_{\widetilde p} : = d(d\Psi_p)_v : T_pM \to T_{\Psi(p)}N.
\]
\begin{lemma}\label{lemma:blowupdifferential}
Let $(\Psi : M \to N, S)$ be a cone-smooth map. Consider the map $d\left(\Psi|_{M\setminus S}\right) : TM|_{M \setminus S} \to \Psi^* TN|_{M \setminus S}.$ Pulling back by $\pi$ gives a map
\[
\pi^* d\left(\Psi|_{M\setminus S}\right) : \pi^* TM|_{\widetilde M_S^\circ} \to \pi^* \Psi^* TN|_{\widetilde M_S^\circ}.
\]
This map extends uniquely to a map of bundles $\widetilde{d\Psi}:\pi^* TM \to \pi^* \Psi^* TN.$ Moreover, for $p \in S$ and $\widetilde p \in E_p,$ we have
\begin{equation}\label{equation:dderivatives}
\widetilde{d\Psi}_{\widetilde p} = d(d\Psi_p)_{\widetilde p}.
\end{equation}
In particular, if $\Psi$ is a cone-immersion, then $\widetilde{d\Psi}$ is an injective map of vector bundles.
\end{lemma}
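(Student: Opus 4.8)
The plan is to work locally near a cone point $p \in S$ in cone coordinates, reduce the extension problem to the one-variable vanishing lemma (Lemma~\ref{lemma: milnor lemma}), and then identify the boundary values with $d(d\Psi_p)$ by a direct computation. Away from $S$ there is nothing to prove: $\widetilde{d\Psi}$ is just $\pi^* d(\Psi|_{M\setminus S})$ on $\widetilde M_S^\circ$, and uniqueness of the extension follows from density of $\widetilde M_S^\circ$ in $\widetilde M_S$ together with continuity. So the content is local existence and smoothness of the extension across each exceptional sphere $E_p$, plus the formula~\eqref{equation:dderivatives}.

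Fix $p \in S$, a point $\widetilde p \in E_p$, and cone coordinates $(U,\mathbf X,\alpha)$ at $\widetilde p$ as in Definition~\ref{definition: blowup}(\ref{cone coordinates}), with coordinate $s$ on $[0,\epsilon)$. Since $\Psi$ is cone-smooth at $p$, the composition $\Psi\circ\pi\circ\alpha$ is smooth on $U\times[0,\epsilon)$. First I would trivialize $\pi^*TM|_V$ using the frame $e_0,\dots,e_m$ of Lemma~\ref{lemma: induced tangent frame}: these sections are smooth and everywhere independent, and crucially $e_i = \frac1s \pderiv[\pi]{x^i}$ for $s\neq 0$ while $e_0 = \pderiv[\pi]{s}$. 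On $\widetilde M_S^\circ$ the map $\pi^* d(\Psi|_{M\setminus S})$ sends $\pderiv[\pi]{x^i}(\tilde q,s)$ to $\pderiv{x^i}(\Psi\circ\pi\circ\alpha)(\tilde q,s)$ and $\pderiv[\pi]{s}(\tilde q,s)$ to $\pderiv{s}(\Psi\circ\pi\circ\alpha)(\tilde q,s)$. Applying it to the frame, we get that $\widetilde{d\Psi}$ sends $e_0 \mapsto \pderiv{s}(\Psi\circ\pi\circ\alpha)$ and, for $i\geq 1$ and $s\neq 0$, $e_i \mapsto \frac1s\,\pderiv{x^i}(\Psi\circ\pi\circ\alpha)$. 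Because $\Psi\circ\pi$ maps $E_p$ to the point $\Psi(p)$, the functions $\pderiv{x^i}(\Psi\circ\pi\circ\alpha)$ vanish at $s=0$; by Lemma~\ref{lemma: milnor lemma} (applied componentwise in any chart near $\Psi(p)$) each equals $s$ times a smooth function, so $\frac1s\,\pderiv{x^i}(\Psi\circ\pi\circ\alpha)$ extends smoothly to $s=0$. Hence the image of each frame element extends smoothly over $E_p$, which gives the smooth extension $\widetilde{d\Psi}$; its value on $e_0$ at $s=0$ is $\pderiv{s}(\Psi\circ\pi\circ\alpha)|_{s=0}$, and its value on $e_i$ at $s=0$ is, again by Lemma~\ref{lemma: milnor lemma}, $\pderiv{s}\pderiv{x^i}(\Psi\circ\pi\circ\alpha)|_{s=0}$.

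It remains to verify~\eqref{equation:dderivatives}, i.e. that the linear map just constructed at $\widetilde p = [v] \in E_p$ agrees with $d(d\Psi_p)_v$. For this I would pass to a polar coordinate map $\kappa$ compatible with the blowup (Remark~\ref{remark:polar coordinates}), write the associated section $\sigma$ of $T_pM\setminus\{0\}\to\orientedprojective(T_pM)$, and use the two identities already recorded in the proof of Lemma~\ref{lemma:immersion criterion}: $\pderiv{s}(\Psi\circ\kappa)(r,0) = d\Psi_p(\sigma(r))$, and $d(d\Psi_p)_{\sigma(r)}\circ d\sigma_r = d\big(\pderiv{s}(\Psi\circ\kappa)|_{s=0}\big)_r$, together with $d(d\Psi_p)_{\sigma(r)}(\varepsilon) = \varepsilon(d\Psi_p(\sigma(r)))$ along the Euler direction. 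Since $T_{\sigma(r)}T_pM = \R\langle\varepsilon\rangle \oplus d\sigma_r(T_rS^{m})$ and the Euler direction corresponds to $e_0$ (the $\partial_s$ direction) while $d\sigma_r(T_rS^m)$ is spanned by the $e_i$, $i\geq 1$, matching the two maps on each summand is exactly matching $\widetilde{d\Psi}(e_0) = \pderiv{s}(\Psi\circ\pi\circ\alpha)|_{s=0}$ with the Euler-direction formula and $\widetilde{d\Psi}(e_i)$ with the spherical-direction formula; both are the same second-derivative expression once one identifies the cone coordinates $\alpha$ with a polar map up to reparameterization. The last sentence of the lemma is then immediate: if $\Psi$ is a cone-immersion, then $d\Psi_p|_{T_pM\setminus\{0\}}$ is an immersion, so each $d(d\Psi_p)_v = \widetilde{d\Psi}_{\widetilde p}$ is injective, and away from $S$ the differential of the immersion $\Psi$ is injective, so $\widetilde{d\Psi}$ is injective on all fibers. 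The main obstacle is bookkeeping: keeping the normalization factor $\frac1s$ in the frame $e_i$ consistent with the $\frac1s$ implicit in passing between Cartesian blowup coordinates and polar coordinates, so that the boundary value comes out as $d(d\Psi_p)$ on the nose rather than up to a scalar; invoking Lemma~\ref{lemma: milnor lemma} for the smoothness and Lemma~\ref{lemma:immersion criterion}'s computation for the identification handles this cleanly.
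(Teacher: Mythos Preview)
Your proposal is correct and follows essentially the same approach as the paper: both use the frame $e_0,\dots,e_m$ of Lemma~\ref{lemma: induced tangent frame}, observe that $\partial_{x^i}(\Psi\circ\pi)$ vanishes at $s=0$, and invoke Lemma~\ref{lemma: milnor lemma} to extend $\tfrac{1}{s}\partial_{x^i}(\Psi\circ\pi)$ smoothly across $E_p$.

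The one minor divergence is in verifying~\eqref{equation:dderivatives}. You route this through a polar coordinate map $\kappa$ and the identities from the proof of Lemma~\ref{lemma:immersion criterion}, which works but requires the bookkeeping you flag. The paper is slightly more direct: it simply observes that, by the definition of the cone derivative, $\partial_s\Psi(\widetilde q,0)=d\Psi_p(v(\widetilde q))$ where $v(\widetilde q)=e_0(\widetilde q,0)$, and then differentiates this identity in $x^i$ (identifying $T_vT_pM\simeq T_pM$) to obtain $\partial_{x^i}\partial_s\Psi(\widetilde q,0)=d(d\Psi_p)_{v(\widetilde q)}(e_i(\widetilde q,0))$ directly in the cone coordinates, without ever introducing $\kappa$ or $\sigma$. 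This sidesteps the reparameterization issue entirely.
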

\begin{proof}
By Lemma~\ref{lemma: induced tangent frame} and using the notation therein, it suffices for the first claim to show that the sections $\pi^* d\Psi(e_i|_{\{s > 0\}})$ extend to smooth sections of $\pi^*\Psi^* TN$ for $i = 0,\ldots,m$ and any choice of cone coordinates. Linearity of $\widetilde{d\Psi}$ follows by continuity. Consider the differential of the blowup projection $d\pi|_{\widetilde M_S\setminus E_S} : T\widetilde M_S|_{\widetilde M_S^\circ} \to \pi^* TM|_{\widetilde M_S^\circ}.$ We have
\[
\pi^* d\left(\Psi|_{M\setminus S}\right)  = \pi^* d\left(\Psi|_{M\setminus S}\right) \circ d\pi \circ d \pi^{-1}|_{\widetilde M_S^\circ}  = d(\Psi \circ \pi) \circ d \pi^{-1}|_{\widetilde M_S^\circ} .
\]
By definition of cone-smooth, $d(\Psi \circ \pi) : T\widetilde{M}_S \to \Psi^* \pi^* TN$ is a smooth bundle map. For $p \in S,$ since $\Psi\circ \pi$ maps $E_p$ to a point, it follows that $d(\Psi \circ \pi)|_{\{s = 0\}} = 0.$ On the other hand, $d \pi^{-1}(e_i|_{\{s > 0\}}) = \frac{1}{s} \pderiv{x^i}$ for $i = 1,\ldots, m,$ and $d \pi^{-1}(e_0|_{\{s > 0\}}) = \pderiv{s}.$ So, it follows from Lemma~\ref{lemma: milnor lemma} that $\pi^*\left( d\Psi(e_i|_{\{s>0\}})\right)$ extends to a smooth section of $\pi^*\Psi^* TN,$ which is $\widetilde{d\Psi}(e_i).$  Thus, the map $\widetilde{d\Psi}$ is well-defined.

To prove equation~\eqref{equation:dderivatives}, we claim that for $\widetilde q \in U,$
\begin{gather*}
\widetilde{d\Psi}(e_i)(\widetilde q,0) = \frac{\partial^2\Psi}{\partial x^i \partial s}(\widetilde q, 0), \qquad i = 1,\ldots, m, \\
\widetilde{d\Psi}(e_0)(\widetilde q,0) = \pderiv[\Psi]{s} (\widetilde q, 0).
\end{gather*}
Indeed,
\begin{equation*}
\frac{\partial^2\Psi}{\partial x^i \partial s}(\widetilde q, 0) = \frac{\partial^2\Psi}{\partial s \partial x^i}(\widetilde q, 0) =\pderiv{s}\left(\widetilde{d\Psi}\left(\pderiv[\pi]{x^i}\right)\right)(\widetilde q, 0) = \widetilde{d\Psi}(e_i)(\widetilde q, 0).
\end{equation*}
Similarly,
\begin{equation*}
\pderiv[\Psi]{s}(\widetilde q, 0) =\widetilde{d\Psi}\left(\pderiv[\pi]{s}\right)(\widetilde q, 0) = \widetilde{d\Psi}(e_0)(\widetilde q, 0).
\end{equation*}
On the other hand, writing $v(\widetilde q) = e_0(\widetilde q,0) \in T_pM,$ recalling the definition of the cone derivative $d\Psi_p,$ and identifying $T_vT_pM \simeq T_pM,$ we have
\begin{gather*}
\frac{\partial^2\Psi}{\partial x^i \partial s} (\widetilde q, 0) =  \pderiv{x^i}d\Psi_p(v(\widetilde q)) = d(d\Psi_p)_{v(\widetilde q)}(e_i(\widetilde q,0)), \quad i = 1,\ldots, m,\\
\pderiv[\Psi]{s}(\widetilde q, 0) = d\Psi_p(v) = d(d\Psi_p)_{v(\widetilde q)}(e_0(\widetilde q,0)).
\end{gather*}
Equation~\eqref{equation:dderivatives} follows.
\end{proof}

\begin{dfn}\label{definition: many tangent spaces}
Let $S \subset M$ be a finite subset. The \emph{blowup tangent bundle} of $(M,S)$ is the bundle $\widetilde{TM}_S : = \pi^*TM \to \widetilde{M}_S.$ When clear from the context, we may omit the subscript $S.$ Let $(\Psi : M \to N, S)$ be a cone-smooth map. The \emph{blowup differential} of $\Psi$ is the map
\[
\widetilde{d \Psi} : \widetilde{TM}_S \to \pi^* \Psi^* TN
\]
given by Lemma~\ref{lemma:blowupdifferential}.
	\end{dfn}

Let $M$ be a smooth manifold and let $S \subset M$ be a finite subset and let $\pi : \widetilde{M}_S \to M$ denote the blowup projection. Given a differential form $\alpha$ on $M$ we can pull-back $\alpha$ as a section of $\Lambda^*(T^*M)$ to obtain a section of $\pi^*\Lambda^*\left(T^*M\right) \simeq \Lambda^*\left(\widetilde{TM}^*_S\right).$ We denote this pull-back by $\pi^{-1}\alpha.$ Observe that this pull-back is different from the pull-back of $\alpha$ as a differential form, $\pi^*\alpha,$ which would be a section of $\Lambda^*(T^*\widetilde{M}_S).$ A similar distinction applies to pull-backs of metrics.
\begin{dfn}\label{definition: cone-smooth differential form etc}
A \emph{cone-smooth differential form} on $(M,S)$ is a smooth differential form $\alpha$ on $M \setminus S$ such that $\pi^{-1} \alpha$ extends to a smooth section $\widetilde{\alpha}$ of $\Lambda^*\left(\widetilde{TM}_S^*\right).$ We call $\widetilde{\alpha}$ the \emph{blowup form}. We say that $\alpha$ and $\widetilde{\alpha}$ are \emph{closed} if $\alpha$ is closed as a differential form on $M\setminus S.$

A \emph{cone-smooth Riemannian metric} on $(M,S)$ is a smooth Riemannian metric $g$ on $M \setminus S$ such that $\pi^{-1}g$ extends to a smooth metric $\widetilde g$ on the blowup tangent bundle $\widetilde{TM}_S.$ We call $\widetilde g$ the \emph{blowup metric}.

Let $(\Psi:M \to N,S)$ be a cone-smooth map. A \emph{cone-smooth vector field along} $\Psi$ is a smooth section $\xi$ of $\left(\Psi|_{M\setminus S}\right)^*TN$ such that $\left(\pi|_{\widetilde{M}_S^\circ}\right)^*\xi$ extends to a smooth section $\widetilde \xi$ of $(\Psi \circ \pi)^*TN.$ We call $\widetilde \xi$ the \emph{blowup vector field.} In the case $N = M$ and $\Psi = \id_M$, we call $\xi$ a cone-smooth vector field on $(M,S).$
\end{dfn}

\begin{rem}\label{remark:differential of cone-smooth function}
Let $S \subset M$ be a finite subset and let $(f : M \to \R,S)$ be a cone-smooth function. Then, the blowup differential $\widetilde{df}$ is a smooth section of the blowup cotangent bundle $\widetilde{TM}_S^* \to \widetilde{M}_S,$ so $df$ is a cone-smooth 1-form. For $p \in S$ and $\widetilde p \in E_p,$ we use the notation $df_{\widetilde p} = \widetilde{df}|_{\widetilde p}.$ Given a cone-smooth Riemannian metric on $(M,S),$ we can define the cone-smooth vector field $\nabla f$ on $(M,S)$ in the usual way.
\end{rem}

\begin{rem}\label{remark:cone-smooth de Rham}
By Remark~\ref{remark:differential of cone-smooth function} the differential of a cone-smooth function is a cone-smooth $1$-form. However, a cone-smooth $0$-form need not be a cone-smooth function as it may not extend continuously to $M.$ Furthermore, the differential of a cone-smooth $0$-form need not be a cone-smooth $1$-form. More generally, the exterior derivative of a cone-smooth differential form may not be cone-smooth. In order to define the cone-smooth de Rham complex of $(M,S),$ one should consider only cone-smooth differential forms with cone-smooth exterior derivative. Since this is not used in the present paper, we will not discuss it further.
\end{rem}
\begin{lemma}\label{lemma:cone-smooth pullback}
Let $(\Psi: M \to N, S)$ be a cone-smooth map, let $\alpha$ be a smooth differential form on $N$ and let $\xi$ be a cone-smooth vector field along $\Psi.$ Then $\left(\Psi|_{M\setminus S}\right)^*\alpha$ and $i_{\xi}\alpha$ are cone-smooth differential forms on $(M,S).$ If the blowup vector field $\widetilde \xi$ vanishes on $\partial \widetilde{M}_S,$ then so does the blowup form $\widetilde{i_{\xi}\alpha}.$ If $(\Psi,S)$ is a cone-immersion, and $g$ is a Riemannian metric on $N,$ then $\left(\Psi|_{M\setminus S}\right)^*g$ is a cone-smooth Riemannian metric on $(M,S).$
\end{lemma}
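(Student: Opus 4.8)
The plan is to reduce every assertion to the defining property of cone-smoothness, namely that the relevant pull-back by the blowup projection $\pi$ extends smoothly over the exceptional spheres, and then to verify the extension locally in cone coordinates $(U,\mathbf{X},\alpha)$ around a point $\widetilde p \in E_p$ for $p \in S.$ Away from $S$ all three claims are standard facts about smooth pull-backs, interior products and induced metrics, so the content is entirely at the exceptional locus.

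For the pull-back $\left(\Psi|_{M\setminus S}\right)^*\alpha$: using the blowup differential $\widetilde{d\Psi} : \widetilde{TM}_S \to \pi^*\Psi^*TN$ from Lemma~\ref{lemma:blowupdifferential} and Definition~\ref{definition: many tangent spaces}, one checks that on $\widetilde M_S^\circ$ one has $\pi^{-1}\left(\left(\Psi|_{M\setminus S}\right)^*\alpha\right) = \left(\widetilde{d\Psi}\right)^* \left(\pi^{-1}\Psi^*\alpha\right),$ where the right-hand side makes sense as a section of $\Lambda^*(\widetilde{TM}_S^*)$ over all of $\widetilde M_S.$ Since $\widetilde{d\Psi}$ is a smooth bundle map and $\pi^{-1}(\Psi^*\alpha) = (\Psi\circ\pi)^{-1}\alpha$ is smooth (as $\Psi\circ\pi$ is smooth by cone-smoothness), the composite extends smoothly, which is exactly the assertion that $\left(\Psi|_{M\setminus S}\right)^*\alpha$ is cone-smooth. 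For the interior product $i_\xi\alpha$: writing $\widetilde\xi$ for the blowup vector field along $\Psi,$ a smooth section of $(\Psi\circ\pi)^*TN,$ one has $\pi^{-1}(i_\xi\alpha) = i_{\widetilde\xi}\left((\Psi\circ\pi)^{-1}\alpha\right)$ on the interior, and the right-hand side is manifestly a smooth section of $\Lambda^*(\widetilde{TM}_S^*)$ — here I should be careful that $(\Psi\circ\pi)^{-1}\alpha$ is a section of $\Lambda^*\left((\Psi\circ\pi)^*T^*N\right)$ and $\widetilde\xi$ a section of $(\Psi\circ\pi)^*TN,$ so the contraction lands in $\Lambda^*\left((\Psi\circ\pi)^*T^*N\right),$ which we then compose with $\widetilde{d\Psi}$ as above to land in $\Lambda^*(\widetilde{TM}_S^*);$ all operations preserve smoothness. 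The statement about vanishing on $\partial\widetilde M_S$ is immediate: if $\widetilde\xi|_{\partial\widetilde M_S} = 0$ then $i_{\widetilde\xi}(\cdots)|_{\partial\widetilde M_S} = 0,$ and this is precisely $\widetilde{i_\xi\alpha}|_{\partial\widetilde M_S} = 0.$

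For the metric claim, suppose $(\Psi,S)$ is a cone-immersion and $g$ is Riemannian on $N.$ Then $\pi^{-1}\left(\left(\Psi|_{M\setminus S}\right)^*g\right) = \left(\widetilde{d\Psi}\right)^*\left((\Psi\circ\pi)^{-1}g\right)$ on $\widetilde M_S^\circ,$ and the right-hand side extends to a smooth symmetric bilinear form on $\widetilde{TM}_S$ since $\widetilde{d\Psi}$ is smooth and $(\Psi\circ\pi)^{-1}g$ is a smooth metric on $(\Psi\circ\pi)^*TN.$ To see it is a genuine metric, i.e.\ positive definite, at a boundary point $\widetilde p \in E_p$ one invokes the last sentence of Lemma~\ref{lemma:blowupdifferential}: for a cone-immersion $\widetilde{d\Psi}$ is a fiberwise injective bundle map, and the pull-back of a positive-definite form by an injective linear map is positive definite. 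Hence $\left(\Psi|_{M\setminus S}\right)^*g$ is a cone-smooth Riemannian metric.

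The only mildly delicate point, and the one I would treat most carefully, is the bookkeeping of \emph{which} pull-back operation ($\pi^{-1}$ versus $\pi^*$, and likewise $\widetilde{d\Psi}$ acting on $\pi^*\Psi^*TN$-valued forms) is in play at each step, so that the extension being asserted is literally the smoothness of $\pi^{-1}$ of the object over $\widetilde M_S$ and not of its honest differential-form pull-back $\pi^*$ — cf.\ Remark~\ref{remark:cone-smooth de Rham}. Everything else is a direct application of Lemma~\ref{lemma:blowupdifferential} together with the smoothness of $\Psi\circ\pi$.
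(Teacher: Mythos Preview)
Your proposal is correct and takes essentially the same approach as the paper: the paper's proof consists of the single sentence ``This follows from Lemma~\ref{lemma:blowupdifferential},'' and your argument is precisely the unpacking of that sentence, using the blowup differential $\widetilde{d\Psi}$ to extend the relevant sections over $\partial\widetilde{M}_S$ and invoking its fiberwise injectivity in the cone-immersion case for positive-definiteness. Your care in distinguishing $\pi^{-1}$ from $\pi^*$ and in tracking which bundle each object lives in is exactly what the one-line proof leaves implicit.
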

\begin{proof}
This follows from Lemma~\ref{lemma:blowupdifferential}.
\end{proof}
In light of the preceding lemma, we abbreviate
\begin{equation}\label{eq:pullbackabbrev}
\Psi^*\alpha : = \left(\Psi|_{M\setminus S}\right)^*\alpha, \qquad \Psi^*g : = \left(\Psi|_{M\setminus S}\right)^*g.
\end{equation}
\begin{lemma}\label{lemma:pullbackform}
Let $\alpha$ be a cone-smooth differential form on $(M,S).$ Then, the pull-back differential form $\left(\pi|_{\widetilde{M}_S^\circ}\right)^*\alpha$ extends to a smooth differential form on $\widetilde{M}_S.$ If the blowup form $\widetilde \alpha$ vanishes on $\partial\widetilde{M}_S,$ then so does the extension of $\left(\pi|_{\widetilde{M}_S^\circ}\right)^*\alpha$ considered as a section of $\Lambda^*\left(T^*\widetilde{M}_S\right)|_{\partial\widetilde{M}_S}.$
\end{lemma}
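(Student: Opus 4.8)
The plan is to exploit the fact that the pull-back of a differential form by $\pi$ factors through the differential of $\pi,$ and that this differential is a genuinely smooth bundle map over \emph{all} of $\widetilde{M}_S$ once its target is taken to be the pull-back bundle $\widetilde{TM}_S = \pi^*TM.$

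First I would record the relevant bundle-theoretic fact. Because $\pi : \widetilde{M}_S \to M$ is smooth, its differential, regarded as a bundle map
\[
d\pi : T\widetilde{M}_S \longrightarrow \pi^*TM = \widetilde{TM}_S
\]
covering the identity on $\widetilde{M}_S$ (as in the proof of Lemma~\ref{lemma:blowupdifferential}, but now defined over all of $\widetilde{M}_S$ and not merely over $\widetilde{M}_S^\circ,$ since $\pi$ is smooth everywhere), is smooth. Taking fibrewise transposes on exterior powers, it induces a smooth bundle map
\[
(d\pi)^* : \Lambda^*\bigl(\widetilde{TM}_S^*\bigr) \longrightarrow \Lambda^*\bigl(T^*\widetilde{M}_S\bigr).
\]

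Next I would check the pointwise identity $\left(\pi|_{\widetilde{M}_S^\circ}\right)^*\alpha = (d\pi)^*\bigl(\pi^{-1}\alpha\bigr)$ on the interior. Over $\widetilde{M}_S^\circ$ the map $\pi$ is a diffeomorphism onto $M \setminus S,$ and for $q \in \widetilde{M}_S^\circ$ the map $d\pi_q : T_q\widetilde{M}_S \to \bigl(\pi^*TM\bigr)_q$ is exactly the usual differential $d\pi_q : T_q\widetilde{M}_S \to T_{\pi(q)}M$ under the canonical identification $\bigl(\pi^*TM\bigr)_q = T_{\pi(q)}M,$ while $\bigl(\pi^{-1}\alpha\bigr)_q = \alpha_{\pi(q)}$ under the same identification. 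Applying the transpose $(d\pi_q)^*$ gives $(d\pi_q)^*\alpha_{\pi(q)} = \bigl(\pi^*\alpha\bigr)_q,$ which is the claimed identity. Now, $\alpha$ being a cone-smooth differential form, $\pi^{-1}\alpha$ extends to the smooth section $\widetilde\alpha$ of $\Lambda^*\bigl(\widetilde{TM}_S^*\bigr)$ over all of $\widetilde{M}_S;$ consequently $(d\pi)^*\widetilde\alpha$ is a smooth section of $\Lambda^*\bigl(T^*\widetilde{M}_S\bigr),$ i.e.\ a smooth differential form on $\widetilde{M}_S,$ which by the pointwise identity agrees with $\left(\pi|_{\widetilde{M}_S^\circ}\right)^*\alpha$ on the dense open set $\widetilde{M}_S^\circ.$ Hence $(d\pi)^*\widetilde\alpha$ is the required smooth extension. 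The vanishing statement is then immediate: if $\widetilde\alpha_q = 0$ for every $q \in \partial\widetilde{M}_S,$ then by linearity of the transpose $\bigl((d\pi)^*\widetilde\alpha\bigr)_q = (d\pi_q)^*\bigl(\widetilde\alpha_q\bigr) = 0$ at each such $q,$ so the extension vanishes on $\partial\widetilde{M}_S$ as a section of $\Lambda^*\bigl(T^*\widetilde{M}_S\bigr)|_{\partial\widetilde{M}_S}.$

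I do not expect any real obstacle. The only step requiring genuine care is the pointwise identity, that is, keeping the two notions of pull-back straight: $\pi^{-1}\alpha$ is a section of $\Lambda^*\bigl(\widetilde{TM}_S^*\bigr)$ obtained by pulling back $\alpha$ pointwise, whereas $\left(\pi|_{\widetilde{M}_S^\circ}\right)^*\alpha$ is a section of $\Lambda^*\bigl(T^*\widetilde{M}_S\bigr)$ obtained by pulling $\alpha$ back as a form; one must verify that the smooth bundle map $d\pi : T\widetilde{M}_S \to \pi^*TM$ is precisely the device converting the former into the latter, which is exactly the content of the first two steps.
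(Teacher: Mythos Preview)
Your proof is correct and follows essentially the same approach as the paper: both use the smooth bundle map $d\pi : T\widetilde{M}_S \to \widetilde{TM}_S$ and its induced dual on exterior powers to produce the extension $\Lambda^*(d\pi^*)\circ\widetilde\alpha$, then deduce the vanishing claim from linearity. The paper's version is more terse, but the content is identical.
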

\begin{proof}
The dual of the differential of $\pi$ gives a map of vector bundles
\[
d \pi^* : \widetilde{TM}_S^* = \pi^*T^*\!M \to T^*\widetilde{M}_S,
\]
which induces a map $\Lambda^*(d\pi^*) : \Lambda^*(\widetilde{TM}_S^*) \to \Lambda^*(T^*\widetilde{M}_S).$
Since
\[
\left(\pi|_{\widetilde{M}_S^\circ}\right)^*\alpha = \left.\Lambda^*(d\pi^*)\circ \widetilde \alpha\right|_{\widetilde{M}_S^\circ},
\]
the required extension is given by $\Lambda^*(d\pi^*)\circ \widetilde \alpha.$ The vanishing claim is immediate.
\end{proof}
We write $\pi^*\alpha$ for the extension of $\left(\pi|_{\widetilde{M}_S^\circ}\right)^*\alpha$ given by the preceding lemma.
\begin{lemma}\label{lemma:exactness of extension}
Let $\alpha$ be a cone-smooth differential 1-form on $(M,S)$ such that there exists a smooth function $f^\circ : M\setminus S \to \R$ with  $\alpha|_{M\setminus S} = df^\circ$ and the blowup form $\widetilde{\alpha}$ vanishes on $\partial\widetilde{M}_S$. Then $f^\circ$ extends to a cone-smooth function $f$ on $(M,S),$ and $S$ is contained in the critical locus of $f.$
\end{lemma}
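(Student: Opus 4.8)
The plan is to pull the statement back to the oriented blowup $\pi : \widetilde M_S \to M$, where the cone-smooth $1$-form $\alpha$ becomes an honest closed smooth $1$-form on a manifold with boundary, and then to invoke the ordinary Poincaré lemma. By Lemma~\ref{lemma:pullbackform}, the pull-back $\left(\pi|_{\widetilde M_S^\circ}\right)^*\alpha$ extends to a smooth $1$-form $\pi^*\alpha$ on $\widetilde M_S$, and since the blowup form $\widetilde\alpha$ vanishes on $\partial\widetilde M_S$, so does $\pi^*\alpha$. On the dense open set $\widetilde M_S^\circ$ — which $\pi$ maps diffeomorphically onto $M\setminus S$ — we have $\pi^*\alpha = \pi^*(df^\circ) = d(f^\circ\circ\pi)$, so $\pi^*\alpha$ is closed there; as $d(\pi^*\alpha)$ is continuous, $\pi^*\alpha$ is closed on all of $\widetilde M_S$, and $f^\circ\circ\pi$ is a smooth primitive of $\pi^*\alpha$ on $\widetilde M_S^\circ$.

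Next I would extend $f^\circ\circ\pi$ across the exceptional spheres. Fix $p\in S$ and $\widetilde p\in E_p$. A small neighborhood $\widetilde U$ of $\widetilde p$ in $\widetilde M_S$ is diffeomorphic to a half-space, hence contractible, so the Poincaré lemma produces a smooth $F$ on $\widetilde U$ with $dF = \pi^*\alpha$. On the connected open set $\widetilde U\setminus E_p$ the function $F - f^\circ\circ\pi$ has vanishing differential and is therefore constant, so $f^\circ\circ\pi$ extends smoothly across $E_p\cap\widetilde U$. Any two such local extensions agree with $f^\circ\circ\pi$ on the dense interior, hence agree on overlaps, and together they define a smooth function $\widetilde f$ on a neighborhood $W$ of $\partial\widetilde M_S$ with $\widetilde f = f^\circ\circ\pi$ on $W\cap\widetilde M_S^\circ$ and $d\widetilde f = \pi^*\alpha$ on $W$.

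Now I would descend. Since $d\widetilde f = \pi^*\alpha$ vanishes on $\partial\widetilde M_S$ and each exceptional sphere $E_p$ is connected, $\widetilde f$ is constant on $E_p$; setting $f(p)$ equal to that constant and $f := f^\circ$ on $M\setminus S$, the function $f\circ\pi$ coincides with $\widetilde f$ near $\partial\widetilde M_S$ and is therefore smooth there, so $f$ is cone-smooth at each $p\in S$ and is a cone-smooth function on $(M,S)$ extending $f^\circ$. For the critical-locus claim, the defining relation of the cone-derivative gives, for $\widetilde p\in E_p$, that $df_p\circ d\pi_{\widetilde p} = d(f\circ\pi)_{\widetilde p} = (\pi^*\alpha)_{\widetilde p} = 0$; by construction of the oriented blowup the image of $d\pi_{\widetilde p}$ is the line of $T_pM$ spanned by the ray $\widetilde p$, and these lines fill out $T_pM$ as $\widetilde p$ runs over $E_p\cong\orientedprojective(T_pM)$, so $df_p\equiv 0$, i.e.\ $S\subset\crit(f)$. (Equivalently: $df = \alpha$ on $M\setminus S$, so the blowup form of $df$ equals $\widetilde\alpha$, which vanishes on $\partial\widetilde M_S$.)

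The only real work is in the second paragraph, namely showing that the pulled-back function extends \emph{smoothly}, and not merely continuously, across the exceptional boundary. What makes this routine rather than delicate is precisely that passing to the blowup turns the cone-smooth datum into a genuine closed smooth $1$-form on a manifold with boundary, so the classical Poincaré lemma applies verbatim and the usual ambiguity in the constant of integration is eliminated by density of $\widetilde M_S^\circ$. One must only be careful that the identity $d\widetilde f = \pi^*\alpha$ is asserted up to and including $\partial\widetilde M_S$ — it holds there by continuity — since both the smoothness of the extension across each $E_p$ and the vanishing of the cone-derivative along $S$ rely on it.
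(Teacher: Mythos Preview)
Your proof is correct and follows essentially the same route as the paper: pull $\alpha$ back to the blowup via Lemma~\ref{lemma:pullbackform} to obtain a closed smooth $1$-form vanishing on $\partial\widetilde{M}_S$, produce a smooth primitive extending $f^\circ\circ\pi$, observe that it is constant on each exceptional sphere, and descend. The only stylistic difference is that the paper asserts global exactness of $\pi^*\alpha$ on $\widetilde{M}_S$ in one line (using that $\widetilde{M}_S^\circ\hookrightarrow\widetilde{M}_S$ is a homotopy equivalence and $\pi^*\alpha$ is already exact on the interior), whereas you build the extension of $f^\circ\circ\pi$ by hand via the local Poincar\'e lemma and a patching-by-density argument; both achieve the same end.
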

\begin{proof}
Since $d\left(\alpha|_{M\setminus S}\right) = d(df^\circ)= 0,$ it follows that $d(\pi^*\alpha) = 0.$ Lemma~\ref{lemma:pullbackform} gives $\pi^*\alpha|_{\partial \widetilde{M}_S}=0,$ so $\pi^*\alpha$ is exact. Let $\widetilde f: \widetilde{M}_S \to \R$ be smooth with $d\widetilde{f} = \pi^*\alpha.$ After possibly adding a constant to $\widetilde f,$ we may assume that $\widetilde f|_{\widetilde{M}_S^\circ} = f^\circ \circ \pi|_{\widetilde{M}_S^\circ}.$ Again invoking the vanishing of $\pi^*\alpha|_{\widetilde M_S},$ it follows that $\widetilde{f}|_{\partial\widetilde{M}_S}$ is locally constant. So, we take $f: M \to \R$ to be the unique function such that $f \circ \pi = \widetilde f,$ and $f$ is cone-smooth by definition. The vanishing of the cone-derivative of $f$ at $S$ follows from the vanishing of $\pi^*\alpha|_{\partial\widetilde{M}_S}$ as a section of $T^*\widetilde{M}_S|_{\partial\widetilde{M}_S}$ given by Lemma~\ref{lemma:pullbackform}.
\end{proof}
\begin{dfn}
Suppose $M$ is oriented and let $\alpha$ be a cone-smooth differential form on $(M,S)$ such that $\pi^*\alpha$ has compact support. We define the integral of $\alpha$ by
\[
\int_M \alpha := \int_{\widetilde M_S} \pi^*\alpha.
\]
\end{dfn}
\begin{rem}
\begin{enumerate}[label=(\alph*)]
\item~\label{it:intcoin}
For $S \subset M$ finite, a smooth differential form $\bar \alpha$ on $M$ gives rise to a cone-smooth differential form $\alpha = \bar \alpha |_{M \setminus S}$ on $(M,S)$. Moreover, $\int_M \alpha = \int_M \bar \alpha.$ Indeed, $\pi|_{\widetilde{M}_S^\circ} : \widetilde{M}_S^\circ \to M \setminus S$ is a diffeomorphism, while $S$ and $\partial \widetilde{M}_S = \widetilde{M}_S \setminus \widetilde{M}_S^\circ$ have measure zero. So,
\[
\int_M \bar \alpha = \int_{M\setminus S} \alpha =
\int_{\widetilde M^\circ_S} \pi^*\alpha =
\int_{\widetilde M_S} \pi^*\alpha = \int_M \alpha.
\]
\item
For $k \geq 0,$ one can define cone-$C^k$ differential forms analogously to cone-smooth differential forms and the definition of the integral remains valid. Indeed, Lemma~\ref{lemma:pullbackform} continues to hold if we replace cone-smooth with cone-$C^k.$ So, we have a $C^k$ differential form $\pi^*\alpha$ on $\widetilde{M}_S$ that extends $\left(\pi|_{\widetilde{M}_S^\circ}\right)^*\alpha,$ and the integral $\int_{\widetilde{M}_S} \pi^*\alpha$ is well-defined. Moreover, just as in~\ref{it:intcoin}, a $C^k$ differential form $\bar \alpha$ on $M$ gives rise to a cone-$C^k$ differential form $\alpha = \bar \alpha|_{M \setminus S}$ on $(M,S)$ and $\int_M \alpha = \int_M \bar \alpha$.
\end{enumerate}
\end{rem}
Recall the definition of a cone-smooth vector field $\xi$ and the associated blowup vector field $\widetilde \xi$ from Definition~\ref{definition: cone-smooth differential form etc}.
\begin{lemma}\label{lemma:cone smooth vanishing tangent}
Let $\xi$ be a cone-smooth vector field on $(M,S)$ with blowup vector field $\widetilde \xi$ vanishing on $\partial \widetilde{M}_S.$ Then, there exists a unique smooth vector field $\widehat \xi$ on $\widetilde{M}_S$ such that
$
d\pi\left(\widehat \xi|_{\widetilde M_S^\circ}\right) = \xi.
$
Moreover, $\widehat \xi$ is tangent to $\partial \widetilde{M}_S.$
\end{lemma}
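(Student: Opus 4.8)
The plan is to reduce everything to a local computation in cone coordinates near each exceptional sphere, using the smooth frame of the blowup tangent bundle $\widetilde{TM}_S = \pi^*TM$ furnished by Lemma~\ref{lemma: induced tangent frame} together with the division lemma, Lemma~\ref{lemma: milnor lemma}. The global structure of the argument is standard: over the interior $\widetilde{M}_S^\circ$ the blowup projection restricts to a diffeomorphism, so the bundle map $d\pi : T\widetilde{M}_S \to \widetilde{TM}_S$ is an isomorphism there, and $\widehat\xi := (d\pi)^{-1}(\widetilde\xi)$ is the unique smooth vector field on $\widetilde{M}_S^\circ$ that pushes forward to $\xi.$ Since $\widetilde{M}_S^\circ$ is dense in $\widetilde{M}_S,$ any smooth extension to $\widetilde{M}_S$ is unique; it therefore suffices to show that this interior field extends smoothly across $\partial\widetilde{M}_S$ and is tangent there, and both claims are local.

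So fix $p \in S,$ a point $\widetilde{p} \in E_p,$ and cone coordinates $(U,\mathbf{X},\alpha)$ at $\widetilde{p}$ with radial coordinate $s \ge 0,$ so that $\partial\widetilde{M}_S = \{s = 0\}$ near $\widetilde{p}.$ Let $e_0 := \pderiv[\pi]{s}$ and let $e_1,\dots,e_m$ be the smooth sections of $\pi^*TM$ defined in Lemma~\ref{lemma: induced tangent frame}; these form a smooth frame of $\widetilde{TM}_S$ near $\widetilde{p}.$ Directly from the definition of the $e_i$ and from the fact that $\pi$ collapses $E_p$ to a point, the map $d\pi$ sends $\pderiv{s} \mapsto e_0$ and $\pderiv{x^i} \mapsto s\,e_i$ for every value of $s$ (both sides vanishing at $s = 0$). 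Writing $\widetilde\xi = a^0 e_0 + \sum_{i=1}^m a^i e_i$ for smooth functions $a^0,\dots,a^m,$ we obtain on $\{s > 0\}$ that
\[
\widehat\xi = (d\pi)^{-1}(\widetilde\xi) = a^0\,\pderiv{s} + \sum_{i=1}^m \frac{a^i}{s}\,\pderiv{x^i}.
\]

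Now the hypothesis enters. Since $e_0,\dots,e_m$ are everywhere linearly independent, the assumption that $\widetilde\xi$ vanishes on $\partial\widetilde{M}_S$ is equivalent to $a^j(\widetilde q,0) = 0$ for $j = 0,\dots,m.$ Applying Lemma~\ref{lemma: milnor lemma} to each $a^i$ with $i \ge 1$ gives $a^i(\widetilde q,s) = s\,b^i(\widetilde q,s)$ with $b^i$ smooth, so $a^i/s = b^i$ extends smoothly across $\{s = 0\}.$ Hence $\widehat\xi = a^0\,\pderiv{s} + \sum_{i=1}^m b^i\,\pderiv{x^i}$ extends to a smooth vector field in a neighborhood of $\widetilde{p},$ and because $a^0$ vanishes on $\{s = 0\},$ the $\pderiv{s}$-component of $\widehat\xi$ vanishes there, i.e.\ $\widehat\xi$ is tangent to $\partial\widetilde{M}_S.$ These local extensions agree on the dense set $\widetilde{M}_S^\circ,$ hence patch to a global smooth vector field, completing the proof.

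The argument has no real obstacle; the only point requiring care is the claim that $d\pi$ is ``diagonal'' with entries $1, s, \dots, s$ relative to the coordinate frame on $T\widetilde{M}_S$ and the frame $e_0,\dots,e_m$ on $\widetilde{TM}_S,$ which is immediate from Lemma~\ref{lemma: induced tangent frame} and the construction of the oriented blowup. The boundary-vanishing hypothesis on $\widetilde\xi$ is precisely what is needed: it produces the factor of $s$ cancelling the $1/s$ introduced by $(d\pi)^{-1},$ and the same vanishing of $a^0$ yields tangency to $\partial\widetilde{M}_S.$
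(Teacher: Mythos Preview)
Your proof is correct and follows essentially the same approach as the paper: define the lift on the interior via the diffeomorphism $\pi|_{\widetilde M_S^\circ}$, reduce to cone coordinates, expand $\widetilde\xi$ in the frame $e_0,\dots,e_m$ of Lemma~\ref{lemma: induced tangent frame}, and apply Lemma~\ref{lemma: milnor lemma} to divide out the factor of $s$ in the $e_i$-components for $i\ge 1$, while the vanishing of the $e_0$-coefficient gives tangency to the boundary. The only cosmetic difference is that the paper writes the interior lift as $\overline\xi$ with $d\pi(\overline\xi)=\xi$ and then compares coefficients, whereas you phrase it as inverting the bundle map $d\pi$ on $\widetilde\xi$; the computations are identical.
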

\begin{proof}
Since $\pi|_{\widetilde M_S^\circ}$ is a diffeomorphism, there exists a unique vector field $\overline \xi$ on $\widetilde M_S^\circ$ such that
\begin{equation}\label{equation:oxi}
d\pi(\overline \xi) = \xi.
\end{equation}
To prove the lemma, it suffices to show that $\overline \xi$ extends smoothly to a vector field $\widehat \xi$ on $\widetilde M_S$ that is tangent to $\partial \widetilde M_S.$
To this end, we use Lemma~\ref{lemma: induced tangent frame} and the notation therein. Abbreviate $V^\circ = V \cap \widetilde M_S^\circ.$ It suffices to show that $\overline \xi|_{V^\circ}$ extends to $V.$ Write
$\widetilde \xi|_V = \sum_{i = 0}^m \widetilde \xi^i e_i.$ So,
\[
\widetilde \xi|_{V^\circ} = \widetilde \xi^0|_{V^\circ} \pderiv[\pi]{s} + \sum_{i = 1}^m \widetilde \xi^i|_{V^\circ} \frac{1}{s}\pderiv[\pi]{x^i}.
\]
Furthermore, write
\[
\overline \xi|_{V^\circ} = \overline\xi^0 \pderiv{s} + \sum_{i = 1}^m \overline \xi^i \pderiv{x^i}.
\]
Equation~\eqref{equation:oxi} gives
\[
\overline{\xi}^0 = \widetilde{\xi}^0|_{V^\circ}, \qquad\qquad \overline{\xi}^i =\frac{1}{s}\widetilde{\xi}^i|_{V^\circ}, \quad i = 1,\ldots,m.
\]
Since $\widetilde \xi$ vanishes on $\partial \widetilde M_S,$ it follows that the functions $\widetilde \xi^i$ vanish on $\partial \widetilde M_S.$ By Lemma~\ref{lemma: milnor lemma}, the functions $\overline \xi^i, \, i = 1,\ldots,m,$ extend smoothly to functions $\widehat \xi^i$ on~$V.$ Take $\widehat \xi^0 = \widetilde \xi^0.$ Define $\widehat \xi$ on $V$ by
\[
\widehat \xi|_V = \widehat \xi^0 \pderiv{s} + \sum_i \widehat \xi^i \pderiv{x^i}.
\]
Since $\widehat \xi^0 = \widetilde \xi^0$ vanishes on $\partial \widetilde M_S,$ it follows that $\widehat \xi$ is tangent to $\partial \widetilde{M}_S.$
\end{proof}

\begin{rem}\label{remark: blowup differential lifting}
Let $\Psi \in \diff(M,S).$ Then Remark~\ref{remark: cone-diffeomorphism lifting} and the fact that
\[
\pi^*\Psi^* TM = \widetilde \Psi^*\pi^*TM = \widetilde \Psi^*\widetilde{TM}_S
\]
imply that the blowup differential gives an isomorphism of vector bundles
\[
\widetilde{d\Psi} : \widetilde{TM}_S \overset{\sim}{\longrightarrow} \widetilde{\Psi}^*\widetilde{TM}_S.
\]
In particular, cone-smooth diffeomorphisms act by pull-back on cone-smooth differential forms.
\end{rem}

\begin{dfn}\label{definition:immersed cone smooth differential form}
Let $K = [(\Psi: M \to N,S)]$ be a cone-immersed submanifold of type $(M,S).$ A \emph{cone-smooth differential form} on $K$ is an equivalence class $\tau = [((\chi,S),\alpha)]$ where $(\chi,S)$ represents $K$ and $\alpha$ is a cone-smooth differential form on $(M,S).$ Two pairs are equivalent if they belong to the same orbit of the $\diff(M,S)$ action given by Remark~\ref{remark: blowup differential lifting}. We may write $\alpha = \Psi^*\tau.$ Given a smooth form $\eta$ on $N,$ the \emph{restriction} to $K$ is the cone-smooth form given by
\[
\eta|_K : = [((\Psi,S),\Psi^*\eta)].
\]
We say that $\tau$ is \emph{closed} if $\alpha$ is. We say that $\tau$ \emph{vanishes at the cone locus} if $\widetilde{\alpha}$ vanishes on $\partial \widetilde{M}_S.$ Given an orientation on $K,$ if $\pi^*\alpha$ has compact support, we define
\[
\int_K \tau := \int_M \alpha.
\]
\end{dfn}

\begin{dfn}
Let $K= [(\Psi:M \to N,S)]$ be a cone-immersed submanifold, let $p = [((\Psi,S),q)]$ be a cone point, and let $\widetilde{p} = \left[\left(\orientedprojective(d\Psi_q),\widetilde q\right)\right]\in \orientedprojective(TC_pK)$. The \emph{tangent space of $K$ at $\widetilde{p}$} is defined by
\[
T_{\widetilde p} K := \widetilde{d\Psi}_{\widetilde q}\left(\widetilde{TM}_{\widetilde q}\right)\subset T_{p_0}N,
\]
which is independent of the choice of representatives. At a smooth point $p$ of $K,$ we define the tangent space $T_pK$ as in Definition~\ref{definition:points}.

Let $h = [((\Psi,S),f)]$ be a cone-smooth function on $K.$ Then the differential of $h$ at $\widetilde p$ is defined by
\[
dh_{\widetilde p} := df_{\widetilde p}\circ \widetilde{d\Psi}^{-1}_{\widetilde q} : T_{\widetilde p} K \to \R,
\]
which is independent of choices of representatives.
\end{dfn}

\begin{rem}
An orientation on a cone immersed submanifold $K$ as in Definition~\ref{definition:cone immersed submanifold}~\ref{item:orientation} is equivalent to a continuously varying orientation on its tangent spaces.
\end{rem}

\begin{dfn}
		\label{definition: critical cone point}\mbox{}
\begin{enumerate}
\item\label{item:critical cone point intrinsic}
Let $(f : M \to \R,S)$ be a cone-smooth function, and let $p \in S.$ The point $p$ is said to be a \emph{critical point} of $f$ if the cone-derivative $df_p : T_pM \to \R$ vanishes identically.
\item\label{item:critical cone point submanifold}
Let $K = [(\Psi : M \to N,S)]$ be a cone-immersed submanifold, let $h = [((\Psi,S),f)]$ be a cone-smooth function on $K,$ and let $p  = [((\Psi,S),q)]$ be a cone point. The point $p$ is a \emph{critical point} of $h$ if $q$ is a critical point of $f.$
\end{enumerate}
	\end{dfn}
\begin{rem}\label{remark: critical cone point}
It follows from Lemma~\ref{lemma:blowupdifferential} that in the situation of part~(\ref{item:critical cone point intrinsic}) of the preceding definition, if $p$ is a critical point of $f,$ then $df_{\widetilde p} = 0$ for all $\widetilde p \in E_p.$ The analogous statement holds in the situation of part~(\ref{item:critical cone point submanifold}).
\end{rem}

\begin{dfn}\label{dfn:cone-Hessian}
Let $(f : M \to \R,S)$ be a cone-smooth function, and let $p \in S$ be a critical point of $f.$
\begin{enumerate}
\item
The \emph{cone-Hessian} of $f$ at $p$ is the map
\[
\nabla df : T_pM \to T_p^*M,
\]
smooth away from $0$ and homogeneous of degree $1$, defined as follows.
By Remark~\ref{remark: critical cone point}, the blowup differential $\widetilde{df}$ vanishes on the exceptional sphere $E_p \subset \widetilde{M}_S.$ So, the restriction of the second covariant derivative $\nabla \widetilde{df} \in Hom\left(T\widetilde{M}_S, \widetilde{TM}_S^*\right)$ to $E_p$ is independent of the choice of connection. Moreover, $\nabla \widetilde{df}$ vanishes on $TE_p \subset T\widetilde{M}_S|_{E_p}.$ Recall that a vector $0 \neq v \in T_pM$ gives rise to a point $[v] \in \orientedprojective(T_pM) \simeq E_p.$ For $v \in T_pM,$ and $\widetilde v \in T_{[v]} \widetilde{M}_S$ such that $d \pi_{[v]}(\widetilde v) = v,$  we define
\[
\nabla_v df := \nabla_{\widetilde v} \widetilde{df} \in \left(\widetilde{TM}^*_S\right)_{[v]} = T_p^*M.
\]
\item
\label{definition: degenerate critical cone point}
The critical point $p$ is said to be \emph{degenerate} if there exists a tangent vector $0\ne v\in T_pM$ with $\nabla_vdf=0.$
\end{enumerate}
\end{dfn}

\begin{lemma}\label{lemma: coordinate expression of cone-second derivative}
Continue with the notation of Definition~\ref{dfn:cone-Hessian}. Let $0 \neq v \in T_pM$ and write $\widetilde p = [v].$ Take local cone coordinates with $\pderiv[\pi]{s}\left(\widetilde{p},0\right)=v$ and let $e_0,\ldots,e_m,$ denote the induced local frame of the blowup tangent bundle as in Lemma~\ref{lemma: induced tangent frame}. The cone-Hessian is given by
	\begin{equation*}
	(\nabla_vdf)(e_0(\widetilde p, 0))=\frac{\partial^2f}{\partial s^2}\left(\widetilde{p},0\right),\quad(\nabla_vdf)(e_i(\widetilde p, 0))=\frac{1}{2}\frac{\partial^3f}{\partial s^2\partial x^i}\left(\widetilde{p},0\right), \quad 1 \leq i \leq m.
	\end{equation*}
\end{lemma}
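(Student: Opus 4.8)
The plan is to pick a connection on the blowup tangent bundle adapted to the frame $e_0,\ldots,e_m$ of Lemma~\ref{lemma: induced tangent frame}, so that the covariant derivative appearing in Definition~\ref{dfn:cone-Hessian} reduces to an ordinary $s$-derivative of component functions, and then to apply Lemma~\ref{lemma: milnor lemma} twice.

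First I would unwind the definition of $\nabla_v df.$ The cone-coordinate normalization $\pderiv[\pi]{s}(\widetilde p,0)=v$ says exactly that $\widetilde v:=\pderiv{s}\big|_{(\widetilde p,0)}\in T_{\widetilde p}\widetilde{M}_S$ satisfies $d\pi_{\widetilde p}(\widetilde v)=e_0(\widetilde p,0)=v,$ so $\widetilde v$ is an admissible choice in Definition~\ref{dfn:cone-Hessian} and $\nabla_v df=\nabla_{\widetilde v}\widetilde{df}.$ Since $p$ is a critical point, Remark~\ref{remark: critical cone point} gives that $\widetilde{df}$ vanishes on $E_p;$ hence, for any connection, the Leibniz rule gives $(\nabla_{\widetilde v}\widetilde{df})(e_j)=\widetilde v\big(\widetilde{df}(e_j)\big)-\widetilde{df}(\nabla_{\widetilde v}e_j),$ and the last term vanishes at $\widetilde p.$ Therefore
\[
(\nabla_v df)\big(e_j(\widetilde p,0)\big)=\pderiv{s}\big(\widetilde{df}(e_j)\big)(\widetilde p,0),\qquad j=0,\ldots,m.
\]
Since $e_0(\widetilde p,0),\ldots,e_m(\widetilde p,0)$ is a basis of $T_pM$ by Lemma~\ref{lemma: induced tangent frame}, it remains to determine the component functions $\widetilde{df}(e_j)$ near $(\widetilde p,0)$ to first order in $s.$

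For this I repeat the computation from the proof of Lemma~\ref{lemma:blowupdifferential}. For $s>0$ the blowup projection is a local diffeomorphism, so $\widetilde{df}(e_0)=\widetilde{df}\big(\pderiv[\pi]{s}\big)=\pderiv[f]{s}$ and $\widetilde{df}(e_i)=\tfrac1s\,\widetilde{df}\big(\pderiv[\pi]{x^i}\big)=\tfrac1s\,\pderiv[f]{x^i}$ for $i=1,\ldots,m;$ by continuity the identity $\widetilde{df}(e_0)=\pderiv[f]{s}$ holds on the whole cone chart. Because $f\circ\pi$ is constant on $E_p,$ the smooth function $\pderiv[f]{x^i}$ vanishes at $s=0,$ so Lemma~\ref{lemma: milnor lemma}, applied to $\pderiv[f]{x^i},$ gives $\pderiv[f]{x^i}(x,s)=s\,g_i(x,s)$ with $g_i$ smooth, $g_i(x,0)=\frac{\partial^2 f}{\partial s\,\partial x^i}(x,0),$ and $\pderiv[g_i]{s}(x,0)=\tfrac12\frac{\partial^3 f}{\partial s^2\,\partial x^i}(x,0);$ hence $\widetilde{df}(e_i)=g_i$ extends smoothly across $s=0.$ Differentiating $\widetilde{df}(e_0)=\pderiv[f]{s}$ and $\widetilde{df}(e_i)=g_i$ in $s$ at $(\widetilde p,0)$ and substituting into the displayed identity gives
\[
(\nabla_v df)\big(e_0(\widetilde p,0)\big)=\frac{\partial^2 f}{\partial s^2}(\widetilde p,0),\qquad (\nabla_v df)\big(e_i(\widetilde p,0)\big)=\frac12\frac{\partial^3 f}{\partial s^2\,\partial x^i}(\widetilde p,0),
\]
which is the claim.

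I do not expect a genuine obstacle; the argument is essentially bookkeeping. The two points requiring care are the interplay of the connection-independence statements built into Definition~\ref{dfn:cone-Hessian} — that $\nabla\widetilde{df}|_{E_p}$ is independent of the connection and annihilates $TE_p$ — which is what legitimizes both the choice $\widetilde v=\pderiv{s}$ and the reduction to differentiating component functions, and the fact that Lemma~\ref{lemma: milnor lemma} must be applied to $\pderiv[f]{x^i}$ rather than to $f$ itself, which is where the factor $\tfrac12$ is produced.
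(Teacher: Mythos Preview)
Your proof is correct and follows essentially the same route as the paper: identify $\widetilde{df}(e_i)$ with $g_i$ via Lemma~\ref{lemma: milnor lemma} applied to $\pderiv[f]{x^i}$, then differentiate in $s$. The only cosmetic difference is that the paper fixes the connection making $e_0,\ldots,e_m$ parallel (so $\nabla_{\widetilde v}e_j=0$), whereas you keep the connection arbitrary and kill the correction term $\widetilde{df}(\nabla_{\widetilde v}e_j)$ using $\widetilde{df}|_{\widetilde p}=0$; these are equivalent, and your version makes the connection-independence more transparent. (Minor remark: you only invoke Lemma~\ref{lemma: milnor lemma} once, not twice as your opening line suggests.)
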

\begin{proof}
Since
\[
\pderiv[f]{x^i}(\widetilde{q},0) = 0, \qquad \widetilde q \in U,
\]
Lemma~\ref{lemma: milnor lemma} gives
$\pderiv[f]{x^i} = s g_i,$ where $g_i$ is smooth and
\[
\pderiv[g_i]{s}(\widetilde q, 0) = \frac{1}{2}\frac{\partial^3 f}{\partial s^2 \partial x^i}(\widetilde q, 0).
\]
Observe that for $s > 0,$ we have
\[
df(e_i) =
\frac{1}{s}\pderiv[f]{x^i} = g_i.
\]
By continuity, we have $df(e_i) = g_i$ everywhere. Use the connection on $\widetilde{TM}_S$ with respect to which the frame $e_0,\ldots,e_m,$ is parallel.
Then, for $i = 1,\ldots,m,$
\[
(\nabla_vdf)(e_i(\widetilde p, 0)) = \left.\pderiv{s} df(e_i) \right|_{(\widetilde p, 0)}= \pderiv[g_i]{s}(\widetilde p, 0) =  \frac{1}{2}\frac{\partial^3 f}{\partial s^2 \partial x^i}(\widetilde p, 0).
\]
The proof of the left-hand equality is similar but easier.
\end{proof}

\begin{dfn}
Let $K = [(\Psi: M \to N,S)]$ be a cone-immersed submanifold, let $h = [((\Psi,S),f)]$ be a cone-smooth function and let $p = [((\Psi,S),q)]$ be a critical cone point. Let $\widehat TC_pK = TC_pK \setminus\{0\}$ denote the punctured tangent cone. The \emph{cone-Hessian} of $h$ at $p$ is the section $\nabla dh$ of the cotangent bundle of the punctured tangent cone
\[
T^*\widehat TC_pK \to \widehat TC_pK
\]
defined as follows. Let $v = [(d\Psi_q,w)]$ be a point of the punctured tangent cone. Observe that $T_v \widehat TC_pK \simeq T_{[v]} K$ by equation~\eqref{equation:dderivatives}. Recall that $(\widetilde {T M}_S)_{[w]} \simeq T_q M$ and by definition $T_{[v]}K= \widetilde{d\Psi}_{[w]}((\widetilde {T M}_S)_{[w]}).$ We define
\[
\nabla_v dh := \nabla_{w} df \circ \widetilde{d\Psi}^{-1}_{[w]} : T_{[v]}K \to \R.
\]
The critical cone point $p$ is \emph{degenerate} if $q$ is a degenerate critical point of $f.$
\end{dfn}

	The following lemma is well-known for extrema of smooth functions. We show it is true also for cone-smooth functions.
	
	\begin{lemma}
		\label{lemma: vanishing second derivative at extremum point}
		Let $M$ be a smooth manifold, let $p\in M,$ let $0\ne v\in T_pM$ and let $h:M\to\mathbb{R}$ be cone-smooth at $p$ with $dh_p=0.$ Assume further that $p$ is an extremum point of $h.$ If the equality $(\nabla_vdh)(v)=0$ holds, then we have $\nabla_vdh=0.$ In particular, $p$ is a degenerate critical point of $h$ in this case.
	\end{lemma}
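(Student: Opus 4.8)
The plan is to reduce the statement to a calculus fact about the honest smooth function obtained by pulling $h$ back to the oriented blowup, working in cone coordinates adapted to $v.$ Write $\widetilde p = [v] \in \orientedprojective(T_pM) = E_p.$ I would choose cone coordinates $(U,\mathbf{X},\alpha)$ at $\widetilde p,$ with $\mathbf{X} = (x^1,\dots,x^m),$ normalized so that $\frac{\partial\pi}{\partial s}(\widetilde p,0) = v$ as in Lemma~\ref{lemma: coordinate expression of cone-second derivative}, and take the induced frame $e_0 = \frac{\partial\pi}{\partial s},\, e_1,\dots,e_m$ of $\pi^*TM$ from Lemma~\ref{lemma: induced tangent frame}; thus $e_0(\widetilde p,0) = v,$ and $e_0(\widetilde p,0),\dots,e_m(\widetilde p,0)$ is a basis of $T_pM.$ Since $h$ is cone-smooth at $p,$ the composition $h \circ \pi \circ \alpha$ is smooth on $U \times [0,\epsilon)$ and, after a smooth extension across $\{s=0\},$ may be treated as a smooth function on $U \times (-\epsilon,\epsilon);$ set $\psi := h \circ \pi \circ \alpha - h(p).$ Because $\pi$ collapses $E_p$ to $p$ we have $\psi(\widetilde q,0) = 0,$ and, by the definition of the cone-derivative in Definition~\ref{definition: cone differentiability}, the hypothesis $dh_p = 0$ gives $\frac{\partial\psi}{\partial s}(\widetilde q,0) = dh_p\big(e_0(\widetilde q,0)\big) = 0$ for all $\widetilde q \in U.$ Applying Lemma~\ref{lemma: milnor lemma} twice, $\psi(\widetilde q,s) = s^2\psi_2(\widetilde q,s)$ with $\psi_2$ smooth and $\psi_2(\widetilde q,0) = \tfrac12\,\frac{\partial^2\psi}{\partial s^2}(\widetilde q,0) = \tfrac12\,\frac{\partial^2 h}{\partial s^2}(\widetilde q,0).$

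Next I would bring in the extremum hypothesis. Replacing $h$ by $-h$ if necessary --- which negates $\nabla_v dh$ but preserves all the hypotheses and the conclusion --- I may assume $h \ge h(p)$ on a neighborhood of $p$ in $M.$ By continuity of $\pi\circ\alpha$ at $(\widetilde p,0),$ after shrinking $U$ and $\epsilon$ this forces $\psi \ge 0,$ hence $\psi_2(\widetilde q,s) \ge 0$ for $s > 0,$ and letting $s \to 0^+$ gives $\frac{\partial^2 h}{\partial s^2}(\widetilde q,0) = 2\psi_2(\widetilde q,0) \ge 0$ for every $\widetilde q \in U.$ On the other hand, Lemma~\ref{lemma: coordinate expression of cone-second derivative} identifies the hypothesis $(\nabla_v dh)(v) = (\nabla_v dh)\big(e_0(\widetilde p,0)\big) = \frac{\partial^2 h}{\partial s^2}(\widetilde p,0)$ with $0.$ Hence the smooth function $\widetilde q \mapsto \frac{\partial^2 h}{\partial s^2}(\widetilde q,0)$ on $U$ is nonnegative and attains the value $0$ at $\widetilde p,$ so its differential vanishes there: $\frac{\partial^3 h}{\partial x^i\partial s^2}(\widetilde p,0) = 0$ for $i = 1,\dots,m.$

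Finally, applying Lemma~\ref{lemma: coordinate expression of cone-second derivative} once more, $(\nabla_v dh)\big(e_i(\widetilde p,0)\big) = \tfrac12\,\frac{\partial^3 h}{\partial s^2\partial x^i}(\widetilde p,0) = 0$ for $i = 1,\dots,m,$ while $(\nabla_v dh)\big(e_0(\widetilde p,0)\big) = 0;$ since $e_0(\widetilde p,0),\dots,e_m(\widetilde p,0)$ is a basis of $T_pM,$ this yields $\nabla_v dh = 0,$ and then $p$ is a degenerate critical point of $h$ by Definition~\ref{definition: degenerate critical cone point}. I expect the only non-routine step to be the passage from ``$p$ is an extremum of $h$'' to the pointwise inequality $\frac{\partial^2 h}{\partial s^2}(\,\cdot\,,0) \ge 0$ along $E_p$ --- i.e.\ the observation that the pure second $s$-derivative along $\{s=0\}$ of a nonnegative function vanishing to first order there is itself nonnegative --- which is exactly where the argument genuinely uses that $p$ is an extremum and not merely a critical point; the rest is bookkeeping with cone-coordinate formulas already in hand.
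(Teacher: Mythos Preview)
Your proof is correct and follows essentially the same approach as the paper: pass to cone coordinates at $\widetilde p=[v]$, use the extremum hypothesis to obtain $\frac{\partial^2 h}{\partial s^2}(\widetilde q,0)\ge 0$ for $\widetilde q$ near $\widetilde p$, note that this nonnegative function vanishes at $\widetilde p$ by the hypothesis $(\nabla_v dh)(v)=0$, conclude its $x^i$-derivatives vanish there, and translate back via Lemma~\ref{lemma: coordinate expression of cone-second derivative}. The only difference is cosmetic: you derive the nonnegativity via the factorization $\psi=s^2\psi_2$ from Lemma~\ref{lemma: milnor lemma}, while the paper asserts it directly; the aside about extending smoothly across $\{s=0\}$ is unnecessary but harmless.
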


	\begin{proof}
		Without loss of generality we assume $p$ is a minimum of $h.$ Let $\widetilde{p}:=[v]\in\orientedprojective(T_pM).$ Let $(U,\mathbf{X},\alpha)$ be local cone coordinates with $\pderiv[\pi]{s}\left(\widetilde{p},0\right)=v$ and let $e_0,\ldots,e_m,$ denote the induced local frame of the blowup tangent bundle as in Lemma~\ref{lemma: induced tangent frame}. By the assumption and Lemma~\ref{lemma: coordinate expression of cone-second derivative}, we have
		\begin{equation}
		\label{second s derivative vanishes}
		\frac{\partial^2h}{\partial s^2}\left(\widetilde{p},0\right)=0.
		\end{equation}
		Since $p$ is a critical point where $h$ attains a minimum, we have
		\begin{equation}
		\label{all second derivatives are non-negative}
		\frac{\partial^2h}{\partial s^2}\left(\widetilde{q},0\right)\ge0,\quad\widetilde{q}\in U.
		\end{equation}
		For $i=1,\ldots,m,$ Lemma~\ref{lemma: coordinate expression of cone-second derivative} gives
\[
		(\nabla_vdh)\left(e_i\left(\widetilde{p},0\right)\right)
		=\frac{1}{2}\frac{\partial^3h}{\partial s^2\partial x^i}\left(\widetilde{p},0\right)
		=\frac{1}{2}\frac{\partial^3h}{\partial x^i\partial s^2}\left(\widetilde{p},0\right).
\]
Since equations~\eqref{second s derivative vanishes} and~\eqref{all second derivatives are non-negative} imply that $\tilde p$ is a local minimum for $\frac{\partial^2h}{\partial s^2}(x_1,\ldots,x_m, 0),$ it follows that
		\begin{equation}
		\label{another derivative vanishes}
		(\nabla_vdh)\left(e_i\left(\widetilde{p},0\right)\right) =0.
		\end{equation}
		Since $e_0\left(\widetilde{p},0\right) = v,$ the assumption and~\eqref{another derivative vanishes} give $\nabla_vdh=0,$ as desired.
	\end{proof}

Recall the meaning of a polar coordinate map from Definition~\ref{definition:polar coordinates}.
	\begin{lemma}
		\label{lemma: nice level sets}
		Let $M$ be a smooth manifold of dimension $m+1$ and let $p\in M.$ Let $h:M\to\mathbb{R}$ be cone-smooth at $p$ such that $p$ is a non-degenerate critical point and an extremum point of $h.$ Then there exist a positive $\epsilon$ and a polar coordinate map $\kappa:S^m\times[0,\epsilon)\to M$ centered at $p$ such that
for each $s\in(0,\epsilon)$ the restricted map $\kappa|_{S^m\times\{s\}}$ parameterizes a level set of $h.$
	\end{lemma}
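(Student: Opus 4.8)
The plan is to start from an arbitrary polar coordinate map centered at $p$, pin down the first nonvanishing radial Taylor coefficient of $h$ along each ray with the cone-Hessian machinery, and then straighten the level sets by reparameterizing each ray by a different, jointly smooth amount. Compactness of $S^m$ is what will make the successive choices uniform over all rays.

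First I would fix a section $\sigma_0$ of $T_pM\setminus\{0\}\to\orientedprojective(T_pM)\cong S^m$ and a polar coordinate map $\kappa_0:S^m\times[0,\epsilon_0)\to M$ centered at $p$ associated with $\sigma_0$ (such maps exist by Definition~\ref{definition:polar coordinates}). Since $h$ is cone-smooth at $p$, after shrinking $\epsilon_0$ the function $H:=h\circ\kappa_0$ is smooth on $S^m\times[0,\epsilon_0)$. Because $\kappa_0(r,0)=p$ and $dh_p\equiv 0$ (as $p$ is a critical point), one has $H(r,0)=h(p)$ and $\frac{\partial H}{\partial s}(r,0)=dh_p(\sigma_0(r))=0$ for all $r$. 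Replacing $h$ by $-h$ if needed, assume $p$ is a minimum, so each $s\mapsto H(r,s)$ has a minimum at $s=0$. The crucial observation is that $\frac{\partial^2 H}{\partial s^2}(r,0)>0$ for every $r$: near any point of $E_p\cong S^m$, the lift of $\kappa_0$ to the blowup together with a coordinate chart of $E_p$ provides local cone coordinates in the sense of Definition~\ref{definition: blowup}(\ref{cone coordinates}) whose induced frame vector $e_0$ restricts on $E_p$ to $\sigma_0$, so Lemma~\ref{lemma: coordinate expression of cone-second derivative} gives $\frac{\partial^2 H}{\partial s^2}(r,0)=(\nabla_{\sigma_0(r)}dh)(\sigma_0(r))$. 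Since $p$ is non-degenerate, $\nabla_{\sigma_0(r)}dh\ne 0$, and since $p$ is an extremum, Lemma~\ref{lemma: vanishing second derivative at extremum point} then forces $(\nabla_{\sigma_0(r)}dh)(\sigma_0(r))\ne 0$; as this quantity is also $\ge 0$ by the minimum property, it is strictly positive, and by compactness there is $c>0$ with $\frac{\partial^2 H}{\partial s^2}(r,0)\ge 2c$ for all $r$.

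Next I would carry out the straightening. Applying Lemma~\ref{lemma: milnor lemma} twice to $H-h(p)$ in local charts on $S^m$, using the vanishing of $H-h(p)$ and of $\frac{\partial H}{\partial s}$ along $s=0$, yields a smooth $G$ on $S^m\times[0,\epsilon_0)$ with $H(r,s)-h(p)=s^2G(r,s)$ and $G(r,0)=\tfrac12\frac{\partial^2 H}{\partial s^2}(r,0)\ge c$; shrinking $\epsilon_0$, we may assume $G>0$ throughout. Set $\phi(r,s):=s\sqrt{G(r,s)}$, which is smooth with $\phi(r,0)=0$, $\frac{\partial\phi}{\partial s}(r,0)=\sqrt{G(r,0)}>0$, and $\phi(r,s)^2=H(r,s)-h(p)$. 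Shrinking $\epsilon_0$ once more so that $\frac{\partial\phi}{\partial s}>0$, each $\phi(r,\cdot)$ is a diffeomorphism onto $[0,\phi(r,\epsilon_0))$, and by compactness there is $\epsilon>0$ below every endpoint $\phi(r,\epsilon_0)$; the implicit function theorem then provides a jointly smooth $\psi:S^m\times[0,\epsilon)\to[0,\epsilon_0)$ with $\phi(r,\psi(r,s))=s$, $\psi(r,0)=0$, and $\frac{\partial\psi}{\partial s}(r,0)=1/\frac{\partial\phi}{\partial s}(r,0)>0$. I would then define $\kappa(r,s):=\kappa_0(r,\psi(r,s))$ on $S^m\times[0,\epsilon)$ and check it is a polar coordinate map centered at $p$: it is smooth, $\kappa(\cdot,0)\equiv p$, it restricts to an open embedding on $S^m\times(0,\epsilon)$ (being $\kappa_0$ composed with the open embedding $(r,s)\mapsto(r,\psi(r,s))$), and $\frac{\partial\kappa}{\partial s}(r,0)=\frac{\partial\psi}{\partial s}(r,0)\,\sigma_0(r)$ is a nowhere-vanishing smooth section over $S^m$, which is then the section associated with $\kappa$. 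Finally, $h(\kappa(r,s))=H(r,\psi(r,s))=h(p)+\phi(r,\psi(r,s))^2=h(p)+s^2$, which depends only on $s$; hence for each $s\in(0,\epsilon)$ the image of $\kappa|_{S^m\times\{s\}}$ lies in the level set $h^{-1}(h(p)+s^2)$, and since $\kappa(S^m\times[0,\epsilon))$ is a neighborhood of $p$, this embedded sphere is precisely the level set near $p$, as required.

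The only substantive point is the positivity of $\frac{\partial^2 H}{\partial s^2}(r,0)$, that is, the correct bookkeeping between the intrinsic cone-Hessian $\nabla_v dh$, its local cone-coordinate expression from Lemma~\ref{lemma: coordinate expression of cone-second derivative}, and the radial derivative of $H$; this is where the non-degeneracy and extremum hypotheses are both used, via Lemma~\ref{lemma: vanishing second derivative at extremum point}. Everything after that is a uniform-in-$r$ one-parameter Morse lemma, with compactness of $S^m$ guaranteeing that all the shrinkings can be made simultaneously for every ray.
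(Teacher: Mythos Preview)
Your argument is correct and follows essentially the same approach as the paper's own proof: both use Lemmas~\ref{lemma: coordinate expression of cone-second derivative} and~\ref{lemma: vanishing second derivative at extremum point} to obtain strict positivity of the second radial derivative, apply Lemma~\ref{lemma: milnor lemma} twice to factor $h-h(p)=s^2G$ with $G>0$, and then invert $s\sqrt{G}$ via the implicit function theorem to straighten the level sets. The only cosmetic difference is that you begin with a polar coordinate map $\kappa_0$ and reparameterize its radial coordinate, whereas the paper works directly on the blowup $\widetilde{M}_p\cong\orientedprojective(T_pM)\times[0,\epsilon)$ and then composes with the blowup projection; these are the same construction in slightly different language.
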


	\begin{proof}
		Without loss of generality we suppose $h(p)=0$ is a minimum. Let $\pi:\widetilde{M}_p\to M$ denote the blowup projection. For simplicity, we write $h$ instead of $h\circ\pi$ and think of $h$ as a function on $\widetilde{M}_p.$ Identify a neighborhood $E_p\subset V\subset\widetilde{M}_p$ with $\orientedprojective(T_pM)\times[0,\epsilon)$ and let $r$ denote the $[0,\epsilon)$-coordinate. Then we have
		\[
		\pderiv[h]{r}\left(\widetilde{p},0\right)=0,\quad\widetilde{p}\in\orientedprojective(T_pM).
		\]
		By Lemmas~\ref{lemma: coordinate expression of cone-second derivative} and~\ref{lemma: vanishing second derivative at extremum point}, we have
		\[
		\frac{\partial^2h}{\partial r^2}\left(\widetilde{p},0\right)>0,\quad\widetilde{p}\in\orientedprojective(T_pM).
		\]
		Applying Lemma~\ref{lemma: milnor lemma} twice and diminishing $\epsilon$ if necessary, we write
		\[
		h\left(\widetilde{p},r\right)=r^2f\left(\widetilde{p},r\right),
		\]
		where $f:\orientedprojective(T_pM)\times[0,\epsilon)\to\mathbb{R}$ is smooth and positive. It follows that the function $\sqrt{h}:\orientedprojective(T_pM)\times[0,\epsilon) \to \R$ is smooth. Diminishing $\epsilon$ again if necessary, $\sqrt{h}$ has no critical points. By the implicit function theorem, there exists a diffeomorphism
		\[
		\widetilde{\kappa}:S^m\times[0,\epsilon)\to\orientedprojective(T_pM)\times[0,\epsilon)
		\]
		satisfying
		\[
		\sqrt{h}\left(\widetilde{\kappa}(q,s)\right)=s,\quad(q,s)\in S^m\times[0,\epsilon).
		\]
We claim that the map $\kappa:=\pi\circ\widetilde{\kappa}$ has all the desired properties. Let
\[
\varpi : T_pM \setminus\{0\} \to \orientedprojective(T_pM)
\]
denote the projection. To show that $\kappa$ satisfies property~(\ref{item:sigma}) of a polar coordinate map, it suffices to show that $\pderiv[\kappa]{s}(q,0) \neq 0$ for $q \in S^m$ and the composition $\varpi \circ \pderiv[\kappa]{s}(\cdot,0) : S^m \to \orientedprojective(T_pM)$ is a diffeomorphism. Indeed,
\[
\pderiv[\kappa]{s}(q,0) = d\pi \circ \pderiv[\widetilde\kappa]{s}(q,0).
\]
Since $\widetilde \kappa$ is a diffeomorphism, $\pderiv[\widetilde{\kappa}]{s}(q,0)$ is not tangent to $E_p.$ So, $\pderiv[\kappa]{s}(q,0) \neq 0.$ Finally, $\varpi \circ \pderiv[\kappa]{s}(\cdot,0) = \widetilde{\kappa}(\cdot,0),$ which is a diffeomorphism by construction. The remaining properties of $\kappa$ are immediate.
	\end{proof}

	\subsection{Cone-immersed Lagrangians}
	\label{subsection: cone-immersed Lagrangians}
	
	Let $(X,\omega)$ be a $2n$-dimensional symplectic manifold and let $L$ be a connected $n$-dimensional smooth manifold. A cone-immersion $(\Psi:L\to X,S)$ is said to be \emph{Lagrangian} if it is Lagrangian away from its cone locus. The cone-immersed submanifold represented by a Lagrangian cone-immersion is also said to be Lagrangian. Suppose $\Lambda=[(\Psi:L\to X,S)]$ is Lagrangian, let $p\in\Lambda$ be a cone point, and let $\widetilde{p}\in\orientedprojective(TC_p\Lambda).$ As the Lagrangian Grassmannian bundle of $X$ is a closed subset of the $n$-Grassmannian bundle, it follows that $T_{\widetilde{p}}\Lambda$ is a Lagrangian subspace of $T_{p_0}X.$ Thus, we have a well-defined phase function $\theta_\Psi : \widetilde{L}_S \to S^1$ given by $\theta_\Psi(q) = \theta_{\widetilde{d\Psi}_q((\widetilde{TL}_S)_q)}.$
	
	We wish to study paths of cone-immersed Lagrangians with static cone locus. For a finite subset $C_0\subset X,$ we let $\mathcal{L}(X,L;S,C_0)$ denote the space of oriented cone-immersed Lagrangians in $X$ of type $(L,S)$ with cone locus image equal to $C_0.$ For a path $\Lambda=(\Lambda_t)_{t\in[0,1]}$ in $\mathcal{L}(X,L;S,C_0),$ a lifting of $\Lambda$ is a family of cone-immersions, $((\Psi_t:L\to X,S))_{t\in[0,1]},$ such that $(\Psi_t,S)$ represents $\Lambda_t$ for $t\in[0,1].$ The path $\Lambda$ is smooth if it admits a smooth lifting, that is, if the family of maps $\Psi_t\circ\pi$ is smooth, where $\pi:\widetilde{L}_S\to L$ is the blowup projection. Given a smooth path $\Lambda = (\Lambda_t)_{t \in [0,1]},$ we define a family of $1$-forms $\sigma_t$ on $\Lambda_t$ as follows. Let $((\Psi_t:L \to X,S))_{t}$ be a smooth lifting. We abbreviate
\[
\deriv{t}\Psi_t : = \deriv{t}\Psi_t|_{L\setminus S},
\]
which is a cone-smooth vector field along $\Psi_t.$ Moreover, the blowup vector field $\widetilde{\deriv{t}\Psi_t}$ vanishes on $\partial \widetilde{L}_S.$ We define
\[
\sigma_t := \left[\left((\Psi_t,S),i_{\deriv{t}\Psi_t}\omega\right)\right].
\]
\begin{lemma}\label{lemma:derivative 1-form of cone smooth path}
The form $\sigma_t$ is independent of the choice of $\Psi_t,$ closed, and vanishes at the cone locus of $\Lambda_t.$
\end{lemma}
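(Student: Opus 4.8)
The plan is to check the three assertions separately, reducing each to facts established earlier in this section. The key computational input for \emph{independence of the lifting} is the Lagrangian condition $\Psi_t^*\omega = 0$ on $L \setminus S.$ If $(\Psi_t)_t$ and $(\Psi_t')_t$ are two smooth liftings of $\Lambda,$ then for each $t$ there is $\varphi_t \in \diff(L,S)$ with $\Psi_t' = \Psi_t \circ \varphi_t;$ since the strict transforms $\widetilde{\Psi}_t$ are smooth immersions depending smoothly on $t$ (Lemma~\ref{lemma: cone-immersions}), inverting them locally by the inverse function theorem with parameters and patching shows $\varphi_t$ may be taken to depend smoothly on $t.$ Differentiating $\Psi_t' = \Psi_t \circ \varphi_t$ in $t$ on $L \setminus S$ and contracting with $\omega,$ the term involving $\deriv{t}\varphi_t$ lands in the image of $d\Psi_t$ and thus contributes a factor $\Psi_t^*\omega = 0;$ hence
\[
i_{\deriv{t}\Psi_t'}\omega = \varphi_t^*\!\left(i_{\deriv{t}\Psi_t}\omega\right) \qquad \text{on } L \setminus S.
\]
Both sides are cone-smooth $1$-forms on $(L,S)$ — the left by Lemma~\ref{lemma:cone-smooth pullback}, the right because $\varphi_t$ acts by pull-back on cone-smooth forms by Remark~\ref{remark: blowup differential lifting} — and a cone-smooth form is determined by its restriction to $L \setminus S,$ so the equality persists as an equality of cone-smooth forms. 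As the ordinary pull-back by $\varphi_t$ on $L \setminus S$ is the restriction of the action of Remark~\ref{remark: blowup differential lifting}, Definition~\ref{definition:immersed cone smooth differential form} shows $((\Psi_t',S), i_{\deriv{t}\Psi_t'}\omega)$ and $((\Psi_t,S), i_{\deriv{t}\Psi_t}\omega)$ represent the same $\sigma_t.$

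For \emph{closedness}, I would pass to the smooth map $\Psi : (L \setminus S)\times[0,1] \to X$ given by $\Psi(q,t) = \Psi_t(q).$ Since $\omega$ is closed, so is $\Psi^*\omega;$ writing $\Psi^*\omega = \omega_t + dt\wedge\beta_t$ with $\omega_t$ a family of $2$-forms and $\beta_t$ a family of $1$-forms on $L \setminus S,$ one identifies $\omega_t = \Psi_t^*\omega$ and $\beta_t = i_{\deriv{t}\Psi_t}\omega.$ The $dt$-component of $0 = d(\Psi^*\omega)$ gives $d_L\beta_t = \partial_t\omega_t,$ and $\omega_t \equiv 0$ because each $\Psi_t$ is Lagrangian away from $S;$ hence $i_{\deriv{t}\Psi_t}\omega$ is closed on $L \setminus S,$ which by Definition~\ref{definition: cone-smooth differential form etc} is exactly closedness of the cone-smooth form, hence of $\sigma_t.$

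\emph{Vanishing at the cone locus} is then immediate from Lemma~\ref{lemma:cone-smooth pullback} applied with $\xi = \deriv{t}\Psi_t$ and the form $\omega$: since the blowup vector field $\widetilde{\deriv{t}\Psi_t}$ vanishes on $\partial\widetilde{L}_S$ — as noted just before the lemma, because $\Psi_t\circ\pi$ is constant in $t$ on each exceptional sphere — the blowup form of $i_{\deriv{t}\Psi_t}\omega$ vanishes on $\partial\widetilde{L}_S,$ which is the meaning of $\sigma_t$ vanishing at the cone locus. The only step that is not routine within the cone-smooth calculus developed above is the smooth $t$-dependence of the reparametrizations $\varphi_t$; I expect this to be the main obstacle, handled by the local inverse-function argument sketched in the first step.
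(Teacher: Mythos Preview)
Your proposal is correct and follows the same route as the paper: the paper simply refers to the classical argument of \cite[Lemma~2.1]{akveld-salamon} for independence and closedness (which you have written out explicitly, adapted to the cone setting) and invokes Lemma~\ref{lemma:cone-smooth pullback} for the vanishing at the cone locus exactly as you do. Your one flagged concern, the smooth $t$-dependence of the reparametrizations $\varphi_t,$ is indeed handled by the local inverse-function argument you sketch on $L\setminus S$ (and is irrelevant over the cone locus, where the form already vanishes), so there is no real obstacle.
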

\begin{proof}
The proof that $\sigma_t$ is independent of $\Psi_t$ and closed is analogous to the proof of~\cite[Lemma 2.1]{akveld-salamon}.   Lemma~\ref{lemma:cone-smooth pullback} implies that $\sigma_t$ vanishes at the cone locus of $\Lambda_t.$
\end{proof}
We call $\sigma_t$ the time derivative and write
\[
\deriv{t}\Lambda_t := \sigma_t.
\]
A path of cone-immersed Lagrangians is said to be exact if its time-derivative is the differential of a cone-smooth function $\deriv{t}\Lambda_t = dh_t.$ In this case, it follows that every cone point is a critical point of $h_t.$
	
	Suppose $(X,\omega,J,\Omega)$ is Calabi-Yau. An oriented cone-immersed Lagrangian $\Lambda\in\mathcal{L}(X,L;S,C_0)$ is \emph{positive} if the tangent space $T_p\Lambda$ is positive for each smooth point $p$, and for each cone point $p$ and $\widetilde p \in \orientedprojective(TC_p\Lambda)$ the tangent space $T_{\widetilde p}\Lambda$ is positive. Note that this is stronger than positivity at the smooth locus. Assume now that $L$ is closed. Let $\mathcal{O}\subset\mathcal{L}(X,L;S,C_0)$ be an exact isotopy class of positive cone-immersed Lagrangians and let $\Lambda = [(\Psi:L \to X,S)]\in\mathcal{O}.$ Recall Definition~\ref{definition:immersed cone smooth differential form}. As the blowup $\widetilde{L}_S$ is compact, the volume form $\real\Omega|_\Lambda$ is integrable. Let $\Cs(\Lambda)$ denote the space of cone-smooth functions on $\Lambda.$ Set
	\[
	\overline{\Cs}(\Lambda):=\left\{h\in \Cs(\Lambda)\left|\int_\Lambda h\real\Omega=0,\;\forall c\in \Lambda^C,\;dh_c=0\right.\right\}.
	\]
	Then the isomorphism~\eqref{equation: tangent space to O} and Riemannian metric~\eqref{equation: the metric} make sense as in the smooth case. Let $(\Lambda_t)_t$ be a smooth path in $\mathcal{O}$ and let $(\Psi_t)_t$ be a smooth lifting. We say $(\Psi_t)_t$ is \emph{horizontal} if it satisfies $i_{\deriv{t}\Psi_t}\real\Omega=0.$ It is shown in~\cite[Section 5.3]{solomon} that every compactly supported path of \emph{smooth} positive Lagrangians admits horizontal liftings. We show the same for cone-immersed positive Lagrangians.
	
	\begin{lemma}
		\label{lemma: path of cone-Lags admits horizontal liftings}
		Let $(\Lambda_t)_{t\in[0,1]}$ be a smooth path in $\mathcal{O},$ and let $(\Psi,S)$ be a representative of $\Lambda_0.$ Then $(\Psi,S)$ extends uniquely to a horizontal lifting of $(\Lambda_t)_t.$
	\end{lemma}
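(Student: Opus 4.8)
The plan is to prove this exactly as in the smooth case treated in~\cite{solomon}: start from an arbitrary smooth lifting and correct it by a time-dependent reparametrization of $(L,S)$ that enforces horizontality, the only genuinely new ingredient being the bookkeeping at the cone locus. Since $(\Lambda_t)_t$ is a smooth path it admits a smooth lifting $(\Psi^0_t:L\to X,S)_{t\in[0,1]}$, and after composing it with a fixed cone-diffeomorphism of $(L,S)$ we may assume $\Psi^0_0=\Psi$. I would then look for a smooth family $\varphi_t\in\diff(L,S)$ with $\varphi_0=\id$ such that $\Psi_t:=\Psi^0_t\circ\varphi_t$ is horizontal, writing $\varphi_t$ as the flow of a time-dependent cone-smooth vector field $Y_t$ on $(L,S)$, i.e.\ $\deriv{t}\varphi_t=Y_t\circ\varphi_t$.

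The key computation is that, by the chain rule and the fact that $\varphi_t$ is a diffeomorphism,
\[
\Psi_t^*\!\left(i_{\deriv{t}\Psi_t}\real\Omega\right)=\varphi_t^*\!\left((\Psi^0_t)^*\!\left(i_{\deriv{t}\Psi^0_t}\real\Omega\right)+i_{Y_t}\!\left((\Psi^0_t)^*\real\Omega\right)\right),
\]
so horizontality of $(\Psi_t)_t$ is equivalent to the $\varphi_t$-free equation
\[
i_{Y_t}\!\left((\Psi^0_t)^*\real\Omega\right)=-(\Psi^0_t)^*\!\left(i_{\deriv{t}\Psi^0_t}\real\Omega\right).
\]
Now positivity of the cone-immersed Lagrangian $\Lambda_t$, together with Lemma~\ref{lemma: Omega does not vanish on Lags}, implies that $(\Psi^0_t)^*\real\Omega$ is a cone-smooth volume form on $(L,S)$ which is nowhere vanishing over $L\setminus S$ and whose blowup form is nowhere vanishing on $\widetilde{L}_S$; hence contraction with it is an isomorphism from vector fields to $(n-1)$-forms, both for the bundles over $L\setminus S$ and for the corresponding bundles over $\widetilde{L}_S$. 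Moreover, the right-hand side is a cone-smooth $(n-1)$-form on $(L,S)$ whose blowup form vanishes on $\partial\widetilde{L}_S$, because $\deriv{t}\Psi^0_t$ is a cone-smooth vector field along $\Psi^0_t$ whose blowup vector field vanishes on $\partial\widetilde{L}_S$ and Lemma~\ref{lemma:cone-smooth pullback} applies. Consequently the equation has a unique solution $Y_t$, which is a cone-smooth vector field on $(L,S)$, depends smoothly on $t$, and whose blowup vector field vanishes on $\partial\widetilde{L}_S$.

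It then remains to manufacture $\varphi_t$ from $Y_t$. By Lemma~\ref{lemma:cone smooth vanishing tangent}, $Y_t$ lifts to a smooth time-dependent vector field $\widehat Y_t$ on the compact manifold with boundary $\widetilde{L}_S$, tangent to $\partial\widetilde{L}_S$; its flow $\widehat\varphi_t$ is therefore defined for all $t\in[0,1]$, satisfies $\widehat\varphi_0=\id$, and preserves each exceptional sphere $E_p$ (using that $L$, hence each boundary component of $\widetilde{L}_S$, is connected). By Remark~\ref{remark: cone-diffeomorphism lifting}, $\widehat\varphi_t$ descends to a smooth family $\varphi_t\in\diff(L,S)$ with $\varphi_0=\id$, and since $\Psi_t\circ\pi=(\Psi^0_t\circ\pi)\circ\widehat\varphi_t$ the family $(\Psi_t)_t$ with $\Psi_t:=\Psi^0_t\circ\varphi_t$ is a smooth lifting; it is horizontal by construction and extends $(\Psi,S)$. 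For uniqueness, I would observe that $\deriv{t}\Lambda_t$ already prescribes the component of $\deriv{t}\Psi_t$ transverse to $\Psi_t$, while the computation above shows that horizontality prescribes its tangential component; hence, read on the blowup, a horizontal lifting solves a first-order ODE whose right-hand side is determined by the path $(\Lambda_t)_t$, and the conclusion follows from uniqueness for this ODE together with the initial condition $\Psi_0=\Psi$.

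I expect the main obstacle to be precisely this cone-point bookkeeping, rather than the analysis, which is parallel to~\cite{solomon}: one must check that the correcting vector field $Y_t$, a priori defined only on $L\setminus S$, is genuinely cone-smooth and has blowup vector field vanishing on $\partial\widetilde{L}_S$, so that its flow lifts to a diffeomorphism of $\widetilde{L}_S$ fixing the exceptional spheres and therefore defines an honest element of $\diff(L,S)$. This is exactly the place where one uses positivity of $\Lambda_t$ at the cone points — which, as the text notes, is strictly stronger than positivity at the smooth locus — in the form of the non-vanishing of the blowup form of $\real\Omega|_{\Lambda_t}$.
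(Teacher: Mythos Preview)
Your proof is correct and follows essentially the same approach as the paper: start from an arbitrary lifting, solve the linear equation $i_{Y_t}(\Psi^0_t)^*\real\Omega=-(\Psi^0_t)^*\bigl(i_{\deriv{t}\Psi^0_t}\real\Omega\bigr)$ for a cone-smooth correcting field using positivity and Lemma~\ref{lemma:cone-smooth pullback}, then invoke Lemma~\ref{lemma:cone smooth vanishing tangent} and Remark~\ref{remark: cone-diffeomorphism lifting} to integrate it to a family in $\diff(L,S)$. You also supply a uniqueness argument, which the paper's proof leaves implicit despite the statement asserting it.
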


	\begin{proof}
		We imitate the argument presented in~\cite{solomon}. Let $\left(\left(\Psi_t,S\right)\right)_t$ be a smooth lifting of $(\Lambda_t)_t$ with $\Psi_0=\Psi.$ For $t\in[0,1],$ let $w_t$ denote the unique vector field on $L\setminus S$ satisfying
		\[ i_{w_t}\Psi_t^*\real\Omega=-i_{\deriv{t}\Psi_t}\real\Omega.
		\]
We claim that $w_t$ is cone-smooth and the blowup vector field $\widetilde w_t$ vanishes on $\partial \widetilde L_S.$ Indeed, Lemma~\ref{lemma:cone-smooth pullback}
implies that the differential forms $\Psi_t^*\real\Omega$ and $i_{\deriv{t}\Psi_t}\real\Omega$ are cone-smooth and the blowup differential form $\widetilde{i_{\deriv{t}\Psi_t}\real\Omega}$ vanishes on $\partial \widetilde L_S.$ Moreover, since $\Lambda_t$ is positive, it follows from Lemma~\ref{lemma:blowupdifferential} that the blowup form $\widetilde{\Psi_t^*\real \Omega}$ is non-vanishing. Let $\widetilde w_t$ be the unique section of $\widetilde{TL}_S$ such that
\[
i_{\widetilde w_t}\widetilde{\Psi_t^*\real\Omega}=-\widetilde{i_{\deriv{t}\Psi_t}\real\Omega}.
\]
Then, $\widetilde w_t|_{\widetilde L_S^\circ} = \pi|_{\widetilde L_S^\circ}^*w_t.$ So, $w_t$ is cone-smooth and $\widetilde w_t$ is the blowup vector field. Furthermore, $\widetilde w_t$ vanishes on $\partial \widetilde L_S$ since $\widetilde{i_{\deriv{t}\Psi_t}\real\Omega}$ vanishes on $\partial \widetilde L_S.$

By Lemma~\ref{lemma:cone smooth vanishing tangent} there exists a unique vector field $\widehat w_t$ on $\widetilde L_S$ such that
\[
d\pi\left(\widehat w_t|_{\widetilde L_S^\circ}\right) = w_t.
\]
Moreover, $\widehat w_t$ is tangent to $\partial \widetilde L_S.$
Let $(\widehat \varphi_t)_t$ denote the flow of $(\widehat w_t)_t.$ Then, $\widehat\varphi_t(\partial\widetilde L_S) = \partial \widetilde L_S,$  and $\widehat \varphi_t$ descends to a map $\varphi_t : M \to M.$
Remark~\ref{remark: cone-diffeomorphism lifting} implies that $\varphi_t$ is cone-smooth.
Thus, the family of compositions $(\Psi_t\circ\varphi_t)_t$ gives the desired horizontal lifting.
	\end{proof}

	By virtue of Lemma~\ref{lemma: path of cone-Lags admits horizontal liftings}, the Levi-Civita connection described in Section~\ref{subsection: calabi-yau manifolds} extends naturally to exact isotopy classes of cone-immersed positive Lagrangians. We use horizontal lifts to define geodesics in such classes. The definition below is more general than that used in previous works, as it allows the Lagrangians in question to be non-closed or non-smooth. Note, however, that it is equivalent to the old definition when the Lagrangians in question are smoothly embedded and closed.
	
	\begin{dfn}
		\label{definition: geodesic}
		Let $(X,\omega,J,\Omega)$ be a Calabi-Yau manifold, let $L$ be a connected smooth manifold, not necessarily closed, and let $S \subset L$ be a finite subset. Let $C_0\subset X$ be finite, let $\mathcal{O}\subset\mathcal{L}(X,L;S,C_0)$ be an exact isotopy class of cone-immersed Lagrangians, and let $(\Lambda_t)_{t\in[0,1]}$ be a path in $\mathcal{O}.$ The path $(\Lambda_t)_t$ is a \emph{geodesic} if it admits a horizontal lifting $((\Psi_t,S))_{t\in[0,1]}$ and a family of functions $h_t\in \Cs(\Lambda_t)$ satisfying
		\[
		\deriv{t}\Lambda_t=dh_t,\quad\deriv{t}(h_t\circ\Psi_t)=0.
		\]
		We call the family $(h_t)_{t\in[0,1]}$ the \emph{Hamiltonian} of the geodesic. We also call the time independent function $h = h_t \circ \Psi_t : L \to \R$ the \emph{Hamiltonian} with respect to the horizontal lifting $(\Psi_t)_t.$ Observe that $h_t = [(\Psi_t,h)].$  If $L$ is not compact, the Hamiltonian is only well-defined up to a time independent constant. If $C_0$ is empty, we say $(\Lambda_t)_t$ is a \emph{smooth geodesic} or \emph{geodesic of smooth Lagrangians}.
	\end{dfn}
From now on, unless otherwise specified, the term geodesic will be used in the sense of the preceding definition.
	
\begin{lemma}\label{lemma:extension of geodesic to cone point}
Let $(\Lambda_t)_t$ be a path in $\mathcal{L}(X,L;S,C_0)$ and let $(\Lambda_t^\circ)_t$ be the path in $\mathcal{L}(X,L\setminus S)$ obtained by removing the cone points. If $(\Lambda_t^\circ)_t$ is a geodesic, then so is~$(\Lambda_t)_t.$
\end{lemma}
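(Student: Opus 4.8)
The plan is to manufacture a horizontal lifting and a Hamiltonian for $(\Lambda_t)_t$ out of the corresponding data for $(\Lambda_t^\circ)_t,$ the only genuinely new point being the extension of a primitive across the cone locus $S,$ which is provided by Lemma~\ref{lemma:exactness of extension}. As is implicit in the notion of a geodesic, we take $(\Lambda_t)_t$ to be a smooth path of positive cone-immersed Lagrangians.

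First I would fix a representative $(\Psi_0,S)$ of $\Lambda_0$ and apply Lemma~\ref{lemma: path of cone-Lags admits horizontal liftings} to extend it to a horizontal lifting $((\Psi_t,S))_{t\in[0,1]}$ of $(\Lambda_t)_t,$ so $i_{\deriv{t}\Psi_t}\real\Omega=0.$ Restricting to $L\setminus S,$ the family $\Psi_t^\circ:=\Psi_t|_{L\setminus S}$ is a smooth path of Lagrangian immersions representing $\Lambda_t^\circ,$ and it is a horizontal lifting of $(\Lambda_t^\circ)_t$ since horizontality passes to $L\setminus S$ by restriction. Now $(\Lambda_t^\circ)_t$ being a geodesic gives a horizontal lifting together with a smooth family of primitives $h_t^\circ$ of $\deriv{t}\Lambda_t^\circ$ satisfying the geodesic equation; since two horizontal liftings of a smooth Lagrangian path differ by a time-independent reparametrization (uniqueness of horizontal liftings, \cite{solomon}), the equation $\deriv{t}(h_t^\circ\circ\Psi_t^\circ)=0$ holds for our lifting as well. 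Thus $h^\circ:=h_t^\circ\circ\Psi_t^\circ:L\setminus S\to\R$ is independent of $t.$

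The key step is then to extend $h^\circ$ over $S.$ Set $\alpha_t:=\Psi_t^*\!\left(\deriv{t}\Lambda_t\right)=i_{\deriv{t}\Psi_t}\omega.$ By Lemmas~\ref{lemma:derivative 1-form of cone smooth path} and~\ref{lemma:cone-smooth pullback}, $\alpha_t$ is a closed cone-smooth $1$-form on $(L,S)$ whose blowup form vanishes on $\partial\widetilde L_S,$ and on $L\setminus S$ one has $\alpha_t|_{L\setminus S}=(\Psi_t^\circ)^*\!\left(\deriv{t}\Lambda_t^\circ\right)=d\!\left(h_t^\circ\circ\Psi_t^\circ\right)=dh^\circ.$ Lemma~\ref{lemma:exactness of extension}, applied to $\alpha_t$ with primitive $h^\circ,$ then yields a cone-smooth function $h$ on $(L,S)$ extending $h^\circ,$ with $S$ contained in its critical locus. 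Since $\widetilde L_S^\circ$ is dense in $\widetilde L_S,$ this extension is unique, so $h$ is again independent of $t.$

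Finally I would put $h_t:=[((\Psi_t,S),h)]\in C^\infty(\Lambda_t)$ and verify Definition~\ref{definition: geodesic}. As $h_t\circ\Psi_t=h$ is time-independent, $\deriv{t}(h_t\circ\Psi_t)=0.$ Moreover $\Psi_t^*\!\left(\deriv{t}\Lambda_t\right)=\alpha_t$ and $\Psi_t^*(dh_t)=dh$ are cone-smooth $1$-forms on $(L,S)$ agreeing on the dense open set $L\setminus S$ and with smooth blowup forms on $\widetilde L_S,$ hence equal; so $\deriv{t}\Lambda_t=dh_t.$ In particular $(\Lambda_t)_t$ is exact, so it lies in an exact isotopy class, and $((\Psi_t,S))_t,\,(h_t)_t$ exhibit it as a geodesic. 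I expect the one delicate point to be the lifting bookkeeping in the second paragraph — arranging that the horizontal lifting of the cone-immersed path restricts to one for which the smooth geodesic equation holds, so that the primitive may be chosen time-independent — after which crossing $S$ is a direct invocation of Lemma~\ref{lemma:exactness of extension}.
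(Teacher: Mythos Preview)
Your argument is correct and follows the same route as the paper, which records only the one-line justification ``This follows from Lemma~\ref{lemma:exactness of extension} and Lemma~\ref{lemma:derivative 1-form of cone smooth path}.'' You have supplied the details the paper leaves implicit: obtaining a horizontal lifting of the cone-immersed path via Lemma~\ref{lemma: path of cone-Lags admits horizontal liftings}, restricting it to $L\setminus S$, and invoking uniqueness of horizontal liftings so that the time-independent primitive $h^\circ$ pulled back from the geodesic data on $(\Lambda_t^\circ)_t$ is available for extension by Lemma~\ref{lemma:exactness of extension}. The bookkeeping you flag as the delicate point is handled correctly; uniqueness of horizontal liftings (given the initial condition) is a pointwise ODE statement and does not require compactness.
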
	
\begin{proof}
This follows from Lemma~\ref{lemma:exactness of extension} and Lemma~\ref{lemma:derivative 1-form of cone smooth path}.
\end{proof}
	
	\section{Lagrangian and special Lagrangian cylinders}
	\label{section: lagrangian and special lagrangian cylinders}
The present section collects results on Lagrangian cylinders. Section~\ref{subsection: the space of lagrangian cylinders} begins by applying Theorem~\ref{thm: frechet} to show that the space of Lagrangian cylinders with boundary in a pair of positive Lagrangian submanifolds is a Frechet manifold. Proposition~\ref{proposition: space of special lag cylinders is one dimensional} shows that the space of imaginary special Lagrangian cylinders with boundary in a pair of positive Lagrangians is a one-dimensional submanifold of this Frechet manifold. Theorem~\ref{theorem: space of ISL cylinders is 1-dim} follows. The proof of Proposition~\ref{proposition: space of special lag cylinders is one dimensional} is based on Lemma~\ref{lemma: linearized operator}, which computes the linearization of the imaginary special Lagrangian equation with positive Lagrangian boundary conditions.

Section~\ref{subsection: relative Lagrangian Flux} defines the relative Lagrangian flux of a path of Lagrangian cylinders with boundary in a fixed pair of Lagrangians. Lemma~\ref{lemma: relative Lagrangian flux} computes the relative flux in terms of the differences in the values of Hamiltonian functions at the two ends of the cylinders.

Section~\ref{subsection: regular families of special Lagrangian cylinders} begins with a discussion of the fundamental harmonic of an imaginary special Lagrangian cylinder, which is by definition the unique harmonic function taking the value $0$ on one end of the cylinder and $1$ on the other. The cylinder is said to have regular harmonics if its fundamental harmonic has no critical points. Lemma~\ref{lemma:immersive cylinder has regular harmonics} shows that a cylinder $Z$ has regular harmonics if and only if any family of immersions representing a small enough open interval around $Z$ in the space of imaginary special Lagrangian cylinders gives an immersion of the cylinder times the interval. Definition~\ref{definition: interior regularity}~\eqref{interior regularity first part} calls such a family of immersions an interior regular parameterization under an additional technical condition. Definition~\ref{definition: interior regularity}~\eqref{compatible with harmonics} formulates the condition for an interior regular parameterization to be compatible with harmonics, and Lemma~\ref{lemma: interior and end regular parameterization compatible with harmonics}~\ref{interior regular parameterization compatible with harmonics} shows that any interior regular parameterization can be modified to be so. There is a close connection between interior regular parameterizations compatible with harmonics and horizontal lifts of geodesics of positive Lagrangians, which plays an important role in the proof of Proposition~\ref{proposition: interior-regular family gives rise to geodesic}.

Components of the space of imaginary special Lagrangian cylinders with boundary in a pair of positive Lagrangian submanifolds are typically non-compact. Nonetheless, the perturbation arguments in the proof of Theorem~\ref{theorem: perturbation of geodesic} rely on a compactness argument.
For this purpose,
Definition~\ref{definition: interior regularity}~\eqref{regular convergence to intersection point} formulates the notion of a regular parameterization about an intersection point of the positive Lagrangian boundary conditions. Such a parameterization partially compactifies the domain of an interior regular parameterization by allowing a certain degeneration of the family of immersions. The regularity requirements near the degeneration are motivated by Lemma~\ref{lemma: cylinder with regular harmonics in Euclidean space}, which pertains after rescaling. Lemma~\ref{lemma: interior and end regular parameterization compatible with harmonics}~\ref{end regular parameterization compatible with harmonics} shows that regular parameterizations about an intersection point can be modified to be compatible with harmonics.

	\subsection{The space of Lagrangian cylinders}
	\label{subsection: the space of lagrangian cylinders}
	
	We start this section with the definition of its main objects.
	
	\begin{dfn}\label{definition: Lagrangian cylinders}$\;$
		\begin{enumerate}[label=(\alph*)]
			\item \label{item:just Lagrangian} Let $(X,\omega)$ be a symplectic manifold, and let $\Lambda_0,\Lambda_1\subset X$ be smoothly embedded Lagrangians. A \emph{Lagrangian cylinder} between $\Lambda_0$ and $\Lambda_1$ is a smooth immersed Lagrangian submanifold with boundary, $Z=[f:L\to X]\in\mathcal{L}(X,L;\Lambda_0,\Lambda_1)$, where $L=N\times[0,1]$ for some smooth manifold $N,$ and the restricted immersion $f|_{N\times\{i\}}$ is an embedding into $\Lambda_i$ for $i=0,1.$ We let $\mathcal{LC}(N;\Lambda_0,\Lambda_1)$ denote the space of Lagrangian cylinders between $\Lambda_0$ and $\Lambda_1$ of type $N\times[0,1]$. We let $\mathcal{LC}(\Lambda_0,\Lambda_1)$ denote the space of Lagrangian cylinders between $\Lambda_0$ and $\Lambda_1$ of general topological type.
			\item\label{item:special} If $X$ is Calabi-Yau and $\Lambda_0$ and $\Lambda_1$ are positive, we let $\mathcal{SLC}(N;\Lambda_0,\Lambda_1)\subset\mathcal{LC}(N;\Lambda_0,\Lambda_1)$ denote the subspace consisting of imaginary special Lagrangian cylinders. The space $\mathcal{SLC}(\Lambda_0,\Lambda_1)$ is defined analogously.
		\end{enumerate}
	\end{dfn}

\begin{rem}\label{remark:automatic}\mbox{}
\begin{enumerate}
  \item \label{item:free}
Let $Z = [f : N \times [0,1] \to X]$ as in Definition~\ref{definition: Lagrangian cylinders}~\ref{item:just Lagrangian}. By Lemma~\ref{lemma: embedded boundary point implies freeness} the requirement that $f|_{N \times \{i\}}$ is an embedding implies that $f$ is free as required in Notation~\ref{notation:space of Lagrangians}.
  \item \label{item:not tangent}
Let $X, \Lambda_0,\Lambda_1,$ be as in Definition~\ref{definition: Lagrangian cylinders}~\ref{item:special}. Let $Z = [f: N \times [0,1] \to X]$ be an immersed imaginary special Lagrangian with the boundary component corresponding to $N\times \{i\}$ in $\Lambda_i.$  Then $Z$ automatically satisfies condition~\ref{not tangent} in Notation~\ref{notation:space of Lagrangians}.
\end{enumerate}
\end{rem}

	It is well-known that special Lagrangian submanifolds can be described as solutions to an elliptic PDE (see~\cite[Section~ III.2]{harvey-lawson} and~\cite[Theorem 5]{caffarelli-nirenberg-spruck} or~\cite{mclean,salur} for a geometric approach). We show below that any pair of positive Lagrangian submanifolds, $\Lambda_0$ and $\Lambda_1$, provides an elliptic boundary condition to the imaginary special Lagrangian equation. In particular, we shall see in Proposition~\ref{proposition: space of special lag cylinders is one dimensional} that, if $N$ is compact, $\mathcal{SLC}(N;\Lambda_0,\Lambda_1)$ is a smooth $1$-dimensional submanifold of the Fr\'echet manifold $\mathcal{LC}(N;\Lambda_0,\Lambda_1)$. Our approach is similar to that of McLean~\cite[Section~3]{mclean} with some necessary adaptations.
	
	Fix two Lagrangian submanifolds, $\Lambda_0,\Lambda_1\subset X,$ and a compact $n-1$-dimensional manifold $N.$ By Theorem~\ref{thm: frechet}, the space $\mathcal{LC}(N;\Lambda_0,\Lambda_1)$ is a Fr\'echet manifold modeled locally on $\Omega_B^1(N\times[0,1]),$ the space of closed 1-forms on $N\times[0,1]$ annihilating the boundary. Moreover, by Lemma~\ref{lemma: tangent space} \ref{tangent space of lag with boundary}, for $Z\in\mathcal{LC}(N;\Lambda_0,\Lambda_1)$ we have a canonical isomorphism
	\[
	T_Z\mathcal{LC}(N;\Lambda_0,\Lambda_1)\cong\Omega_B^1(Z).
	\]
	The following observation allows us to replace the spaces $\Omega_B^1(N\times[0,1])$ and $\Omega_B^1(Z)$ with spaces of functions.

	\begin{lemma}
		\label{lemma: closed forms on cylinder}
		Let $N$ be a smooth manifold without boundary, and let $\sigma\in\Omega^1(N\times[0,1])$ be closed with pull-back to the boundary component $N\times\{0\}$ zero. Then $\sigma$ is exact.
	\end{lemma}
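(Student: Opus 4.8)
The plan is to produce an explicit primitive of $\sigma$ by integrating along the $[0,1]$-direction, exploiting that $\sigma$ vanishes on $N\times\{0\}$. Write $t$ for the coordinate on $[0,1]$ and $d_N$ for the exterior derivative on $N$. Every $1$-form on $N\times[0,1]$ has a unique decomposition $\sigma = \alpha + g\,dt$, where $g\in C^\infty(N\times[0,1])$ and $\alpha = (\alpha_t)_{t\in[0,1]}$ is a smooth family of $1$-forms on $N$, namely the component of $\sigma$ with no $dt$ factor. First I would compute $d\sigma$ in these terms; the vanishing $d\sigma = 0$ is then equivalent to the two equations $d_N\alpha_t = 0$ and $\partial_t\alpha_t = d_N g(\cdot,t)$.

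Next I would invoke the boundary hypothesis. The pullback of $\sigma$ to $N\times\{0\}$ equals $\alpha_0$, since $dt$ pulls back to $0$ there, so the hypothesis says precisely that $\alpha_0 = 0$. Integrating the structure equation $\partial_t\alpha_t = d_N g(\cdot,t)$ from $0$ to $t$ and interchanging $d_N$ with the integral gives $\alpha_t = d_N\bigl(\int_0^t g(\cdot,s)\,ds\bigr)$. Setting $h(x,t) := \int_0^t g(x,s)\,ds$, one checks that $h$ is smooth with $d_N h = \alpha_t$ and $\partial_t h = g$, hence $dh = \alpha + g\,dt = \sigma$, so $\sigma$ is exact.

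I do not expect any real obstacle here: the only care needed is the bookkeeping of signs when expanding $d(\alpha + g\,dt)$ and the standard justification that $d_N$ commutes with $\int_0^t(\cdot)\,ds$. Alternatively one can argue cohomologically — the inclusion of $N$ as $N\times\{0\}$ is a homotopy equivalence, hence induces an injection on de Rham cohomology, and since $\sigma$ restricts to $0$ on $N\times\{0\}$ its class vanishes — but the explicit primitive above is equally short and will be convenient in later computations, e.g. to track which functions vanish on the boundary.
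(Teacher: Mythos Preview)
Your proof is correct. It is a different argument from the paper's: the paper argues via periods, projecting any loop $\gamma$ in $N\times[0,1]$ onto $N\times\{0\}$ via $(p,t)\mapsto(p,0)$, noting that the projection is homotopic to $\gamma$, and concluding that $\int_\gamma\sigma$ equals the integral of $\sigma$ over a loop lying entirely in $N\times\{0\}$, which vanishes by hypothesis; exactness then follows from vanishing of all periods. Your approach instead constructs the primitive directly by fiber-integrating the $dt$-component, which is essentially the standard homotopy operator underlying the Poincar\'e lemma. Your construction has the small advantage that the primitive $h(x,t)=\int_0^t g(x,s)\,ds$ visibly satisfies $h|_{N\times\{0\}}=0$, which is exactly the normalization used immediately afterward in identifying $\Omega_B^1(N\times[0,1])$ with $\cob{\infty}(N\times[0,1])$; the paper's argument gives only exactness and then the boundary behavior of the primitive is read off from $dh|_{N\times\{i\}}=0$. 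The cohomological alternative you mention at the end is precisely the paper's approach.
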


	\begin{proof}
		Let $\pi_0:N\times[0,1]\to N\times[0,1]$ be given by $(p,t)\mapsto(p,0)$. Let $\gamma:S^1\to N\times[0,1]$ be a smooth loop, and write $\gamma_0:=\pi_0\circ\gamma$. Then $\gamma_0$ is homotopic to $\gamma$. As $\sigma$ is closed and annihilates the boundary component $N\times\{0\}$, we have
		\[
		\int_{S^1}\gamma^*\sigma=\int_{S^1}\gamma_0^*\sigma=0,
		\]
		and the lemma follows.
	\end{proof}

	In the rest of this section, we assume $N$ is connected. By Lemma~\ref{lemma: closed forms on cylinder}, every differential form in $\Omega_B^1(N\times[0,1])$ is exact with primitive constant on each boundary component. So, by Theorem~\ref{thm: frechet} we obtain the following lemma. For a smooth cylinder $Z$ with boundary components $C_0$ and $C_1$, we let $\cob{\infty}(Z)$ denote the space of smooth functions on $Z$ which vanish on $C_0$ and are constant on $C_1$.
	
	\begin{lemma}
		\label{lemma: local description of space of cylinders}
		Let $(X,\omega)$ be a symplectic manifold, let $\Lambda_0,\Lambda_1\subset X$ be fixed smoothly embedded Lagrangians, let $N$ be a connected closed $n-1$-dimensional manifold and let $Z\in\mathcal{LC}(N;\Lambda_0,\Lambda_1)$. Then any immersed Weinstein neighborhood of $Z$ compatible with $\Lambda_0$ and $\Lambda_1$ gives rise to a local parameterization
		\[
		\mathbf{X}:\mathcal{U}\subset\cob{\infty}(Z)\to\widetilde{\mathcal{U}}\subset\mathcal{LC}(N;\Lambda_0,\Lambda_1).
		\]
		Moreover, we have a canonical isomorphism
		\[
		T_Z\mathcal{LC}(N;\Lambda_0,\Lambda_1)\cong\cob{\infty}(Z).
		\]
	\end{lemma}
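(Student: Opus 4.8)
The plan is to deduce the lemma from Corollary~\ref{corollary: frechet} by trading the model space $\Omega_B^1(N\times[0,1])$ of closed $1$-forms for the function space $\cob{\infty}(N\times[0,1]),$ the passage being mediated by Lemma~\ref{lemma: closed forms on cylinder}. The key linear-algebraic input is that, writing $L=N\times[0,1]$ with boundary components $C_0,C_1$ — each compact and connected, since $N$ is closed and connected — the exterior derivative
\[
d:\cob{\infty}(L)\longrightarrow\Omega_B^1(L)
\]
is an isomorphism of Fr\'echet spaces. It is well-defined, because $u|_{C_0}=0$ and $u|_{C_1}$ constant imply $du$ is closed and annihilates $\partial L;$ injective, because $du=0$ forces $u$ locally constant, hence constant on $L,$ hence identically $0$ as $u|_{C_0}=0;$ and surjective, because given $\sigma\in\Omega_B^1(L),$ Lemma~\ref{lemma: closed forms on cylinder} furnishes a primitive $u$ with $du=\sigma,$ which after adding a constant satisfies $u|_{C_0}=0$ (as $C_0$ is connected and $\sigma|_{C_0}=0$) and then has $u|_{C_1}$ constant (as $C_1$ is connected and $\sigma|_{C_1}=0$), so $u\in\cob{\infty}(L).$ Both spaces are Fr\'echet, being closed subspaces of $C^\infty(L)$ and $\Omega^1(L)$ cut out by closed conditions, so the continuous linear bijection $d$ is a topological isomorphism by the open mapping theorem for Fr\'echet spaces — alternatively, one exhibits the continuous inverse $\sigma\mapsto\bigl(p\mapsto\int_\gamma\sigma\bigr),$ with $\gamma$ a path in $L$ from a fixed basepoint in $C_0$ to $p.$ The identical argument applied to the immersed cylinder $Z,$ whose boundary components are diffeomorphic to $N,$ shows $d:\cob{\infty}(Z)\to\Omega_B^1(Z)$ is a topological isomorphism as well.

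To assemble the chart, let $(V,\varphi)$ be an immersed Weinstein neighborhood of $Z$ compatible with $\Lambda_0,\Lambda_1$ and let $\mathbf{X}:\mathcal{U}\subset\Omega_B^1(L)\to\widetilde{\mathcal{U}}\subset\mathcal{L}(X,L;\Lambda_0,\Lambda_1)$ be the parameterization of Corollary~\ref{corollary: frechet}. Put $\mathcal{U}'=d^{-1}(\mathcal{U})\subset\cob{\infty}(L),$ which is open, and consider $\mathbf{X}\circ d|_{\mathcal{U}'}.$ By the first paragraph this is a homeomorphism onto $\widetilde{\mathcal{U}},$ and the transition maps between two such charts are smooth, being the transition maps of Corollary~\ref{corollary: frechet} conjugated by the fixed topological linear isomorphism $d.$ It remains to shrink $\mathcal{U}'$ so that its image lies in $\mathcal{LC}(N;\Lambda_0,\Lambda_1),$ that is, so that both boundary immersions are embeddings. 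Here I would invoke that an injective immersion of a compact manifold is an embedding, and that injective immersions form a $C^1$-open subset of all immersions: the two boundary immersions of $\mathbf{X}(du)$ depend continuously on $u$ and, for $u=0,$ coincide with those of $Z,$ which are embeddings; hence they remain embeddings for $u$ in a smaller neighborhood of $0.$ Identifying $\cob{\infty}(Z)\cong\cob{\infty}(L)$ via $h\mapsto h\circ f$ for $f$ a representative of $Z,$ this produces the desired local parameterization $\mathbf{X}:\mathcal{U}\subset\cob{\infty}(Z)\to\widetilde{\mathcal{U}}\subset\mathcal{LC}(N;\Lambda_0,\Lambda_1),$ and in particular realizes $\mathcal{LC}(N;\Lambda_0,\Lambda_1)$ as an open submanifold of $\mathcal{L}(X,L;\Lambda_0,\Lambda_1).$

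Finally, for the tangent space: since $\mathcal{LC}(N;\Lambda_0,\Lambda_1)$ is open in $\mathcal{L}(X,L;\Lambda_0,\Lambda_1),$ we have $T_Z\mathcal{LC}(N;\Lambda_0,\Lambda_1)=T_Z\mathcal{L}(X,L;\Lambda_0,\Lambda_1),$ which by Lemma~\ref{lemma: tangent space}~\ref{tangent space of lag with boundary} is canonically isomorphic to $\Omega_B^1(Z);$ composing with the canonical isomorphism $d^{-1}:\Omega_B^1(Z)\to\cob{\infty}(Z)$ of the first paragraph gives $T_Z\mathcal{LC}(N;\Lambda_0,\Lambda_1)\cong\cob{\infty}(Z).$ Concretely, in the notation of Remark~\ref{remark: tangent space hands on}, a smooth path $(Z_t)$ with lifting $(\Psi_t)$ corresponds to the unique $h_t\in\cob{\infty}(Z_t)$ with $dh_t=i_{\deriv{t}\Psi_t}\omega.$ The one point demanding genuine care is the openness of $\mathcal{LC}(N;\Lambda_0,\Lambda_1)$ inside $\mathcal{L}(X,L;\Lambda_0,\Lambda_1)$ — equivalently, the persistence of embeddedness of the compact boundary under small perturbations; everything else is the formal observation that $d$ intertwines the two model spaces.
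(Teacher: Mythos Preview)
Your proof is correct and follows the same approach as the paper, which dispatches the lemma in a single sentence by invoking Corollary~\ref{corollary: frechet} after noting via Lemma~\ref{lemma: closed forms on cylinder} that $d:\cob{\infty}(L)\to\Omega_B^1(L)$ is an isomorphism. You have filled in the details more carefully than the paper does, in particular explicitly verifying the openness of the boundary-embedding condition that distinguishes $\mathcal{LC}(N;\Lambda_0,\Lambda_1)$ from $\mathcal{L}(X,L;\Lambda_0,\Lambda_1)$, a point the paper leaves implicit.
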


	Continuing with the same setting, we now assume further that $(X,\omega,J,\Omega)$ is Calabi-Yau, $\Lambda_0$ and $\Lambda_1$ are positive and $Z\in\mathcal{SLC}(N;\Lambda_0,\Lambda_1).$ Abbreviate
\[
L = N \times [0,1].
\]
Fix an immersion
\[
f: L\to X
\]
representing $Z.$
By Lemma~\ref{lemma: Weinstein with boundary}, choose an immersed Weinstein neighborhood $(V,\varphi)$ of $Z$ compatible with $\Lambda_0$ and $\Lambda_1,$ where $V \subset T^*L$ and $\varphi : V\to X$ with $\varphi|_L = f.$ Let $\pi_L : T^*L \to L$ denote the projection. For $u\in\cob{\infty}(L),$ let $\mathrm{Graph}(du) \subset T^*L$ denote the graph. Write
	\[
	\mathcal{U}:=\left\{\left.u\in\cob{\infty}(L)\right|\mathrm{Graph}(du)\subset V\right\}.
	\]
For $u\in\mathcal{U},$ let $j_u : L \to X$ be given by
\[
j_u = \varphi \circ \left( \pi_L|_{\mathrm{Graph}(du)}\right)^{-1}.
\]

Define
	\begin{equation}
	\label{SL operator}
	F:\mathcal{U}\to C^\infty(L),\qquad u\mapsto*j_u^*\real\Omega,
	\end{equation}
	where $*$ denotes the Hodge star operator of $f^*g.$ Then, for $u\in\mathcal{U}$, the cylinder $[j_u]$ is imaginary special Lagrangian if and only if the function $u$ satisfies $F(u)=0$. We have thus established a local characterization of $\mathcal{SLC}(N;\Lambda_0,\Lambda_1)$ as the zero set of a differential operator. Since $Z\in\mathcal{SLC}(N;\Lambda_0,\Lambda_1),$ we have $F(0)=0$. We compute the linearization of $F$ in Lemma~\ref{lemma: linearized operator} below. By Lemma~\ref{lemma: Omega does not vanish on Lags} we have
	\begin{equation}
	\label{equation:rho on imaginary cylinder}
	f^*\imaginary\Omega=\rho_f\vol_f,
	\end{equation}
	where $\vol_f$ denotes the Riemannian volume form of $f^*g$ and $\rho_f := \rho \circ f$ with $\rho$ the positive function defined in~\eqref{equation: rho}.
	
	\begin{lemma}
		\label{lemma: linearized operator}
		Consider the above setting.
		\begin{enumerate}[label=(\alph*)]
			\item\label{more general lemma} Let $f_t:L\to X,\;t\in(-\epsilon,\epsilon),$ be a smooth family of Lagrangian immersions with $f_0=f.$ Write $v:=\left.\deriv{t}\right|_{t=0}f_t$ and suppose we have $i_v\omega=du$ for some $u\in C^\infty\left(L\right).$ Then
			\[ \left.\deriv{t}\right|_{t=0}\left(*f_t^*\real\Omega\right)=*d(\rho_f*du).
			\]
			\item\label{less general lemma} The linearization of the operator $F$ of~\eqref{SL operator} at $0 \in \mathcal{U}$ is given by
			\[
			dF_0(u)=*d(\rho_f *du).
			\]
		\end{enumerate}
	\end{lemma}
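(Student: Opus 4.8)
The plan is to prove part~\ref{more general lemma} by a direct variational computation and then obtain part~\ref{less general lemma} by specializing the family. For part~\ref{more general lemma}, I would regard the family as a single smooth map $F : L\times(-\epsilon,\epsilon)\to X$ with $F(\cdot,t) = f_t$ and apply the standard variation formula for pullbacks (a consequence of Cartan's formula): for any differential form $\alpha$ on $X,$
\[
\left.\deriv{t}\right|_{t=0} f_t^*\alpha = d\bigl(f^*(i_v\alpha)\bigr) + f^*(i_v\, d\alpha).
\]
Since $\Omega$ is holomorphic it is closed, so $d\,\real\Omega = 0$ and the second term vanishes for $\alpha = \real\Omega.$ Because the Hodge star $*$ of $f^*g$ does not depend on $t,$ it may be interchanged with $\deriv{t},$ so it remains only to prove the pointwise identity $f^*(i_v\real\Omega) = \rho_f * du$ on $L.$

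To prove this identity, fix $p\in L,$ put $S := T_pZ\subset T_{f(p)}X,$ and use that $Z$ is Lagrangian to decompose $v = a + Jb$ with $a,b\in S.$ Because $S$ is Lagrangian, $i_a\omega|_S = 0,$ so $du = i_v\omega|_S = i_{Jb}\omega|_S,$ which is $\pm b^\flat$ (the flat taken with respect to $f^*g$); likewise $i_a\real\Omega|_S = 0$ because $f$ represents an imaginary special Lagrangian, so $\real\Omega|_S = 0,$ while the $(n,0)$-type of $\Omega$ (giving $i_{Jw}\Omega = i\, i_w\Omega$) together with $\imaginary\Omega|_S = \rho_f\vol_f|_S$ from~\eqref{equation:rho on imaginary cylinder} gives $i_{Jb}\real\Omega|_S = \pm\rho_f\, i_b\vol_f = \pm\rho_f *(b^\flat).$ Eliminating $b^\flat$ between the two computations yields $f^*(i_v\real\Omega) = \rho_f * du,$ which is the claim. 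For part~\ref{less general lemma}, I would apply part~\ref{more general lemma} to the family $f_t := j_{tu},$ which is defined for $|t|$ small since $L$ is compact and $V$ is open. Each $j_{tu}$ is a Lagrangian immersion, being the composition of the symplectomorphism $\varphi$ with the section of $T^*L$ whose image is the Lagrangian submanifold $\mathrm{Graph}(d(tu)),$ and $j_0 = \varphi|_L = f.$ Writing $v := \left.\deriv{t}\right|_{t=0} j_{tu},$ the chain rule gives $v = d\varphi(V),$ where $V$ is the vertical vector field along the zero section $L\subset T^*L$ corresponding to the covector field $du;$ the standard description of the canonical symplectic form $\omega_0$ gives $i_V\omega_0|_{TL} = du,$ and since $\varphi^*\omega = \omega_0$ we conclude $f^*(i_v\omega) = du.$ Hence the hypothesis of part~\ref{more general lemma} holds, and
\[
dF_0(u) = \left.\deriv{t}\right|_{t=0} F(tu) = \left.\deriv{t}\right|_{t=0}\bigl(*j_{tu}^*\real\Omega\bigr) = *d(\rho_f * du).
\]

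The main obstacle is the pointwise linear-algebra identity $f^*(i_v\real\Omega) = \rho_f * du$ in part~\ref{more general lemma}. This is exactly the step that uses that $f$ represents an imaginary special Lagrangian, so that $\real\Omega|_{TZ}\equiv 0$ and the tangential component $a$ of $v$ drops out entirely, together with the normalization~\eqref{equation:rho on imaginary cylinder} of $\rho.$ To land on the stated formula with the correct sign, one must keep careful track of orientations and of the sign conventions for $J,$ for the metric $g = \omega(\cdot,J\cdot),$ and for the canonical form $\omega_0.$ The remaining ingredients — the variation formula for pullbacks, the closedness of $\Omega,$ and the identification $i_V\omega_0|_{TL} = du$ — are routine.
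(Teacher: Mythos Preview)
Your proposal is correct and follows essentially the same route as the paper's proof: both use Cartan's formula with $d\real\Omega=0$ to reduce to the pointwise identity $f^*(i_v\real\Omega)=\rho_f*du$, and both prove that identity by decomposing $v$ into a tangential and a $J$-rotated-tangential part (the paper writes this as $v=v^T+v^\perp$ with $v^\perp=-Jdf(\nabla u)$, which is exactly your $a+Jb$ with $b=-\nabla u$), then uses $\real\Omega|_{TZ}=0$ to kill the tangential contribution and the $(n,0)$-type of $\Omega$ together with~\eqref{equation:rho on imaginary cylinder} to compute the remaining term. Your treatment of part~\ref{less general lemma} is more explicit than the paper's (which simply says it is a particular case of part~\ref{more general lemma}), but the content is the same.
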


	\begin{proof}
		We prove part \ref{more general lemma}. Decompose $v$ into
		\[
		v=v^T+v^\perp,
		\]
		where $v^T$ is tangent to $Z$ and $v^\perp$ is orthogonal to $Z$ with respect to the K\"ahler metric $g$. As $Z$ is Lagrangian, we have
		\[
		i_{v^\perp}\omega=i_{v}\omega=du,
		\]
		which implies
		\begin{equation}
		\label{orthogonal component}
		v^\perp=-Jdf(\nabla u),
		\end{equation}
		where $\nabla$ denotes the gradient with respect to $f^*g$. By~\eqref{equation:rho on imaginary cylinder} and~\eqref{orthogonal component}, as $Z$ is imaginary special Lagrangian and $\Omega$ is complex-linear, we have
		\begin{align}
		\label{gradient contraction}
		i_{v}\real\Omega
		&=i_{v^\perp}\real\Omega\\
		&=i_{\nabla u}f^*\imaginary\Omega\nonumber\\
		&=\rho_f \,i_{\nabla u}\vol_f\nonumber\\
		&=\rho_f*du.\nonumber
		\end{align}
		As $\Omega$ is closed, from the Cartan formula and~\eqref{gradient contraction} we deduce
		\begin{align*}
		*\left(\left.\deriv{t}\right|_{t=0}f_t^*\real\Omega\right)
		&=*di_{v}\real\Omega\\
		&=*d(\rho_f*du).
		\end{align*}
Finally, since $*$ is the Hodge star operator associated with the metric $f^*g = f_0^*g$ and thus independent of $t,$ it follows that
\begin{equation*}
\left.\deriv{t}\right|_{t=0}\left(*f_t^*\real\Omega\right) =  *\left(\left.\deriv{t}\right|_{t=0}f_t^*\real\Omega\right) = *d(\rho_f*du)
\end{equation*}
		as desired. Part \ref{less general lemma} is a particular case of \ref{more general lemma}.
	\end{proof}

	In light of Lemma~\ref{lemma: linearized operator} we define the linear second-order operator
	\begin{equation}
	\label{equation: definition of rho-Laplacian}
	\Delta_\rho:\cob{\infty}(L)\to C^\infty(L),\qquad u\mapsto*d(\rho_f*du).
	\end{equation}
For $Z  \in \mathcal{SLC}(N;\Lambda_0,\Lambda_1),$ and $f : L \to X$ an immersion with $Z = [f],$ we have a canonical identification $C^\infty(Z) = C^\infty(L),$ so we obtain an operator
\[
\Delta_\rho : \cob{\infty}(Z) \to C^\infty(Z).
\]
This operator does not depend on the choice of $f.$
	The operator $\Delta_\rho$ is similar to the usual Riemannian Laplacian in a manner made precise in Lemma~\ref{lemma: Delta rho is a cool operator} below.

For $k\ge0$ and $\alpha\in(0,1)$ we let $\cob{k,\alpha}(L)$ denote the completion of $\cob{\infty}(L)$ with respect to the H\"older $C^{k,\alpha}$-norm. We let $C^{k,\alpha}(L;\partial L)$ denote the space of functions on $L$ of regularity $C^{k,\alpha}$ which vanish on the boundary. For $P,Q,$ smooth manifolds, we let $C^{k,\alpha}(P,Q)$ denote the smooth Banach manifold of maps $P \to Q$ of regularity $C^{k,\alpha}.$

	\begin{lemma}
		\label{lemma: Delta rho is a cool operator}
		Let $k$ be a non-negative integer and let $\alpha\in(0,1)$. Then the naturally extended linear operator $\Delta_\rho:\cob{k+2,\alpha}(L)\to C^{k,\alpha}(L)$ is surjective with a 1-dimensional kernel consisting of $C^\infty$ functions.
	\end{lemma}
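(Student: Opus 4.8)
The plan is to identify $\Delta_\rho$ with a classical uniformly elliptic operator in divergence form and then deduce the statement from the standard theory of the Dirichlet problem. First I would observe that, for $u \in \cob{\infty}(L),$ writing $\nabla$ and $\mathrm{div}$ for the gradient and divergence of the Riemannian metric $f^*g$ on $L,$ we have $\rho_f * du = i_{\rho_f \nabla u}\vol_f,$ so by the Cartan formula and $d\vol_f = 0,$
\[
\Delta_\rho u = *d(\rho_f * du) = *\,\mathcal{L}_{\rho_f \nabla u}\vol_f = *\big((\mathrm{div}(\rho_f \nabla u))\,\vol_f\big) = \mathrm{div}(\rho_f \nabla u).
\]
In particular $\Delta_\rho$ is independent of any choice of orientation of $L.$ Since $\rho > 0$ is smooth and $f$ is an immersion of the compact manifold with boundary $L = N \times [0,1],$ the pullback $f^*g$ is a smooth Riemannian metric and $\rho_f = \rho \circ f$ is smooth and positive; hence $\Delta_\rho$ is a second-order, uniformly elliptic operator on $L$ with smooth coefficients and no zeroth-order term, with principal symbol $-\rho_f|\xi|^2_{f^*g}.$ Consequently the weak maximum principle applies to $\Delta_\rho,$ and the interior and boundary Schauder theory of the Dirichlet problem is available for it on $(L, f^*g).$

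Next I would invoke solvability of the Dirichlet problem: for every $w \in C^{k,\alpha}(L)$ and $g \in C^{k+2,\alpha}(\partial L)$ there is a unique $u \in C^{k+2,\alpha}(L)$ with $\Delta_\rho u = w$ and $u|_{\partial L} = g.$ Uniqueness is the maximum principle; existence follows, after subtracting a $C^{k+2,\alpha}$-extension of $g,$ from the fact that the Dirichlet problem is an elliptic boundary value problem, so $\Delta_\rho$ with zero boundary data is Fredholm of index zero, hence surjective once its kernel is trivial; regularity up to the boundary is the boundary Schauder estimate applied in charts adapted to $\partial L.$ Granting this, surjectivity of $\Delta_\rho : \cob{k+2,\alpha}(L) \to C^{k,\alpha}(L)$ is immediate: given $w,$ solve with $g = 0,$ obtaining $u$ that vanishes on $\partial L$ and in particular is constant on $C_1,$ so $u \in \cob{k+2,\alpha}(L).$ For the kernel, let $u_0 \in C^{k+2,\alpha}(L)$ solve $\Delta_\rho u_0 = 0$ with $u_0|_{C_0} = 0,$ $u_0|_{C_1} = 1;$ then $u_0 \in \cob{k+2,\alpha}(L)$ and $u_0 \neq 0$ since $u_0|_{C_1} \neq 0.$ If $u \in \cob{k+2,\alpha}(L)$ satisfies $\Delta_\rho u = 0$ and $c := u|_{C_1} \in \R,$ then $u - c\,u_0$ solves the homogeneous Dirichlet problem, so $u = c\,u_0$ by uniqueness. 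Hence $\ker \Delta_\rho = \R\,u_0$ is one-dimensional.

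Finally, since $u_0$ solves an elliptic equation with smooth coefficients and smooth (constant) boundary data on a smooth compact manifold with boundary, bootstrapping the boundary Schauder estimates gives $u_0 \in C^\infty(L),$ so the kernel consists of smooth functions. I expect no substantial obstacle: the only points needing care are the verification that $\Delta_\rho$ really is uniformly elliptic with the stated sign — handled by the divergence-form identity above — and the transfer of the classical Euclidean Dirichlet theory to $(L, f^*g)$ with the weight $\rho_f,$ which is routine via charts and a partition of unity. Reducing everything to the honest Dirichlet problem, rather than treating the nonstandard space $\cob{\infty}(L)$ (on which the value on $C_1$ is unprescribed) directly, is what keeps the argument clean.
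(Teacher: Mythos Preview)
Your proof is correct and follows essentially the same approach as the paper: both reduce to the classical Dirichlet problem, use that $\Delta_\rho$ is uniformly elliptic with vanishing zeroth-order coefficient (so the maximum principle and Schauder theory apply), deduce that $\Delta_\rho : C^{k+2,\alpha}(L;\partial L) \to C^{k,\alpha}(L)$ is an isomorphism, and then identify the kernel on $\cob{k+2,\alpha}(L)$ with the one-parameter family of harmonic functions with constant boundary values. Your explicit divergence-form identity $\Delta_\rho u = \mathrm{div}(\rho_f \nabla u)$ is a nice addition that the paper leaves implicit (it only asserts the principal-symbol comparison), but otherwise the arguments coincide.
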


	\begin{proof}
		We note first that the principal symbol of $\Delta_\rho$ differs from that of the usual Riemannian Laplacian by the positive function $\rho_f$. Hence $\Delta_\rho$ is elliptic. Also, $\Delta_\rho$ annihilates constants. Equivalently, expressing $\Delta_\rho$ in local coordinates as
		\[
		\Delta_\rho u=a^{ij}u_{ij}+b^iu_i+cu,
		\]
		the coefficient $c$ vanishes. As $L$ is compact, it now follows from standard arguments (see, for example,~\cite[Chapter~6]{gilbarg-trudinger} and~\cite[Section~5.1]{taylor1}) that
\[
\Delta_\rho:C^{k+2,\alpha}(L;\partial L)\to C^{k,\alpha}(L)
\]
is one-to-one and surjective. Finally, for $a\in\R,$ there exists a unique $u\in\cob{k+2,\alpha}(L)$ satisfying
		\[
		\Delta_\rho u=0,\quad u|_{N\times \{1\}}=a,
		\]
		which is in fact $C^\infty$. The lemma follows.
	\end{proof}

	\begin{prop}
		\label{proposition: space of special lag cylinders is one dimensional}
		Let $\Lambda_0,\Lambda_1\subset X$ be smoothly embedded positive Lagrangians, let $N$ be a connected closed smooth manifold, and abbreviate $L = N\times [0,1].$ Then the space $\mathcal{SLC}(N;\Lambda_0,\Lambda_1)$ is a smoothly embedded $1$-dimensional submanifold of $\mathcal{LC}(N;\Lambda_0,\Lambda_1)$. Moreover, for $Z = [f : L \to X]\in\mathcal{SLC}(N;\Lambda_0,\Lambda_1)$, we have
		\[
		T_Z\mathcal{SLC}(N;\Lambda_0,\Lambda_1)=\ker\Delta_\rho,
		\]
		where $\Delta_\rho:\cob{\infty}(L)\to C^\infty(L)$ is defined as in~\eqref{equation: definition of rho-Laplacian}.
	\end{prop}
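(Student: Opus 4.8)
The plan is to realize $\mathcal{SLC}(N;\Lambda_0,\Lambda_1)$ near a given point as the zero set of the nonlinear elliptic operator $F$ of~\eqref{SL operator}, and to apply the implicit function theorem after passing to H\"older completions. I would fix $Z=[f:L\to X]\in\mathcal{SLC}(N;\Lambda_0,\Lambda_1)$ and an immersed Weinstein neighborhood $(V,\varphi)$ of $Z$ as above, so that by Lemma~\ref{lemma: local description of space of cylinders} the assignment $\mathbf{X}:u\mapsto[j_u]$ is a local parameterization of $\mathcal{LC}(N;\Lambda_0,\Lambda_1)$ near $Z$ by an open neighborhood $\mathcal{U}$ of $0$ in $\cob{\infty}(L),$ with $[j_u]\in\mathcal{SLC}(N;\Lambda_0,\Lambda_1)$ precisely when $F(u)=0.$ For $k\ge0$ and $\alpha\in(0,1),$ the operator $F$ is built from $\mathrm{Graph}(du)$ via the fixed symplectomorphism $\varphi$ and the form $\real\Omega,$ with the Hodge star of the fixed metric $f^*g$ applied at the end, hence depends smoothly on the $2$-jet of $u$; it therefore extends to a smooth map of Banach spaces $F:\mathcal{U}^{k+2,\alpha}\to C^{k,\alpha}(L)$ on $\mathcal{U}^{k+2,\alpha}=\{u\in\cob{k+2,\alpha}(L):\mathrm{Graph}(du)\subset V\}.$ By Lemma~\ref{lemma: linearized operator} we have $dF_0=\Delta_\rho,$ and by Lemma~\ref{lemma: Delta rho is a cool operator} the operator $\Delta_\rho:\cob{k+2,\alpha}(L)\to C^{k,\alpha}(L)$ is surjective with $1$-dimensional kernel. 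Its kernel, being finite-dimensional, is complemented, so $F$ is a submersion at $0$; the implicit function theorem then produces a neighborhood of $0$ in $\cob{k+2,\alpha}(L)$ in which $F^{-1}(0)$ is a smooth $1$-dimensional Banach submanifold, tangent at $0$ to $\ker\Delta_\rho,$ parameterized by a smooth curve $c_{k,\alpha}(t)=tu_0+w_{k,\alpha}(tu_0)$ with $u_0$ spanning $\ker\Delta_\rho.$

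To descend to the Fr\'echet category I would invoke elliptic regularity: every $u\in\cob{k+2,\alpha}(L)$ with $F(u)=0$ is in fact $C^\infty,$ since $F(u)=0$ is a second-order nonlinear elliptic equation with smooth coefficients and the boundary conditions encoded in $\cob{k+2,\alpha}(L)$ (vanishing on $N\times\{0\},$ locally constant on $N\times\{1\}$) are smooth, so Schauder estimates and bootstrapping up to the boundary (see~\cite[Chapter~6]{gilbarg-trudinger} and~\cite[Section~5.1]{taylor1}) upgrade a $C^{2,\alpha}$ solution to a $C^\infty$ one. Hence the local solution set $F^{-1}(0)$ near $0$ is one and the same subset of $\cob{\infty}(L)$ for every $k$ and $\alpha$; after shrinking, the curves $c_{k,\alpha}$ agree up to reparameterization and define a single germ of a curve $c$ that is smooth into $\cob{k+2,\alpha}(L)$ for all $k,\alpha,$ hence smooth into $\cob{\infty}(L).$ Composing $c$ with $\mathbf{X}$ gives a smooth curve in $\mathcal{LC}(N;\Lambda_0,\Lambda_1)$ that is a homeomorphism onto a neighborhood of $Z$ in $\mathcal{SLC}(N;\Lambda_0,\Lambda_1)$ with the subspace topology; this exhibits $\mathcal{SLC}(N;\Lambda_0,\Lambda_1)$ as a smoothly embedded $1$-dimensional submanifold, and under the canonical isomorphism $T_Z\mathcal{LC}(N;\Lambda_0,\Lambda_1)\cong\cob{\infty}(Z)$ of Lemma~\ref{lemma: local description of space of cylinders} it identifies $T_Z\mathcal{SLC}(N;\Lambda_0,\Lambda_1)$ with $d\mathbf{X}_0(\ker\Delta_\rho)=\ker\Delta_\rho.$

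The infinitesimal ingredients are already in hand: Lemma~\ref{lemma: linearized operator} identifies $dF_0$ with $\Delta_\rho,$ and Lemma~\ref{lemma: Delta rho is a cool operator} supplies surjectivity and the $1$-dimensional kernel. I expect the main obstacle to be the regularity step together with the attendant bookkeeping: verifying that $F$ is a genuinely smooth map between the chosen H\"older spaces and respects the boundary conditions, that the nonlinear equation $F(u)=0$ has the structure required for Schauder bootstrapping all the way to the boundary, and that this indeed forces the finite-dimensional local solution sets for different $(k,\alpha)$ to coincide --- which is precisely what lets the Banach implicit function theorem yield the Fr\'echet-manifold conclusion.
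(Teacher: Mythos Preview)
Your proposal is correct and follows essentially the same route as the paper: pass to H\"older completions, use Lemmas~\ref{lemma: linearized operator} and~\ref{lemma: Delta rho is a cool operator} to see that $dF_0=\Delta_\rho$ is a surjective Fredholm operator of index one, apply the implicit function theorem to get a smooth $1$-parameter family of solutions, and then invoke elliptic regularity and the uniqueness of the local solution set across all $(k,\alpha)$ to upgrade to the Fr\'echet setting. The paper's presentation differs only cosmetically, fixing $k=2$ first and then repeating the argument for general $k$ rather than carrying $(k,\alpha)$ as parameters throughout.
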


	\begin{proof}
		Let $Z = [f : L \to X]\in\mathcal{SLC}(N;\Lambda_0,\Lambda_1)$. Recall the definition of the operator $F : \mathcal{U} \to C^\infty(L)$ from~\eqref{SL operator}. Pick $\alpha\in(0,1)$ and let $\mathcal{U}^{2,\alpha} \subset \cob{2,\alpha}(L)$ be an open set with $\mathcal{U}^{2,\alpha} \cap \cob{\infty}(L) = \mathcal{U}.$ Extend the operator $F$ to an operator
\[
F : \mathcal{U}^{2,\alpha} \to C^{\alpha}(L).
\]
Since $\Omega$ is smooth, and the map
\[
\mathcal{U}^{2,\alpha} \to C^{1,\alpha}(L,X), \qquad u \mapsto j_u,
\]
is smooth, it follows that
the extended operator $F$ is smooth. Since $Z$ is imaginary special Lagrangian, $F(0)=0$. By Lemmas~\ref{lemma: linearized operator} and~\ref{lemma: Delta rho is a cool operator}, the linearization
\[
dF_0=\Delta_\rho:\cob{2,\alpha}(L)\to C^\alpha(L)
\]
is onto with a $1$-dimensional kernel. By the implicit function theorem, there exist $\epsilon>0$, an open $0\in\mathcal{W}\subset\cob{2,\alpha}(L)$ and a smooth embedding $\gamma:(-\epsilon,\epsilon)\to\mathcal{W}$ with $\gamma(0)=0,$ such that for $f\in\mathcal{W}$, we have $F(f)=0$ if and only if $f=\gamma(t)$ for some $t\in(-\epsilon,\epsilon)$. The path $\gamma$ satisfies
		\[
		\Delta_\rho(\dot{\gamma}(0))=\left.\deriv{t}\right|_{t=0}F(\gamma(t))=0.
		\]
		By elliptic regularity (e.g.\ \cite[Chapter 17]{gilbarg-trudinger}), for $t\in(-\epsilon,\epsilon)$ the function $\gamma(t)$ is in fact in $\cob{\infty}(L)$. Moreover, for every $k\ge2$, the above argument with $2$ replaced by $k$ shows that $\gamma$ is smooth as an embedding into the space $\cob{k,\alpha}(L)$. It follows that $\gamma$ is smooth as a map into $\cob{\infty}(L)$.
	\end{proof}

	\begin{proof}[Proof of Theorem~\ref{theorem: space of ISL cylinders is 1-dim}]
		Follows from Proposition~\ref{proposition: space of special lag cylinders is one dimensional}.
	\end{proof}

	For a Lagrangian cylinder $Z \in\mathcal{LC}(N;\Lambda_0,\Lambda_1)$, let $Z^\mathcal{H}\subset\mathcal{LC}(N;\Lambda_0,\Lambda_1)$ denote the space of Lagrangian cylinders exact isotopic to $Z$ relative to the boundary. Then we have
	\[
	T_ZZ^\mathcal{H}=C^\infty(Z;\partial Z)\subset\cob{\infty}(Z),
	\]
	the space of smooth functions vanishing on the boundary. As we have
	\[
	\cob{\infty}(Z)=C^\infty(Z;\partial Z)\oplus\ker\Delta_\rho,
	\]
	the following is a consequence of Proposition~\ref{proposition: space of special lag cylinders is one dimensional}.
	
	\begin{cor}
		\label{corollary: Hamiltonian isotopy class is descrete}
		Let $\Lambda_0,\Lambda_1\subset X$ be smoothly embedded positive Lagrangians and let $Z\in\mathcal{LC}(N;\Lambda_0,\Lambda_1)$. Then the intersection $Z^\mathcal{H}\cap\mathcal{SLC}(N;\Lambda_0,\Lambda_1)$ is a discrete subset of $\mathcal{LC}(N;\Lambda_0,\Lambda_1)$.
	\end{cor}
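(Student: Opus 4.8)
The plan is to show that each point $Z'$ of the intersection $Z^\mathcal{H}\cap\mathcal{SLC}(N;\Lambda_0,\Lambda_1)$ is isolated in it, which is exactly the assertion of discreteness. The mechanism is transversality: at such a $Z'$ we have $T_{Z'}Z^\mathcal{H}=C^\infty(Z';\partial Z')$, while Proposition~\ref{proposition: space of special lag cylinders is one dimensional} gives $T_{Z'}\mathcal{SLC}(N;\Lambda_0,\Lambda_1)=\ker\Delta_\rho$, and the stated decomposition $\cob{\infty}(Z')=C^\infty(Z';\partial Z')\oplus\ker\Delta_\rho$ shows these two subspaces of $\cob{\infty}(Z')$ meet only at $0$. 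So a one-dimensional submanifold transverse to a closed codimension-one submanifold should meet it discretely; I would make this precise in a local chart.

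Fix $Z'$ in the intersection and use Lemma~\ref{lemma: local description of space of cylinders} to choose a parameterization $\mathbf{X}:\mathcal{U}\subset\cob{\infty}(Z')\to\widetilde{\mathcal{U}}\subset\mathcal{LC}(N;\Lambda_0,\Lambda_1)$ with $\mathbf{X}(0)=Z'$. By Proposition~\ref{proposition: space of special lag cylinders is one dimensional}, after shrinking $\mathcal{U}$ the preimage $\mathbf{X}^{-1}\left(\mathcal{SLC}(N;\Lambda_0,\Lambda_1)\cap\widetilde{\mathcal{U}}\right)$ is the image of a smooth embedding $\gamma:(-\epsilon,\epsilon)\to\mathcal{U}$ with $\gamma(0)=0$ and $\dot\gamma(0)$ spanning $\ker\Delta_\rho$. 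Let $\ell:\cob{\infty}(Z')\to\R$ record the value of a function on the boundary component $N\times\{1\}$, which is a constant. The last paragraph of the proof of Lemma~\ref{lemma: Delta rho is a cool operator} produces, for each $a\in\R$, a unique element of $\ker\Delta_\rho$ with $\ell$-value $a$; hence $\ell|_{\ker\Delta_\rho}$ is a linear isomorphism onto $\R$, so $\ell\left(\dot\gamma(0)\right)\ne0$, and after shrinking $\epsilon$ the map $t\mapsto\ell(\gamma(t))$ is strictly monotone, vanishing only at $t=0$.

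Next I would use that $Z^\mathcal{H}$ is an immersed submanifold of $\mathcal{LC}(N;\Lambda_0,\Lambda_1)$ whose tangent space at every one of its points $W$ is $C^\infty(W;\partial W)$ — indeed $W\in Z^\mathcal{H}$ forces $W^\mathcal{H}=Z^\mathcal{H}$, so this follows from the description of $T_{W}W^\mathcal{H}$. Under the identifications in the chart, $C^\infty(W;\partial W)$ corresponds to $\ker\ell$, so $d\left(\ell\circ\mathbf{X}^{-1}\right)$ annihilates $TZ^\mathcal{H}$; therefore $\ell\circ\mathbf{X}^{-1}$ is locally constant on $Z^\mathcal{H}\cap\widetilde{\mathcal{U}}$, hence identically $0$ on a neighborhood of $Z'$ inside $Z^\mathcal{H}$. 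Now suppose, for contradiction, that $Z_k\in Z^\mathcal{H}\cap\mathcal{SLC}(N;\Lambda_0,\Lambda_1)$ converge to $Z'$ with $Z_k\ne Z'$. For large $k$ the $Z_k$ lie in that neighborhood, so $\ell\left(\mathbf{X}^{-1}(Z_k)\right)=0$; they also lie on $\gamma$, so $\mathbf{X}^{-1}(Z_k)=\gamma(t_k)$ with $\ell(\gamma(t_k))=0$, forcing $t_k=0$ and $Z_k=Z'$, a contradiction. Thus $Z'$ is isolated.

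The step needing the most care is the local structure of $Z^\mathcal{H}$ near $Z'$, and specifically the identification of $T_WZ^\mathcal{H}$ with $\ker\ell$ in the chart — equivalently, that an exact-relative-to-the-boundary isotopy preserves the $N\times\{1\}$-boundary value, a relative-action invariant. The hands-on way to see this uses that closed $1$-forms on $N\times[0,1]$ annihilating a boundary component are exact (Lemma~\ref{lemma: closed forms on cylinder}), so the relevant primitive exists globally along the isotopy, together with a Stokes'-type computation on $[0,1]\times(N\times[0,1])$ exploiting $\partial(N\times[0,1])\subset\Lambda_0\sqcup\Lambda_1$ and the Lagrangian condition; the definition of exactness relative to the boundary then makes the primitive vanish on all of the boundary, killing the action. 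Once this local picture is in hand, the transversality with the one-dimensional $\mathcal{SLC}(N;\Lambda_0,\Lambda_1)$ is automatic from $\cob{\infty}(Z')=C^\infty(Z';\partial Z')\oplus\ker\Delta_\rho$, and discreteness follows.
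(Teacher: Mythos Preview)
Your proof is correct and takes the same approach as the paper: both observe that $T_{Z'}\mathcal{SLC}(N;\Lambda_0,\Lambda_1)=\ker\Delta_\rho$ and $T_{Z'}Z^\mathcal{H}=C^\infty(Z';\partial Z')$ are complementary in $\cob{\infty}(Z')$, so the one-dimensional $\mathcal{SLC}$ and the codimension-one $Z^\mathcal{H}$ meet transversally and hence discretely. The paper records this as a one-line consequence of Proposition~\ref{proposition: space of special lag cylinders is one dimensional} and the direct-sum decomposition, whereas you spell out the transversality via the boundary-value functional $\ell$ and discuss the local structure of $Z^\mathcal{H}$; the mechanism is identical.
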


	\begin{rem}
		\label{remark: not only cylinders}
		Using the above technique, one can generalize Proposition~\ref{proposition: space of special lag cylinders is one dimensional} to spaces of imaginary special Lagrangians modeled by an arbitrary manifold $L$ with any number of boundary components. Namely, recalling Notation~\ref{notation:space of Lagrangians}, for $\Lambda_1,\ldots,\Lambda_k \subset X$ positive Lagrangians, let
\[
\mathcal{SL}(X,L;\Lambda_1,\ldots,\Lambda_k) \subset \mathcal{L}(X,L;\Lambda_1,\ldots,\Lambda_k)
\]
denote the subspace of imaginary special Lagrangians. As Lemma~\ref{lemma: closed forms on cylinder} fails to hold for arbitrary manifolds $L,$ one considers the space $\Omega_B^1(L)$ as a local model of $\mathcal{L}(X,L;\Lambda_1,\ldots,\Lambda_k)$ as in Theorem~\ref{thm: frechet}. Special Lagrangian submanifolds near $Z \in \mathcal{SL}(X,L;\Lambda_1,\ldots,\Lambda_k)$ are then parameterized by $\Delta_\rho$-harmonic $1$-forms on $L$, where $\Delta_\rho$ in this case is a modified Hodge Laplacian. It follows from the Hodge decomposition for manifolds with boundary (see~\cite{gunterschwartz},~\cite[Section~5.9]{taylor1} and the references therein) that $\mathcal{SL}(X,L;\Lambda_1,\ldots,\Lambda_k)$ has the dimension of the real relative cohomology space $H^1(L,\partial L)$. Similarly to Corollary~\ref{corollary: Hamiltonian isotopy class is descrete}, the intersection $Z^\mathcal{H} \cap \mathcal{SL}(X,L;\Lambda_1,\ldots,\Lambda_k)$ is discrete. Since the first version of this paper appeared, this line of investigation has been taken up in~\cite{Pi25a,Pi25b}.	\end{rem}

	\subsection{Relative Lagrangian flux}
	\label{subsection: relative Lagrangian Flux}
	
	Let $(X,\omega)$ be a symplectic manifold. Recall the notion of \emph{Lagrangian flux}~\cite[Section~1]{fukaya}, \cite[Section~6]{solomon}, generalizing the flux of a symplectomorphism~\cite{calabi}. Roughly speaking, given a path of closed Lagrangian submanifolds, $\Lambda_t \subset X,\;t\in[0,1],$ its Lagrangian flux is a linear functional on $H_1(\Lambda_0)$ which measures the path's deviation from being exact. Suppose now that $\Lambda_0,\Lambda_1\subset X$ are fixed Lagrangians and $Z_s,\;s\in[0,1],$ is a path in $\mathcal{LC}(N;\Lambda_0,\Lambda_1).$ Then the \emph{relative} Lagrangian flux of the path is a linear functional on $H_1(Z_0,\partial Z_0)$ which measures the deviation of the path from being exact relative to the boundary. As the relative homology group $H_1(Z_0,\partial Z_0)$ is generated by a single element, one can think of the relative Lagrangian flux of a path of cylinders as a number. The precise definition is as follows.
	
Let $N$ be a closed connected $n-1$-dimensional smooth manifold and let $Z_s,\;s\in[s_0,s_1],$ be a smooth path in $\mathcal{LC}(N;\Lambda_0,\Lambda_1)$. Let $\Phi:N\times[0,1]\times[s_0,s_1]\to X$ be a smooth parameterization of the path $(Z_s)$. Namely, for $s\in[s_0,s_1],$ the restricted map $\Phi_s:=\Phi|_{N\times[0,1]\times\{s\}}$ is a parameterization of the cylinder $Z_s$, and for $(p,s)\in N\times[s_0,s_1]$ we have
\[
	\Phi(p,i,s)\in\Lambda_i,\quad i=0,1.
\]
For $s\in[s_0,s_1],$ write
\[
	h_s:=\deriv{s}Z_s\in\cob{\infty}(Z_s).
\]
	For $i = 0,1,$ let $C_{i,s}$ denote the boundary component of the cylinder $Z_s$ corresponding to $N \times \{i\},$ let $A_s \in \R$ satisfy
	\[
	h_s|_{C_{1,s}} \equiv A_s.
	\]
	Let $\gamma:[0,1]\to N\times[0,1]$ be a smooth path representing the fundamental class in the relative homology group $H_1(N\times[0,1],\partial(N\times[0,1])).$ Namely, $\gamma$ satisfies
	\[
	\gamma(i)\in N\times\{i\},\quad i=0,1.
	\]
	Set
	\[
	\overline{\gamma}:[0,1]\times[s_0,s_1]\to X,\quad(t,s)\mapsto\Phi(\gamma(t),s).
	\]

	\begin{lemma}
		\label{lemma: relative Lagrangian flux}
		In the above setting we have
		\begin{equation}
		\label{equality: relative Lagrangian flux}
		\int_{[0,1]\times[s_0,s_1]}\overline{\gamma}^*\omega=-\int_{s_0}^{s_1}A_sds.
		\end{equation}
	\end{lemma}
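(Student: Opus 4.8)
The plan is to reduce the left-hand side to an iterated integral over $[0,1]\times[s_0,s_1]$, integrate first in the $t$-variable, and recognize the $t$-integrand as a total derivative.

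First I would write $\overline{\gamma}^*\omega = \omega\!\left(\partial_t\overline{\gamma},\partial_s\overline{\gamma}\right)\,dt\wedge ds$. Here $\partial_t\overline{\gamma}(t,s) = d(\Phi_s)_{\gamma(t)}(\gamma'(t))$ is tangent to $Z_s$, while $\partial_s\overline{\gamma}(t,s) = \left(\deriv{s}\Phi_s\right)(\gamma(t))$. By Remark~\ref{remark: tangent space hands on}, $\deriv{s}Z_s = \left[\left(\Phi_s, i_{\frac{d}{ds}\Phi_s}\omega\right)\right]$, and since the path $(Z_s)$ is exact relative to the boundary with primitive $h_s$, this means $\Phi_s^*\!\left(i_{\frac{d}{ds}\Phi_s}\omega\right) = d(h_s\circ\Phi_s)$ as $1$-forms on $N\times[0,1]$. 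Consequently, by the sign convention $(i_v\omega)(w)=\omega(v,w)$,
\[
\omega\!\left(\partial_t\overline{\gamma},\partial_s\overline{\gamma}\right)
= -\left(i_{\frac{d}{ds}\Phi_s}\omega\right)\!\left(d(\Phi_s)(\gamma'(t))\right)
= -\left(d(h_s\circ\Phi_s)\right)(\gamma'(t))
= -\deriv{t}\Big(h_s\big(\Phi(\gamma(t),s)\big)\Big).
\]

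Next I would apply Fubini (the integrand is continuous in $(t,s)$) and the fundamental theorem of calculus in $t$ to get
\[
\int_{[0,1]\times[s_0,s_1]}\overline{\gamma}^*\omega
= -\int_{s_0}^{s_1}\Big(h_s\big(\Phi(\gamma(1),s)\big) - h_s\big(\Phi(\gamma(0),s)\big)\Big)\,ds.
\]
Finally, since $\gamma(1)\in N\times\{1\}$ and $\gamma(0)\in N\times\{0\}$, we have $\Phi(\gamma(1),s)\in C_{1,s}$ and $\Phi(\gamma(0),s)\in C_{0,s}$. The first membership gives $h_s\big(\Phi(\gamma(1),s)\big)=A_s$ by the definition of $A_s$, and the second gives $h_s\big(\Phi(\gamma(0),s)\big)=0$ because $h_s\in\cob{\infty}(Z_s)$ vanishes on $C_{0,s}$. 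Substituting these values yields equality~\eqref{equality: relative Lagrangian flux}.

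There is no substantial obstacle here; the only point requiring care is bookkeeping of pullback conventions and signs, and in particular using the identity $\Phi_s^*\!\left(i_{\frac{d}{ds}\Phi_s}\omega\right)=d(h_s\circ\Phi_s)$ in the correct direction — this is precisely where exactness relative to the boundary, rather than mere closedness of $\deriv{s}Z_s$, is needed, and it is also what guarantees the boundary terms evaluate cleanly to $A_s$ and $0$.
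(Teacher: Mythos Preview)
Your proof is correct and is essentially the same argument as the paper's: both use Fubini to write the integral as $-\int_{s_0}^{s_1}\int_{[0,1]} i_{Y_s}\omega\,ds$ with $Y_s=\partial_s\overline{\gamma}$, identify the inner integrand with $d(h_s\circ\Phi_s\circ\gamma)$ via Remark~\ref{remark: tangent space hands on}, and then evaluate by the fundamental theorem of calculus using the boundary values $h_s|_{C_{0,s}}=0$ and $h_s|_{C_{1,s}}=A_s$. The only difference is that you spell out the coordinate form $\overline{\gamma}^*\omega=\omega(\partial_t\overline{\gamma},\partial_s\overline{\gamma})\,dt\wedge ds$ explicitly, whereas the paper compresses this into a single Fubini step.
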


	\begin{proof}
		For $s\in[s_0,s_1],$ let
		\[
		Y_s:=\deriv{s}(\Phi_s\circ\gamma)\in\Gamma([0,1],(\Phi_s\circ\gamma)^*TX).
		\]
		By definition of the derivative of a Lagrangian path (see Remark~\ref{remark: tangent space hands on}), we have
		\[
		i_{Y_s}\omega=d(h_s\circ\Phi_s\circ\gamma).
		\]
		By Fubini's theorem,
		\begin{align*}
		\int_{[0,1]\times[s_0,s_1]}\overline{\gamma}^*\omega
		&=-\int_{s_0}^{s_1}\left(\int_{[0,1]}i_{Y_s}\omega\right)ds\\
		&=-\int_{s_0}^{s_1}A_sds,
		\end{align*}
		as desired.
	\end{proof}

	\begin{dfn}
		\label{definition:relative Lagrangian flux}
		The quantity in equality~\eqref{equality: relative Lagrangian flux} is called the \emph{relative Lagrangian flux} of the path $(Z_s)_{s\in[s_0,s_1]}.$  We let $\mathrm{RelFlux}\left((Z_s)_{s\in[s_0,s_1]}\right)$ denote the relative Lagrangian flux. More generally, if $I \subset \R$ is an interval, possibly open or half open, with endpoints $a < b,$ and $(Z_s)_{s \in I}$ is a path in $\mathcal{LC}(N;\Lambda_0,\Lambda_1),$ we write
\[
\mathrm{RelFlux}\left((Z_s)_{s\in I}\right) = \lim_{s_0 \to a}\lim_{s_1 \to b}\mathrm{RelFlux}\left((Z_s)_{s\in[s_0,s_1]}\right)
\]
whenever the limit exists.
	\end{dfn}

\begin{rem}\label{remark:invariance of relative flux}
	A straightforward modification of the arguments in~\cite[Section~6]{solomon} shows that $\mathrm{RelFlux}\left((Z_s)_{s\in I}\right)$ is independent of the choices of the parameterization $\Phi$ and the path $\gamma.$ Moreover, when $I$ is a closed interval, $\mathrm{RelFlux}\left((Z_s)_{s\in I}\right)$ depends only on the homotopy class of the path $(Z_s)_s$ relative to its endpoints.
\end{rem}

	\subsection{Regular families of special Lagrangian cylinders}
	
	\label{subsection: regular families of special Lagrangian cylinders}
	
	Let $(X,\omega,J,\Omega)$ be a Calabi-Yau manifold of real dimension $2n,$ let $\Lambda_0,\Lambda_1\subset X$ be smoothly embedded positive Lagrangians, and let $N$ be a closed smooth manifold of dimension~$n-1.$
	
	\begin{dfn}
		\label{definition: fundamental harmonic and regular harmonics}
		Let $Z\in\mathcal{SLC}(N;\Lambda_0,\Lambda_1),$ and let $C_i,\;i=0,1,$ denote the boundary component of $Z$ corresponding to $N \times \{i\}.$ The \emph{fundamental harmonic} on $Z$ is the unique function $\sigma\in\cob{\infty}(Z)$ satisfying
		\[
		\Delta_\rho(\sigma)=0,\quad\sigma|_{C_1}\equiv 1.
		\]
		If $\sigma$ has no critical points, we say $Z$ has \emph{regular harmonics}.
	\end{dfn}

For the remainder of this section, we assume that $N$ is connected. Let
\[
\gamma:(-\epsilon,\epsilon)\to\mathcal{SLC}(N;\Lambda_0,\Lambda_1)
\]
be a smooth path. A smooth lifting of $\gamma$ is a smooth  map $\Phi:N\times[0,1]\times(-\epsilon,\epsilon)\to X$ such that for $s \in (-\epsilon,\epsilon)$ the restriction
\[
\Phi_s : = \Phi|_{N\times [0,1]\times\{s\}} : N \times [0,1] \to X
\]
represents the immersed submanifold $\gamma(s)\in \mathcal{SLC}(N;\Lambda_0,\Lambda_1).$
	\begin{lemma}
		\label{lemma:immersive cylinder has regular harmonics}
		Let $Z\in\mathcal{SLC}\left(N;\Lambda_0,\Lambda_1\right).$ Then the following are equivalent.
\begin{enumerate}
\item\label{item:regular harmonics}
$Z$ has regular harmonics.
\item\label{item:all}
For every embedded curve $\gamma : (-\epsilon,\epsilon) \to \mathcal{SLC}\left(N;\Lambda_0,\Lambda_1\right)$ with $\gamma(0) = Z,$ and for every smooth lifting of $\gamma,$
		\[
		\Phi:N\times[0,1]\times(-\epsilon,\epsilon)\to X,
		\]
after possibly diminishing $\epsilon,$ the map $\Phi$ is an immersion.
\item\label{item:one}
For one embedded curve $\gamma : (-\epsilon,\epsilon) \to \mathcal{SLC}\left(N;\Lambda_0,\Lambda_1\right)$ with $\gamma(0) = Z,$ and one smooth lifting of $\gamma,$
\[
		\Phi:N\times[0,1]\times(-\epsilon,\epsilon)\to X,
		\]
the map $\Phi$ is an immersion.
\end{enumerate}
\end{lemma}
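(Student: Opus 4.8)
The plan is to deduce all three statements from a single observation: for an embedded curve $\gamma$ through $Z$ together with any smooth lifting $\Phi,$ the map $\Phi$ is an immersion at every point of the slice $N\times[0,1]\times\{0\}$ if and only if $Z$ has regular harmonics. Granting this, $(\ref{item:regular harmonics})\Rightarrow(\ref{item:all})$ follows because, once $\Phi$ is an immersion at every point of the compact slice $N\times[0,1]\times\{0\},$ openness of the immersion condition together with the tube lemma force $\Phi$ to be an immersion on $N\times[0,1]\times(-\epsilon',\epsilon')$ for some $0<\epsilon'\le\epsilon;$ the implication $(\ref{item:all})\Rightarrow(\ref{item:one})$ is immediate once we note that an embedded curve through $Z$ with a smooth lifting exists, which it does by Proposition~\ref{proposition: space of special lag cylinders is one dimensional}, making $\mathcal{SLC}(N;\Lambda_0,\Lambda_1)$ a nonempty smooth $1$-manifold through $Z,$ together with the local parameterization $s\mapsto[j_{u(s)}]$ from the proof of that proposition, which provides the lifting $\Phi(p,t,s)=j_{u(s)}(p,t);$ and $(\ref{item:one})\Rightarrow(\ref{item:regular harmonics})$ is immediate since an immersion on $N\times[0,1]\times(-\epsilon,\epsilon)$ is in particular an immersion along $N\times[0,1]\times\{0\}.$

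For the observation, abbreviate $L=N\times[0,1],$ $Z_s=\gamma(s),$ $\Phi_s=\Phi|_{L\times\{s\}},$ and $v=\partial_s\Phi|_{s=0},$ a vector field along the Lagrangian immersion $\Phi_0.$ Since $\Phi_0$ is an immersion, $\Phi$ fails to be an immersion at $(p,t,0)$ exactly when $v(p,t)$ lies in the image of $d(\Phi_0)_{(p,t)};$ as that image is an $n$-dimensional Lagrangian subspace of $T_{\Phi_0(p,t)}X$ whose $g$-orthogonal complement is its image under $J,$ this happens exactly when the normal component $v^\perp(p,t)$ vanishes. By Remark~\ref{remark: tangent space hands on}, Lemma~\ref{lemma: closed forms on cylinder} and Lemma~\ref{lemma: local description of space of cylinders}, the velocity $\frac{d}{ds}\gamma(s)$ corresponds to a function $h_s\in\cob{\infty}(Z_s)$ with $i_{\partial_s\Phi_s}\omega=d(h_s\circ\Phi_s),$ and since $\gamma$ takes values in $\mathcal{SLC}(N;\Lambda_0,\Lambda_1),$ Proposition~\ref{proposition: space of special lag cylinders is one dimensional} gives $h_s\in\ker\Delta_\rho.$ Setting $u:=h_0\circ\Phi_0\in\cob{\infty}(L),$ the computation in the proof of Lemma~\ref{lemma: linearized operator}, namely equation~\eqref{orthogonal component} applied with $f=\Phi_0,$ yields $v^\perp=-J\,d\Phi_0(\nabla u),$ where $\nabla$ denotes the gradient of $\Phi_0^*g.$ As $J\circ d\Phi_0$ is injective, $v^\perp(p,t)=0$ if and only if $du(p,t)=0;$ hence $\Phi$ is an immersion along $L\times\{0\}$ if and only if $u$ has no critical points.

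To finish, I would invoke Lemma~\ref{lemma: Delta rho is a cool operator}: the kernel $\ker\Delta_\rho$ is $1$-dimensional, and since the fundamental harmonic $\sigma$ lies in it and is nonzero (it equals $1$ on $C_1$), we have $\ker\Delta_\rho=\R\sigma.$ Because $\gamma$ is an embedded curve, $\frac{d}{ds}\gamma(s)\big|_{s=0}\neq0,$ so $h_0=c\sigma$ for some $c\neq0$ and therefore $u=c\,(\sigma\circ\Phi_0);$ thus $u$ has no critical points precisely when $\sigma$ has none, that is, precisely when $Z$ has regular harmonics. This proves the observation, and the three equivalences follow as above. I do not expect a genuine obstacle; the points that require care are the check that immersivity of $\Phi$ along $N\times[0,1]\times\{0\}$ is independent of the chosen lifting (so that the observation is well posed) and propagates to a full neighborhood by compactness, and the verification that an arbitrary smooth lifting of a curve in $\mathcal{SLC}(N;\Lambda_0,\Lambda_1)$ satisfies the hypotheses of Lemma~\ref{lemma: linearized operator}\,\ref{more general lemma} — precisely the step where the identification of $\frac{d}{ds}\gamma(s)$ with an element of $\ker\Delta_\rho$ is used.
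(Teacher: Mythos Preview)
Your proof is correct and follows essentially the same approach as the paper: both identify $\dot\gamma(0)$ with a nonzero multiple of the fundamental harmonic $\sigma$ via Proposition~\ref{proposition: space of special lag cylinders is one dimensional}, use the relation $i_{\partial_s\Phi|_{s=0}}\omega=d(h_0\circ\Phi_0)$, and observe that $\Phi$ is an immersion along the central slice exactly when this differential is nowhere vanishing. Your organization around a single ``observation'' is slightly cleaner, and you are more explicit than the paper about why $(\ref{item:all})\Rightarrow(\ref{item:one})$ is not vacuous (the paper simply calls it immediate); the paper, on the other hand, avoids invoking the explicit formula $v^\perp=-Jd\Phi_0(\nabla u)$ by arguing directly that $Y$ is tangent to the Lagrangian $Z$ if and only if $i_Y\omega|_Z=0$.
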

	
	\begin{proof}
First, we prove that condition~(\ref{item:one}) implies condition~(\ref{item:regular harmonics}).		Let $\gamma$ and $\Phi$ be as in condition~(\ref{item:one}).
		Let $\sigma$ denote the fundamental harmonic on $Z$. By Proposition~\ref{proposition: space of special lag cylinders is one dimensional}, we have
		\[
		\dot\gamma(0)=a\sigma
		\]
		for some $a\in\R.$ Let
		\begin{equation}
		\label{this is Y}
		Y:=\left.\deriv{s}\right|_{s=0}\Phi(\cdot,\cdot,s)\in\Gamma\left(N\times[0,1], \Phi_0^*TX\right).
		\end{equation}
		By Remark~\ref{remark: tangent space hands on} we have
		\begin{equation}
		\label{equation: Y and a and sigma}
		d(a\sigma\circ\Phi_0)=i_Y\omega.
		\end{equation}
		As $\Phi$ is an immersion, the section $Y$ is nowhere tangent to $Z$. As $Z$ is Lagrangian, the right-hand side of~\eqref{equation: Y and a and sigma} is nowhere vanishing. It follows that $a\sigma$ does not have critical points and neither does $\sigma.$
		
		Next, we prove that condition~(\ref{item:regular harmonics}) implies condition~(\ref{item:all}). Indeed, suppose $Z$ has regular harmonics. Let $\gamma$ and $\Phi$ be as in condition~(\ref{item:all}). By Proposition~\ref{proposition: space of special lag cylinders is one dimensional} we have
		\[
		\dot{\gamma}(0)=a\sigma
		\]
		for some $0\ne a\in\mathbb{R}.$ Defining $Y$ as in~\eqref{this is Y}, the equality~\eqref{equation: Y and a and sigma} continues to hold true. By assumption, the left-hand side of~\eqref{equation: Y and a and sigma} is non-vanishing. It follows that $Y$ is nowhere tangent to $Z,$ which implies that, diminishing $\epsilon$ if necessary, $\Phi$ is an immersion.

It is immediate that condition~(\ref{item:all}) implies condition~(\ref{item:one}).
	\end{proof}
	
	The following lemma is, in fact, a particular case of Lemma~\ref{lemma:immersive cylinder has regular harmonics}. We provide an independent proof for clarity.
	
	\begin{lemma}
		\label{lemma: cylinder with regular harmonics in Euclidean space}
		Equip $\mathbb{C}^n$ with the standard Calabi-Yau structure, let $\Lambda_0,\Lambda_1\subset\mathbb{C}^n$ be positive Lagrangian linear subspaces, and let $Z\in\mathcal{SLC}\left(S^{n-1};\Lambda_0,\Lambda_1\right).$ Then $Z$ has regular harmonics if and only if $Z$ is nowhere tangent to the Euler vector field.
	\end{lemma}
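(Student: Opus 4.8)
The plan is to apply Lemma~\ref{lemma:immersive cylinder has regular harmonics} together with an explicit geometric construction in the linear setting. The key point is that in $\mathbb{C}^n$ with its standard Calabi-Yau structure, the group $\R^+$ acts by dilations $z \mapsto \lambda z$, and this action preserves $\omega$ up to scale, $J$, and sends $\Omega$ to $\lambda^n \Omega$; in particular it preserves the Lagrangian condition, the imaginary special Lagrangian condition, and the positivity of the linear subspaces $\Lambda_0, \Lambda_1$ (which are cones fixed by the dilations). Hence if $Z = [f : S^{n-1}\times[0,1] \to \C^n] \in \mathcal{SLC}(S^{n-1};\Lambda_0,\Lambda_1)$, then for $\lambda$ near $1$ the dilated cylinder $\lambda \cdot Z := [\lambda f]$ is again an element of $\mathcal{SLC}(S^{n-1};\Lambda_0,\Lambda_1)$, since the boundary components lie in the linear (hence dilation-invariant) subspaces $\Lambda_i$. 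This gives a canonical smooth path $\lambda \mapsto \lambda \cdot Z$ through $Z$ in $\mathcal{SLC}(S^{n-1};\Lambda_0,\Lambda_1)$, with an obvious smooth lifting $\Phi(p,t,\lambda) = \lambda f(p,t)$.

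First I would check that this path is \emph{embedded} as a curve in $\mathcal{SLC}(S^{n-1};\Lambda_0,\Lambda_1)$, i.e. that $\lambda \cdot Z \neq Z$ for $\lambda \neq 1$ near $1$; this is clear since $Z$ is compact and its image is not a cone (a compact cylinder immersed in $\C^n$ cannot be dilation-invariant, as dilation-invariant sets are unbounded or contained in $\{0\}$). Next, I would compute the differential of the lifting: $\left.\partial_\lambda\right|_{\lambda=1}\Phi(p,t,\lambda) = f(p,t)$, which is precisely the position vector field, i.e. the restriction to $Z$ of the Euler vector field $\varepsilon$ on $\C^n$. Now apply Lemma~\ref{lemma:immersive cylinder has regular harmonics}, using the equivalence of condition~(\ref{item:regular harmonics}) and condition~(\ref{item:one}) with $\gamma(\lambda) = \lambda \cdot Z$ and $\Phi$ as above: $Z$ has regular harmonics if and only if this particular $\Phi$ is an immersion, after possibly shrinking the parameter interval.

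The remaining step is to identify when $\Phi$ is an immersion. Since $\Phi_0 = f$ is already an immersion, $\Phi$ is an immersion near $\lambda = 1$ if and only if the vector $\partial_\lambda \Phi = f(p,t)$ is nowhere tangent to $Z$ at $\lambda = 1$ — that is, if and only if the Euler vector field $\varepsilon$ restricted to $Z$ is nowhere tangent to $Z$, which is exactly the statement that $Z$ is nowhere tangent to the Euler vector field. (For the forward direction, if $\varepsilon$ is tangent to $Z$ at some point, then $\partial_\lambda\Phi$ is tangent there and $\Phi$ fails to be an immersion for $\lambda$ in any neighborhood of $1$ by continuity near that point; one must be slightly careful and use that $d\Phi_{(p,t,1)}$ drops rank at such a point, which follows since the image of $d(f)_{(p,t)}$ already contains $T_{f(p,t)}Z = T_{\lambda f(p,t)}(\lambda Z)|_{\lambda=1}$ and adding a tangent vector does not increase the rank.) I expect the main obstacle to be the careful verification that tangency of $\varepsilon$ to $Z$ at even a single point genuinely obstructs $\Phi$ from being an immersion \emph{throughout} a neighborhood of $\lambda = 1$, rather than merely at $\lambda = 1$; this is handled by noting that non-immersivity at a point is a closed condition and that at a tangency point the rank of $d\Phi_{(p,t,1)}$ is exactly $n$ rather than $n+1$, so by lower semicontinuity of rank it stays $\leq n$ nearby — but in fact the cleanest route is simply to invoke Lemma~\ref{lemma:immersive cylinder has regular harmonics}, which already packages the "after possibly diminishing $\epsilon$" subtlety, reducing the lemma to the single computation $\partial_\lambda\Phi|_{\lambda=1} = \varepsilon|_Z$ together with the elementary fact that adding to a spanning set of $T_{p_0}Z$ a vector tangent to $Z$ does not make the map immersive.
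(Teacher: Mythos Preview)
Your proposal is correct and takes essentially the same approach as the paper: both use the dilation path $s \mapsto s\cdot Z$ and identify the deformation vector at $s=1$ with the Euler vector field restricted to $Z$. The paper even remarks that this lemma is a particular case of Lemma~\ref{lemma:immersive cylinder has regular harmonics} (exactly your route), but then gives a slightly more direct argument by computing $\left.\deriv{s}\right|_{s=1}[\Phi_s]=a\sigma$ with $a\neq 0$ and invoking Remark~\ref{remark: tangent space hands on} to see that the critical points of $\sigma$ are precisely the points where $\varepsilon$ is tangent to $Z$.
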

	
	\begin{proof}
		Let $\Phi_1:S^{n-1}\times[0,1]\to\mathbb{C}^n$ be a representative of $Z,$ and set
		\[
		\Phi:S^{n-1}\times[0,1]\times(0,\infty)\to\mathbb{C}^n,\quad(p,t,s)\mapsto s\cdot\Phi_1(p,t).
		\]
		Then for $s\in(0,\infty),$ the map $\Phi_s:=\Phi|_{S^{n-1}\times[0,1]\times\{s\}}$ is an immersion representing an element in $\mathcal{SLC}\left(S^{n-1};\Lambda_0,\Lambda_1\right).$ Hence, we have
		\[
		\left.\deriv{s}\right|_{s=1}[\Phi_s]=a\sigma
		\]
		for some $0\ne a\in\mathbb{R},$ where $\sigma$ denotes the fundamental harmonic on $Z.$ By Remark~\ref{remark: tangent space hands on}, a point $q\in Z$ is a critical point of $\sigma$ if and only if the Euler vector at $q$ is tangent to $Z.$ The lemma follows.
	\end{proof}

	The following lemma, which is in fact an elementary observation in functional analysis, shows that the fundamental harmonic depends smoothly on the geometry of a cylinder. In particular, the property of having regular harmonics is stable under small perturbations.
	
	\begin{lemma}
		\label{lemma: regular harmonics is open property}
		Let $N$ be a closed connected smooth manifold, and write $Z:=N\times[0,1].$ Let $C_i:=N \times\{i\},i=0,1,$ denote the boundary components of $Z.$ Let $\alpha\in(0,1),$ let $k\in\mathbb{N},$ and let $L:\cob{k+2,\alpha}(Z)\to C^{k,\alpha}(Z)$ be a bounded linear operator, such that $L_0:=L|_{C^{k+2,\alpha}(Z;\partial Z)}:C^{k+2,\alpha}(Z;\partial Z)\to C^{k,\alpha}(Z)$ is an isomorphism. Let $u\in\cob{k+2,\alpha}(Z)$ be the unique function satisfying
		\[
		Lu=0,\quad u|_{C_1}\equiv 1.
		\]
		Then, letting $\mathcal{B}\left(\cob{k+2,\alpha}(Z),C^{k,\alpha}(Z)\right)$ denote the space of bounded linear operators $\cob{k+2,\alpha}(Z)\to C^{k,\alpha}(Z)$ endowed with the operator norm, there is an open neighborhood
\[
L\in \mathcal{V}\subset\mathcal{B}\left(\cob{k+2,\alpha}(Z),C^{k,\alpha}(Z)\right)
\]
such that for $\widetilde{L}\in\mathcal{V},$ there exists a unique $\widetilde{u}=\widetilde{u}_{\widetilde{L}}$ satisfying
		\[
		\widetilde{L}\widetilde{u}=0,\quad\widetilde{u}|_{C_1}\equiv 1.
		\]
		Moreover, the assignment $\widetilde{L}\mapsto\widetilde{u}_{\widetilde{L}}$ is smooth.
	\end{lemma}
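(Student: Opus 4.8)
The plan is to reduce the statement to two standard facts: that the set $\mathrm{Isom}$ of invertible bounded operators between two fixed Banach spaces is open, and that operator inversion $T \mapsto T^{-1}$ is smooth on $\mathrm{Isom}$ (in fact real-analytic, via the Neumann series). First I would fix, once and for all, a reference function $v \in \cob{\infty}(Z) \subset \cob{k+2,\alpha}(Z)$ depending only on the $[0,1]$-coordinate, vanishing near $C_0$ and equal to $1$ near $C_1$; then $\{w \in \cob{k+2,\alpha}(Z) \mid w|_{C_1}\equiv 1\} = v + C^{k+2,\alpha}(Z;\partial Z)$. Since $C^{k+2,\alpha}(Z;\partial Z)$ and $C^{k,\alpha}(Z)$ are Banach spaces, the restriction map
\[
r : \mathcal{B}\!\left(\cob{k+2,\alpha}(Z), C^{k,\alpha}(Z)\right) \to \mathcal{B}\!\left(C^{k+2,\alpha}(Z;\partial Z), C^{k,\alpha}(Z)\right), \qquad \widetilde{L} \mapsto \widetilde{L}|_{C^{k+2,\alpha}(Z;\partial Z)},
\]
is bounded and linear, hence smooth, and $r(L) = L_0$. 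As $L_0 \in \mathrm{Isom},$ I set $\mathcal{V} := r^{-1}(\mathrm{Isom}),$ an open neighborhood of $L.$

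For $\widetilde{L} \in \mathcal{V}$ I would write down the candidate
\[
\widetilde{u}_{\widetilde{L}} := v - r(\widetilde{L})^{-1}\!\left(\widetilde{L}v\right) \ \in\ v + C^{k+2,\alpha}(Z;\partial Z) \subset \cob{k+2,\alpha}(Z),
\]
and verify the required properties directly: the correction term vanishes on $\partial Z,$ so $\widetilde{u}_{\widetilde{L}}|_{C_1} \equiv 1$; applying $\widetilde{L}$ and using that $r(\widetilde{L})$ agrees with $\widetilde{L}$ on $C^{k+2,\alpha}(Z;\partial Z)$ gives $\widetilde{L}\widetilde{u}_{\widetilde{L}} = \widetilde{L}v - \widetilde{L}v = 0$; and if $\widetilde{u}'$ is another solution, then $\widetilde{u}_{\widetilde{L}} - \widetilde{u}' \in C^{k+2,\alpha}(Z;\partial Z)$ lies in $\ker r(\widetilde{L}) = \{0\},$ giving uniqueness. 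In particular $\widetilde{L} = L$ recovers $u.$ For smoothness of $\widetilde{L} \mapsto \widetilde{u}_{\widetilde{L}},$ I would exhibit it as the composition of $\widetilde{L} \mapsto \left(\widetilde{L}v,\, r(\widetilde{L})\right)$ (bounded linear), $(y,T) \mapsto (y, T^{-1})$ (smooth on $C^{k,\alpha}(Z) \times \mathrm{Isom}$), and $(y,S) \mapsto v - S(y)$ (smooth, since operator evaluation is bounded bilinear).

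There is no serious analytic obstacle; the two points needing care are conceptual. First, one must not require $\widetilde{L}$ itself to be invertible on all of $\cob{k+2,\alpha}(Z)$ — the hypothesis only provides, and the proof only needs, invertibility of the restriction $r(\widetilde{L}),$ so the uniqueness step must be phrased to use precisely this. Second, the reference function $v$ must be chosen independently of $\widetilde{L},$ so that $\widetilde{u}_{\widetilde{L}}$ is a genuine composition of smooth maps of the single variable $\widetilde{L}.$ Both are accommodated by the organization above; the stated stability of the regular-harmonics property is then immediate, since in the application $L = \Delta_\rho$ varies smoothly with the cylinder and $u$ is its fundamental harmonic.
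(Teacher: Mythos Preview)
Your proof is correct and follows essentially the same approach as the paper's: both define $\mathcal{V}$ as the set of operators whose restriction to $C^{k+2,\alpha}(Z;\partial Z)$ is invertible, and both write $\widetilde{u}_{\widetilde{L}} = w - \widetilde{L}_0^{-1}\widetilde{L}w$ for a fixed element $w$ with the correct boundary values. The only cosmetic difference is that the paper takes $w = u$ (the solution for $L$ itself) rather than your auxiliary $v$, which slightly shortens the write-up but is otherwise identical.
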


	\begin{proof}
		Let $L\in\mathcal{V}\subset\mathcal{B}\left(\cob{k+2,\alpha}(Z),C^{k,\alpha}(Z)\right)$ be open such that, for $\widetilde{L}\in\mathcal{V},$ the restricted operator $\widetilde{L}_0:=\widetilde{L}|_{C^{k+2,\alpha}(Z;\partial Z)}$ is an isomorphism. The assignment $\widetilde{L}\mapsto\widetilde{L}_0^{-1}$ is smooth in $\mathcal{V}.$ For $\widetilde{L}\in\mathcal{V},$ the unique $\widetilde{u}$ with the desired properties is given by
		\[
		\widetilde{u}_{\widetilde{L}}=u-\widetilde{L}_0^{-1}\widetilde{L}u.
		\]
		The lemma follows.
	\end{proof}
	
	Next, we define interior regularity and regular convergence for families of special Lagrangian cylinders. As a preliminary, we recall the notion of ends (see~\cite{freudenthal}). Let $A$ be a topological space. The set of ends $\mathcal{E}(A)$ is given by
\[
\mathcal{E}(A) := \varprojlim_{K \subset\subset A} \pi_0(A\setminus K).
\]
Thus, an end $E \in \mathcal{E}(A)$ determines for every compact $K\subset A$ a connected component $E(K)$ of the complement $A\setminus K$, such that for two compact subsets $K\subset K'$ we have $E(K')\subset E(K).$ For our purposes the space $A$ will always be a connected component of $\mathcal{SLC}\left(S^{n-1};\Lambda_0,\Lambda_1\right),$ which is 1-dimensional. We thus use the following terminology, which is somewhat more elementary and equivalent in the case at hand. Let $C$ be a connected non-compact $1$-dimensional manifold. That is, $C$ is a curve diffeomorphic to the real line. A \emph{ray} in $C$ is a connected open proper subset $U\subsetneq C$ with non-compact closure. Two rays, $U,V\subset C$ are said to be equivalent if $U\subset V$ or $V\subset U.$ Finally, an \emph{end} is an equivalence class of rays. Every curve $C$ as above has exactly two ends.

	\begin{dfn}
		\label{definition: interior regularity}
		$\;$
		\begin{enumerate}
			\item\label{interior regularity first part} Let $U\subset\mathcal{SLC}\left(N;\Lambda_0,\Lambda_1\right)$ be open and connected. An \emph{interior-regular parameterization} of $U$ is a smooth immersion $\Phi:N\times[0,1]\times(a,b)\to X$ satisfying the following conditions:
			\begin{enumerate}[ref=(\alph*)]
				\item\label{item: boundary embedding} The restriction of $\Phi$ to a boundary component $\Phi|_{N\times\{i\}\times(a,b)}$ is an embedding for $i = 0,1$.
				\item\label{item: rep} For $s\in(a,b),$ the restricted immersion $\Phi_s:=\Phi|_{N\times[0,1]\times\{s\}}$ represents an element of $U$.
				\item\label{item: diffeo s} The map $\chi:(a,b)\to U,\quad s\mapsto[\Phi_s],$ is a diffeomorphism.
			\end{enumerate}
			The subset $U$ is said to be \emph{interior-regular} if it admits an interior-regular parameterization.
			\item \label{compatible with harmonics}
Let $U$ be as in~\eqref{interior regularity first part} and let $\Phi$ be an interior-regular parameterization of $U$. For $s\in(a,b),$ write $Z_s:=[\Phi_s]$ and let $\sigma_s$ denote the fundamental harmonic on $Z_s.$ The parameterization $\Phi$ is \emph{compatible with the harmonics of $U$} if the following conditions hold.
			\begin{enumerate}[ref=(\alph*)]
				\item\label{t compatible}
For $(p,t,s)\in N\times[0,1]\times(a,b)$ we have
				\[
				\sigma_s(\Phi(p,t,s))=t.
				\]
				\item\label{orthogonal} For $(p,t,s)\in N\times[0,1]\times(a,b)$, the derivative $\pderiv{t}\Phi(p,t,s)$ is orthogonal to the $t$-level set of $\sigma_s.$
			\end{enumerate}
			\item\label{regular convergence to intersection point} Let $\mathcal{Z}\subset\mathcal{SLC}\left(S^{n-1};\Lambda_0,\Lambda_1\right)$ be a connected component, let $E$ be an end of $\mathcal{Z},$ and let $q\in\Lambda_0\cap\Lambda_1.$ A \emph{regular parameterization} of $E$ about $q$ is a smooth map $\Phi:S^{n-1}\times[0,1]\times[0,\epsilon)\to X$ satisfying the following conditions.
			\begin{enumerate}[ref=(\alph*)]
				\item\label{critical point} For $(p,t)\in S^{n-1}\times[0,1]$ we have $\Phi(p,t,0)=q.$
				\item\label{item:interior regular} The restricted map $\Phi|_{S^{n-1}\times[0,1]\times(0,\epsilon)}$ is an interior-regular parameterization of $U,$ for some ray $U\subset\mathcal{Z}$ representing $E.$
				\item \label{item:derivative Phi immersion} The derivative
				\[
				\left.\pderiv{s}\right|_{s=0}\Phi(\cdot,\cdot,s):S^{n-1}\times[0,1]\to T_qX
				\]
				is an immersion and the restriction $\left.\left.\pderiv{s}\right|_{s=0}\Phi(\cdot,\cdot,s)\right|_{S^{n-1}\times\{i\}}$ is an embedding for $i = 0,1.$
				\item \label{item:nowhere tangent Euler} The Euler vector field on $T_qX$ is nowhere tangent to the immersion $\left.\pderiv{s}\right|_{s=0}\Phi(\cdot,\cdot,s).$
			\end{enumerate}
In this case, we also say that $\Phi$ is a regular parameterization of $U$ about~$q.$ A regular parameterization $\Phi$ as above is said to be compatible with the harmonics of $E$ if the interior-regular restricted map $\Phi|_{S^{n-1}\times[0,1]\times(0,\epsilon)}$ is. We say the end $E$ \emph{converges regularly} to the intersection point $q$ if it admits a regular parameterization about $q.$ We may use a half-open interval with arbitrary endpoints, open either from below or above, in place of the half-open interval $[0,\epsilon).$
		\end{enumerate}
	\end{dfn}
The following lemmas make it easier to check regularity.
\begin{lemma}\label{lemma: immersion nowhere tangent to Euler}
Let $N$ be a compact manifold possibly with boundary. Let $G : N \times [0,\epsilon) \to \R^m$ be a smooth map satisfying the following.
\begin{enumerate}
  \item
The restriction
$G|_{N \times \{0\}}$
is an immersion.
  \item
The Euler vector field on $\R^m$ is nowhere tangent to the immersion $G|_{N \times \{0\}}.$
\end{enumerate}
Let $F : N \times [0,\epsilon) \to \R^m$ be given by
\[
F(p,s) = sG(p,s).
\]
Then, possibly after shrinking $\epsilon,$ the restriction $F|_{N \times (0,\epsilon)}$ is an immersion. Moreover, if $N = S^{m-1},$ and $G|_{N \times \{0\}}$ is an embedding, then $F|_{N \times (0,\epsilon)}$ is an embedding.

Furthermore, there exists a $C^1$ neighborhood $G \in U$ in the space of smooth maps $C^\infty(N \times [0,\epsilon),\R^m)$ such that the conclusions of the lemma continue to hold with a uniform choice of $\epsilon$ if we replace $G$ with any map in $U.$
\end{lemma}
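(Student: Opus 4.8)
The plan is to prove the immersion claim by a pointwise rank computation and then deduce the embedding claim by reparametrizing $F$ into a radial normal form; the uniform statement will then come for free by tracking the relevant estimates. For the immersion claim, fix a chart on $N$ with coordinates $p = (p^1,\dots,p^k)$, $k = \dim N$. The columns of the Jacobian of $F = sG$ are $\partial_{p^i}F = s\,\partial_{p^i}G$ for $i = 1,\dots,k$ and $\partial_s F = G + s\,\partial_s G$, so for $s \neq 0$, rescaling the first $k$ columns by $1/s$ shows that $F$ is immersive at $(p,s)$ iff the matrix
\[
\widetilde M(p,s) := \bigl[\;\partial_{p^1}G(p,s)\;\big|\;\cdots\;\big|\;\partial_{p^k}G(p,s)\;\big|\;G(p,s) + s\,\partial_s G(p,s)\;\bigr]
\]
has rank $k+1$. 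This matrix is smooth up to $s = 0$, and the columns of $\widetilde M(p,0)$ are $\partial_{p^i}G(p,0)$, $i=1,\dots,k$, together with the radial vector $G(p,0)$. Hypothesis~(1) makes the first $k$ independent, and hypothesis~(2) says the nonzero radial vector $G(p,0)$ does not lie in their span, so $\operatorname{rank}\widetilde M(p,0) = k+1$ for every $p \in N$. Since full rank is an open condition and $N$ is compact, after shrinking $\epsilon$ we get $\operatorname{rank}\widetilde M = k+1$ on all of $N \times [0,\epsilon)$, whence $F|_{N\times(0,\epsilon)}$ is an immersion. For the uniform version, the least singular value of $\widetilde M(p,0)$ depends continuously on $G$ in the $C^1$-topology and is bounded below on a $C^1$-neighborhood $U$ of $G$ by compactness of $N$, while $\widetilde M(p,s) - \widetilde M(p,0)$ has norm $\le s\,\|G'\|_{C^1}$ for $G' \in U$; shrinking $\epsilon$ and $U$ then gives a uniform choice.

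For the embedding claim, assume $N = S^{m-1}$, so $k + 1 = m$ and $F$ is a map between $m$-manifolds; once $F|_{N\times(0,\epsilon)}$ is shown to be an injective immersion it is automatically an embedding, since an injective local diffeomorphism is an open embedding. Note also that $F(p,s) = 0$ forces $s = 0$, because $|G|$ is bounded below near $s = 0$, so no point with $s > 0$ maps to the origin, and only injectivity of $F|_{N\times(0,\epsilon)}$ remains. I first claim that $\bar g := g/|g| : S^{m-1} \to S^{m-1}$, where $g := G(\cdot,0)$, is a diffeomorphism. It is a local diffeomorphism, since $d\bar g_p$ is surjective precisely when $dg_p(T_pS^{m-1})$ does not contain the radial direction $g(p)$, which is hypothesis~(2). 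A local diffeomorphism of the compact manifold $S^{m-1}$ is a covering of $S^{m-1}$ of some degree $d \ge 1$; for $m \ge 3$, simple connectedness of $S^{m-1}$ forces $d = 1$, and for $m = 2$ the degree of $\bar g$ equals the winding number about the origin of the embedded loop $g$, which is $\pm 1$ by the Jordan curve theorem (it is nonzero as $\bar g$ is a covering), so $d = 1$ again; the case $m = 1$ is trivial. Hence $\bar g$ is a diffeomorphism. This is the step where the embedding hypothesis, rather than mere immersion, is essential, and it is the main obstacle: it needs genuine topological input (covering theory, the Jordan curve theorem) rather than local analysis.

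To finish, precompose $G$ with $\bar g^{-1} \times \id$ on $N \times [0,\epsilon)$; this changes neither hypotheses nor conclusion, so we may assume $g(p) = \lambda(p)\,p$ with $\lambda > 0$, i.e.\ $\bar G(\cdot,0) = \id_{S^{m-1}}$ for $\bar G := G/|G|$. Then $H(p,s) := (\bar G(p,s), s)$ is the identity on $S^{m-1} \times \{0\}$ with invertible differential there and preserves the $s$-coordinate, so by the inverse function theorem and compactness it restricts, after shrinking $\epsilon$, to a diffeomorphism of $S^{m-1} \times [0,\epsilon)$ onto itself. Precomposing $F$ with $H^{-1}$ puts it in the radial form $(v,s) \mapsto \rho(v,s)\,v$ with $\rho(v,s) = s\,|G(H^{-1}(v,s))|$, so $\rho(v,0) = 0$ and $\partial_s \rho > 0$ for small $s > 0$, the $s$-derivative being $|G| + O(s)$ with $|G|$ bounded below. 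Such a radial map is injective: $\rho(v,s)\,v = \rho(v',s')\,v'$ with $s,s' > 0$ forces $v = v'$ (equal unit vectors with positive ratio) and then $\rho(v,s) = \rho(v,s')$, hence $s = s'$ by strict monotonicity of $\rho(v,\cdot)$. Since this radial map is a reparametrization of $F|_{N\times(0,\epsilon)}$ it is also an immersion, hence an embedding, and therefore so is $F|_{N\times(0,\epsilon)}$.

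For the uniform version of the embedding claim, the degree of $\bar g$ is a homotopy invariant and so is unchanged under small $C^1$-perturbations of $G$, and the estimates used --- $|G|$ bounded below near $s = 0$, $H$ a diffeomorphism on a fixed domain, $\partial_s \rho > 0$ --- all hold uniformly on a $C^1$-neighborhood of $G$ by compactness of $N$; beyond the degree argument the remaining work is the routine bookkeeping of domains, which is unproblematic because each estimate depends on only finitely many derivatives of $G$ along the compact set $N \times \{0\}$.
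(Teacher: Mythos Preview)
Your proof is correct and follows essentially the same strategy as the paper's: for the immersion, a rank computation at $s=0$ using that the Euler vector $G(p,0)$ is independent of the tangential derivatives $\partial_{p^i}G(p,0)$, then compactness; for the embedding, the same topological step showing $g/|g|$ is a diffeomorphism of $S^{m-1}$ (covering plus simple connectedness for $m\ge 3$, winding number for $m=2$), followed by a reparametrization making the direction constant and reducing injectivity to strict monotonicity of the radial coordinate. The only cosmetic differences are that the paper phrases the rank argument geometrically rather than via a matrix, and it handles ``homeomorphism onto image'' by extending $F$ to a continuous injection of a closed ball, whereas you invoke that an injective immersion between equidimensional manifolds is an open embedding.
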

\begin{proof}
Possibly after shrinking $\epsilon,$ the restriction $G|_{N \times \{s\}}$ is an immersion to which the Euler vector field is nowhere tangent for $s \in [0,\epsilon).$ Letting $\varepsilon$ denote the Euler vector field, we have
\[
\pderiv[F]{s}(p,s) = \varepsilon(G(p,s)) + s \pderiv[G]{s}(p,s).
\]
So, possibly shrinking $\epsilon$ again, we may assume that
\[
\pderiv[F]{s}(p,s) \notin \mathrm{Im}\left(d\left(G|_{N \times \{s\}}\right)_{(p,s)}\right), \qquad (p,s) \in N \times [0,\epsilon).
\]
On the other hand, for $v \in T_pN$ and $w \in T_s [0,\epsilon)\simeq \R,$ we have
\[
dF_{(p,s)}(v,w) =  sd(G|_{N \times \{s\}})_{(p,s)}(v) + w\pderiv[F]{s}(p,s).
\]
It follows that $dF_{(p,s)}$ is injective for $(p,s) \in N \times (0,\epsilon),$ so $F|_{N \times (0,\epsilon)}$ is an immersion.

We turn to the case that $N = S^{m-1}$ and $G|_{N \times \{0\}}$ is an embedding. Let $\pi : \R^m \to \orientedprojective(\R^m)$ denote the quotient map. We first show that $\pi \circ G|_{N \times \{s\}}$ is a diffeomorphism for $s \in [0,\epsilon).$ Indeed, since the Euler vector field is nowhere tangent to $G|_{N \times \{s\}},$ it follows that $\pi \circ G|_{N \times \{s\}}$ is a local diffeomorphism. The compactness of $N$ implies that $\pi \circ G|_{N \times \{s\}}$ is a covering map. When $m \geq 3,$ since $S^{m-1}$ is simply connected, $\pi \circ G|_{N \times \{s\}}$ must be a diffeomorphism. On the other hand, when $m = 2,$ since $G|_{N \times \{0\}}$ is an embedding nowhere tangent to the Euler vector field, the winding number about zero is $\pm 1.$ The winding number is preserved under isotopy. So, again, $\pi \circ G|_{N \times \{s\}}$ must be a diffeomorphism.

Next, we show that $F|_{N \times (0,\epsilon)}$ is injective. Define a diffeomorphism $\varphi : N \times [0,\epsilon) \to N \times [0,\epsilon)$ by
\[
\varphi(p,s) = (\pi \circ G|_{N \times \{s\}})^{-1}(\pi \circ G|_{N \times \{0\}}(p,0)).
\]
It suffices to show $F \circ \varphi$ is injective, so we may replace $F$ with $F \circ \varphi$ and $G$ with $G \circ \varphi,$ and thus we may assume $\pi \circ F(p,s)$ is independent of $s.$
Since $\pi \circ G|_{N \times \{s\}}$ is injective, so is $G|_{N \times \{s\}}$ for $s \in [0,\epsilon).$
Thus, we need only show that for $p \in N$ and $s < s' \in [0,\epsilon)$ we have $|F(p,s)| < |F(p,s')|.$ But
\[
\left.\pderiv[|F(p,s)|]{s}\right|_{s = 0^+} = |G(p,0)|.
\]
By compactness of $N,$ after possibly shrinking $\epsilon,$ we have $\pderiv[|F(p,s)|]{s} > 0$ for $(p,s) \in N \times [0,\epsilon).$ So, $F|_{N \times (0,\epsilon)}$ is injective.

To complete the proof, we show that $F|_{N \times (0,\epsilon)}$ is a homeomorphism onto its image. Let $B_\epsilon(0)$ denote the ball of radius $\epsilon$ in $\R^m$ and identify $S^{m-1}$ with the unit sphere in $\R^m.$ Define $\widetilde F : B_{\epsilon}(0) \to \R^m$ by $\widetilde F(sp) = F(p,s).$ Then, $\widetilde F$ is continuous and injective, so for $0 < \epsilon' < \epsilon,$ the restriction of $\widetilde F$ to the compact space $\overline{B_{\epsilon'}(0)}$ is a homeomorphism onto its image. So, we conclude by replacing $\epsilon$ with $\epsilon'.$

The choice of $\epsilon$ throughout the proof depends only on $G$ and its first derivatives, so the conclusions of the lemma continue to hold with uniform $\epsilon$ for any map that is $C^1$ close to~$G.$
\end{proof}

\begin{lemma}\label{lemma:easy regularity}
Let $M$ and $N$ be manifolds and let $q \in M.$ Let $F : N \times [0,\epsilon) \to M$ be a smooth map satisfying the following.
\begin{enumerate}
  \item
$F(p,0) = q$ for all $p \in N.$
  \item
The derivative
\[
\left.\pderiv[F]{s}\right|_{s = 0} : N \to T_qM
\]
is an immersion.
  \item
The Euler vector field on $T_qX$ is nowhere tangent to the immersion $\left.\pderiv[F]{s}\right|_{s = 0}.$
\end{enumerate}
Then, possibly after shrinking $\epsilon,$ the restriction $F|_{N \times (0,\epsilon)}$ is an immersion. Moreover, writing $m = \dim M,$ if $N = S^{m-1},$ and $\left.\pderiv[F]{s}\right|_{s = 0}$ is an embedding, then $F|_{N \times (0,\epsilon)}$ is an embedding.
\end{lemma}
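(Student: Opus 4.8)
The plan is to reduce the statement to its Euclidean model, Lemma~\ref{lemma: immersion nowhere tangent to Euler}. The mechanism is the one used repeatedly in Section~\ref{section: Lagrangians with Cone Points}: since $F$ collapses $N \times \{0\}$ to the point $q$, in a chart at $q$ we may factor out a power of $s$ by Lemma~\ref{lemma: milnor lemma}, writing $F(p,s) = s\,G(p,s)$ with $G|_{N \times \{0\}} = \pderiv[F]{s}|_{s=0}$, after which the hypotheses on $\pderiv[F]{s}|_{s=0}$ become precisely the hypotheses of Lemma~\ref{lemma: immersion nowhere tangent to Euler}.

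In detail, first I would fix a chart $\psi : V \to \R^m$ with $q \in V \subset M$ open and $\psi(q) = 0$, and set $\widehat F = \psi \circ F$. Since $F(N \times \{0\}) = \{q\}$ and $F$ is continuous, after shrinking $\epsilon$ we may arrange $F(N \times [0,\epsilon)) \subset V$; for the second assertion this uses compactness of $S^{m-1}$, while for the first it suffices over any relatively compact open subset of $N$, the immersion property being local. Then $\widehat F(\cdot,0) \equiv 0$, so applying Lemma~\ref{lemma: milnor lemma} to the components of $\widehat F$ in local coordinates on $N$ — the factorization being unique, the local pieces patch — produces a smooth $G : N \times [0,\epsilon) \to \R^m$ with
\[
\widehat F(p,s) = s\,G(p,s), \qquad G(p,0) = \left.\pderiv[\widehat F]{s}\right|_{s=0}(p) = d\psi_q\!\left(\left.\pderiv[F]{s}\right|_{s=0}(p)\right).
\]
Because $d\psi_q$ is a linear isomorphism and the Euler vector field of a vector space is natural under linear isomorphisms, $G|_{N \times \{0\}}$ is an immersion nowhere tangent to the Euler vector field of $\R^m$, and it is an embedding exactly when $\pderiv[F]{s}|_{s=0}$ is.

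Finally I would invoke Lemma~\ref{lemma: immersion nowhere tangent to Euler} for $G$ with target $\R^m$: after a further shrinking of $\epsilon$, the map $\widehat F|_{N \times (0,\epsilon)}$ is an immersion, and in the case $N = S^{m-1}$ with $G|_{N \times \{0\}}$ an embedding it is an embedding; composing with $\psi^{-1}$ transfers both conclusions to $F|_{N \times (0,\epsilon)}$. I do not expect a genuine obstacle here — Lemma~\ref{lemma: immersion nowhere tangent to Euler} does the real work, and what remains is the routine bookkeeping of the chart reduction. The one point deserving a word is the uniformity of $\epsilon$, which ultimately rests on compactness of $N$, as holds in every application in this paper and in particular for the sphere $S^{m-1}$ in the second assertion.
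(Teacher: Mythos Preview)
Your proposal is correct and follows exactly the approach of the paper, which proves the lemma in one line by choosing coordinates on $M$ about $q$ and applying Lemmas~\ref{lemma: milnor lemma} and~\ref{lemma: immersion nowhere tangent to Euler}. Your write-up simply makes explicit the chart reduction and the factorization $\widehat F(p,s)=s\,G(p,s)$ that the paper leaves implicit.
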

\begin{proof}
This follows by choosing coordinates on $M$ about $q$ and applying Lemmas~\ref{lemma: milnor lemma} and~\ref{lemma: immersion nowhere tangent to Euler}.
\end{proof}
The following is an immediate consequence of Lemma~\ref{lemma:easy regularity}.
\begin{cor}\label{rem: easy regularity}
If $\Phi:S^{n-1}\times[0,1]\times[0,\epsilon)\to X$ satisfies conditions~\eqref{regular convergence to intersection point}\ref{critical point}, \eqref{regular convergence to intersection point}\ref{item:derivative Phi immersion}, \eqref{regular convergence to intersection point}\ref{item:nowhere tangent Euler} and~\eqref{interior regularity first part}\ref{item: rep} of Definition~\ref{definition: interior regularity}, then possibly after diminishing $\epsilon,$ it also satisfies condition~\eqref{regular convergence to intersection point}\ref{item:interior regular} and thus it is a regular parameterization.
\end{cor}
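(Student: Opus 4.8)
The plan is to verify the one missing ingredient, condition~\eqref{regular convergence to intersection point}\ref{item:interior regular} of Definition~\ref{definition: interior regularity}, by unwinding the notion of an interior-regular parameterization; every quantitative statement will be supplied by Lemma~\ref{lemma:easy regularity}.

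First I would record that condition~\eqref{regular convergence to intersection point}\ref{critical point} together with continuity forces $q\in\Lambda_0\cap\Lambda_1$: for $i=0,1$ and $s>0$ the point $\Phi(p,i,s)$ lies in the closed set $\Lambda_i$ by condition~\eqref{interior regularity first part}\ref{item: rep}, and letting $s\to0$ gives $q\in\Lambda_i$. Next I would invoke Lemma~\ref{lemma:easy regularity} twice. Applied with $M=X$ and $N=S^{n-1}\times[0,1]$, its three hypotheses are precisely conditions~\eqref{regular convergence to intersection point}\ref{critical point}, \eqref{regular convergence to intersection point}\ref{item:derivative Phi immersion}, and~\eqref{regular convergence to intersection point}\ref{item:nowhere tangent Euler}, so after shrinking $\epsilon$ the restriction $\Phi|_{S^{n-1}\times[0,1]\times(0,\epsilon)}$ is a smooth immersion. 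Applied with $M=\Lambda_i$, so that $N=S^{n-1}=S^{\dim\Lambda_i-1}$, and $F=\Phi|_{S^{n-1}\times\{i\}\times[0,\epsilon)}$: the curve $s\mapsto\Phi(p,i,s)$ lies in $\Lambda_i$, hence $\partial_s|_{s=0}\Phi(\cdot,i,s)$ takes values in $T_q\Lambda_i$ and is an embedding by condition~\eqref{regular convergence to intersection point}\ref{item:derivative Phi immersion}; moreover the Euler vector field at a vector $w\in T_q\Lambda_i$ equals $w$ whether one works in $T_q\Lambda_i$ or in $T_qX$, so condition~\eqref{regular convergence to intersection point}\ref{item:nowhere tangent Euler} restricts to the nowhere-tangency hypothesis for the immersion into $T_q\Lambda_i$. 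Thus, after two further shrinkings of $\epsilon$, the map $\Phi|_{S^{n-1}\times\{i\}\times(0,\epsilon)}$ is an embedding for $i=0,1$, which is condition~\eqref{interior regularity first part}\ref{item: boundary embedding} of an interior-regular parameterization; condition~\eqref{interior regularity first part}\ref{item: rep} is part of the hypothesis.

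It remains to produce the ray and verify condition~\eqref{interior regularity first part}\ref{item: diffeo s}. For $s\in(0,\epsilon)$ set $\chi(s):=[\Phi_s]$, a smooth curve into the $1$-manifold $\mathcal{SLC}(S^{n-1};\Lambda_0,\Lambda_1)$ of Theorem~\ref{theorem: space of ISL cylinders is 1-dim}. It is an immersion: since $\Phi|_{S^{n-1}\times[0,1]\times(0,\epsilon)}$ is an immersion, $\partial_s\Phi_s$ is nowhere tangent to the Lagrangian $Z_s:=[\Phi_s]$, so by Remark~\ref{remark: tangent space hands on} the tangent vector $\dot\chi(s)$ is represented by the nowhere-vanishing $1$-form $i_{\partial_s\Phi_s}\omega$ on $Z_s$ and is therefore nonzero. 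It is injective: an equality $\chi(s_1)=\chi(s_2)$ would make the boundary components corresponding to $S^{n-1}\times\{i\}$ coincide, so that $\Phi_{s_1}(S^{n-1}\times\{i\})=\Phi_{s_2}(S^{n-1}\times\{i\})$, contradicting the embeddedness of $\Phi|_{S^{n-1}\times\{i\}\times(0,\epsilon)}$ unless $s_1=s_2$. An injective immersion between $1$-manifolds is a local diffeomorphism, hence an open map, hence a diffeomorphism onto the open set $U:=\chi((0,\epsilon))$; this gives condition~\eqref{interior regularity first part}\ref{item: diffeo s}, so $\Phi|_{S^{n-1}\times[0,1]\times(0,\epsilon)}$ is an interior-regular parameterization of $U$. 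Let $\mathcal{Z}$ be the connected component of $\mathcal{SLC}(S^{n-1};\Lambda_0,\Lambda_1)$ containing $U$. The closure of $U$ in $\mathcal{Z}$ is non-compact: otherwise some sequence $s_n\to0^+$ would have $\chi(s_n)$ subconverging to an element of $\mathcal{SLC}(S^{n-1};\Lambda_0,\Lambda_1)$, yet $\Phi_{s_n}$ converges uniformly to the constant map with value $q$, which would force a cylinder to have one-point image. After one more shrinking of $\epsilon$ we may also assume $U\subsetneq\mathcal{Z}$, so $U$ is a ray and represents one of the two ends of $\mathcal{Z}$. Hence $\Phi$ satisfies condition~\eqref{regular convergence to intersection point}\ref{item:interior regular}, and since conditions~\eqref{regular convergence to intersection point}\ref{critical point}, \eqref{regular convergence to intersection point}\ref{item:derivative Phi immersion}, \eqref{regular convergence to intersection point}\ref{item:nowhere tangent Euler} hold by hypothesis, $\Phi$ is a regular parameterization of $U$ about $q$.

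All of the analytic content is carried by Lemma~\ref{lemma:easy regularity} (and, through it, Lemma~\ref{lemma: immersion nowhere tangent to Euler}), so I do not expect an essential obstacle; the only points requiring care are clerical — using only finitely many shrinkings of $\epsilon$, matching the dimension hypothesis $N=S^{\dim M-1}$ for the two boundary embeddings, identifying the Euler vector fields on $T_q\Lambda_i$ and $T_qX$, and the soft topological step that an injective immersion of curves is an open embedding whose image has non-compact closure. The most delicate of these is the last one, namely arguing that the image $U$ is genuinely a ray converging to the intersection point, for which the shrinking-to-$q$ of the cylinders $\Phi_s$ is the key observation.
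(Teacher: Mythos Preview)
Your proof is correct and follows the paper's intended route: the paper simply records the corollary as ``an immediate consequence of Lemma~\ref{lemma:easy regularity}'' without further argument, and your two applications of that lemma (once with $N=S^{n-1}\times[0,1]$ to get the immersion, once with $N=S^{n-1}$ and $M=\Lambda_i$ to get the boundary embeddings) are exactly what is implicit there. You go further by supplying the verifications the paper glosses over---that $\chi:s\mapsto[\Phi_s]$ is an injective immersion of $1$-manifolds (hence a diffeomorphism onto its image) and that the image $U$ has non-compact closure and so is a genuine ray---so your write-up is in fact more complete than the paper's one-line justification.
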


	\begin{rem}
		\label{remark: every cylinder in interior-regular family has regular harmonics}
		It follows from Lemma~\ref{lemma:immersive cylinder has regular harmonics} that if a connected open subset $U\subset\mathcal{SLC}\left(N;\Lambda_0,\Lambda_1\right)$ is interior-regular, every element of $U$ has regular harmonics.
	\end{rem}
	
	\begin{rem}
		\label{remark: regular parameterization gives rise to family of cone-immersions}
		Let $\mathcal{Z},E$ and $q,$ be as in Definition~\ref{definition: interior regularity}~\eqref{regular convergence to intersection point}, and suppose $\Phi$ is a regular parameterization of $E$ about $q.$ Recalling Definition~\ref{definition:polar coordinates}, let
\[
\kappa: S^{n-1}\times[0,\epsilon) \to \R^n
\]
be a polar coordinate map centered at zero with image $U.$ For a fixed $t\in[0,1],$ denote by $\Psi_t: U \to X$ the unique map such that
\[
\Phi|_{S^{n-1}\times\{t\}\times[0,\epsilon)} = \Psi_t \circ \kappa.
\]
Then, Lemma~\ref{lemma:immersion criterion} together with properties~\ref{item:derivative Phi immersion} and~\ref{item:nowhere tangent Euler} of a regular parameterization in Definition~\ref{definition: interior regularity}~(\ref{regular convergence to intersection point}) shows that $\Psi_t$ is cone-immersive at zero. It follows from property~\ref{item:interior regular} of a regular parameterization that $\Psi_t$ is a smooth immersion away from zero. Thus $(\Psi_t,0)$ is a cone-immersion.

This remark is the reason that cone-smooth maps enter the present work. It will be used in Lemma~\ref{lemma: geodesic of small Lagrangians} below, where the cone-immersions $\Psi_t$ parameterize cone-smooth immersed Lagrangians that comprise a geodesic. Lemma~\ref{lemma: geodesic of small Lagrangians} is an important step in showing that the cylindrical transform is surjective as claimed in Theorem~\ref{theorem: geodesic-cylinder correspondence}. Namely, for every regular connected component $\mathcal{Z}\subset\mathcal{SLC}\left(S^{n-1};\Lambda_0,\Lambda_1\right),$ there exists a geodesic between $\Lambda_0$ and $\Lambda_1$ with cylindrical transform $\mathcal{Z}.$ Cone-smooth Lagrangians are necessary to make sure there are enough geodesics for the cylindrical transform to be surjective.
	\end{rem}

Lemma~\ref{lm:chu} below clarifies the meaning of a parameterization being compatible with harmonics as in Definition~\ref{definition: interior regularity}(\ref{compatible with harmonics}). It is used to prove uniqueness of parameterizations under appropriate conditions.
\begin{rem}\label{rm:one slice compatible}
	The notion of compatibility with the harmonics could be defined for a parameterization of a single cylinder $Z \in \mathcal{SLC} \left( N; \Lambda_0, \Lambda_1 \right).$ Indeed, for a parameterization $\Phi: N \times [0, 1] \times (a, b) \to X$ as in Definition~\ref{definition: interior regularity}, the conditions specified in part~(\ref{compatible with harmonics}) make sense for a fixed $s \in (a, b).$
\end{rem}
\begin{lemma}\label{lm:chu}
Let $U\subset\mathcal{SLC}\left(N;\Lambda_0,\Lambda_1\right)$ be interior-regular, let $Z \in U$ be a cylinder, and let $p$ be a point of $Z.$ There exists a unique vector $v \in T_pZ$ such that for any interior regular parameterization of $U$ compatible with harmonics,
\[
\Phi: N \times [0,1] \times (0,1) \to X,
\]
with $[\Phi_s] = Z,$ and $(q,t) \in N \times [0,1]$ such that  $p = [\Phi_s,(q,t)],$ there holds
\[
\pderiv[\Phi]{t}(q,t,s) = v.
\]
\end{lemma}
\begin{proof}
Let $\sigma : Z \to \R$ denote the fundamental harmonic of $Z.$
By condition~\ref{orthogonal} of Definition~\ref{definition: interior regularity}(\ref{compatible with harmonics}) concerning compability with harmonics, $v$ must be orthogonal to the hypersurface $\sigma^{-1}(t) \subset Z$ and thus a multiple of $\nabla \sigma(q,t,s).$ Taking the derivative of condition~\ref{t compatible} of the same definition shows that $\langle\nabla \sigma(q,t,s),v\rangle = 1.$ There exists a unique $v$ satisfying these two constraints.
\end{proof}
	
	\begin{lemma}
		\label{lemma: interior and end regular parameterization compatible with harmonics}
		$\:$
		\begin{enumerate}[label=(\alph*)]
			\item \label{interior regular parameterization compatible with harmonics}
			Let $U\subset\mathcal{SLC}\left(N;\Lambda_0,\Lambda_1\right)$ be interior-regular and let \[
\Phi: N \times [0,1] \times (0,1) \to X
\]
be an interior regular parameterization. Then $U$ admits a unique interior-regular parameterization $\widehat \Phi : N \times [0,1]\times (0,1) \to X$ compatible with its harmonics such that $\widehat \Phi|_{N \times \{0\} \times (0,1)} = \Phi|_{N \times \{0\} \times (0,1)}.$
			\item\label{end regular parameterization compatible with harmonics} Let $\mathcal{Z}\subset\mathcal{SLC}\left(S^{n-1};\Lambda_0,\Lambda_1\right)$ be a connected component and let $E$ be an end of $\mathcal{Z}.$ Let $q\in\Lambda_0\cap\Lambda_1$ be a transverse intersection point, suppose $E$ converges regularly to $q$ and suppose $\Phi : S^{n-1} \times [0,1] \times [0,\epsilon) \to X$ is a regular parameterization of $E$ about $q.$ Then $E$ admits a unique regular parameterization $\widehat \Phi : S^{n-1} \times [0,1] \times [0,\epsilon) \to X$ about $q$ which is compatible with the harmonics of $E$ such that $\widehat \Phi|_{N \times \{0\} \times [0,\epsilon)} = \Phi|_{N \times \{0\} \times [0,\epsilon)}.$
  		\end{enumerate}
	\end{lemma}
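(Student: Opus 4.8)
The plan is to obtain $\widehat\Phi$ from $\Phi$ by a fibrewise reparametrization driven by the gradient flow of the fundamental harmonics, proving \ref{interior regular parameterization compatible with harmonics} directly and then bootstrapping to \ref{end regular parameterization compatible with harmonics}. For \ref{interior regular parameterization compatible with harmonics} I work on $L=N\times[0,1]$ and, for each $s$, let $u_s\in\cob{\infty}(L)$ be the pullback by $\Phi_s$ of the fundamental harmonic $\sigma_s$ on $Z_s:=[\Phi_s]$, so $u_s|_{N\times\{0\}}\equiv0$, $u_s|_{N\times\{1\}}\equiv1$, and $\Delta_\rho u_s=0$ for the operator~\eqref{equation: definition of rho-Laplacian} built from $\Phi_s^*g$ and $\rho\circ\Phi_s$. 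Composing Lemma~\ref{lemma: regular harmonics is open property} with the smooth dependence of this operator on $s$ shows $u_s$ depends smoothly on $s$; by Remark~\ref{remark: every cylinder in interior-regular family has regular harmonics} each $u_s$ has no critical points, so by the strong maximum principle $0<u_s<1$ in the interior and $\nabla^{\Phi_s^*g}u_s$ is nowhere zero. I then set $V_s:=\nabla u_s/|\nabla u_s|^2$ (gradient and norm with respect to $\Phi_s^*g$), which satisfies $du_s(V_s)\equiv1$ and points into $L$ along $N\times\{0\}$, and define $\psi_s:L\to L$ by sending $(p,t)$ to the time-$t$ point of the flow of $V_s$ started at $(p,0)$. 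Since $u_s:L\to[0,1]$ is a submersion with $u_s^{-1}(i)=N\times\{i\}$, the flow exists for $t\in[0,1]$, $u_s\circ\psi_s(p,t)=t$, and $\psi_s$ is a diffeomorphism in $\diff(L)$ that is the identity on $N\times\{0\}$. Finally I put $\widehat\Phi(p,t,s):=\Phi(\psi_s(p,t),s)$.

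Checking the asserted properties is routine: $\psi_s$ varies smoothly in $s$ and lies in $\diff(L)$, so $\widehat\Phi$ is a smooth immersion with $[\widehat\Phi_s]=[\Phi_s]$, which together with the hypotheses on $\Phi$ yields all conditions in Definition~\ref{definition: interior regularity}\eqref{interior regularity first part} for the same open set $U$, and $\widehat\Phi|_{N\times\{0\}\times(0,1)}=\Phi|_{N\times\{0\}\times(0,1)}$ by construction. Compatibility with the harmonics holds because $\sigma_s(\widehat\Phi(p,t,s))=u_s(\psi_s(p,t))=t$ and $\partial_t\widehat\Phi=d\Phi_s(V_s)$ is the $g$-gradient of $\sigma_s$ along $Z_s$, hence $g$-orthogonal to the $t$-level set of $\sigma_s$. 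Uniqueness is then immediate: conditions~\ref{t compatible} and~\ref{orthogonal} force $\partial_t\widehat\Phi$ to equal $V_s$ (the unique vector in $TZ_s$ orthogonal to the $\sigma_s$-level set on which $d\sigma_s$ equals $1$), while condition~\ref{item: diffeo s} and the prescribed values at $t=0$ fix the $s$- and $N$-parametrizations, so $\widehat\Phi$ is the unique solution of $\partial_t\widehat\Phi=V_s$ with these initial data.

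For \ref{end regular parameterization compatible with harmonics} I apply \ref{interior regular parameterization compatible with harmonics} (its proof is valid for any open interval) to $\Phi|_{S^{n-1}\times[0,1]\times(0,\epsilon)}$, producing $\widehat\Phi(p,t,s)=\Phi(\psi_s(p,t),s)$ for $s\in(0,\epsilon)$, and the task is to show $\psi_s$, hence $\widehat\Phi$, extend smoothly to $s=0$ with the extension a regular parameterization of $E$ about $q$. Writing $q=0$ in coordinates, Lemma~\ref{lemma: milnor lemma} gives $\Phi(p,t,s)=sR(p,t,s)$ with $R$ smooth and $R(\cdot,\cdot,0)=W:=\partial_s\Phi(\cdot,\cdot,0)$; conditions~\ref{item:derivative Phi immersion} and~\ref{item:nowhere tangent Euler} say $W$ is an immersion with embedded boundary restrictions, nowhere tangent to the Euler vector field. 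Dividing $\Phi_s^*\omega=\Phi_s^*\real\Omega=0$ and $\Phi_s^*\imaginary\Omega=(\rho\circ\Phi_s)\vol_{\Phi_s^*g}$ by the appropriate power of $s$ and letting $s\to0$ shows $W$ is an imaginary special Lagrangian immersion in $T_qX\cong\C^n$ with boundary in the positive Lagrangian subspaces $T_q\Lambda_0,T_q\Lambda_1$, so (free by Lemma~\ref{lemma: embedded boundary point implies freeness}) it represents an element of $\mathcal{SLC}(S^{n-1};T_q\Lambda_0,T_q\Lambda_1)$, up to a constant rescaling of the holomorphic volume form that affects neither positivity nor the $\Delta_\rho$-harmonic equation. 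Since $\Delta_\rho$-harmonicity is unchanged by constant rescaling of the metric and the rescaled metrics $g_s:=s^{-2}\Phi_s^*g$ extend smoothly to $g_0=W^*g_q$ by Lemma~\ref{lemma: milnor lemma}, the functions $u_s$ also solve $\Delta_{\rho\circ\Phi_s,g_s}u_s=0$ with boundary values $0$ and $1$; by Lemma~\ref{lemma: regular harmonics is open property} they extend smoothly in $s\in[0,\epsilon)$ to $u_0$, which is the fundamental harmonic on $[W]$. By Lemma~\ref{lemma: cylinder with regular harmonics in Euclidean space} and condition~\ref{item:nowhere tangent Euler}, $u_0$ has no critical points, so $V_s=\nabla^{g_s}u_s/|\nabla^{g_s}u_s|_{g_s}^2$ and the flow map $\psi_s$ extend smoothly to $s=0$.

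With $\psi_s$ smooth down to $s=0$, the map $\widehat\Phi$ is smooth on $S^{n-1}\times[0,1]\times[0,\epsilon)$ with $\widehat\Phi(\cdot,\cdot,0)\equiv q$, giving condition~\ref{critical point}; differentiating in $s$ at $s=0$, where $d\Phi$ kills the $N\times[0,1]$ directions because $\Phi(\cdot,\cdot,0)$ is constant, gives $\partial_s|_{s=0}\widehat\Phi=W\circ\psi_0$, a reparametrization of $W$ by the boundary-preserving diffeomorphism $\psi_0$, which secures conditions~\ref{item:derivative Phi immersion} and~\ref{item:nowhere tangent Euler}; condition~\ref{item: rep} holds since $[\widehat\Phi_s]=[\Phi_s]$, Corollary~\ref{rem: easy regularity} then supplies condition~\ref{item:interior regular} after shrinking $\epsilon$, and compatibility with the harmonics of $E$ is exactly what \ref{interior regular parameterization compatible with harmonics} gave on $(0,\epsilon)$. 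Uniqueness follows from uniqueness in \ref{interior regular parameterization compatible with harmonics}: any competitor restricts on $(0,\epsilon)$ to an interior-regular parameterization, compatible with the harmonics, of the same ray with the same $t=0$ boundary, hence agrees with $\widehat\Phi$ there and, by continuity, also at $s=0$. I expect the main obstacle to be precisely this analysis near $s=0$ in \ref{end regular parameterization compatible with harmonics}: showing the gradient-flow reparametrization does not degenerate as the cylinders collapse to $q$, which comes down to identifying the blow-up limit $u_0$ with the fundamental harmonic of the tangent-cone cylinder $W$ and invoking Lemma~\ref{lemma: cylinder with regular harmonics in Euclidean space} to rule out a critical point in the limit.
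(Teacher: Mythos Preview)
Your proposal is correct and follows essentially the same approach as the paper: both construct $\widehat\Phi$ by flowing along the normalized gradient $V_s=\nabla\sigma_s/|\nabla\sigma_s|^2$ of the fundamental harmonic, and for part~\ref{end regular parameterization compatible with harmonics} both rescale by $1/s$ so that the resulting family of immersions and metrics extends smoothly to $s=0$, then invoke Lemma~\ref{lemma: cylinder with regular harmonics in Euclidean space} on the tangent-cone cylinder together with Lemma~\ref{lemma: regular harmonics is open property} to ensure $V_s$ and its flow extend smoothly across $s=0$. The paper phrases the rescaling as pulling back $J$ and $\Omega$ under $M_s$ in Darboux coordinates, while you equivalently rescale the induced metric on $L$; these give the same operator $\Delta_s$ and the same flow.

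One small point: invoking Corollary~\ref{rem: easy regularity} ``after shrinking $\epsilon$'' is unnecessary and does not match the statement, which asks for $\widehat\Phi$ on the full $[0,\epsilon)$. Since $\widehat\Phi(p,t,s)=\Phi(\psi_s(p,t),s)$ with $(p,t,s)\mapsto(\psi_s(p,t),s)$ a diffeomorphism of $S^{n-1}\times[0,1]\times[0,\epsilon)$, the immersion and boundary-embedding conditions for interior regularity transfer directly from $\Phi$ to $\widehat\Phi$ on all of $(0,\epsilon)$, so no shrinking is needed.
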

	
	\begin{proof}
		We prove~\ref{end regular parameterization compatible with harmonics}. The proof of~\ref{interior regular parameterization compatible with harmonics} is similar and less involved.
		
		We identify a neighborhood of $q$ in $X$ with a ball $V\subset\mathbb{C}^n$ via Darboux coordinates carrying $q$ to the origin and $\Lambda_i,\;i=0,1,$ to linear subspaces. For $s \in [0,1),$ write
\[
V_s:= \{z \in \C^n| s z \in V\}.
\]
For $s\in(0,1)$ let $M_s:V_s\to V$ denote rescaling by $s.$ Define a family of complex structures and $n$-forms on $V_s$ by
		\begin{equation}
		\label{equation:rescaled J and Omega}
		J_s:=M_s^*J,\quad\Omega_s:=s^{-n}M_s^*\Omega,\quad s\in(0,1),
		\end{equation}
		where $J$ and $\Omega$ are the complex structure and Calabi-Yau form of $X,$ respectively. Then as $s\searrow0,$ $J_s$ and $\Omega_s$ converge in the $C^\infty$-topology on compact subsets to a constant complex structure and a constant $n$-form on $V_0 = \C^n$ denoted by $J_0$ and $\Omega_0$ respectively. For $s\in[0,1),$ the quadruple $(V_s,\omega,J_s,\Omega_s)$ is a Calabi-Yau manifold. We let $g_s$ denote the associated K\"ahler metric. We define $\rho_s$ as in~\eqref{equation: rho}.
		
		Let $\Phi:S^{n-1}\times[0,1]\times[0,1)\to V$ be a regular parameterization of $E$ about the origin $q.$ By Lemma~\ref{lemma: milnor lemma}, we have
		\[
		\Phi(p,t,s)=s\cdot\chi(p,t,s),\quad(p,t,s)\in S^{n-1}\times[0,1]\times[0,1),
		\]
		where $\chi:S^{n-1}\times[0,1]\times[0,1)\to \C^n$ is smooth with
		\[
		\chi(p,t,0)=\pderiv[\Phi]{s}(p,t,0).
		\]
		For $s\in[0,1),$ write $\chi_s:=\chi|_{S^{n-1}\times[0,1]\times\{s\}}.$ Then $\chi_s$ is an immersion representing an $\Omega_s$-imaginary special Lagrangian cylinder with boundary components in $\Lambda_i,\;i=0,1.$ Define a smooth family of elliptic linear differential operators
		\[
		\Delta_s:C^\infty\left(S^{n-1}\times[0,1]\right)\to C^\infty\left(S^{n-1}\times[0,1]\right),\quad u\mapsto*_sd((\rho_s\circ\chi_s)*_sdu),
		\]
		where $*_s$ denotes the Hodge star operator of the metric $\chi_s^*g_s.$ Let $\widetilde{\sigma}_s$ denote the fundamental harmonic on $S^{n-1}\times[0,1]$ with respect to $\Delta_s.$ Namely, $\widetilde{\sigma}_s:S^{n-1}\times[0,1]\to\mathbb{R}$ is the unique function satisfying
		\[
		\Delta_s(\widetilde{\sigma}_s)=0,\quad\widetilde{\sigma}_s|_{S^{n-1}\times\{i\}}=i,\quad i=0,1.
		\]
		Note that all the cylinders $[\chi_s]$ have regular harmonics. Indeed, for $s\in(0,1),$ the cylinder $[\chi_s]$ is merely a rescaling of $[\Phi_s],$ which has regular harmonics by Remark~\ref{remark: every cylinder in interior-regular family has regular harmonics}, whereas $[\chi_0]$ has regular harmonics by Lemma~\ref{lemma: cylinder with regular harmonics in Euclidean space} and property~\ref{item:nowhere tangent Euler} of Definition~\ref{definition: interior regularity}~(\ref{regular convergence to intersection point}). Hence, all the functions $\widetilde{\sigma}_s$ have no critical points. By Lemma~\ref{lemma: regular harmonics is open property}, the function $\widetilde{\sigma}_s$ depends smoothly on $s.$ Define a smooth family of vector fields on $S^{n-1}\times[0,1]$ by
		\[
		Y_s:=\frac{\nabla\widetilde{\sigma}_s}{|\nabla\widetilde{\sigma}_s|^2},\quad s\in[0,1),
		\]
		where the gradient and modulus are taken with respect to $\chi_s^*g_s.$ Then $Y_s$ is $\chi_s^*g_s$-orthogonal to the level sets of $\widetilde{\sigma}_s$ and satisfies $Y_s(\widetilde{\sigma}_s)\equiv1.$ Let
\[
\varphi_s : W_s \subset \R \times \left(S^{n-1}\times [0,1]\right) \to S^{n-1}\times [0,1]
\]
denote the flow of $Y_s.$ Note that for $(p,t,s)\in S^{n-1}\times[0,1]\times[0,1)$ we have
		\[
(t,(p,0)) \in W_s, \qquad
		\widetilde{\sigma}_s(\varphi_s(t,(p,0)))=t.
		\]
It follows that also
\[
(-\widetilde{\sigma}_s(p,t),(p,t)) \in W_s, \qquad \varphi_s(-\widetilde{\sigma}_s(p,t),(p,t)) = (p,0).
\]
Thus for every $s$ the map
\[
\mu_s:S^{n-1} \times [0,1] \to S^{n-1}\times [0,1], \qquad (p,t) \mapsto \varphi_s(t,(p,0)),
\]
is a diffeomorphism. Indeed, writing $\pi : S^{n-1} \times [0,1] \to S^{n-1}$ for the projection to the first component, $\mu_s^{-1}$ is given by $(p,t) \mapsto (\pi\circ\varphi_s(-\widetilde{\sigma}_s(p,t),(p,t)),\widetilde{\sigma}_s(p,t)).$
Finally, we set
		\[
		\widehat{\chi}:S^{n-1}\times[0,1]\times[0,1)\to V,\qquad(p,t,s)\mapsto\chi_s\circ \mu_s(p,t),
		\]
		and
		\[
		\widehat{\Phi}:S^{n-1}\times[0,1]\times[0,1)\to V,\qquad(p,t,s)\mapsto s\cdot\widehat{\chi}(p,t,s).
		\]
Since $\mu_s$ is a diffeomorphism, it follows from Definition~\ref{definition: interior regularity}~(\ref{regular convergence to intersection point}) that $\widehat{\Phi}$ is a regular parameterization of $E$ about $q$. Since $\mu_s|_{N \times \{0\}} = \id_{N \times \{0\}},$ it follows that $\widehat \Phi|_{N \times \{0\} \times [0,\epsilon)} = \Phi|_{N \times \{0\} \times [0,\epsilon)}.$ Since $\widetilde{\sigma}_s(\mu_s(p,t))=t,$ and $Y_s$ is orthogonal to the level sets of $\widetilde{\sigma},$ it follows from Definition~\ref{definition: interior regularity}~\eqref{compatible with harmonics} that $\widehat \Phi$ is compatible with the harmonics of $E.$

Suppose $\widehat \Phi'$ is another such regular parameterization. Since
\begin{equation}\label{equation:restriction to t=0}
\widehat \Phi|_{N \times \{0\} \times [0,\epsilon)} = \Phi|_{N \times \{0\} \times [0,\epsilon)} = \widehat \Phi'|_{N \times \{0\} \times [0,\epsilon)},
\end{equation}
it follows from the definition of regular parameterization that $\left[\widehat \Phi'_s\right] = \left[\widehat \Phi_s\right]$ for $s \in (0,\epsilon).$ So, there exists a diffeomorphism $\psi_s : N \times [0,1] \to N \times [0,1]$ such that $\widehat \Phi'_s = \widehat \Phi_s \circ \psi_s.$ By the chain rule,
\begin{equation}\label{eq:chru}
d\widehat\Phi_s\left(\pderiv[\psi_s]{t}\right) = \pderiv[\widehat\Phi_s']{t}.
\end{equation}
On the other hand, by compatibility with harmonics and Lemma~\ref{lm:chu},
\[
\pderiv[\widehat\Phi_s]{t}\circ \psi_s = \pderiv[\widehat\Phi_s']{t}.
\]
That is,
\begin{equation}\label{eq:uniqv}
d\widehat\Phi_s\left(\pderiv{t}\circ \psi_s\right) = \pderiv[\widehat\Phi_s']{t}.
\end{equation}
Since $d\widehat\Phi_s$ is injective, it follows from equations~\eqref{eq:chru} and~\eqref{eq:uniqv} that
\[
\pderiv[\psi_s]{t}(q,t) = \pderiv{t}\circ \psi_s(q,t).
\]
By~\eqref{equation:restriction to t=0}, we have also $\psi_s(q,0) = (q,0).$
Consequently, $\psi_s(q,t) = (q,t)$ and thus $\widehat \Phi' = \widehat \Phi$ proving uniqueness.
\end{proof}
Let $\diff(N \times (0,1) \to (0,1))\subset \diff(N \times (0,1))$ denote the subgroup consisting of diffeomorphisms that carry fibers of the projection $N \times (0,1) \to (0,1)$ to other such fibers.	For $\Phi : N \times [0,1] \times (0,1) \to X$ an interior regular parameterization of $U$ and $\phi \in \diff(N \times (0,1) \to (0,1)),$ write
\[
\Phi_\phi = \Phi \circ \left(\phi \times \id_{[0,1]}\right): N \times [0,1]\times (0,1) \to X.
\]
\begin{cor}\label{corollary:action}
Let $U\subset\mathcal{SLC}\left(N;\Lambda_0,\Lambda_1\right)$ be interior-regular. Then, the map $(\Phi,\phi) \mapsto \Phi_\phi$ gives a free transitive action of the group $\diff(N \times (0,1) \to (0,1))$ on the set of interior regular parameterizations of $U$ compatible with the harmonics.
\end{cor}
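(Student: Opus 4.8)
The plan is to deduce the corollary from the uniqueness clause of Lemma~\ref{lemma: interior and end regular parameterization compatible with harmonics}\ref{interior regular parameterization compatible with harmonics}. The bulk of the work is to check that $\Phi_\phi$ is again an interior-regular parameterization of $U$ compatible with the harmonics; once this is in hand, the fact that the assignment is an action, freeness, and transitivity all follow quickly. Throughout, for a cylinder $Z\in U$ I write $C_0(Z)$ for its boundary component corresponding to $N\times\{0\},$ and for $\phi\in\diff(N\times(0,1)\to(0,1))$ I write $\phi(p,s)=(\phi_1(p,s),\bar\phi(s))$; the defining property of this group is precisely that the $(0,1)$-component $\bar\phi$ is independent of $p,$ so $\bar\phi\in\diff(0,1)$ and, for each $s,$ the map $p\mapsto\phi_1(p,s)$ is a diffeomorphism of $N.$

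First I would record that $(\Phi,\phi)\mapsto\Phi_\phi$ is an action: since $(\phi\times\id_{[0,1]})\circ(\psi\times\id_{[0,1]})=(\phi\circ\psi)\times\id_{[0,1]}$ as diffeomorphisms of $N\times[0,1]\times(0,1),$ we get $(\Phi_\phi)_\psi=\Phi_{\phi\circ\psi}$ and $\Phi_{\id}=\Phi.$ Next I would verify well-definedness, namely that $\Phi_\phi$ is interior-regular and compatible with the harmonics. For fixed $s$ one has $(\Phi_\phi)_s=\Phi_{\bar\phi(s)}\circ\psi_s$ with $\psi_s(p,t)=(\phi_1(p,s),t)\in\diff(N\times[0,1]),$ so $[(\Phi_\phi)_s]=[\Phi_{\bar\phi(s)}]\in U$ and $s\mapsto[(\Phi_\phi)_s]$ is the precomposition of the diffeomorphism $s\mapsto[\Phi_s]$ by $\bar\phi,$ hence a diffeomorphism. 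The boundary restrictions of $\Phi_\phi$ are embeddings because $\phi\times\id_{[0,1]}$ preserves each stratum $N\times\{i\}\times(0,1)$ and those of $\Phi$ are. Finally, conditions~\ref{t compatible} and~\ref{orthogonal} of Definition~\ref{definition: interior regularity}~\eqref{compatible with harmonics} are preserved because $\phi\times\id_{[0,1]}$ fixes the $[0,1]$-coordinate, so $\pderiv{t}\Phi_\phi=\left(\pderiv{t}\Phi\right)\circ(\phi\times\id_{[0,1]}),$ while the fundamental harmonic $\sigma_{[(\Phi_\phi)_s]}=\sigma_{[\Phi_{\bar\phi(s)}]}$ is intrinsic to the cylinder.

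Freeness is immediate: if $\Phi_\phi=\Phi,$ then restricting to the stratum $t=0,$ on which $\Phi$ is an embedding by condition~\ref{item: boundary embedding}, forces $(\phi_1(p,s),\bar\phi(s))=(p,s)$ for all $(p,s),$ i.e.\ $\phi=\id.$ For transitivity, given interior-regular parameterizations $\Phi,\Phi'$ of $U$ compatible with the harmonics, I would first observe that $W:=\Phi(N\times\{0\}\times(0,1))\subset\Lambda_0$ is independent of the parameterization: it equals $\bigcup_{Z\in U}C_0(Z),$ and these boundary components are pairwise disjoint since $\Phi|_{N\times\{0\}\times(0,1)}$ is injective. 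Hence $\Phi|_{N\times\{0\}\times(0,1)}$ and $\Phi'|_{N\times\{0\}\times(0,1)}$ are diffeomorphisms of $N\times(0,1)$ onto the common set $W,$ and I would set
\[
\phi:=\bigl(\Phi|_{N\times\{0\}\times(0,1)}\bigr)^{-1}\circ\Phi'|_{N\times\{0\}\times(0,1)}\in\diff(N\times(0,1)).
\]
Since $\Phi'|_{N\times\{0\}\times(0,1)}$ carries the fiber $N\times\{s\}$ onto $C_0([\Phi'_s])$ and $\Phi|_{N\times\{0\}\times(0,1)}$ carries $N\times\{s'\}$ onto $C_0([\Phi_{s'}]),$ the map $\phi$ carries the fiber over $s$ to the fiber over the unique $s'$ with $[\Phi_{s'}]=[\Phi'_s]$ (using that $s\mapsto[\Phi_s]$ is a bijection), so $\phi\in\diff(N\times(0,1)\to(0,1)).$ By construction $\Phi_\phi$ and $\Phi'$ agree on the stratum $t=0,$ and both are interior-regular parameterizations of $U$ compatible with the harmonics (the former by the previous step); the uniqueness clause of Lemma~\ref{lemma: interior and end regular parameterization compatible with harmonics}\ref{interior regular parameterization compatible with harmonics} then gives $\Phi_\phi=\Phi'.$

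The only step requiring genuine care is the claim that $\Phi_\phi$ remains interior-regular and compatible with the harmonics, which is exactly where the restriction to \emph{fiber-preserving} diffeomorphisms is needed: a diffeomorphism of $N\times(0,1)$ mixing the $(0,1)$-direction into $N$ would destroy the property that $(\Phi_\phi)_s$ represents a single cylinder of $U.$ Everything else is either formal (the action axioms) or a direct consequence of the uniqueness already established.
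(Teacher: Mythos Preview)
Your proof is correct and follows essentially the same approach as the paper's own proof, which consists of three sentences: well-definedness ``follows from the definitions,'' freeness ``is immediate,'' and transitivity ``follows from the uniqueness claim of Lemma~\ref{lemma: interior and end regular parameterization compatible with harmonics}~\ref{interior regular parameterization compatible with harmonics}.'' You have simply unpacked each of these, most notably by constructing the diffeomorphism $\phi$ explicitly in the transitivity step and verifying it is fiber-preserving, which the paper leaves to the reader.
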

\begin{proof}
It follows from the definitions that the action preserves regular parameterizations compatible with the harmonics. Freeness is immediate. Transitivity follows from the uniqueness claim of Lemma~\ref{lemma: interior and end regular parameterization compatible with harmonics}~\ref{interior regular parameterization compatible with harmonics}.
\end{proof}

	\section{The relation between cylinders and geodesics}
	\label{section: the relation between cylinders and geodesics}

	In this section we establish the relation between geodesics of positive Lagrangians and families of imaginary special Lagrangian cylinders. The section starts with the proof of Theorem~\ref{theorem: family} divided into two lemmas. Lemma~\ref{geodesic yields unique cylinder} shows how imaginary special Lagrangian cylinders arise from geodesics of positive Lagrangian submanifolds. Lemma~\ref{lemma: time function} identifies the fundamental harmonics of the cylinders with the time parameter of the geodesic.

The remainder of the section is devoted to the proof of Theorem~\ref{theorem: geodesic-cylinder correspondence}, which establishes a bijective correspondence between geodesics of positive Lagrangian spheres and regular components of the space of imaginary special Lagrangian cylinders. According to Definition~\ref{definition: regularity} and Remark~\ref{remark: regular connected component}, a regular component is one that admits an interior regular parameterization and regular parameterizations about intersection points at each end. The map from regular components to geodesics is constructed in two steps. Proposition~\ref{proposition: interior-regular family gives rise to geodesic} constructs a geodesic of open positive Lagrangians from an interior regular parameterization.
Lemma~\ref{lemma: geodesic of small Lagrangians} constructs a geodesic of open positive Lagrangians from a regular parameterization about an intersection point. These geodesics of open positive Lagrangians glue together to give a geodesic of Lagrangian spheres. Conversely, it is shown in two steps that the cylindrical transform maps geodesics of positive Lagrangian spheres to regular components of the space of imaginary special Lagrangian cylinders. Lemma~\ref{lemma: non-singular cylindrical transform} constructs the requisite interior regular parameterization, while Lemma~\ref{lemma:rocking lemma} constructs the regular parameterizations about intersection points.

We fix an ambient Calabi-Yau manifold $(X,\omega,J,\Omega),$ a smooth manifold $L,$ a finite subset $S \subset L$ and a finite subset $C_0 \subset X.$ In the following, all geodesics are geodesics of cone-immersed Lagrangians of type $(L,S)$ with cone locus image $C_0$ unless otherwise mentioned.

	\begin{lemma}
		\label{geodesic yields unique cylinder}
		Let $(\Lambda_t)_{t\in[0,1]}$ be a geodesic of positive Lagrangians of type $(L,S)$, and let $\Psi_t:L\to\Lambda_t,\;t\in[0,1],$ be a horizontal lifting of $(\Lambda_t)$. Let $(h_t)_t$ be the Hamiltonian of $(\Lambda_t)_t$ and let $h = h_t\circ \Psi_t:L\to\mathbb{R}$ be the Hamiltonian of $(\Lambda_t)_t$ with respect to the lifting $(\Psi_t)_t$.
		For $c$ in the image of $h,$ define
		\[
\Phi_c:\left(h^{-1}(c)\setminus\crit(h)\right)\times[0,1]\to X,\qquad (p,t)\mapsto\Psi_t(p).
		\]
		Then $\Phi_c$ is an imaginary special Lagrangian immersion.
	\end{lemma}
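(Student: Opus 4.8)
The plan is to verify directly the two defining properties of an imaginary special Lagrangian immersion for the map $\Phi_c$: that $\Phi_c^*\omega = 0$ (Lagrangian), and that $\Phi_c^*\real\Omega = 0$ while $\Phi_c^*\imaginary\Omega$ is a positive volume form (imaginary special). The key observation is that all the needed pointwise identities come directly from the geodesic equations~\eqref{equation: geodesic}, transported through the horizontal lifting. I would first set up coordinates: fix $c$ in the image of $h$, let $M_c := h^{-1}(c)\setminus\crit(h)$, which is a smooth $(n-1)$-manifold by the regular value theorem (here one uses that away from $\crit(h)$ the level set is smooth; the cone points lie in $\crit(h)$ since they are critical points of $h$ by Definition~\ref{definition: geodesic}), and work on $M_c\times[0,1]$ with a tangent vector $\pderiv{t}$ in the $[0,1]$ direction and vectors tangent to $M_c$ in the other directions. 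Write $v_t := \deriv{t}\Psi_t$, a vector field along $\Psi_t$.

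\textbf{Lagrangian property.} For vectors $w_1, w_2$ tangent to $M_c$ at $p$, we have $\Phi_c^*\omega(w_1,w_2) = \omega(d\Psi_t(w_1), d\Psi_t(w_2)) = (\Psi_t^*\omega)(w_1,w_2) = 0$ since each $\Lambda_t$ is Lagrangian. For the mixed term, $\Phi_c^*\omega(\pderiv{t}, w) = \omega(v_t, d\Psi_t(w)) = (i_{v_t}\omega)(d\Psi_t(w))$. By the geodesic equation $i_{v_t}\omega = d(h_t\circ\Psi_t) = dh$, so this equals $dh(w)$. But $w$ is tangent to the level set $h^{-1}(c)$, so $dh(w) = 0$. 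Hence $\Phi_c^*\omega = 0$.

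\textbf{Imaginary special property.} For the restriction of $\real\Omega$: on vectors all tangent to $M_c$, the value is $(\Psi_t^*\real\Omega)(w_1,\dots,w_{n-1})$, which need not vanish a priori — but the $n$-form $\Phi_c^*\real\Omega$ on the $n$-manifold $M_c\times[0,1]$ is determined by its value on a frame including $\pderiv{t}$, namely $\Phi_c^*\real\Omega(\pderiv{t}, w_1,\dots,w_{n-1}) = (i_{v_t}\real\Omega)(d\Psi_t(w_1),\dots,d\Psi_t(w_{n-1}))$, and by horizontality of the lifting, $i_{v_t}\real\Omega = 0$. Thus $\Phi_c^*\real\Omega = 0$. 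Then by Lemma~\ref{lemma: Omega does not vanish on Lags} applied to the Lagrangian immersion $\Phi_c$ (which we just showed is Lagrangian), $\Phi_c^*\imaginary\Omega$ is nowhere vanishing; since $\Lambda_t$ is positive, a continuity/orientation argument — or direct computation as in~\eqref{gradient contraction} — shows it is a \emph{positive} volume form with respect to the induced orientation. Concretely, $\Phi_c^*\imaginary\Omega(\pderiv{t}, w_1,\dots,w_{n-1}) = (i_{v_t}\imaginary\Omega)(d\Psi_t(\vec w))$, and decomposing $v_t$ into tangential and normal parts with respect to $g$ as in the proof of Lemma~\ref{lemma: linearized operator}, the tangential part contributes zero (it lands in $T\Lambda_t$, on which $\imaginary\Omega$ together with the other $n-1$ tangent vectors gives a multiple of $\imaginary\Omega|_{\Lambda_t}$... but more simply, since $i_{v_t}\real\Omega = 0$ and the tangential part already satisfies $i_{v_t^T}\imaginary\Omega$ restricted to $T\Lambda_t$ vanishes by $\Lambda_t$ being special-free-of-imaginary... ) — the cleanest route is: $v_t^\perp = -J\, d\Psi_t(\nabla u)$ where $dh = d(h_t\circ\Psi_t)$, giving $i_{v_t}\imaginary\Omega = \rho_f * dh \neq 0$ on $M_c$ since $\nabla h \neq 0$ away from $\crit(h)$, and the sign is fixed by positivity of $\Lambda_t$.

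\textbf{Main obstacle.} The main subtlety is not the Lagrangian computation but carefully establishing that the induced $\imaginary\Omega$ has the \emph{correct sign} (positivity) uniformly in $t$, and handling orientations/cone points near $\crit(h)$; here one leans on positivity of each $\Lambda_t$ and on the identity $i_{v_t}\imaginary\Omega = \rho_f * dh$ which explicitly shows non-vanishing on $M_c$. One should also confirm that $\Phi_c$ is genuinely an \emph{immersion}: $d\Phi_c$ is injective because its image contains $d\Psi_t(TM_c) = T$(level set in $\Lambda_t$), and the $\pderiv{t}$ direction maps to $v_t$, which is transverse to $T\Lambda_t$ at points of $M_c$ — indeed $i_{v_t}\omega = dh$ is nonzero there while $\omega$ vanishes on $T\Lambda_t$, forcing $v_t \notin T\Lambda_t$. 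This transversality is exactly where the removal of $\crit(h)$, and in particular of the cone points, is essential.
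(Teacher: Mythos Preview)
Your proof is correct and follows essentially the same approach as the paper: verify that $\Phi_c$ is an immersion using transversality of $v_t = \deriv{t}\Psi_t$ to $\Lambda_t$ (forced by $i_{v_t}\omega = dh \neq 0$ away from $\crit(h)$), check the Lagrangian condition via $\Psi_t^*\omega = 0$ on tangent-tangent pairs and $dh|_{h^{-1}(c)} = 0$ on mixed pairs, and check $\Phi_c^*\real\Omega = 0$ by horizontality. The paper's own proof actually stops after $\real\Omega$ vanishes and does not spell out the positivity of $\imaginary\Omega$, relying implicitly on Lemma~\ref{lemma: Omega does not vanish on Lags} and an orientation convention; your extra discussion of this point is a reasonable addition, though it could be stated more crisply (the clean route is exactly what you indicate at the end: $\real\Omega = 0$ plus Lemma~\ref{lemma: Omega does not vanish on Lags} gives $\imaginary\Omega$ nowhere vanishing, and one fixes the sign by orientation).
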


	\begin{proof}
		Fix $t_0\in[0,1]$ and $p\in h^{-1}(c)\setminus\crit(h)$. Let $u_1,\ldots,u_{n-1}\in T_ph^{-1}(c)$ be a basis and write
		\[
		w_i:=d(\Psi_{t_0})_p(u_i)\in T_{\Psi_{t_0}(p)}\Lambda_{t_0},\qquad i=1,\ldots,n-1.
		\]
		The tangent vectors $w_1,\ldots,w_{n-1}$ are linearly independent as $\Psi_{t_0}$ is a smooth immersion away from critical points of $h$. Let $v\in T_{\Psi_{t_0}(p)}X$ be the $t$-derivative of $\Phi_c$. That is,
		\[
		v:=\pderiv[\Phi_c]{t}(p,t_0)=\deriv{t}\Psi_t(p).
		\]
		Since $p$ is a regular point of $h$, we have $v\not\in T_{\Psi_{t_0}(p)}\Lambda_{t_0}$. It follows that the tangent vectors $v,w_1,\ldots,w_{n-1},$ are linearly independent and $\Phi_c$ is indeed an immersion.
		
		As the immersion $\Psi_{t_0}$ is Lagrangian, we have
		\[
		\omega(w_i,w_j)=0,\qquad i,j=1,\ldots,n-1.
		\]
		By definition of the derivative of a Lagrangian path (see Remark~\ref{remark: tangent space hands on}), we have
		\[
		\omega(v,w_i)=dh_p(u_i)=0,\qquad i=1,\ldots,n-1.
		\]
		The immersion $\Phi_c$ is thus Lagrangian. Finally, by horizontality of the lifting $(\Psi_t)$, we have
		\[
		\real\Omega(v,w_1,\ldots,w_{n-1})=0,
		\]
		and $\Phi_c$ is indeed an imaginary special Lagrangian immersion.
	\end{proof}

	\begin{dfn}
		\label{definition: associated cylinders}
		In the setting of Lemma~\ref{geodesic yields unique cylinder}, for $c$ in the image of the Hamiltonian $h,$ we call the immersed special Lagrangian cylinder $[\Phi_c]$ the \emph{associated cylinder of $c$ level sets.} Note that $[\Phi_c]$ is independent of the horizontal lifting $(\Psi_t)_t.$ It depends only on the choice of additive constant in the intrinsic Hamiltonian $(h_t)_t.$
	\end{dfn}

\begin{rem}\label{rem:compact cylinder}
In the setting of Lemma~\ref{geodesic yields unique cylinder}, for $c$ in the image of the Hamiltonian~$h,$ the associated cylinder of $c$ level sets $Z_c$ is compact and non-empty only if $c$ is a regular value. Indeed,
$Z_c$ is represented by the immersion
\[
\Phi_c:\left(h^{-1}(c)\setminus\crit(h)\right)\times[0,1]\to X,
\]
and $h^{-1}(c)\setminus \crit(h)$ is compact and non-empty only if $h^{-1}(c) \cap \crit(h) = \emptyset.$
\end{rem}

	\begin{lemma}
		\label{lemma: time function}
		Let $(\Lambda_t)_t,(\Psi_t)_t,h$ and $(\Phi_c)_c,$ be as in Lemma~\ref{geodesic yields unique cylinder}. For $c$ in the image of $h,$ let $Z_c$ denote the associated cylinder $[\Phi_c]$ and define a function
		\[
		\sigma_c:Z_c\to[0,1],
\]
by
\[
\sigma_c(\Phi_c(q,t))=t.
		\]
		\begin{enumerate}[label=(\alph*)]
			\item\label{time function is the derivative} Let $N$ be an $(n-1)$-dimensional smooth manifold, let $c_0<c_1\in\mathbb{R}$ and let $\beta:N\times(c_0,c_1)\to L$ be a smooth embedding with
			\begin{equation}
			\label{local level slicing}
			h(\beta(q,c))=c,\quad(q,c)\in N\times(c_0,c_1).
			\end{equation}
			For $c\in(c_0,c_1)$ define a smooth immersion
			\[
			\phi_c:N\times[0,1]\to X,\quad(q,t)\mapsto\Phi_c(\beta(q,c),t)=\Psi_t(\beta(q,c)),
			\]
and let
\[
v = \deriv{c}\phi_c.
\]
Then,
\[
i_v \omega = -d(\sigma_c\circ \phi_c).
\]
\item\label{time function is harmonic} For $c$ in the image of $h$ we have $\Delta_\rho(\sigma_c)=0.$
		\end{enumerate}
	\end{lemma}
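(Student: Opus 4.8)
The plan is to prove part~\ref{time function is the derivative} by a direct evaluation of the $1$-form $i_v\omega$ on a frame, and then to deduce part~\ref{time function is harmonic} from part~\ref{time function is the derivative} by feeding the tautological family of level-set cylinders obtained by varying $c$ into the linearized imaginary special Lagrangian operator formula of Lemma~\ref{lemma: linearized operator}~\ref{more general lemma}. For part~\ref{time function is the derivative}, fix $(q,t)$ and put $p := \beta(q,c)$, so $h(p) = c$. Unwinding definitions, $v(q,t) = d\Psi_t\!\big(\tfrac{\partial\beta}{\partial c}(q,c)\big)$, while $\phi_c$ pushes $\partial_t$ forward to $\tfrac{d}{dt}\Psi_t(p)$ and each coordinate direction $\partial_{q^i}$ to $d\Psi_t\big(d\beta_q(\partial_{q^i})\big)$, with $d\beta_q(\partial_{q^i})\in\ker dh_p$. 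On the $N$-directions one gets $(i_v\omega)(\partial_{q^i}) = (\Psi_t^*\omega)\big(\tfrac{\partial\beta}{\partial c}, d\beta_q(\partial_{q^i})\big) = 0$, since $\Psi_t$ is Lagrangian on $L\setminus S$. On $\partial_t$, the geodesic equation in the form $i_{\frac{d}{dt}\Psi_t}\omega = dh$, where $h$ is the Hamiltonian with respect to the horizontal lifting $(\Psi_t)_t$, combined with the slicing identity $dh_p\big(\tfrac{\partial\beta}{\partial c}\big) = \tfrac{\partial}{\partial c}\big(h\circ\beta\big)(q,c) = 1$ coming from~\eqref{local level slicing}, gives $(i_v\omega)(\partial_t) = -dh_p\big(\tfrac{\partial\beta}{\partial c}\big) = -1$. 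Since $\sigma_c\circ\phi_c = t$, these two computations say precisely $i_v\omega = -dt = -d(\sigma_c\circ\phi_c)$.

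For part~\ref{time function is harmonic}, note first that $\Delta_\rho\sigma_c = 0$ is a local condition on $Z_c$, and that the domain $\big(h^{-1}(c)\setminus\crit(h)\big)\times[0,1]$ of $\Phi_c$ avoids $S$, since the cone points are critical points of the Hamiltonian and hence $S\subset\crit(h)$; thus $Z_c$ is honestly smooth and $\Delta_\rho$ is the ordinary (non-cone) operator there. It therefore suffices to verify $\Delta_\rho\sigma_c = 0$ near an arbitrary $p\in h^{-1}(c)\setminus\crit(h)$. As $dh_p\neq 0$, the submersion normal form furnishes a slicing $\beta:N\times(c_0,c_1)\to L$ as in part~\ref{time function is the derivative}; after shrinking, its image avoids the closed set $\crit(h)\supset S$. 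Then for each $c'\in(c_0,c_1)$ the map $\phi_{c'}:N\times[0,1]\to X$, $(q,t)\mapsto\Psi_t(\beta(q,c'))$, is a Lagrangian immersion with $\phi_{c'}^*\real\Omega = 0$, being a reparameterized restriction of the imaginary special Lagrangian immersion $\Phi_{c'}$ provided by Lemma~\ref{geodesic yields unique cylinder} at level $c'$. The derivation of Lemma~\ref{lemma: linearized operator}~\ref{more general lemma} is local and uses only that the family consists of Lagrangian immersions through an imaginary special Lagrangian one, so it applies to $(\phi_{c'})_{c'}$ with $c'=c$ playing the role of $t=0$; taking $v = \tfrac{d}{dc'}\big|_{c'=c}\phi_{c'}$ and $u = -\sigma_c\circ\phi_c$, which satisfies $i_v\omega = du$ by part~\ref{time function is the derivative}, we obtain
\[
0 = \left.\frac{d}{dc'}\right|_{c'=c}\!\big(*\phi_{c'}^*\real\Omega\big) = *d\big(\rho_{\phi_c}*d(-\sigma_c\circ\phi_c)\big) = -\Delta_\rho(\sigma_c\circ\phi_c),
\]
the first equality holding because $\phi_{c'}^*\real\Omega\equiv 0$ for all $c'$ near $c$. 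Hence $\Delta_\rho\sigma_c = 0$ on this chart, and therefore on all of $Z_c$.

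I do not anticipate a serious obstacle here: part~\ref{time function is the derivative} is a short bookkeeping computation with the geodesic equation, and part~\ref{time function is harmonic} is essentially a repackaging of it through Lemma~\ref{lemma: linearized operator}. The two points demanding a little care are that the level-set slicings $\beta$ can be chosen with image in the smooth locus — which is exactly where one uses that cone points are critical points of $h$ — and that the linearization formula of Lemma~\ref{lemma: linearized operator}~\ref{more general lemma}, though stated for a genuine element of $\mathcal{SLC}(N;\Lambda_0,\Lambda_1)$ with embedded boundary, is local in nature and so applies to the possibly non-embedded immersions $\phi_{c'}$.
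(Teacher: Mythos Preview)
Your proposal is correct and follows essentially the same approach as the paper: for part~\ref{time function is the derivative} you evaluate $i_v\omega$ on the frame $\{\partial_t,\partial_{q^i}\}$ using the Lagrangian condition on $\Psi_t$ and the geodesic equation, exactly as the paper does, and for part~\ref{time function is harmonic} you localize near a regular point of $h$, build a level-set slicing $\beta$, and feed the resulting one-parameter family of imaginary special Lagrangian immersions into Lemma~\ref{lemma: linearized operator}~\ref{more general lemma}, which is again the paper's argument. Your added remarks about the cone points lying in $\crit(h)$ and about the local nature of the linearization formula are accurate clarifications but do not constitute a different route.
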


	\begin{proof}
		Let $N$ and $\beta$ be as in~\ref{time function is the derivative}.
		Let $t$ denote the $[0,1]$-coordinate on $N\times[0,1].$ By Remark~\ref{remark: tangent space hands on} and the definition of a geodesic, we have
		\begin{align}
		\label{same time derivative}
i_v\omega\left(\pderiv{t}\right) &=-\omega\left(\deriv{t}\Psi_t,\deriv{c}(\Psi_t\circ\beta)\right)\\
		&=-\deriv{c}(h\circ\beta)\nonumber\\
        &=-1 \nonumber \\
		&=-\pderiv[(\sigma_c\circ\phi_c)]{t}.\nonumber
		\end{align}
Let $w$ be a vector field on $N \times [0,1]$ tangent to $N.$ Then, since $\Psi_t$ is a Lagrangian immersion,
\begin{align}\label{same tangent derivative}
i_v\omega\left(w\right) &=-\omega\left(d\Psi_t(w),\deriv{c}(\Psi_t\circ\beta)\right)\\
		&=0\nonumber\\
		&= -w(\sigma_c\circ\phi_c).\nonumber
\end{align}
		Part~\ref{time function is the derivative} now follows from~\eqref{same time derivative} and~\eqref{same tangent derivative}.
		
		Let $p\in L$ be a regular point of $h$ and let $c_0<c_1$ with $h(p)\in(c_0,c_1).$ Let $B$ denote the standard open ball of dimension $n-1.$ Let $p\in W\subset L$ be a ball containing only regular points of $h,$ and let $\beta:B\times(c_0,c_1)\to W$ be a diffeomorphism with
		\[
		h\circ\beta(q,c)=c,\quad(q,c)\in B\times(c_0,c_1).
		\]
		For $c\in(c_0,c_1),$ define $\phi_c$ as in~\ref{time function is the derivative}. Part~\ref{time function is harmonic} now follows from~\ref{time function is the derivative}, Lemma~\ref{lemma: linearized operator}~\ref{more general lemma} and Lemma~\ref{geodesic yields unique cylinder}.
	\end{proof}

	\begin{rem}
		\label{remark: time function is the fundamental harmonic}
		Let $(\Lambda_t)_{t\in[0,1]},\,(h_t)_t,\,(Z_c)_c,$ and $(\sigma_c)_c,$ be as in Lemma~\ref{lemma: time function}. Let $c$ be such that $Z_c$ is compact and non-empty. In particular, $c$ must be a regular value of $(h_t)_t$ by Remark~\ref{rem:compact cylinder}. It follows from Lemma~\ref{lemma: time function}~\ref{time function is harmonic} that $\sigma_c$ is the fundamental harmonic on $Z_c.$ Also, by its definition, the function $\sigma_c$ has no critical points. The cylinder $Z_c$ thus has regular harmonics.
	\end{rem}

	\begin{proof}[Proof of Theorem~\ref{theorem: family}]
		Follows from Lemmas~\ref{geodesic yields unique cylinder} and~\ref{lemma: time function}.
	\end{proof}

	Let $(\Lambda_t)_t$ be a geodesic of positive Lagrangians with Hamiltonian $(h_t:\Lambda_t\to\mathbb{R})_t.$ For $c$ in the image of $h_t,$ let $Z_c$ denote the associated cylinder of $c$ level sets. Recall from Definition~\ref{definition: cylindrical transform} that the family of cylinders $(Z_c)_c$ is called the \emph{cylindrical transform} of the geodesic $(\Lambda_t)_t$. The subset $\{Z_c\;|\;c\;\mathrm{is\;a\;regular\;value\;of}\; h\}$ is the \emph{non-singular cylindrical transform}. We show that every component in the non-singular cylindrical transform is interior-regular with relative Lagrangian flux given by the Hamiltonian of the geodesic.

	\begin{lemma}
		\label{lemma: non-singular cylindrical transform}
		Let $(\Lambda_t)_t$ be a geodesic with Hamiltonian $(h_t)_t$ and assume the functions $h_t$ are proper and $\Lambda_0,\Lambda_1,$ are embedded. For $c$ in the image of $h_t,$ let $Z_c$ denote the associated cylinder of $c$ level sets. Since the functions $h_t$ are proper, the cylinder $Z_c$ is compact when $c$ is a regular value of $(h_t)_t.$ Let $c_0<c_1\in\mathbb{R}$ be such that the interval $(c_0,c_1)$ consists of regular values of $(h_t)_t.$
		\begin{enumerate}[label=(\alph*)]
			\item\label{cylindrical transform is interior-regular} The family of cylinders $U:=\{Z_c\;|\;c\in(c_0,c_1)\}$ is interior-regular.
			\item\label{flux equals Hamiltonian} For $b_0<b_1\in(c_0,c_1)$ we have
			\[
			\mathrm{RelFlux}\left((Z_c)_{c\in[b_0,b_1]}\right)=b_1-b_0.
			\]
		\end{enumerate}
	\end{lemma}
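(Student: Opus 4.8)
The plan is to build an explicit interior-regular parameterization of $U$ out of a horizontal lifting of $(\Lambda_t)_t$ and then read off the relative flux from the derivative $1$-form of Lemma~\ref{lemma: time function}. Note first that, since $(c_0,c_1)$ consists of regular values of $(h_t)_t$, the part of the geodesic over this interval avoids the critical locus and so involves no cone points. For part~\ref{cylindrical transform is interior-regular}, fix a horizontal lifting $(\Psi_t)_{t\in[0,1]}$ and let $h = h_t\circ\Psi_t : L \to \R$ be the associated time-independent Hamiltonian. Since the $h_t$ are proper and $(c_0,c_1)$ consists of regular values of $h$, the map $h : h^{-1}((c_0,c_1)) \to (c_0,c_1)$ is a proper submersion, so by Ehresmann's theorem and contractibility of the interval there are a closed $(n-1)$-manifold $N$, which may be taken to be $h^{-1}(c)$ for any $c\in(c_0,c_1)$, and a diffeomorphism $\beta : N\times(c_0,c_1)\xrightarrow{\sim} h^{-1}((c_0,c_1))$ with $h(\beta(q,c)) = c$. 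One then sets
\[
\Phi : N\times[0,1]\times(c_0,c_1)\to X,\qquad \Phi(q,t,c) := \Psi_t(\beta(q,c)),
\]
so that, for each $c$, the slice $\Phi|_{N\times[0,1]\times\{c\}}$ is, up to precomposition with the diffeomorphism $\beta(\cdot,c)\times\id_{[0,1]}$, the immersion $\Phi_c$ of Lemma~\ref{geodesic yields unique cylinder} restricted to the regular level set $h^{-1}(c)$ (on which it is everywhere defined, since $c$ is a regular value of $h$); in particular it is a Lagrangian immersion representing $Z_c\in\mathcal{SLC}(N;\Lambda_0,\Lambda_1)$.

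Next one checks that $\Phi$ meets the three requirements of an interior-regular parameterization (Definition~\ref{definition: interior regularity}). Writing $v = \partial\Phi/\partial c$ and $\phi_c = \Phi(\cdot,\cdot,c)$, Lemma~\ref{lemma: time function}~\ref{time function is the derivative} gives $i_v\omega = -d(\sigma_c\circ\phi_c) = -dt$, which is nowhere zero; since $Z_c$ is Lagrangian, $v$ is nowhere tangent to $Z_c$, and together with the immersivity of each slice this forces $\Phi$ to be an immersion on all of $N\times[0,1]\times(c_0,c_1)$. The restriction $\Phi|_{N\times\{i\}\times(c_0,c_1)}$, namely $(q,c)\mapsto\Psi_i(\beta(q,c))$, is an embedding because $\beta$ is a diffeomorphism onto $h^{-1}((c_0,c_1))$ and $\Psi_i$ represents the embedded Lagrangian $\Lambda_i$. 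Finally, the map $\chi : (c_0,c_1)\to\mathcal{SLC}(N;\Lambda_0,\Lambda_1)$, $c\mapsto[\Phi_c] = Z_c$, is smooth; it is injective because for $c\ne c'$ the boundary components of $Z_c$ and $Z_{c'}$ lying in $\Lambda_0$ are the disjoint level sets of the time-$0$ Hamiltonian at $c$ and $c'$; and it is an immersion because, again by Lemma~\ref{lemma: time function}~\ref{time function is the derivative}, $\tfrac{d}{dc}Z_c$ is represented by $-d(\sigma_c\circ\phi_c)$ and hence corresponds, under the isomorphism $T_{Z_c}\mathcal{SLC}(N;\Lambda_0,\Lambda_1)\cong\ker\Delta_\rho$ of Proposition~\ref{proposition: space of special lag cylinders is one dimensional}, to the nowhere-zero function $-\sigma_c$. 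An injective immersion of an interval into the $1$-manifold $\mathcal{SLC}(N;\Lambda_0,\Lambda_1)$ is a diffeomorphism onto an open subset, which here is the connected set $U$; hence $\Phi$ is an interior-regular parameterization of $U$.

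For part~\ref{flux equals Hamiltonian}, restrict the parameterization above to $c\in[b_0,b_1]$. In the notation of Section~\ref{subsection: relative Lagrangian Flux}, $\tfrac{d}{dc}Z_c\in\cob{\infty}(Z_c)$ is represented, by Lemma~\ref{lemma: time function}~\ref{time function is the derivative} together with Remark~\ref{remark: tangent space hands on}, by the $1$-form $i_v\omega = -d(\sigma_c\circ\phi_c)$, so under the identification of Lemma~\ref{lemma: local description of space of cylinders} it is the function $-\sigma_c$; therefore the constant $A_c$ determined by $\tfrac{d}{dc}Z_c\equiv A_c$ on the boundary component $C_{1,c}$ equals $-\sigma_c|_{C_{1,c}} = -1$. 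Then Definition~\ref{definition:relative Lagrangian flux}, equivalently Lemma~\ref{lemma: relative Lagrangian flux}, yields
\[
\mathrm{RelFlux}\big((Z_c)_{c\in[b_0,b_1]}\big) = -\int_{b_0}^{b_1}A_c\,dc = \int_{b_0}^{b_1}1\,dc = b_1-b_0.
\]

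The substantive content is all in part~\ref{cylindrical transform is interior-regular}: the claims that $\Phi$ is an immersion on the full product and that $\chi$ is a diffeomorphism onto an open neighborhood. Both reduce to the single geometric input $i_v\omega = -d(\sigma_c\circ\phi_c)$ of Lemma~\ref{lemma: time function}, i.e.\ to the fact that the Hamiltonian of the geodesic makes the $c$-direction of the family transverse to every cylinder $Z_c$. Once that is available, part~\ref{flux equals Hamiltonian} is the one-line computation above.
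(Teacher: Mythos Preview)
Your proof is correct and follows the same approach as the paper: choose a horizontal lifting, trivialize the regular level sets via a diffeomorphism $\beta:N\times(c_0,c_1)\to h^{-1}((c_0,c_1))$, and set $\Phi(q,t,c)=\Psi_t(\beta(q,c))$; part~\ref{flux equals Hamiltonian} then follows from Lemma~\ref{lemma: time function}~\ref{time function is the derivative}. The paper's own argument is extremely terse---it simply asserts that $\Phi$ is an interior-regular parameterization---whereas you actually verify each clause of Definition~\ref{definition: interior regularity}, which is a genuine improvement in exposition; the only minor caveat is that your appeal to $\Psi_i$ being an embedding and to $\mathcal{SLC}(N;\Lambda_0,\Lambda_1)$ being a $1$-manifold presupposes that $\Lambda_0,\Lambda_1$ are smoothly embedded and $N$ is connected, assumptions that are implicit in the definition of interior regularity but not explicitly restated in the lemma.
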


	\begin{proof}
		Let $(\Psi_t:L\to X)_t$ be a horizontal lifting of $(\Lambda_t)_t,$ and let $h:L\to\mathbb{R}$ be the corresponding Hamiltonian,
		\[
		h_t=[(\Psi_t,h)].
		\]
		By assumption there exist an $(n-1)$-dimensional smooth manifold $N$ and a diffeomorphism
		\[
		\beta:N\times(c_0,c_1)\to h^{-1}(c_0,c_1)
		\]
		with
		\[
		h\circ\beta(q,c)=c,\quad (q,c)\in N\times(c_0,c_1).
		\]
		Define
		\[
		\Phi:N\times[0,1]\times(c_0,c_1)\to X,\quad(q,t,c)\mapsto\Psi_t(\beta(q,c)).
		\]
Since $\Lambda_i$ is embedded, $\Phi|_{N\times\{i\}\times(a,b)}$ is an embedding for $i = 0,1.$
It follows that $\Phi$ is an interior-regular parameterization of $U$ verifying~\ref{cylindrical transform is interior-regular}. Part~\ref{flux equals Hamiltonian} follows from Lemma~\ref{lemma: time function}~\ref{time function is the derivative} and Definition~\ref{definition:relative Lagrangian flux}.
	\end{proof}

	Proposition~\ref{proposition: interior-regular family gives rise to geodesic} below is a converse to Lemma~\ref{lemma: non-singular cylindrical transform}~\ref{cylindrical transform is interior-regular} showing that an interior-regular family of special Lagrangian cylinders gives rise to a geodesic of positive Lagrangians. We use the following notion.
	
	\begin{dfn}
		\label{definition: swept submanifold}
		Let $\Lambda_0,\Lambda_1\subset X$ be smooth Lagrangians and let $N$ be a closed connected smooth manifold of dimension $n-1.$ Let $U\subset\mathcal{SLC}(N;\Lambda_0,\Lambda_1)$ be open, connected and interior-regular. Let $\Phi:N\times[0,1]\times(0,1)\to X$ be an interior regular parameterization of $U.$ For $i=0,1,$ the submanifold of $\Lambda_i$ \emph{swept by $U$} is the image of the embedding $\Phi|_{N\times\{i\}\times(0,1)}.$ This is independent of $\Phi.$ Similarly, suppose $U\subset\mathcal{SLC}\left(S^{n-1};\Lambda_0,\Lambda_1\right)$ is a ray and $\Phi : S^{n-1}\times [0,1]\times [0,\epsilon) \to X$ is a regular parameterization of $U$ about an intersection point $q \in \Lambda_0 \cap \Lambda_1.$ For $i = 0,1,$ the \emph{unpunctured} submanifold of $\Lambda_i$ swept by $U$ is the image of the restricted map~$\Phi|_{S^{n-1}\times\{i\}\times[0,\epsilon)}.$
	\end{dfn}

	\begin{prop}
		\label{proposition: interior-regular family gives rise to geodesic}
		Let $\Lambda_0,\Lambda_1\subset X$ be smoothly embedded positive Lagrangians, let $N$ be a closed connected smooth manifold of dimension $n-1$ and let
\[
U\subset\mathcal{SLC}(N;\Lambda_0,\Lambda_1)
\]
be open, connected and interior regular. For $i=0,1,$ let $\Lambda_i^U$ denote the submanifold of $\Lambda_i$ swept by $U.$ Then, there exists a geodesic of positive Lagrangians between $\Lambda_0^U$ and $\Lambda_1^U$ with cylindrical transform $U.$ This geodesic is unique up to reparameterization and has empty critical locus.
	\end{prop}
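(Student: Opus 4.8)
The plan is to transpose the two parameters of an interior-regular parameterization of $U$ that is compatible with its harmonics. By Lemma~\ref{lemma: interior and end regular parameterization compatible with harmonics}~\ref{interior regular parameterization compatible with harmonics}, fix such a parameterization $\Phi : N\times[0,1]\times(0,1)\to X$ of $U$. Set $L := N\times(0,1)$, which is connected, non-compact, and without boundary, and for $t\in[0,1]$ let $\Psi_t := \Phi(\cdot,t,\cdot) : L\to X$. For $s\in(0,1)$ write $Z_s := [\Phi(\cdot,\cdot,s)]\in U$ and let $\sigma_s$ be its fundamental harmonic. Since $s\mapsto Z_s$ is a diffeomorphism onto $U$ and $T_{Z_s}\mathcal{SLC}(N;\Lambda_0,\Lambda_1)=\ker\Delta_\rho$ is spanned by $\sigma_s$ by Proposition~\ref{proposition: space of special lag cylinders is one dimensional}, the restriction of the $1$-form $i_{\partial_s\Phi}\omega$ to $TZ_s$ equals $A_s\,d\sigma_s$ for a nowhere-vanishing smooth function $A_s$ of $s$ alone. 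Using that $\partial_{p_i}\Phi$ and $\partial_t\Phi$ are tangent to the Lagrangian $Z_s$, together with the compatibility identity $\sigma_s(\Phi(p,t,s))=t$, one computes $\omega(\partial_{p_i}\Phi,\partial_{p_j}\Phi)=0$, $\omega(\partial_s\Phi,\partial_{p_i}\Phi)=A_s\,d\sigma_s(\partial_{p_i}\Phi)=0$, and $\omega(\partial_s\Phi,\partial_t\Phi)=A_s$; hence each $\Psi_t$ is a Lagrangian immersion, and we set $\Lambda_t:=[\Psi_t]$.

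The first substantive step is to show each $\Lambda_t$ is positive. Decomposing $\partial_s\Phi=v^T+v^\perp$ into components tangent and $g$-orthogonal to $Z_s$, write $v^\perp=J\xi$ with $\xi\in TZ_s$. Since $Z_s$ is imaginary special, $\Omega$ restricts to $\sqrt{-1}$ times a volume form on it, so two applications of the complex-linearity of $\Omega$ give $\real\Omega(\partial_{p_1}\Phi,\ldots,\partial_{p_{n-1}}\Phi,\partial_s\Phi)=-\imaginary\Omega(\partial_{p_1}\Phi,\ldots,\partial_{p_{n-1}}\Phi,\xi)$. Expanding $\xi$ in the frame $\partial_{p_1}\Phi,\ldots,\partial_{p_{n-1}}\Phi,\partial_t\Phi$ of $TZ_s$, which is $g$-orthogonal by compatibility condition~\ref{orthogonal} of Definition~\ref{definition: interior regularity}, let $f$ be its $\partial_t\Phi$-coefficient. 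Then $A_s=\omega(\partial_s\Phi,\partial_t\Phi)=\omega(J\xi,\partial_t\Phi)=-g(\xi,\partial_t\Phi)=-f\,|\partial_t\Phi|^2$, so $f\neq0$, and hence $\real\Omega(\partial_{p_1}\Phi,\ldots,\partial_{p_{n-1}}\Phi,\partial_s\Phi)=-f\,\imaginary\Omega(\partial_{p_1}\Phi,\ldots,\partial_{p_{n-1}}\Phi,\partial_t\Phi)\neq0$ because $\imaginary\Omega$ is a volume form on $Z_s$. Thus $\real\Omega|_{\Lambda_t}$ is nowhere zero, and orienting each $\Lambda_t$ so that $\real\Omega|_{\Lambda_t}$ is a positive volume form—continuously in $t$, agreeing with the given orientations of $\Lambda_0$ and $\Lambda_1$ at $t=0,1$—makes $(\Lambda_t)_{t\in[0,1]}$ a path of positive Lagrangians lying in a single exact isotopy class. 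I expect this positivity step to be the most delicate point, since it uses the imaginary special condition, the complex-linearity of $\Omega$, the orthogonality built into the harmonic-compatible parameterization, and the non-vanishing of $A_s$ all at once.

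Next I would produce a horizontal lifting and verify the geodesic equations of Definition~\ref{definition: geodesic}. As $\Psi_t^*\real\Omega$ is now a nowhere-vanishing top form, the equation $i_{w_t}\Psi_t^*\real\Omega=-i_{\partial_t\Psi_t}\real\Omega$ defines a smooth vector field $w_t$ on $L$. The crux of the argument is that $w_t$ is tangent to the fibres $N\times\{s\}$: the $(n-1)$-form $i_{\partial_t\Psi_t}\real\Omega$ on $L$ annihilates $\partial_{p_1}\Phi,\ldots,\partial_{p_{n-1}}\Phi$—those span a tangent space of the imaginary special $Z_s$, on which $\real\Omega$ vanishes—so it is divisible by $ds$, and comparing with $i_{w_t}\Psi_t^*\real\Omega$ forces the $\partial_s$-component of $w_t$ to vanish. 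Since the fibres $N$ are compact, $w_t$ integrates to a flow $\varphi_t$ with $\varphi_0=\mathrm{id}$ that preserves each fibre and fixes the $(0,1)$-coordinate; as in the proof of Lemma~\ref{lemma: path of cone-Lags admits horizontal liftings}, $\widetilde\Psi_t:=\Psi_t\circ\varphi_t$ is then a horizontal lifting. Meanwhile $\frac{d}{dt}\Lambda_t=[(\Psi_t,i_{\partial_t\Psi_t}\omega)]=[(\Psi_t,-A_s\,ds)]=d\,[(\Psi_t,H)]$, where $H(s):=-\int^s A_\tau\,d\tau$ depends only on $s$; so the path is exact with Hamiltonian $h_t=[(\Psi_t,H)]$, and since $\varphi_t$ fixes the $(0,1)$-coordinate, $h_t\circ\widetilde\Psi_t=H$ is independent of $t$. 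Therefore $\frac{d}{dt}(h_t\circ\widetilde\Psi_t)=0$, so $(\Lambda_t)_{t\in[0,1]}$ is a geodesic. Note that $H'=-A_s$ never vanishes, so the Hamiltonian has no critical points.

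Finally I would identify the endpoints, the cylindrical transform, and the critical locus, and prove uniqueness. By interior-regularity, $\Psi_0$ and $\Psi_1$ are embeddings onto $\Lambda_0^U$ and $\Lambda_1^U$. For $c$ in the open image of $H$, put $s_c:=H^{-1}(c)$; then $h^{-1}(c)=N\times\{s_c\}$ and the associated cylinder of $c$-level sets is $(p,t)\mapsto\Psi_t(p,s_c)=\Phi(p,t,s_c)$, that is, $Z_{s_c}$, so as $c$ varies the cylindrical transform of $(\Lambda_t)_t$ is exactly $\{Z_s\mid s\in(0,1)\}=U$. For the critical locus, suppose $x_0\in\bigcap_{t\in[0,1]}\Lambda_t$; then for every $t\in[0,1]$ the set $\Phi^{-1}(x_0)$ meets $N\times\{t\}\times(0,1)$. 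But $\Phi$ is a smooth immersion, hence locally injective, so $\Phi^{-1}(x_0)$ is a closed discrete—therefore countable—subset of $N\times[0,1]\times(0,1)$, whose image under projection to $[0,1]$ is countable and hence not all of $[0,1]$: a contradiction. Thus the critical locus is empty and the geodesic has no cone points. For uniqueness up to reparameterization, any geodesic with cylindrical transform $U$ has, by Remark~\ref{remark: time function is the fundamental harmonic}, time function on each associated cylinder equal to its fundamental harmonic, so its time-$t$ Lagrangian is $\bigcup_{Z\in U}\sigma_Z^{-1}(t)$, which is determined by $U$ alone; alternatively, the Hamiltonian is pinned down by the relative Lagrangian flux of $U$ via Lemma~\ref{lemma: non-singular cylindrical transform}~\ref{flux equals Hamiltonian} and Remark~\ref{remark:invariance of relative flux}, and the horizontal lifting is then unique.
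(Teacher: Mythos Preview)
Your argument is correct and follows the paper's strategy: transpose the two parameters of a harmonic-compatible interior-regular parameterization, check that each $\Psi_t$ is a positive Lagrangian immersion, and read off the Hamiltonian as $H(s)=-\int^s A_\tau\,d\tau$ (this is exactly the paper's relative-flux formula in Step~3). The one place you work harder than necessary is horizontality. The paper shows that $(\Psi_t)_t$ is \emph{already} horizontal: $\real\Omega(\partial_t\Phi,\partial_{p_1}\Phi,\ldots,\partial_{p_{n-1}}\Phi)=0$ because these vectors span $TZ_s$ and $Z_s$ is imaginary special, while $\real\Omega(\partial_t\Phi,\partial_{p_1}\Phi,\ldots,\widehat{\partial_{p_i}\Phi},\ldots,\partial_s\Phi)=0$ because, modulo $TK_{t,s}$, the vector $\partial_s\Phi$ lies in the $J$-invariant $2$-plane $(TK_{t,s})^g\cap(TK_{t,s})^\omega$ spanned by $\partial_t\Phi$ and $J\partial_t\Phi$, and $\Omega$ has type $(n,0)$. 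Consequently your correction field $w_t$ is identically zero and $\widetilde\Psi_t=\Psi_t$; the flow construction is unneeded. Your countability argument for the empty critical locus is a valid alternative to the paper's one-line observation that $\partial_t\Phi\ne 0$ forces $dh_t\ne 0$ along the horizontal lift.
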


	\begin{proof}
Let $\Phi:N\times[0,1]\times(0,1)\to X$ be an interior-regular parameterization of~$U.$ Since the points of $U$ are imaginary special Lagrangians, $N$ must be orientable. Orient $N$ so that the open embedding
\[
\Phi|_{N \times \{0\}\times(0,1)} : N \times \{0\} \times (0,1) \to \Lambda_0
\]
is orientation preserving. By Lemma~\ref{lemma: interior and end regular parameterization compatible with harmonics}, we may assume that $\Phi$ is compatible with the harmonics of $U.$ For $s\in(0,1)$ write $\Phi_s:=\Phi|_{N\times[0,1]\times\{s\}}$ and $Z_s:=[\Phi_s].$ Let $\sigma_s$ denote the fundamental harmonic on $Z_s.$ Let $\xi$ denote the $t$-derivative of $\Phi,$
		\[
		\xi=\xi(p,t,s)=\pderiv{t}\Phi(p,t,s).
		\]
		For $t\in[0,1]$ write
		\[
		\Psi_t:=\Phi|_{N\times\{t\}\times(0,1)}.
		\]
		Then $\Psi_t$ is a smooth immersion by the definition of an interior regular parameterization. Let $\Lambda_t^U:=[\Psi_t]$ with orientation given by the orientation on $N \times (0,1).$ We show in three steps that $\left(\Lambda_t^U\right)_{t\in[0,1]}$ is a geodesic of positive Lagrangians.
		
		\textbf{Step 1:} \emph{For $t\in[0,1],$ the immersed submanifold $\Lambda_t^U$ is positive Lagrangian.} To show this, pick $(p,t,s)\in N\times[0,1]\times(0,1).$ As $\Phi$ is compatible with the harmonics of $U,$ the immersed submanifold
		\[
		K_{t,s}:=\left[\Phi|_{N\times\{t\}\times\{s\}}\right]
		\]
		is the $t$-level set of $\sigma_{s}$ in the Lagrangian cylinder $Z_s.$ In particular, $K_{t,s}$ is $\omega$-isotropic. By Proposition~\ref{proposition: space of special lag cylinders is one dimensional}, there exists $a(s)\in\R$ such that
		\begin{equation}
		\label{specific harmonic}
		\deriv{s}Z_s=a(s)\sigma_{s}.
		\end{equation}
		Hence, by Remark~\ref{remark: tangent space hands on}, for $v\in T_{\Psi_t(p,s)}K_{t,s}$ we have
		\begin{align*}
		\omega\left(\pderiv{s}\Psi_t(p,s),v\right)
		&=a(s)d(\sigma_s)_{\Psi_t(p,s)}(v)\\
		&=0,
		\end{align*}
		showing that $\Lambda_t^U$ is Lagrangian.

It remains to establish positivity. Fix a basis $v_1,\ldots,v_{n-1}\in T_{\Psi_t(p,s)}K_{t,s}.$ Let $0\ne w\in T_{\Psi_t(p,s)}\Lambda_t^U$ be orthogonal to $K_{t,s}.$ Define $E_{p,t,s}\subset T_{\Psi_t(p,s)}X$ by
		\begin{align*}
		E_{p,t,s}:
		&=(T_{\Psi_t(p,s)}K_{t,s})^g\cap(T_{\Psi_t(p,s)}K_{t,s})^\omega\\
		&=\{u\in T_{\Psi_t(p,s)}X\;|\;g(u,v_i)=0 = \omega(u,v_i),\;i=1,\ldots,n-1\}.
		\end{align*}
		Then, $E_{p,t,s}$ is a $J$-invariant linear subspace of real dimension $2.$ It follows from Definition~\ref{definition: interior regularity}\eqref{compatible with harmonics}\ref{orthogonal} and the fact that $Z_s$ is Lagrangian that $\xi(p,t,s) \in E_{p,t,s}.$ Since $\Lambda_t^U$ is Lagrangian, it follows that $w\in E_{p,t,s}.$ Since $\Phi$ is an immersion, $\xi\ne0$, so
		\begin{equation}
		\label{complex dependence}
		w=\alpha\xi+\beta J\xi
		\end{equation}
		for some $\alpha,\beta\in\R.$ Also, as $\Phi$ is an immersion, we have $\beta\ne0.$ By Lemma~\ref{lemma: Omega does not vanish on Lags} and since $Z_s$ is imaginary special Lagrangian, we have
		\[
		0\ne\Omega(v_1,\ldots,v_{n-1},\xi)\in\ii\R.
		\]
		Finally, as $\Omega$ is of type $(n,0),$ we have
		\begin{align*}
		\real\Omega(v_1,\ldots,v_{n-1},w)
		&=\real\left(\left(\alpha+\ii\beta\right)\Omega(v_1,\ldots,v_{n-1},\xi)\right)\\
		&=\ii\beta\Omega(v_1,\ldots,v_{n-1},\xi)\\
		&\ne0.
		\end{align*}
Since $\Lambda_0^U$ is positive by assumption, the positivity of $\Lambda_t^U$ follows by continuity.
		
		\textbf{Step 2:} \emph{The family of immersions $(\Psi_t)_{t\in[0,1]}$ is horizontal.} To prove this, we need to verify
		\begin{equation}
		\label{horizontal 1}
		\real\Omega(\xi,v_1,\ldots,v_{n-1})=0
		\end{equation}
		and
		\begin{equation}
		\label{horizontal 2}
		\real\Omega\left(\xi,v_1,\ldots\widehat{v}_i\ldots,v_{n-1},w\right)=0,\quad i=1,\ldots,n-1,
		\end{equation}
		where $v_1,\ldots,v_{n-1}$ and $w$ are as above. Equality~\eqref{horizontal 1} holds true as all the cylinders in $U$ are imaginary special Lagrangian. Equality~\eqref{horizontal 2} holds true by virtue of equality~\eqref{complex dependence}, as $\Omega$ is of type $(n,0).$ This completes the proof of Step 2.
		
		\textbf{Step 3:} \emph{Pick $s_0\in(0,1)$. Define a family of functions $h_t^U \in C^\infty(\Lambda_t^U)$ by
			\[
			h_t^U\circ\Psi_t(p,s_1)=\mathrm{RelFlux}\left((Z_s)_{s\in[s_0,s_1]}\right),\quad s_1\in(0,1).
			\]
			In particular, $h_t^U\circ\Psi_t(p,s_1)$ is independent of $t$ and the point $p\in N.$ Then,
\[
\deriv{t}\Lambda_t^U = dh_t^U.
\]
}

To show this, recall the function $a:(0,1)\to\R$ defined in equality~\eqref{specific harmonic}. By Definition~\ref{definition:relative Lagrangian flux} we have
		\begin{equation*}
		\label{derivative of RelFlux}
		\deriv{s_1}\mathrm{RelFlux}\left((Z_s)_{s\in[s_0,s_1]}\right)=-a(s_1).
		\end{equation*}
So, recalling from Definition~\ref{definition: interior regularity}\eqref{compatible with harmonics}\ref{t compatible} the meaning of $\Phi$ being compatible with the harmonics of $U,$
		\begin{align*}
		\pderiv{s}h_t^U\circ\Psi_t(p,s)
		&=-a(s)		\\
		&=-a(s)d(\sigma_s)_{\Psi_t(p,s)}\left(\xi(p,t,s)\right)\\
&=-\omega\left(\pderiv{s}\Psi_t(p,s),\xi(p,t,s)\right)\\
&=\omega\left(\xi(p,t,s),\pderiv{s}\Psi_t(p,s)\right),
		\end{align*}
On the other hand, for $v$ a vector field on $N \times [0,1]$ tangent to $N,$ since $h_t^U\circ\Psi_t(p,s)$ is independent of $p,$ we have
\begin{equation*}
v\left(h_t^U\circ\Psi_t\right) = 0 = \omega(\xi, d\Psi_t(v)).
\end{equation*}
By Remark~\ref{remark: tangent space hands on}, it follows that $\deriv{t}\Lambda^U_t = dh_t^U.$ This completes the proof of Step 3.

Since $h_t^U\circ \Psi_t$ is independent of $t,$ it follows that $\left(\Lambda_t^U\right)_{t\in[0,1]}$ is a geodesic. By construction, the cylindrical transform of this geodesic is $U.$ Finally, since by Definition~\ref{definition: interior regularity}~\eqref{interior regularity first part} the parameterization $\Phi$ is an immersion, the time derivative $\deriv{t}\Psi_t=\pderiv{t}\Phi = \xi$ is nowhere vanishing, implying that $\left(\Lambda_t^U\right)_t$ has empty critical locus.

It remains to prove uniqueness. Let $\left(\Lambda_t^{U'}\right)_{t\in[0,1]}$ be another geodesic with cylindrical transform $U.$ Let $(\Psi_t' : N \times (0,1) \to X)_t$ be a horizontal lifting with $\Psi_0' = \Psi_0,$ and let $h' : N \times (0,1) \to \R$ denote the associated Hamiltonian. Let $h$ denote the Hamiltonian of $\left(\Lambda_t^U\right)_t$ with respect to the horizontal lifting $(\Psi_t)_t.$ By Lemma~\ref{lemma: non-singular cylindrical transform}~\ref{flux equals Hamiltonian}, after possibly adding a constant to $h'$, we have $h' = h.$ Let $t_0 \in [0,1],$ let $c$ be a value of $h,$ and let $s \in (0,1)$ be such that $h^{-1}(c) = N \times \{s\}.$ Since the cylindrical transforms of $\left(\Lambda_t^U\right)_t$ and $\left(\Lambda_t^{U'}\right)_t$ coincide, the cylinder of $c$ level sets $Z_c$ associated with $\left(\Lambda_t^U\right)_t$ coincides with the cylinder of $c$ level sets $Z_c'$ associated with $\left(\Lambda_t^{U'}\right)_t.$ By Remark~\ref{remark: time function is the fundamental harmonic}, the immersed submanifolds $\left[\Psi_{t_0}|_{N \times \{s\}}\right]$ and $\left[\Psi_{t_0}'|_{N \times \{s\}}\right]$ both coincide with the $t_0$ level set of the fundamental harmonic of the cylinder $Z_c = Z_c'.$ Since $c$ was arbitrary, it follows that $\Lambda_{t_0}^U = \Lambda_{t_0}^{U'}.$ Since $t_0$ was arbitrary, the claim follows.
	\end{proof}

	\begin{lemma}
		\label{lemma: geodesic of small Lagrangians}
		Let $\Lambda_0,\Lambda_1\subset X$ be smoothly embedded positive Lagrangians intersecting transversally at a point $q.$ Suppose there exists a connected component $\mathcal{Z}\subset\mathcal{SLC}\left(S^{n-1};\Lambda_0,\Lambda_1\right)$ with an end $E$ converging regularly to $q.$ Let $U \subset \mathcal{Z}$ be a ray representing $E$ admitting a regular parameterization about $q.$ For $i = 0,1,$ let $\Lambda_i^U$ denote the unpunctured submanifold of $\Lambda_i$ swept by $U$. Then, there exists a geodesic of positive Lagrangians between $\Lambda_0^U$ and $\Lambda_1^U$ with cylindrical transform~$U.$ This geodesic is unique up to reparameterization and has critical locus $\{q\}.$
	\end{lemma}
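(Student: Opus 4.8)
The plan is to produce the geodesic by adjoining a single cone point over $q$ to the smooth geodesic furnished by Proposition~\ref{proposition: interior-regular family gives rise to geodesic}, and then to appeal to Lemma~\ref{lemma:extension of geodesic to cone point}. First I would fix a regular parameterization $\Phi:S^{n-1}\times[0,1]\times[0,\epsilon)\to X$ of $U$ about $q$; by Lemma~\ref{lemma: interior and end regular parameterization compatible with harmonics}~\ref{end regular parameterization compatible with harmonics} I may assume $\Phi$ is compatible with the harmonics of $E$. Its restriction $\Phi|_{S^{n-1}\times[0,1]\times(0,\epsilon)}$ is then an interior-regular parameterization of $U$ compatible with its harmonics, so Proposition~\ref{proposition: interior-regular family gives rise to geodesic} gives a geodesic $(\Lambda_t^\circ)_{t\in[0,1]}$ of smooth positive Lagrangians, unique up to reparameterization, between the punctured submanifolds $\Lambda_i^U\setminus\{q\}$, with cylindrical transform $U$ and empty critical locus; a horizontal lifting is $\Psi_t^\circ:=\Phi|_{S^{n-1}\times\{t\}\times(0,\epsilon)}$. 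On the other hand, picking a polar-coordinate map $\kappa:S^{n-1}\times[0,\epsilon)\to W\subset\R^n$ centered at the origin as in Remark~\ref{remark: regular parameterization gives rise to family of cone-immersions}, the map $\Psi_t:W\to X$ determined by $\Psi_t\circ\kappa=\Phi|_{S^{n-1}\times\{t\}\times[0,\epsilon)}$ is cone-immersive at the origin, so $(\Psi_t,\{0\})$ is a cone-immersion; it is Lagrangian because it agrees with $\Psi_t^\circ$ away from the origin, and $\Psi_t(0)=q$. Setting $\Lambda_t^U:=[(\Psi_t,\{0\})]$, the family $(\Lambda_t^U)_{t\in[0,1]}$ is a smooth path of cone-immersed Lagrangians of type $(W,\{0\})$ with cone-locus image $\{q\}$ which restricts, away from the cone locus, to $(\Lambda_t^\circ)_t$; by Lemma~\ref{lemma: cone-immersions} the slices $\Psi_0,\Psi_1$ are cone-embeddings onto $\Lambda_0^U,\Lambda_1^U$.

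The crux is positivity of $\Lambda_t^U$ \emph{at its cone point}; away from it this is part of Proposition~\ref{proposition: interior-regular family gives rise to geodesic}. Fix $\widetilde p\in\orientedprojective(TC_p\Lambda_t^U)$ corresponding to $v=\pderiv[\Phi]{s}(p,t,0)\in T_qX$, and write $G_t:=\pderiv[\Phi]{s}(\cdot,t,0):S^{n-1}\to T_qX$. A computation with the frame of Lemma~\ref{lemma: induced tangent frame} and equation~\eqref{equation:dderivatives} identifies $T_{\widetilde p}\Lambda_t^U$ with the tangent plane at $v$ of the Euclidean cone over $G_t(S^{n-1})$. To see this plane is a positive Lagrangian subspace, I would pass to Darboux coordinates near $q$ carrying $\Lambda_0,\Lambda_1$ to positive linear subspaces and rescale as in the proof of Lemma~\ref{lemma: interior and end regular parameterization compatible with harmonics}~\ref{end regular parameterization compatible with harmonics}: the rescaled cylinders converge as $s\to0$ to a cylinder $[\chi_0]$ that is imaginary special Lagrangian for the flat Calabi-Yau structure $(\C^n,\omega,J_0,\Omega_0)$, has regular harmonics by Lemma~\ref{lemma: cylinder with regular harmonics in Euclidean space} and property~\ref{item:nowhere tangent Euler} of Definition~\ref{definition: interior regularity}~(\ref{regular convergence to intersection point}), and, by the construction in that proof, is compatible with its own harmonics; consequently $G_t(S^{n-1})$ is the $t$-level set of the fundamental harmonic of $[\chi_0]$, so the Euclidean cone over it is, up to reparameterization, a slice $\Lambda_t^{[\chi_0]}$ of the geodesic obtained by applying Proposition~\ref{proposition: interior-regular family gives rise to geodesic} to the interior-regular family $\{r\cdot[\chi_0]:r>0\}$ of flat imaginary special Lagrangian cylinders between the linear subspaces $\Lambda_0,\Lambda_1$. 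Since the endpoint $\Lambda_0^{[\chi_0]}$ is an open subcone of the positive linear Lagrangian $\Lambda_0$, it is positive, hence that geodesic consists of positive Lagrangians and every slice $\Lambda_t^{[\chi_0]}$ is positive; as $\Omega_0$ is the constant extension of $\Omega_q$, we conclude that $T_{\widetilde p}\Lambda_t^U$ is positive.

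With positivity established, $(\Lambda_t^U)_t$ is a smooth path of positive cone-immersed Lagrangians restricting off the cone locus to the geodesic $(\Lambda_t^\circ)_t$, so Lemma~\ref{lemma:extension of geodesic to cone point} shows $(\Lambda_t^U)_t$ is a geodesic between $\Lambda_0^U$ and $\Lambda_1^U$. Its non-singular cylindrical transform consists of the cylinders $[\Phi|_{S^{n-1}\times[0,1]\times\{s\}}]$, $s\in(0,\epsilon)$, which is exactly $U$. The cone point over $q$ lies in every $\Lambda_t^U$, whereas $\bigcap_t\bigl(\Lambda_t^U\setminus\{q\}\bigr)=\emptyset$ since $(\Lambda_t^\circ)_t$ has empty critical locus, so $\crit\bigl((\Lambda_t^U)_t\bigr)=\{q\}$. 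For uniqueness, any geodesic between $\Lambda_0^U$ and $\Lambda_1^U$ with cylindrical transform $U$ yields, upon deleting its cone points, a smooth geodesic between $\Lambda_0^U\setminus\{q\}$ and $\Lambda_1^U\setminus\{q\}$ with cylindrical transform $U$; by the uniqueness clause of Proposition~\ref{proposition: interior-regular family gives rise to geodesic} this agrees with $(\Lambda_t^\circ)_t$ up to reparameterization, and since a cone-immersion is the unique cone-smooth extension of its restriction to the complement of a prescribed cone locus (Lemma~\ref{lemma: cone-immersions}), the geodesic agrees with $(\Lambda_t^U)_t$ up to reparameterization. The principal difficulty is the positivity of the $\Lambda_t^U$ at their cone point, which is what forces the descent to the flat model and the second use of Proposition~\ref{proposition: interior-regular family gives rise to geodesic}.
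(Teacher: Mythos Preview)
Your proof follows the same structure as the paper's: extract cone-immersions $(\Psi_t,\{0\})$ from the regular parameterization via a polar coordinate map and Remark~\ref{remark: regular parameterization gives rise to family of cone-immersions}, apply Proposition~\ref{proposition: interior-regular family gives rise to geodesic} to the interior to obtain the punctured geodesic $(\Lambda_t^\circ)_t$, then invoke Lemma~\ref{lemma:extension of geodesic to cone point} to extend over the cone point, with uniqueness inherited from the uniqueness clause of Proposition~\ref{proposition: interior-regular family gives rise to geodesic}. The paper's own proof in fact stops there and does not separately verify positivity of $\Lambda_t^U$ at its cone point. Your rescaling-to-the-flat-model argument, together with a second application of Proposition~\ref{proposition: interior-regular family gives rise to geodesic} to the dilation family $\{r\cdot[\chi_0]:r>0\}$ in $(\C^n,\omega,J_0,\Omega_0)$, correctly establishes that every $T_{\widetilde p}\Lambda_t^U$ is a positive Lagrangian subspace of $T_qX$, so in this respect your write-up is more complete than the paper's. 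A slightly shorter route to the same conclusion would be to rerun the computation of Step~1 of Proposition~\ref{proposition: interior-regular family gives rise to geodesic} directly for the limiting cylinder $[\chi_0]$: since $[\chi_0]$ is $\Omega_0$-imaginary special and nowhere tangent to the Euler field, that computation gives $\real\Omega_0\neq 0$ on the tangent to the cone over $G_t(S^{n-1})$, and positivity then follows by continuity from positivity of $T_q\Lambda_0$. One minor quibble: your appeal to Lemma~\ref{lemma: cone-immersions} for uniqueness of the cone-smooth extension is not quite what that lemma asserts; what you actually use is just that a continuous map is determined by its restriction to the complement of a point, and the paper is equally terse at this step.
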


	\begin{proof}
		Let $\Phi:S^{n-1}\times[0,1]\times[0,\epsilon)\to X$ be a regular parameterization of $U$ about $q$ compatible with the harmonics. For $t\in[0,1],$ define
		\[
		\widetilde{\Psi}_t:S^{n-1}\times[0,\epsilon)\to X,\qquad(c,s)\mapsto\Phi(c,t,s).
		\]
Recalling Definition~\ref{definition:polar coordinates}, let
\[
\kappa: S^{n-1}\times[0,\epsilon) \to \R^n
\]
be a polar coordinate map centered at zero with image $W.$
		For $t\in[0,1],$ let
\[
\Psi_t:W\to X
\]
be the unique map such that
\[
		\widetilde{\Psi}_t=\Psi_t\circ\kappa.
\]
		By Remark~\ref{remark: regular parameterization gives rise to family of cone-immersions}, the pair $(\Psi_t,0)$ is a cone-immersion for $t\in[0,1].$ Write
\[
\Psi_t^\circ:=\Psi_t|_{W\setminus\{0\}},\qquad \Lambda_t^U:=[(\Psi_t,0)], \qquad \Lambda_t^{U^\circ}:=\left[\Psi_t^\circ\right].
\]
By Proposition~\ref{proposition: interior-regular family gives rise to geodesic}, the family $\left(\Lambda_t^{U^\circ}\right)_{t\in[0,1]}$ is a geodesic with empty critical locus and cylindrical transform $U.$ Recalling the meaning of a regular parameterization about $q$ from Definition~\ref{definition: interior regularity}\eqref{regular convergence to intersection point}\ref{critical point}, it follows that $\Psi_t(0) = q$ for $t \in [0,1].$ Thus $\Lambda_t^U \in \mathcal{L}(X,W;\{0\},\{q\})$ as in Definition~\ref{definition: geodesic}. It follows from Lemma~\ref{lemma:extension of geodesic to cone point} that $\left(\Lambda_t^U\right)_t$ is a geodesic with critical locus $\{q\}.$ The uniqueness of $\left(\Lambda_t^U\right)_t$ follows from the uniqueness of $\left(\Lambda_t^{U^\circ}\right)_{t\in[0,1]}$ given by Proposition~\ref{proposition: interior-regular family gives rise to geodesic}.
	\end{proof}

	The following lemmas are preliminary to the proof of Theorem~\ref{theorem: geodesic-cylinder correspondence}. We first establish relevant notation. Let $(\Lambda_t)_{t\in[0,1]} \in \mathcal{L}(X,L;S,C_0)$ be a geodesic. Let $(h_t:\Lambda_t\to\mathbb{R})_{t\in[0,1]}$ denote the Hamiltonian of $(\Lambda_t)_t.$ Let $\pi:\widetilde{L}_S\to L$ denote the blowup projection. Let $((\Psi_t:L\to X,S))_{t\in[0,1]}$ be a horizontal lifting of $(\Lambda_t)_t,$ and let $h:L\to\mathbb{R}$ be the Hamiltonian of $(\Lambda_t)_t$ with respect to $((\Psi_t,S))_t.$ That is,
		\[
		h_t=[((\Psi_t,S),h)].
		\]
		Then $(h,S)$ is cone-smooth and $S$ is contained in its critical locus by Lemma~\ref{lemma:derivative 1-form of cone smooth path}. Recalling Definition~\ref{definition: cone-smooth differential form etc}, Remark~\ref{remark:differential of cone-smooth function}, Lemma~\ref{lemma:cone-smooth pullback} and notation~\eqref{eq:pullbackabbrev}, let $\nabla^t h$ denote the gradient of $h$ with respect to the pull-back metric $\Psi_t^*g.$ So, $\nabla^t h$ is a cone-smooth vector field on~$(L,S).$  Composing with the differential $d\Psi_t,$ we consider $\nabla^th$ as a cone-smooth vector field along $\Psi_t.$ Recall that we denote by $\widetilde{\nabla^t h}$ the blowup vector field, which is a section of $\pi^*\Psi_t^*TX.$ Let $\theta_t = \theta_{\Psi_t}:\widetilde{L}_S \to S^1$ denote the phase function.

\begin{lemma}\label{lemma: time derivative of cone derivative}
Let $p_0 \in S,$ let $0\neq v \in T_{p_0}L$ and let $\widetilde p: = [v] \in E_{p_0}.$ Then,
\[
\deriv{t}d\Psi_t(v) =-J\nabla_v\widetilde{\nabla^th}-\tan(\theta_t(\widetilde p))\nabla_v\widetilde{\nabla^th},
\]
where $\nabla$ is any connection on $TX.$
\end{lemma}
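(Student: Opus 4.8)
The plan is to reduce the statement to a computation on the oriented blowup $\widetilde{L}_S$ near the exceptional sphere $E_{p_0}$, and then differentiate the two defining equations of a geodesic in time. First I would work in local cone coordinates at $\widetilde p$ as in Lemma~\ref{lemma: induced tangent frame}: pick coordinates $(x^1,\dots,x^{n-1},s)$ with $\partial\pi/\partial s(\widetilde p,0) = v$, giving the induced frame $e_0,\dots,e_{n-1}$ of $\widetilde{TL}_S$. By Lemma~\ref{lemma:blowupdifferential}, the quantity $\frac{d}{dt}d\Psi_t(v)$ is $\frac{d}{dt}\widetilde{d\Psi_t}(e_0)(\widetilde p,0) = \frac{d}{dt}\frac{\partial(\Psi_t\circ\pi)}{\partial s}(\widetilde p,0)$, which makes the left-hand side an honest smooth expression on $\widetilde{L}_S$ that we can manipulate with the chain rule. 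Crucially, since $S$ lies in the critical locus of $h$, the blowup differential $\widetilde{dh}$ vanishes on $E_{p_0}$, so the cone-Hessian expressions $\nabla_v\widetilde{\nabla^t h}$ are well-defined independently of the connection (Definition~\ref{dfn:cone-Hessian} and Lemma~\ref{lemma: coordinate expression of cone-second derivative}).

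Next I would differentiate the geodesic equations. From $\frac{d}{dt}(h_t\circ\Psi_t) = 0$ we get that $h$ is time-independent, and from $i_{\frac{d}{dt}\Psi_t}\omega = dh_t$ together with horizontality $i_{\frac{d}{dt}\Psi_t}\real\Omega = 0$ we can solve for the time-derivative vector field $\frac{d}{dt}\Psi_t$ in terms of $\nabla^t h$ and the phase function, exactly as in the smooth case: the Lagrangian condition forces $\frac{d}{dt}\Psi_t$ to have the form (normal component) $= -J\,d\Psi_t(\nabla^t h)$ up to a correction by $\tan\theta_t$ coming from the fact that $\real\Omega$ rather than $\imaginary\Omega$ must be annihilated — this is where the $\tan(\theta_t(\widetilde p))$ factor enters. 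More precisely, decomposing $\frac{d}{dt}\Psi_t = a\,d\Psi_t(\nabla^t h) + b\,J\,d\Psi_t(\nabla^t h)$ along the rank-2 $J$-invariant complement of a level set (compare Step~1 in the proof of Proposition~\ref{proposition: interior-regular family gives rise to geodesic}), the equation $i_{\frac{d}{dt}\Psi_t}\real\Omega = 0$ pins down the ratio $a/b$ in terms of $\tan\theta_t$. One then gets, away from $S$,
\[
\deriv{t}\Psi_t = -J\,d\Psi_t(\nabla^t h) - \tan(\theta_t)\,d\Psi_t(\nabla^t h),
\]
as a cone-smooth vector field along $\Psi_t$.

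Finally, I would take the $s$-derivative at $(\widetilde p,0)$ of this identity, pulled back to the blowup. The left side becomes $\deriv{t}d\Psi_t(v)$ by the identification above; on the right side, differentiating $d\Psi_t(\nabla^t h) = \widetilde{d\Psi_t}(\widetilde{\nabla^t h})$ in the $s$-direction and evaluating on $E_{p_0}$ produces exactly the cone-Hessian $\nabla_v\widetilde{\nabla^t h}$ — all the "lower-order" terms (those involving $\widetilde{\nabla^t h}$ itself rather than its covariant derivative, and those where the derivative lands on $J$ or on $\tan\theta_t$) vanish because $\widetilde{\nabla^t h}|_{E_{p_0}} = 0$, which in turn holds because $\widetilde{dh}$ vanishes on $E_{p_0}$. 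This is also what guarantees connection-independence of the final answer, since $\nabla_v$ applied to a section vanishing on $E_{p_0}$ is connection-free. The main obstacle is the bookkeeping at the cone point: one must justify that differentiating the cone-smooth vector field $\nabla^t h$ along $\Psi_t$ and then restricting to $E_{p_0}$ commutes appropriately with the blowup, and that the term with the derivative of $J$ along $\nabla^t h$ genuinely drops out — both follow from $\widetilde{\nabla^t h}|_{E_{p_0}}=0$ and Lemma~\ref{lemma: milnor lemma}, but the argument needs to be spelled out carefully using the frame $e_0,\dots,e_{n-1}$ and Lemma~\ref{lemma: coordinate expression of cone-second derivative}.
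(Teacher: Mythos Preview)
Your proposal is correct and follows essentially the same route as the paper. The paper simply quotes the horizontal velocity formula
\[
\deriv{t}\Psi_t\circ\pi = -J\widetilde{\nabla^th} - \tan(\theta_t)\widetilde{\nabla^th}
\]
from \cite[Remark~5.6]{solomon} rather than re-deriving it from the geodesic equations as you do, and then proceeds exactly as you outline: take cone coordinates with $\partial\pi/\partial s(\widetilde p,0)=v$, swap the covariant $t$- and $s$-derivatives, and use $\widetilde{\nabla^th}(\widetilde p)=0$ (from the criticality of $p_0$) to kill the terms where the derivative lands on $J$ or on $\tan\theta_t$.
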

\begin{proof}
  By~\cite[Remark~5.6]{solomon}, we have
		\begin{equation}
		\label{horizontal vector formula}
		\deriv{t}\Psi_t\circ\pi(p)=-J\widetilde{\nabla^th}(p)-\tan(\theta_t(p))\widetilde{\nabla^th}(p),\quad p\in\widetilde{L}_S,t\in[0,1].
		\end{equation}
Let $s,x^1,\ldots,x^{n-1},$ be local cone-coordinates on $L$ around $\widetilde{p}$ with $\pderiv[\pi]{s}|_{\widetilde p}=v.$ As $p_0$ is a critical point of $h$ and by Remark~\ref{remark: critical cone point}, we have
		\[
		\widetilde{\nabla^th}\left(\widetilde{p}\right)=0,\quad t\in[0,1].
		\]
		Hence, covariantly differentiating~\eqref{horizontal vector formula} at $\widetilde{p}$ in the direction of $v$, we obtain
		\begin{align*}
\deriv{t}d\Psi_t(v) &=
		\frac{D}{\partial t}\frac{\partial\Psi_t}{\partial s}\left(\widetilde{p}\right) \\
		&=\frac{D}{\partial s}\frac{\partial\Psi_t}{\partial t}\left(\widetilde{p}\right)\\
		&=\nabla_v\left(-J\widetilde{\nabla^th}-\tan(\theta_t(\widetilde p))\widetilde{\nabla^th}\right)\\
		&=-J\nabla_v\widetilde{\nabla^th}-\tan(\theta_t(\widetilde p))\nabla_v\widetilde{\nabla^th}.
		\end{align*}
\end{proof}	
	
	\begin{lemma}
		\label{lemma: critical points are non-degenerate}
		Suppose $\Lambda_0$ and $\Lambda_1$ intersect transversally at $q \in C_0.$  Then, $q$ is a non-degenerate critical point of $h_t,\, t \in [0,1].$
	\end{lemma}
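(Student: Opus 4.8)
The plan is to argue by contradiction, using Lemma~\ref{lemma: time derivative of cone derivative} as the engine. Suppose $q$ is a degenerate critical point of $h_{t_0}$ for some $t_0\in[0,1]$, and let $p_0\in S$ be the preimage of $q$ under the lifting $\Psi_t$; this preimage is well defined and independent of $t$ because, by Remark~\ref{remark: embedded cone locus}, the maps $\Psi_t$ induce a fixed bijection $S\to C_0$. Unwinding Definition~\ref{dfn:cone-Hessian} together with the definition of the cone-Hessian of a function on a cone-immersed submanifold, degeneracy of $h_{t_0}$ at $q$ means exactly that there is a vector $0\neq v\in T_{p_0}L$ with $\nabla_v dh=0$, where $h:L\to\R$ denotes the Hamiltonian of the geodesic with respect to the horizontal lifting $(\Psi_t)_t$. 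The essential observation is that this condition is intrinsic to the cone-smooth function $(h,S)$, and in particular does not refer to $t_0$.

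Next I would convert this into the hypothesis appearing on the right-hand side of Lemma~\ref{lemma: time derivative of cone derivative}. Set $\widetilde p=[v]\in E_{p_0}$. Since $p_0$ is a critical point of $h$, Remark~\ref{remark: critical cone point} gives $\widetilde{dh}|_{E_{p_0}}=0$, so for every $t$ the blowup gradient $\widetilde{\nabla^t h}$ also vanishes on $E_{p_0}$. Covariant derivatives of a bundle section at a point of its zero locus are independent of the chosen connection and commute with any bundle map defined there; applying this to the pointwise musical isomorphism of the blowup metric $\widetilde{\Psi_t^*g}$ on $\widetilde{TL}_S$ and to the injective blowup differential $\widetilde{d\Psi_t}\colon\widetilde{TL}_S\to\pi^*\Psi_t^*TX$, the vanishing $\nabla_v dh=0$ propagates to $\nabla_v\widetilde{\nabla^t h}=0$ for all $t\in[0,1]$. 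Substituting into Lemma~\ref{lemma: time derivative of cone derivative} yields $\deriv{t}d\Psi_t(v)=0$ for all $t$, so the curve $t\mapsto d\Psi_t(v)$ in the fixed vector space $T_qX$ is constant; in particular $d\Psi_0(v)=d\Psi_1(v)$.

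Finally I would use transversality to reach a contradiction. Since $\Lambda_0$ and $\Lambda_1$ are smoothly embedded near $q$, the representatives $\Psi_0$ and $\Psi_1$ are smooth immersions near $p_0$, so their cone-derivatives at $p_0$ coincide with the ordinary differentials, and $d\Psi_i(v)$ is a nonzero vector of $T_q\Lambda_i$ for $i=0,1$ (nonzero because $v\neq0$ and $\Psi_i$ is an immersion). Then $d\Psi_0(v)=d\Psi_1(v)$ is a nonzero element of $T_q\Lambda_0\cap T_q\Lambda_1$. But two transverse Lagrangian subspaces of the $2n$-dimensional symplectic vector space $T_qX$ are both $n$-dimensional and together span $T_qX$, hence meet only in the origin, a contradiction. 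This proves $q$ is a non-degenerate critical point of $h_{t_0}$, and since $t_0\in[0,1]$ was arbitrary the statement follows.

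The step I expect to be the main difficulty is the second one: pinning down precisely that the intrinsic degeneracy condition $\nabla_v dh=0$ is equivalent to the vanishing $\nabla_v\widetilde{\nabla^t h}=0$ used in Lemma~\ref{lemma: time derivative of cone derivative}, while keeping track of the several pullback and blowup bundles involved and exploiting that every covariant derivative occurring here is evaluated at a point where the differentiated section vanishes, so that connections and the musical isomorphism may be commuted past the derivative without generating correction terms.
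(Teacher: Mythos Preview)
Your proposal is correct and follows essentially the same route as the paper's own proof: assume degeneracy, pick $0\ne v\in T_{p_0}L$ with $\nabla_v dh=0$, invoke Lemma~\ref{lemma: time derivative of cone derivative} to obtain $d\Psi_0(v)=d\Psi_1(v)$, and contradict transversality. The paper's argument is in fact terser than yours---it simply asserts that the conclusion ``follows from'' Lemma~\ref{lemma: time derivative of cone derivative}---whereas you have correctly identified and carefully justified the step the paper leaves implicit, namely that $\nabla_v dh=0$ forces $\nabla_v\widetilde{\nabla^t h}=0$ via the vanishing of the relevant sections at $\widetilde p$ and the resulting connection-independence and commutation with bundle maps.
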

	
	\begin{proof}
Let $p \in S$ with $\Psi_t(p) = q$ for $t \in [0,1].$ By way of contradiction, suppose that $p$ is a degenerate critical point of $h$ and let $0\ne v\in T_{p}L$ with $\nabla_vdh=0.$ It follows from Lemma~\ref{lemma: time derivative of cone derivative} that
		\[
		d\Psi_0\left(v\right)=d\Psi_1\left(v\right)
		\]
		contradicting the transversality of $\Lambda_0$ and $\Lambda_1.$
	\end{proof}
Suppose $\Lambda_0$ and $\Lambda_1$ intersect transversally at a single point, so $C_0 = \{q\}.$ Moreover, for $t \in [0,1]$ assume $q$ is an absolute minimum or maximum of $h_t$. Write $S = \{p\},$ so $\Psi_t(p) = q.$ By Lemma~\ref{lemma: critical points are non-degenerate}, $p$ is a non-degenerate critical point of $h.$ By Lemma~\ref{lemma: nice level sets},
choose a positive $\epsilon$ and a polar coordinate map $\kappa:S^{n-1}\times[0,\epsilon)\to L$ centered at $p$ such that
for each $s\in(0,\epsilon)$ the restricted map $\kappa|_{S^{n-1}\times\{s\}}$ parameterizes a level set of $h.$
\begin{lemma}\label{lemma:rocking lemma}
Let $\mathcal{Z}$ denote the cylindrical transform of $(\Lambda_t)_t.$ Then, one end of $\mathcal{Z}$ converges regularly to $q.$
In fact, the map
		\[
		\Phi:S^{n-1}\times[0,1]\times[0,\epsilon)\to X,\qquad(c,t,s)\mapsto\Psi_t(\kappa(c,s))
		\]
is a regular parameterization of $\mathcal{Z}$ about $q.$
\end{lemma}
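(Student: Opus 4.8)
The plan is to verify that $\Phi$ satisfies the four hypotheses of Corollary~\ref{rem: easy regularity}; it then follows, after shrinking $\epsilon,$ that $\Phi$ is a regular parameterization. So I must check: (i) $\Phi(\cdot,\cdot,0)\equiv q$; (ii) for small $s>0$ the restriction $\Phi_s:=\Phi|_{S^{n-1}\times[0,1]\times\{s\}}$ represents an element of $\mathcal{SLC}\left(S^{n-1};\Lambda_0,\Lambda_1\right)$; (iii) the map $\left.\pderiv{s}\right|_{s=0}\Phi(\cdot,\cdot,s):S^{n-1}\times[0,1]\to T_qX$ is an immersion restricting to an embedding on each $S^{n-1}\times\{i\}$; and (iv) the Euler field on $T_qX$ is nowhere tangent to that immersion. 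Condition (i) is immediate from $\kappa(c,0)=p$ and $\Psi_t(p)=q.$ For (ii), the map $\kappa|_{S^{n-1}\times\{s\}}$ parameterizes the component through $p$ of a level set $h^{-1}(c(s)),$ which consists of regular points of $h$ for small $s$ since $p$ is a non-degenerate critical point; hence by Lemma~\ref{geodesic yields unique cylinder} the immersion $\Phi_s$ represents the component through $q$ of the associated cylinder $Z_{c(s)}$ of $c(s)$-level sets, an imaginary special Lagrangian cylinder in $\mathcal{SLC}\left(S^{n-1};\Lambda_0,\Lambda_1\right)$ belonging to the cylindrical transform of $(\Lambda_t)_t.$ Since $q$ is an absolute extremum of the $h_t,$ the values $c(s)$ fill an interval of regular values accumulating at $h(q),$ so by Lemma~\ref{lemma: non-singular cylindrical transform}~\ref{cylindrical transform is interior-regular} the cylinders $Z_{c(s)}$ form an interior-regular, hence connected, ray $U$ inside a connected component $\mathcal{Z}\subset\mathcal{SLC}\left(S^{n-1};\Lambda_0,\Lambda_1\right)$ of the cylindrical transform; as these cylinders degenerate to $q,$ the end $E$ of $\mathcal{Z}$ represented by $U$ is the one named in the lemma.

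The substance lies in conditions (iii) and (iv). As in the proof of Lemma~\ref{lemma:immersion criterion}, the relevant derivative is
\[
V(c,t):=\left.\pderiv{s}\right|_{s=0}\Phi(c,t,s)=d\Psi_t(\sigma(c)),
\]
where $d\Psi_t$ is the cone-derivative of $\Psi_t$ at $p$ and $\sigma$ is the section of $T_pL\setminus\{0\}\to\orientedprojective(T_pL)\cong S^{n-1}$ associated with $\kappa.$ Fixing $(c,t)$ and writing $\widetilde p=[\sigma(c)],$ I would first use Lemma~\ref{lemma:blowupdifferential} to see that $d(d\Psi_t)_{\sigma(c)}=\widetilde{d\Psi_t}_{\widetilde p}$ is injective with image the Lagrangian subspace $T_{\widetilde p}\Lambda_t;$ combined with the splitting $T_{\sigma(c)}T_pL=\R\langle\varepsilon(\sigma(c))\rangle\oplus d\sigma_c(T_cS^{n-1})$ and the degree-$1$ homogeneity of the cone-derivative, this shows the $c$-partials of $V$ at $(c,t)$ span an $(n-1)$-plane in $T_{\widetilde p}\Lambda_t$ complementary to the Euler direction $\varepsilon(V(c,t))=V(c,t).$ Next, Lemma~\ref{lemma: time derivative of cone derivative} gives
\[
\deriv{t}V(c,t)=-J\,\nabla_{\sigma(c)}\widetilde{\nabla^t h}-\tan(\theta_t(\widetilde p))\,\nabla_{\sigma(c)}\widetilde{\nabla^t h}.
\]
Since $\widetilde{\nabla^t h}$ takes values in the subbundle $\widetilde{d\Psi_t}(\pi^*TL)$ and vanishes at $\widetilde p$ (as $p$ is a critical point of $h$), the vector $\nabla_{\sigma(c)}\widetilde{\nabla^t h}$ lies in $T_{\widetilde p}\Lambda_t,$ and it is non-zero because $p$ is a non-degenerate critical point — reflecting the non-vanishing of the cone-Hessian $\nabla_{\sigma(c)}dh.$ A Lagrangian subspace being totally real, $-J\nabla_{\sigma(c)}\widetilde{\nabla^t h}\notin T_{\widetilde p}\Lambda_t,$ so $\deriv{t}V(c,t)\notin T_{\widetilde p}\Lambda_t.$ Hence the $c$-partials together with $\deriv{t}V(c,t)$ are linearly independent, proving $V$ is an immersion, and $\varepsilon(V(c,t))$ cannot lie in $\mathrm{Im}(dV_{(c,t)})$ — else $\deriv{t}V(c,t)$ would be forced into $T_{\widetilde p}\Lambda_t=\R\langle\varepsilon(V(c,t))\rangle\oplus(\text{span of the }c\text{-partials})$ — which is condition (iv). For the embedding claim in (iii), since $\Lambda_i$ is smoothly embedded the cone-derivative $d\Psi_i$ restricts to an injective immersion on $T_pL\setminus\{0\},$ so $V|_{S^{n-1}\times\{i\}}=d\Psi_i\circ\sigma$ is an embedding. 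Corollary~\ref{rem: easy regularity} then completes the argument: after shrinking $\epsilon,$ $\Phi$ is a regular parameterization of $U,$ hence of the end $E,$ about $q,$ so $E$ converges regularly to $q.$

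The step I expect to be the main obstacle is the transversality $\deriv{t}V(c,t)\notin T_{\widetilde p}\Lambda_t.$ It requires pinning down that the gradient term appearing in Lemma~\ref{lemma: time derivative of cone derivative} stays tangent to $\Lambda_t$ while its $J$-rotation does not, and that the non-vanishing of $\nabla_{\sigma(c)}\widetilde{\nabla^t h}$ is exactly the non-degeneracy of the cone-critical point $p.$ The bookkeeping of cone-derivatives, blowup differentials, and connection-independence at the exceptional sphere — together with confirming that the polar coordinate map $\kappa$ furnished by Lemma~\ref{lemma: nice level sets} interacts with the horizontal lifting $(\Psi_t)_t$ so that $\Phi$ is smooth and $V=d\Psi_t(\sigma(\cdot))$ — is where the delicacy lies.
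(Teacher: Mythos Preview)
Your proposal is correct and follows essentially the same route as the paper's proof: both reduce to showing that $V(c,t)=d\Psi_t(\sigma(c))$ is an immersion nowhere tangent to the Euler field, identify the $c$-partials via Lemma~\ref{lemma:blowupdifferential} with a hyperplane in $T_{\widetilde p}\Lambda_t$ complementary to the Euler direction, compute the $t$-partial using Lemma~\ref{lemma: time derivative of cone derivative}, and use non-degeneracy of the cone-Hessian together with the totally real property of a Lagrangian subspace to conclude $\partial_t V\notin T_{\widetilde p}\Lambda_t$. The only organizational difference is that the paper verifies condition~\eqref{regular convergence to intersection point}\ref{item:interior regular} directly from Lemma~\ref{lemma: non-singular cylindrical transform}\ref{cylindrical transform is interior-regular}, whereas you route through Corollary~\ref{rem: easy regularity}; both are fine.
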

\begin{proof}
We verify the conditions of Definition~\ref{definition: interior regularity}~\eqref{regular convergence to intersection point} one by one. Condition~\ref{critical point} is immediate. By Lemma~\ref{lemma: non-singular cylindrical transform}~\ref{cylindrical transform is interior-regular}, $\Phi|_{S^{n-1}\times [0,1]\times (0,\epsilon)}$ is interior regular verifying condition~\ref{item:interior regular}. To verify conditions~\ref{item:derivative Phi immersion} and~\ref{item:nowhere tangent Euler}, we show that
\[
\left.\pderiv[\Phi]{s}\right|_{S^{n-1} \times [0,1]\times \{0\}} : S^{n-1} \times [0,1]\times \{0\} \to T_qX
\]
is an immersion nowhere tangent to the Euler vector field. Indeed, by Lemma~\ref{lemma:immersion criterion}, for $t \in [0,1]$ the restriction
\[
\left.\pderiv[\Phi]{s}\right|_{S^{n-1} \times \{t\}\times \{0\}} : S^{n-1} \times \{t\}\times \{0\} \to T_qX
\]
is an immersion nowhere tangent to the Euler vector field. Let $(c,t) \in S^{n-1} \times [0,1],$ and write
\[
v = \pderiv[\kappa]{s}(c,0) \in T_pL, \qquad \widetilde p = [v] \in E_p \simeq \orientedprojective(T_pL).
\]
Let
\[
\widetilde q = [(\orientedprojective((d\Psi_t)_p),\widetilde p)] \in \orientedprojective(TC_q\Lambda_t).
\]
By Lemma~\ref{lemma: time derivative of cone derivative} we have
\begin{equation}\label{equation:Jw}
\pderiv{t}\pderiv[\Phi]{s}(c,t,0) = \deriv{t} d\Psi_t(v) = -J\nabla_v\widetilde{\nabla^th}-\tan(\theta_t(\widetilde p))\nabla_v\widetilde{\nabla^th}.
\end{equation}
Write $w = \nabla_v\widetilde{\nabla^th}.$ As $p$ is a critical point of $h,$ Remark~\ref{remark: critical cone point} gives $\widetilde{\nabla^th}(\widetilde p) = 0,$ and it follows that $w \in T_{\widetilde q}\Lambda_t \subset T_qX.$ Moreover, $w \neq 0$ since $p$ is a non-degenerate critical point of $h.$ Write
\[
\Upsilon := d\left(\left.\pderiv[\Psi_t\circ\kappa]{s}\right|_{s=0}\right)\left(T_cS^{n-1}\right) = d \left(\left.\pderiv[\Phi]{s}\right|_{S^{n-1} \times \{t\}\times \{0\}} \right)\left(T_cS^{n-1}\right)
\subset T_q X.
\]
Recalling Definition~\ref{definition:polar coordinates}, let $\sigma = \left.\pderiv[\kappa]{s}\right|_{s=0}: \orientedprojective(T_pL) \to T_pL \setminus \{0\}$ denote the section associated to $\kappa.$ Let $\widehat p \subset T_pL$ and $\widehat q \subset T_q X$ denote the $1$-dimensional subspaces generated by the rays $\widetilde p, \widetilde q,$ respectively.
Then,
\[
\widetilde{TL}_S|_{\widetilde p} = T_pL = \widehat p \oplus d\sigma\left(T_{\widetilde p}\orientedprojective(T_pL)\right).
\]
So, we obtain
\[
T_{\widetilde q}\Lambda_t = \widetilde{d\Psi}_t\left(\left(\widetilde{TL}_S\right)_{\widetilde p}\right) = \widehat q \oplus \Upsilon.
\]
Since $T_{\widetilde q}\Lambda_t$ is Lagrangian, we have $T_{\widetilde q}\Lambda_t \cap JT_{\widetilde q}\Lambda_t = \{0\}.$ It follows from equation~\eqref{equation:Jw} that
\[
\pderiv{t}\pderiv[\Phi]{s}(c,t,0) \notin T_{\widetilde q}\Lambda_t.
\]
So, the linear subspace
\begin{multline*}
\R\left\langle\pderiv{t}\pderiv[\Phi]{s}(c,t,0)\right\rangle \oplus T_{\widetilde q}\Lambda_t =\\
 =\R\left\langle\pderiv{t}\pderiv[\Phi]{s}(c,t,0)\right\rangle \oplus \widehat q\oplus d \left(\left.\pderiv[\Phi]{s}\right|_{S^{n-1} \times \{t\}\times \{0\}} \right)\left(T_cS^{n-1}\right) \subset T_qX
\end{multline*}
is $n+1$ dimensional. Since the Euler vector field $\varepsilon$ on $T_qX$ satisfies
\[
0 \neq \varepsilon\left(\pderiv[\Phi]{s}(c,t,0)\right) \in \widehat q,
\]
the lemma follows.
\end{proof}

	We henceforth restrict the discussion to the setting of Theorem~\ref{theorem: geodesic-cylinder correspondence}. Namely, we assume $\Lambda_0,\Lambda_1\subset X$ are smoothly embedded positive Lagrangian spheres intersecting transversally at the points $q_0$ and $q_1$ and nowhere else.
	
	\begin{rem}
		\label{rem:simple topology}
		Note that, under the above assumption, if there exists a geodesic connecting $\Lambda_0$ and $\Lambda_1$, its critical locus necessarily consists of exactly two points. In particular, letting $h$ denote the Hamiltonian of the geodesic and $[m,M]$ the image of $h,$ every $c\in(m,M)$ is a regular value.
	\end{rem}

	We define regularity for a connected component in $\mathcal{SLC}\left(S^{n-1};\Lambda_0,\Lambda_1\right).$
	
	\begin{dfn}
		\label{definition: regularity}
		Let $\mathcal{Z}\subset\mathcal{SLC}\left(S^{n-1};\Lambda_0,\Lambda_1\right)$ be a connected component. A \emph{regular parameterization} of $\mathcal{Z}$ is a smooth map $\Phi:S^{n-1}\times[0,1]\times[0,1]\to X$ satisfying the following conditions:
		\begin{enumerate}
			\item The restricted map $\Phi|_{S^{n-1}\times[0,1]\times(0,1)}$ is an interior-regular parameterization of $Z.$
			\item The restricted maps $\Phi|_{S^{n-1}\times[0,1]\times[0,1/2)}$ and $\Phi|_{S^{n-1}\times[0,1]\times(1/2,1]}$ are regular parameterizations of the two ends of $\mathcal{Z}$ about the intersection points $q_0$ and $q_1,$ respectively.
		\end{enumerate}
		We say $\mathcal{Z}$ is \emph{regular} if it admits a regular parameterization.
	\end{dfn}
	
	\begin{rem}
		\label{remark: regular connected component}
		$\;$
		\begin{enumerate}[label=(\alph*)]
			\item \label{equivalent definition of regularity}	A connected component $\mathcal{Z}\subset\mathcal{SLC}\left(S^{n-1};\Lambda_0,\Lambda_1\right)$ is regular if and only if each $Z \in \mathcal{Z}$ has an interior-regular neighborhood and the ends of $\mathcal{Z}$ converge regularly to the intersection points~$q_0$ and $q_1$ respectively. This follows from Lemma~\ref{lemma: interior and end regular parameterization compatible with harmonics} and Corollary~\ref{corollary:action}. Indeed, we use the compactness of $[0,1]$ and the uniqueness claim to glue together parameterizations of the ends of~$\mathcal{Z}$ with finitely many interior regular parameterizations of open intervals in~$\mathcal{Z}.$
			\item Suppose $\mathcal{Z}\subset\mathcal{SLC}\left(S^{n-1};\Lambda_0,\Lambda_1\right)$ is a regular connected component. Then $\mathcal{Z}$ admits a regular parameterization compatible with the harmonics. This again follows from Lemma~\ref{lemma: interior and end regular parameterization compatible with harmonics}.
		\end{enumerate}
	\end{rem}

	\begin{proof}[Proof of Theorem~\ref{theorem: geodesic-cylinder correspondence}]
		Suppose there exists a geodesic $(\Lambda_t)_{t\in[0,1]}$ between the positive Lagrangian spheres $\Lambda_0$ and~$\Lambda_1.$ Let $\mathcal{Z}\subset\mathcal{SLC}\left(S^{n-1};\Lambda_0,\Lambda_1\right)$ denote the cylindrical transform. Then Lemma~\ref{lemma: non-singular cylindrical transform}, Lemma~\ref{lemma:rocking lemma} and Remark~\ref{remark: regular connected component}\ref{equivalent definition of regularity}, imply that $\mathcal{Z}$ is regular.
		
		Conversely, let $\mathcal{Z}\subset\mathcal{SLC}\left(S^{n-1};\Lambda_0,\Lambda_1\right)$ be a regular connected component. Let $\Phi : S^{n-1} \times [0,1]\times [0,1]$ be a regular parameterization of $\mathcal{Z}.$ It follows from Remark~\ref{remark: regular parameterization gives rise to family of cone-immersions} and Corollary~\ref{cor:cone-immersion open mapping} that the image of $\Phi|_{S^{n-1}\times \{i\} \times [0,1]} \subset \Lambda_i$ is open. Since $S^{n-1} \times [0,1]$ is compact, the image is also closed. Thus, the submanifolds swept by $\mathcal{Z}$ are $\Lambda_i\setminus\{q_0,q_1\}=:\Lambda_i^\circ,\;i=0,1.$ By Proposition~\ref{proposition: interior-regular family gives rise to geodesic}, there exists a geodesic $\left(\Lambda_t^\circ\right)_{t\in[0,1]}$ between $\Lambda_0^\circ$ and $\Lambda_1^\circ$ with cylindrical transform $\mathcal{Z}.$ For $j = 0,1,$ let $U_j$ be a ray representing the end of $\mathcal{Z}$ that converges regularly to $q_j.$ Let $\Lambda_i^{U_j}$ denote the unpunctured submanifold of $\Lambda_i$ swept by $U_j.$ By Lemma~\ref{lemma: geodesic of small Lagrangians}, there exists a geodesic $\left(\Lambda^{U_j}_t\right)_{t\in[0,1]}$ between $\Lambda_0^{U_j}$ and $\Lambda_1^{U_j}$ with cylindrical transform $U_j.$  By the uniqueness claim of Proposition~\ref{proposition: interior-regular family gives rise to geodesic}, the three above geodesics glue together to the desired geodesic between $\Lambda_0$ and $\Lambda_1.$
	\end{proof}

	\section{Proof of the perturbation theorem}
	\label{section: proof of perturbation theorem}
	
	In this section we prove Theorem~\ref{theorem: perturbation of geodesic}, which shows that geodesics of positive Lagrangian spheres deform uniquely and continuously under small perturbations of the endpoints. The relevant topologies are the subject of Definition~\ref{dfn:topology on geodesics}. By Theorem~\ref{theorem: geodesic-cylinder correspondence}, it suffices to show that regular components of the space of imaginary special Lagrangian cylinders deform uniquely and continuously under small pertubations of their positive Lagrangian boundary conditions. Proposition~\ref{proposition: interior big family of cylinders} shows that a small interval around each cylinder in a regular component deforms uniquely and continuously. Proposition~\ref{proposition: end big family of cylinders} shows that a small interval at each end of a regular component deforms continuously. By a compactness argument and the uniqueness part of Proposition~\ref{proposition: interior big family of cylinders}, a finite number of these deformed intervals are glued together to show that the entire regular component deforms uniquely and continuously.

We assume the setting of Theorem~\ref{theorem: perturbation of geodesic}. That is, $\mathcal{O}$ is a Hamiltonian isotopy class of smoothly embedded positive Lagrangian spheres in $X,$ the spheres $\Lambda_0, \Lambda_1 \in \mathcal{O}$ intersect transversally at $\Lambda_0 \cap \Lambda_1 = \{q_0,q_1\},$ and $(\Lambda_t)_{t\in[0,1]}$ is a geodesic between $\Lambda_0$ and $\Lambda_1.$ Let $\mathcal{Z}$ denote the cylindrical transform of $(\Lambda_t)_t.$ By Theorem~\ref{theorem: geodesic-cylinder correspondence} the cylindrical transform $\mathcal{Z}$ is a regular connected component of the 1-dimensional manifold $\mathcal{SLC}\left(S^{n-1};\Lambda_0,\Lambda_1\right).$ Let $\Phi : S^{n-1} \times [0,1] \times [0,1] \to X$ be a regular parameterization of $\mathcal{Z}.$ For $s \in (0,1),$ we write
	\[
	\Phi_s := \Phi|_{S^{n-1} \times [0,1] \times \{s\}},\quad Z_s := [\Phi_s] \in \mathcal{Z}.
	\]
Fix a Weinstein neighborhood $\Lambda_1 \subset W \subset X$ identified with an open neighborhood of the zero section $T^*\Lambda_1$ in such a way that the projection $\pi : W \to \Lambda_1$ induced from the projection $T^*\Lambda_1 \to \Lambda_1$ satisfies
	\[
	W \cap \Lambda_0 = \pi^{-1}(\{q_0, q_1\}).
	\]
In particular, spheres in $\mathcal{O}$ that are close to $\Lambda_1$ are identified with exact 1-forms on $\Lambda_1.$ For a function $h \in C^\infty(\Lambda_1)$ small enough in the $C^1$-sense that the graph of $dh$ is contained in $W,$ we let $\Lambda_{1, h}$ denote the corresponding element of $\mathcal{O}.$ We write $\Lambda_0 \cap \Lambda_{1, h} = \{q_{0, h}, q_{1, h}\}.$ Thus a sufficiently small $C^{k+1,\alpha}$ open set $0 \in \mathcal{W} \subset C^\infty(\Lambda_1)$ corresponds to a $C^{k,\alpha}$ open neighborhood of $\Lambda_1$ in $\mathcal{O}.$

	\begin{ntn}
		\label{notation: Hamiltonian flow with cutoff}
		Let $\chi : X \to [0,1]$ be smooth with compact support contained in $W,$ and let $h \in C^{k, \alpha}(\Lambda_1)$ for some $k \ge 2$ and $\alpha \in [0,1].$ Then the function $\chi \cdot h \circ \pi$ is well-defined and of regularity $C^{k, \alpha}$ on $X$ with compact support. We let $\varphi_{h, \chi}$ denote the time-1 Hamiltonian flow of $\chi \cdot h \circ \pi.$
	\end{ntn}

	\begin{rem}
		\label{remark: cutoff}
		Let $\chi$ and $h$ be as in Notation~\ref{notation: Hamiltonian flow with cutoff}. Then the map $\varphi_{h, \chi} : X \to X$ is a $C^{k - 1, \alpha}$ symplectomorphism of $X$ restricting to the identity on $\Lambda_0.$
	\end{rem}
In the proofs below, we will use the following well-known technical lemma. Let $\Omega^*_{C^{k,\alpha}}$ denote differential forms of regularity $C^{k,\alpha}.$
\begin{lemma}\label{lemma: Holder pullback}
Let $P,Q,$ be smooth manifolds. For $r,s,l \geq 1,$ the map
\[
C^{r,\alpha}(P,Q) \times \Omega^*_{C^{s,\alpha}}(Q) \to \Omega^*_{C^{\min\{r-1,s-l\},\alpha}}(P), \qquad (g,\eta) \mapsto g^*\eta,
\]
is of regularity $C^l.$
\end{lemma}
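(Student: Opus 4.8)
The plan is to reduce to a coordinate computation and then quote the standard regularity of the two elementary operations involved: multiplication of Hölder functions, and composition with a map of finite Hölder regularity (a version of Palais' $\Omega$-lemma). First I would note that in every application the manifolds are compact, so the $C^{k,\alpha}$-norms are the ones defined through a finite atlas, $C^{r,\alpha}(P,Q)$ is the usual Banach manifold of $C^{r,\alpha}$ maps with $T_gC^{r,\alpha}(P,Q)$ the space of $C^{r,\alpha}$ sections of $g^*TQ,$ and one tacitly assumes $r\ge 1$ and $s\ge l$ so that $C^{\min\{r-1,s-l\},\alpha}$ is a genuine Hölder space, which holds wherever the lemma is invoked. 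Since everything is local and compatible with passing to charts, it suffices to treat the case $P=U,$ $Q=V$ bounded open subsets of Euclidean spaces, where $C^{r,\alpha}(U,V)$ is open in the Banach space $C^{r,\alpha}(U,\R^{\dim Q}),$ a form is a tuple of Hölder functions, and for $\eta=\sum_{|I|=k}\eta_I\,dy^I$ one has
\[
(g^*\eta)_{j_1\cdots j_k}=\sum_{i_1<\cdots<i_k}(\eta_{i_1\cdots i_k}\circ g)\,\det\!\left(\pderiv[g^{i_a}]{x^{j_b}}\right)_{\!1\le a,b\le k},
\]
i.e.\ a finite sum of terms, each the product of one factor $\eta_I\circ g$ with $k$ factors of the form $\pderiv{x^j}g^i.$

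Next I would record the two inputs. (i) For $a,b\ge 0,$ pointwise multiplication is a bounded bilinear, hence $C^\infty,$ map $C^{a,\alpha}(U)\times C^{b,\alpha}(U)\to C^{\min\{a,b\},\alpha}(U).$ (ii) For $\psi\in C^{s,\alpha}(V,\R),$ the assignment $g\mapsto\psi\circ g$ is a map $C^{r,\alpha}(U,V)\to C^{\min\{r,s-l\},\alpha}(U)$ of class $C^l,$ whose $l$-th derivative at $g$ sends $(v_1,\dots,v_l)$ to the section $x\mapsto D^l\psi(g(x))[v_1(x),\dots,v_l(x)];$ this is the composition lemma, and proving it is the one step carrying real analytic content. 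I would prove it by verifying that this multilinear expression is indeed the $l$-th Fréchet derivative, using the Hölder chain-rule estimate $\|\psi\circ g\|_{C^{m,\alpha}}\le C(\|g\|_{C^1})(1+\|g\|_{C^{m,\alpha}})^m\|\psi\|_{C^{m,\alpha}}$ with $m=\min\{r,s\}$ to control norms, and the uniform continuity of $D^l\psi$ on compact subsets of $V$ to control Taylor remainders. Since $\psi\mapsto\psi\circ g$ is linear, the same argument gives that the joint map $(\psi,g)\mapsto\psi\circ g$ is $C^l$ into $C^{\min\{r,s-l\},\alpha}(U).$ Finally $g\mapsto\pderiv{x^j}g^i$ is the restriction of a bounded linear map into $C^{r-1,\alpha}(U),$ hence $C^\infty.$

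With these in hand the assembly is bookkeeping: by (ii), together with the smooth extraction $\eta\mapsto\eta_I,$ the map $(g,\eta)\mapsto\eta_I\circ g$ is $C^l$ into $C^{\min\{r,s-l\},\alpha}(U);$ multiplying such a factor by the $k$ Jacobian factors --- which lie in $C^{r-1,\alpha}(U)$ and depend smoothly on $g$ --- lands, by (i), in $C^{\min\{r-1,s-l\},\alpha}(U)$ via a bounded multilinear, hence smooth, operation; and a composite of a $C^l$ map with $C^\infty$ maps, as well as a finite linear combination of $C^l$ maps, is again $C^l.$ Hence $(g,\eta)\mapsto g^*\eta$ is of class $C^l$ into $\Omega^*_{C^{\min\{r-1,s-l\},\alpha}}(U),$ and a partition-of-unity patching over a finite atlas of $P$ upgrades this to the global statement. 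I expect the composition lemma in (ii) to be the only genuinely delicate point: the loss of $l$ derivatives there is the familiar loss-of-derivatives for composition operators of finite regularity, and it produces the $s-l$ in the statement, the remaining $r-1$ being merely the regularity of the Jacobian factors.
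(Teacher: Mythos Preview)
The paper does not actually prove this lemma; it is introduced as ``the following well-known technical lemma'' and used without proof. So there is no paper argument to compare against.

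Your outline is the standard proof and is correct in its essentials. The decomposition into (i) bilinear multiplication $C^{a,\alpha}\times C^{b,\alpha}\to C^{\min\{a,b\},\alpha}$, (ii) the $\Omega$-lemma giving $C^l$-regularity of $(g,\psi)\mapsto\psi\circ g$ into $C^{\min\{r,s-l\},\alpha}$, and (iii) the bounded linear map $g\mapsto\partial_{x^j}g^i$ into $C^{r-1,\alpha}$, followed by the observation that the coordinate formula for $(g^*\eta)_J$ is a polynomial in these ingredients, is exactly how this is done. Your identification of the $\Omega$-lemma as the only substantive step is right, and your sketch of its proof (Taylor expansion, H\"older chain-rule bound, uniform continuity of $D^l\psi$ on compacts) is the correct outline. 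One small comment: when you write the H\"older composition estimate $\|\psi\circ g\|_{C^{m,\alpha}}\le C(\|g\|_{C^1})(1+\|g\|_{C^{m,\alpha}})^m\|\psi\|_{C^{m,\alpha}}$, the power of the bracket is not literally $m$ in general but a polynomial bound suffices; this is cosmetic and does not affect the argument. You should also remark that joint $C^l$-regularity in $(g,\eta)$ follows because the dependence on $\eta$ is linear and bounded, so it is enough to prove $C^l$ in $g$ with operator-norm continuity in $\eta$, which is what your argument yields.
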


Abbreviate $L = S^{n-1}\times [0,1].$	

	\begin{prop}
		\label{proposition: interior big family of cylinders}
		Fix $\alpha \in (0,1).$ Let $s_0  \in (0, 1).$ Then there exist a positive $\epsilon,$ a $C^{3, \alpha}$-open $0 \in \mathcal{W} \subset C^\infty(\Lambda_1)$ and a family of smooth immersions
\[
f_{s,h} : L \to X, \qquad (s, h) \in (s_0 - \epsilon, s_0 + \epsilon) \times \mathcal{W},
\]
smooth in $s$ and continuous with respect to the $C^{3,\alpha}$ topology on $\mathcal{W}$ and the $C^{1,\alpha}$ topology on $C^{\infty}(L,X)$			with the following properties:
			\begin{enumerate}[label=(\arabic*)]
				\item \label{item: special} For $s$ and $h$ as above the immersion $f_{s,h}$ represents an imaginary special Lagrangian cylinder $Z_{s, h} \in \mathcal{SLC}(S^{n-1};\Lambda_0,\Lambda_{1,h}).$
\item \label{item:h = 0}
We have $f_{s,0} = \Phi_s$ for $s \in (s_0 - \epsilon, s_0 + \epsilon).$
				\item \label{item: interior regular} For $h \in \mathcal{W}$ the family of cylinders $(Z_{s, h})_s$ is interior regular.
			\end{enumerate}
Moreover, there exists a $C^{1,\alpha}$ open set $\mathcal V \subset C^\infty(L,X)$ with
\begin{equation}\label{equation: fshinW}
f_{s,h} \in \mathcal V, \qquad (s,h) \in (s_0-\epsilon,s_0 + \epsilon)\times \mathcal{W},
\end{equation}
such that if $f \in \mathcal V$ represents an imaginary special Lagrangian cylinder
\[
[f] \in \mathcal{SLC}(S^{n-1};\Lambda_0,\Lambda_{1,h})
\]
with $h \in \mathcal W,$ then $[f] = [f_{s,h}]$ for some $s \in (s_0-\epsilon,s_0 + \epsilon).$
	\end{prop}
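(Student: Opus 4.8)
The plan is to run an implicit function theorem with $h$ as a parameter, in the spirit of the proof of Proposition~\ref{proposition: space of special lag cylinders is one dimensional}, after using the symplectomorphisms $\varphi_{h,\chi}$ to trade the moving boundary condition for a moving Calabi-Yau structure. Choosing $\chi\equiv 1$ on a smaller Weinstein neighborhood of $\Lambda_1,$ one gets, for $C^1$-small $h,$ that $\varphi_{h,\chi}$ fixes $\Lambda_0$ pointwise (Remark~\ref{remark: cutoff}) and carries $\Lambda_1$ to $\Lambda_{1,h}.$ Set $\Omega_h:=\varphi_{h,\chi}^*\Omega$ and $J_h:=\varphi_{h,\chi}^*J;$ then $(X,\omega,J_h,\Omega_h)$ is Calabi-Yau with $(J_0,\Omega_0)=(J,\Omega),$ and since $\varphi_{h,\chi}\to\id$ in $C^1$ as $h\to 0$ in $C^{3,\alpha},$ after shrinking the allowed $h$ the spheres $\Lambda_0,\Lambda_1$ stay positive for $\Omega_h.$ A cylinder with boundary in $\Lambda_0,\Lambda_1$ is $\Omega_h$-imaginary special Lagrangian if and only if its $\varphi_{h,\chi}$-image is $\Omega$-imaginary special Lagrangian with boundary in $\Lambda_0,\Lambda_{1,h};$ so it suffices to build immersions $f_{s,h}^\circ:L\to X$ representing $\Omega_h$-imaginary special Lagrangian cylinders between $\Lambda_0$ and $\Lambda_1$ with $f_{s,0}^\circ=\Phi_s$ and $(Z_{s,h}^\circ)_s$ interior regular, and then put $f_{s,h}:=\varphi_{h,\chi}\circ f_{s,h}^\circ,$ since all the asserted properties transfer under $\varphi_{h,\chi}.$

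Carrying out the construction of Lemma~\ref{lemma: Weinstein with boundary} in the parameter $s,$ fix a smooth family of immersed Weinstein neighborhoods $(V_s,\varphi_s)$ of $Z_s$ compatible with $\Lambda_0,\Lambda_1$ with $\varphi_s|_L=\Phi_s,$ and let $j_w^{(s)}:L\to X$ be the associated graph immersions, so $j_0^{(s)}=\Phi_s.$ Fix $\alpha\in(0,1)$ and define, for $s$ near $s_0,$ small $w\in C^{2,\alpha}(L;\partial L),$ and $C^{3,\alpha}$-small $h,$
\[
F(s,w,h):=*_{\Phi_s^*g}\,\bigl(j_w^{(s)}\bigr)^*\real\Omega_h\in C^\alpha(L),
\]
so that $[j_w^{(s)}]$ is $\Omega_h$-imaginary special Lagrangian iff $F(s,w,h)=0,$ and $F(s,0,0)=0$ because $\Phi_s$ represents an element of $\mathcal{Z}.$ For fixed smooth $h$ the form $\Omega_h$ is smooth, so $F(\cdot,\cdot,h)$ is smooth; and, controlling $h\mapsto\varphi_{h,\chi}\mapsto(\Omega_h,\nabla\Omega_h)$ via Lemma~\ref{lemma: Holder pullback}, the maps $F$ and $D_wF$ are jointly continuous. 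By Lemma~\ref{lemma: linearized operator} and Lemma~\ref{lemma: Delta rho is a cool operator}, $D_wF(s_0,0,0)=\Delta_\rho|_{C^{2,\alpha}(L;\partial L)}$ is an isomorphism onto $C^\alpha(L).$ A parametric implicit function theorem then gives $\epsilon>0,$ a $C^{3,\alpha}$-neighborhood $\mathcal{W}$ of $0,$ and $(s,h)\mapsto w(s,h)\in C^{2,\alpha}(L;\partial L)$—smooth in $s,$ continuous in $h,$ the unique small solution of $F(s,w,h)=0,$ with $w(s,0)\equiv 0.$ Set $f_{s,h}^\circ:=j_{w(s,h)}^{(s)};$ elliptic regularity as in Proposition~\ref{proposition: space of special lag cylinders is one dimensional} promotes $w(s,h)$ to $\cob{\infty}(L)$ for fixed smooth $h,$ so the $f_{s,h}^\circ$ are smooth immersions, smooth in $s,$ with $f_{s,0}^\circ=\Phi_s;$ this yields properties (1) and (2). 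For property (3): transversality of $\partial_s f_{s,h}^\circ$ to its image, embeddedness of the boundary restrictions, and non-vanishing of $\partial_s Z_{s,h}^\circ$—which lies in the one-dimensional $\ker\Delta_\rho$ of Proposition~\ref{proposition: space of special lag cylinders is one dimensional} and is a continuously varying multiple of the fundamental harmonic, nonzero at $(s_0,0)$—all hold at $h=0$ by the corresponding properties of $\Phi$ and hence, using compactness of $L,$ persist after shrinking $\epsilon$ and $\mathcal{W}.$

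For the final claim, observe that $A:(s,w)\mapsto[j_w^{(s)}],$ defined for $s$ near $s_0$ and small $w\in C^{2,\alpha}(L;\partial L),$ is a local diffeomorphism onto a neighborhood of $Z_{s_0}$ in $\mathcal{LC}(S^{n-1};\Lambda_0,\Lambda_1):$ indeed $\partial_w A(s,0)$ spans the $C^{2,\alpha}(L;\partial L)$ directions while $\partial_s A(s,0)=\partial_s Z_s$ is a nonzero multiple of the fundamental harmonic $\sigma_s,$ which has nonzero boundary value on $C_1,$ so together they fill $T_{Z_s}\mathcal{LC}\cong\cob{2,\alpha}(L).$ Choose a $C^{1,\alpha}$-neighborhood $\mathcal{V}\subset C^\infty(L,X)$ of $\{\Phi_s\}_s$ small enough to contain every $f_{s,h}$ and so that for $f\in\mathcal{V}$ and $h\in\mathcal{W}$ the cylinder $[\varphi_{h,\chi}^{-1}\circ f]$ lies in the image of $A;$ this is possible after shrinking $\mathcal{W},$ since $f_{s,h}\to\Phi_s$ in $C^{1,\alpha}$ as $h\to 0.$ If $f\in\mathcal{V}$ represents an $\Omega$-imaginary special Lagrangian cylinder between $\Lambda_0$ and $\Lambda_{1,h}$ with $h\in\mathcal{W},$ then $\varphi_{h,\chi}^{-1}\circ f$ represents an $\Omega_h$-imaginary special Lagrangian cylinder between $\Lambda_0$ and $\Lambda_1,$ which equals $[j_{w'}^{(s')}]$ for some $s'$ near $s_0$ and small $w'\in C^{2,\alpha}(L;\partial L)$ with $F(s',w',h)=0;$ by uniqueness $w'=w(s',h),$ so $[f]=[f_{s',h}].$

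The hard part will be the regularity bookkeeping. Since $\Omega_h=\varphi_{h,\chi}^*\Omega$ is only $C^{1,\alpha}$ in $h\in C^{3,\alpha}$ (one derivative lost to the flow, one more in pullbacks), $F$ is not jointly $C^1,$ so one must instead exploit that $\Omega_h$ is a bona fide smooth form for each fixed smooth $h$ together with a version of the implicit function theorem demanding only continuity of $F$ and $D_wF$ in the parameters; tracking these losses through $h\mapsto\varphi_{h,\chi},$ the pullbacks of Lemma~\ref{lemma: Holder pullback}, and the graph construction so as to land in $C^{1,\alpha}$ with $C^{3,\alpha}$-continuity in $h,$ and then combining the resulting continuity with compactness of $L$ and of short arcs in $\mathcal{Z}$ to make a single $\epsilon,\mathcal{W},\mathcal{V}$ work uniformly over $h\in\mathcal{W},$ is where the real work lies.
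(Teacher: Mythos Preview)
Your proposal is correct and follows the same overall strategy as the paper: trade the moving boundary $\Lambda_{1,h}$ for a moving Calabi--Yau form $\Omega_h=\varphi_{h,\chi}^*\Omega$, then run the implicit function theorem around the isomorphism $\Delta_\rho|_{C^{2,\alpha}(L;\partial L)}$ from Lemmas~\ref{lemma: linearized operator} and~\ref{lemma: Delta rho is a cool operator}, and finally verify interior regularity and the uniqueness neighborhood $\mathcal V$ by openness and compactness.

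Two minor differences from the paper are worth noting. First, you encode the $s$-direction by a smooth family of Weinstein neighborhoods $(V_s,\varphi_s)$ and take $w\in C^{2,\alpha}(L;\partial L)$; the paper instead fixes a single Weinstein neighborhood at $Z_{s_0}$, works in the larger space $\cob{2,\alpha}(L)$, and splits off a one-dimensional complement $\ell$ of $C^{2,\alpha}(L;\partial L)$ to play the role of $s$, identifying $\ell$ with an interval in $\mathcal Z$ afterwards via diffeomorphisms $\zeta_s$. The paper's choice sidesteps the (routine but unwritten) verification that Lemma~\ref{lemma: Weinstein with boundary} can be carried out smoothly in the parameter $s$. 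Second, your caution about regularity is slightly excessive: using Lemma~\ref{lemma: Holder pullback} the paper shows $\mathcal F$ is genuinely $C^1$ jointly (the map $h\mapsto\varphi_{h,\chi}$ is $C^1$ from $C^{3,\alpha}$ into $C^{2,\alpha}(X,X)$, and the two pullback steps each preserve $C^1$-dependence landing in $C^\alpha$), so the standard implicit function theorem suffices and no special parametric version is needed.
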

	
	\begin{proof}
By Lemma~\ref{lemma: Weinstein with boundary}, choose an immersed Weinstein neighborhood $(Y,\psi)$ of $Z_{s_0}$ compatible with $\Lambda_0$ and $\Lambda_1,$ where $Y \subset T^*L$ and $\psi : Y \to X$ with $\psi|_L = \Phi_{s_0}.$ Let $\pi_L : T^*L \to L$ denote the projection. For $u\in\cob{2,\alpha}(L),$ let $\mathrm{Graph}(du) \subset T^*L$ denote the graph. For $u$ small enough that $\mathrm{Graph}(du) \subset Y,$ let $j_u : L \to X$ be given by
\[
j_u = \psi \circ \left( \pi_L|_{\mathrm{Graph}(du)}\right)^{-1}.
\]
Let $U \subset X$ be an open set such that
\[
\overline U \subset W \setminus \Lambda_0, \qquad \Phi_{s_0}({S^{n-1}\times \{1\}}) \subset U.
\]
Let $\chi : X \to [0,1]$ be a smooth function such that $\chi|_U \equiv 1$ and $\mathrm{supp}(\chi) \subset W \setminus \Lambda_0.$
Choose an open set $\Phi_{s_0}(S^{n-1}\times\{1\}) \subset U' \subset\subset U$ and an open set $0 \in \mathcal{A} \subset C^{3, \alpha}(\Lambda_1)$ such that for $h \in \mathcal{A}$ we have
		\[
		\varphi_{h, \chi}(U' \cap \Lambda_1) \subset \Lambda_{1, h}.
		\]
		Let $0 \in \mathcal{U} \subset \cob{2,\alpha}(L)$ be open such that for $u\in\mathcal{U},$ we have $j_u(S^{n-1}\times \{1\}) \subset U'.$ Consider the differential operator
		\[
		\mathcal{F} : \mathcal{U} \times \mathcal{A} \to C^\alpha\left(L\right),\quad(u,h) \mapsto *j_u^* \varphi_{h, \chi}^* \real \Omega.
		\]
For $(u, h) \in \mathcal{U} \times \mathcal{A},$ the immersion $\varphi_{h,\chi} \circ j_u$ represents a Lagrangian cylinder in $\mathcal{LC}\left(S^{n-1}; \Lambda_0, \Lambda_{1, h}\right)$, and this cylinder is imaginary special Lagrangian if and only if $\mathcal{F}(u, h) = 0.$

We claim $\mathcal{F}$ is continuously differentiable, and for a fixed $h \in \mathcal{A} \cap C^\infty(\Lambda_1),$ the map $u \mapsto \mathcal{F}(u,h)$ is smooth. Indeed, recalling Remark~\ref{remark: cutoff}, since the map
\[
\mathcal{A} \to C^{2,\alpha}(X,X), \qquad h \mapsto \varphi_{h,\chi},
\]
is continuously differentiable, it follows from Lemma~\ref{lemma: Holder pullback} that the map
\[
\mathcal{A} \to \Omega^*_{C^{1,\alpha}}(X), \qquad h \mapsto \varphi^*_{h,\chi}\real \Omega,
\]
is continuously differentiable. Similarly, since the map
\[
\mathcal{U} \to C^{1,\alpha}(L,X), \qquad u \mapsto j_u,
\]
is smooth, it follows from Lemma~\ref{lemma: Holder pullback} that the map
\[
\mathcal{U} \times \Omega^*_{C^{s,\alpha}}(X) \to \Omega^*_{C^{\alpha}}(L), \qquad (u,\eta) \mapsto j_u^* \eta,
\]
is of regularity $C^{s}.$
So, the map $\mathcal{F}$ is continuously differentiable as the composition of two continuously differentiable  maps. Moreover, for fixed $h \in \mathcal{A}\cap C^\infty(\Lambda_1),$ we have $\varphi^*_{h,\chi}\real \Omega \in \Omega^*_{C^{\infty}}(X),$ so the map $u \mapsto \mathcal{F}(u,h)$ is smooth.
		
		Consider the linearization of $\mathcal{F},$
		\[
		d \mathcal{F}_{(0, 0)} : \cob{2, \alpha}(L) \times C^{3, \alpha}(\Lambda_1) \to C^\alpha(L).
		\]
		By Lemma~\ref{lemma: linearized operator} and Lemma~\ref{lemma: Delta rho is a cool operator},  the restriction of $d \mathcal{F}_{(0, 0)}$ to the subspace
		\[
		C^{2, \alpha}(L; \partial L) \times \{0\} \subset \cob{2, \alpha}(L) \times C^{3, \alpha}(\Lambda_1)
		\]
		is an isomorphism onto $C^\alpha(L).$ Abbreviate $\mathbb{V} := C^{2, \alpha}(L; \partial L).$ Recall that
		\[
		\codim\left(\mathbb{V} \subset \cob{2, \alpha}(L)\right) = 1.
		\]
		Let $\ell \subset \cob{2, \alpha}(L)$ be a line consisting of smooth functions such that
		\[
		\cob{2, \alpha}(L) = \mathbb{V} \oplus \ell.
		\]
		By the implicit function theorem, there exist open neighborhoods
		\[
		0 \in \mathbb{V}_0 \subset \mathbb{V}, \quad 0 \in \ell_0 \subset \ell, \quad 0 \in \mathcal{A}_0 \subset \mathcal{A},
		\]
		such that for $(l, h) \in \ell_0 \times \mathcal{A}_0$ there exists a unique $v = v(l, h) \in \mathbb{V}_0$ with
		\[
		\mathcal{F}(v + l, h) = 0.
		\]
		By elliptic regularity (e.g.\ \cite[Chapter 17]{gilbarg-trudinger}), the function $v(l, h)$ is smooth if $h$ is. Write
		\[
		\mathcal{W} := C^\infty(\Lambda_1) \cap \mathcal{A}_0.
		\]
		Define a family of immersions by
\begin{equation}
		\label{big family of cylinders}
		\widetilde f_{l,h} := \varphi_{h, \chi} \circ j_{v(l, h) + l}, \qquad (l, h) \in \ell_0 \times \mathcal{W}.		
		\end{equation}
By construction, the immersions $\widetilde f_{l,h}$ represent special Lagrangian cylinders in  $\mathcal{SLC}\left(S^{n-1}; \Lambda_0, \Lambda_{1, h}\right)$ for $(l, h) \in \ell_0 \times \mathcal{W}.$ Moreover, the map
\[
\ell_0 \times \mathcal{W} \to C^{\infty}(L,X), \qquad (l,h) \mapsto \widetilde f_{l,h},
\]
is continuous with respect to the $C^{3,\alpha}$ topology on $\mathcal{W}$ and the $C^{1,\alpha}$ topology on $C^\infty(L,X).$ Also, for fixed $h \in \mathcal{W},$ the map $l \mapsto \widetilde f_{l,h}$ is smooth.

		For a small $\epsilon > 0,$ there is a unique open embedding $i : (s_0 - \epsilon, s_0 + \epsilon) \hookrightarrow \ell_0$ such that
		\[
		\left[\widetilde f_{i(s),0}\right] = Z_s, \qquad s \in (s_0 - \epsilon, s_0 + \epsilon).
		\]
For $s \in (s_0 - \epsilon, s_0 + \epsilon),$ let $\zeta_s \in \diff(L)$ be the diffeomorphism such that
\[
\widetilde f_{i(s),0}\circ \zeta_s = \Phi_s.
\]
Observe that the map
\[
(s_0 - \epsilon, s_0 + \epsilon) \to C^\infty(L,L), \qquad s \mapsto \zeta_s
\] 		
is smooth.
Take
\[
f_{s,h} := \widetilde f_{i(s),h} \circ \zeta_s.
\]
Since the maps $(l,h) \mapsto \widetilde f_{l,h}$ and $s \mapsto \zeta_s$ are continuous in the topologies specified above, it follows that the map $(s,h) \mapsto f_{s,h}$ is continuous as desired. Moreover, for fixed $h \in \mathcal{W},$ the map $s \mapsto f_{s,h}$ is smooth.

Write $Z_{s,h} := [f_{s,h}].$ Properties~\ref{item: special} and~\ref{item:h = 0} claimed in the proposition are immediate from the construction. To establish property~\ref{item: interior regular}, we claim that after possibly shrinking $\mathcal{W},$ for $h \in \mathcal{W},$ the map
\[
\Phi^h : L \times (s_0 - \epsilon, s_0 + \epsilon) \to X,
\]
given by
\[
\Phi^h(p,s) = f_{s,h}(p)
\]
is a regular parameterization of the family $(Z_{s,h})_{s \in (s_0 - \epsilon, s_0 + \epsilon)}.$ Indeed, the map $\Phi^h$ is smooth because the map $s \mapsto f_{s,h}$ is smooth. Conditions~\ref{item: rep} and~\ref{item: diffeo s} of Definition~\ref{definition: interior regularity}~\eqref{interior regularity first part} hold by construction. The remaining conditions in Definition~\ref{definition: interior regularity}~\eqref{interior regularity first part} are open, and $\Phi^0 = \Phi|_{S^{n-1}\times [0,1]\times (s_0 - \epsilon, s_0 + \epsilon)}$, which is interior regular. So, possibly after shrinking $\mathcal W$ and $\epsilon,$ the map $\Phi^h$ is also an interior regular parameterization for $h \in \mathcal W.$

Finally, we construct the open set $\mathcal{V}.$ Given $\mathcal{Q} \subset \mathcal{W}$ and $\mathcal{P}\subset C^\infty(L,X),$ let
\[
\mathcal{B}(\mathcal{Q},\mathcal{P}) := \{(h,f) \in \mathcal{Q}\times\mathcal{P}\,|\, f \text{ represents a cylinder } [f]\in \mathcal{LC}(S^{n-1};\Lambda_0,\Lambda_{1,h})\}.
\]
First, we claim that perhaps after shrinking~$\mathcal{W},$ there exists a $C^{1,\alpha}$ open set $\mathcal{V}_1 \subset C^{\infty}(L,X)$ such that
\[
\Phi_s \in \mathcal{V}_1, \qquad s \in (s_0-\epsilon,s_0 + \epsilon),
\]
and if $(h,f) \in \mathcal{B}(\mathcal{W},\mathcal{V}_1),$ then
\begin{equation*}
\varphi_{h,\chi}^{-1}\circ f(L) \subset \psi(Y).
\end{equation*}
Here, the set $\psi(Y)$ is not open in $X$ because $Y$ is a manifold with boundary. Indeed, $Y$ is an open neighborhood of the zero section of $T^*L,$ and $L$ is a manifold with boundary.
Nonetheless, for $i = 0,1,$ we have $\varphi_{h,\chi}^{-1}\circ f(S^{n-1} \times \{i\}) \subset \Lambda_i.$  Moreover, $\varphi_{h,\chi}^{-1}\circ f$ is close in the $C^{1,\alpha}$ topology to $\Phi_s$ for some $s \in (s_0-\epsilon,s_0 +\epsilon),$ and $\Phi_s(L) \subset \psi(Y).$
So, the claim follows.

For $(h,f) \in \mathcal{B}(\mathcal{W},\mathcal{V}_1),$  let
\[
\kappa_{h,f}: = \pi_L \circ \psi^{-1} \circ \varphi_{h,\chi}^{-1}\circ f : L \to L.
\]
The map
\[
\mathcal{B}(\mathcal{W},\mathcal{V}_1) \to C^\infty(L,L), \qquad (h,f) \mapsto \kappa_{h,f},
\]
is continuous with respect to the $C^{3,\alpha}$ topology on~$\mathcal{W}$ and the $C^{1,\alpha}$ topologies on~$\mathcal{V}_1$ and $C^\infty(L,L).$ Moreover, diffeomorphisms are open in $C^\infty(L,L)$ in the $C^{1,\alpha}$ topology and $\kappa_{0,\Phi_s} = \zeta_s$ is a diffeomorphism for $s \in (s_0-\epsilon,s_0+\epsilon).$
So, possibly shrinking $\mathcal{W},$ we choose an open $\mathcal{V}_2 \subset \mathcal{V}_1$ such that $\Phi_s \in \mathcal{V}_2$ for $s \in (s_0 - \epsilon, s_0 + \epsilon)$ and if $(h,f) \in \mathcal{B}(\mathcal{W},\mathcal{V}_2),$ then $\kappa_{h,f}$ is a diffeomorphism.

For $(h,f) \in \mathcal{B}(\mathcal{W},\mathcal{V}_2),$ by Lemma~\ref{lemma: local description of space of cylinders} there exists $u_{h,f} \in \cob{\infty}(L)$ such that
\[
j_{u_{h,f}} = \varphi_{h,\chi}^{-1}\circ f \circ \kappa_{h, f}^{-1}.
\]
The map
\[
\mathcal{B}(\mathcal{W},\mathcal{V}_2) \to \cob{\infty}(L), \qquad (h,f) \mapsto u_{h,f},
\]
is continuous with respect to the $C^{3,\alpha}$ topology on $\mathcal{W},$ the $C^{1,\alpha}$ topology on $\mathcal{V}_2$ and the $C^{2,\alpha}$ topology on $\cob{\infty}(L).$ Hence, possibly shrinking $\mathcal{W},$ we choose an open $\mathcal{V} \subset \mathcal{V}_2$ such that $\Phi_s \in \mathcal{V}$ for $s \in (s_0 - \epsilon, s_0 + \epsilon)$ and if $(h,f) \in \mathcal{B}(\mathcal{W},\mathcal{V}),$ then
\[
u_{h,f} \in \mathbb{V}_0 + \ell_0.
\]
For $(h,f) \in \mathcal{B}(\mathcal{W},\mathcal{V}),$ let $l_{h,f}$ be the projection of $u_{h,f}$ along $\mathbb{V}$ to $\ell.$ If $[f]$ is imaginary special Lagrangian, then $\mathcal{F}(u_{h,f},h) = 0.$ So, by uniqueness of $v_{l,h},$ it follows that $u_{h,f} = l_{h,f} + v_{l_{h,f},h}.$ Thus, $[f] = [f_{s,h}]$ for $s = i^{-1}(l_{h,f}).$ By continuity of $f_{s,h}$ and property~\ref{item:h = 0} of the proposition, after possibly shrinking $\mathcal{W}$ again, we obtain condition~\eqref{equation: fshinW}.
\end{proof}

	\begin{prop}
		\label{proposition: end big family of cylinders}
		Fix $\alpha \in (0,1)$. Then, there exist a positive $\epsilon,$ a $C^{3, \alpha}$-open $0 \in \mathcal{W} \subset C^\infty(\Lambda_1)$ and a family of smooth maps
\[
f_{s,h} : L \to X, \qquad (s, h) \in [0,\epsilon) \times \mathcal{W},
\]
smooth in $s$ and continuous with respect to the $C^{3,\alpha}$ topology on $\mathcal{W}$ and the $C^{1,\alpha}$ topology on $C^{\infty}(L,X)$			with the following properties:
\begin{enumerate}[label=(\arabic*)]
\item \label{item: special end} For $(s,h) \in (0,\epsilon) \times \mathcal{W}$ the map $f_{s,h}$ is an immersion  representing an imaginary special Lagrangian cylinder $Z_{s, h} \in \mathcal{SLC}(S^{n-1};\Lambda_0,\Lambda_{1,h}).$
\item \label{item:h = 0 end}
We have $f_{s,0} = \Phi_s$ for $s \in [0, \epsilon).$
\item \label{item: regularly converges}
For $h \in \mathcal{W},$ the map
\[
\Phi^h : L \times [0,\epsilon) \to X,
\]
given by
\[
\Phi^h(p,s) = f_{s,h}(p)
\]
is a regular parameterization about $q_{0,h}$ of the family of imaginary special Lagrangian cylinders $(Z_{s,h})_{s \in (0,\epsilon)}.$
\item \label{item: end continuity}
The map $\mathcal{W} \to C^\infty(L,TX)$ given by
\begin{equation}\label{equation:pdsfsh0}
h \mapsto \left.\pderiv[f_{s,h}]{s}\right|_{s = 0}
\end{equation}
is continuous with respect to the $C^{3,\alpha}$ topology on $\mathcal W$ and the $C^{1,\alpha}$ topology on $C^{\infty}(L,TX).$
			\end{enumerate}
A similar family of smooth immersions $f_{s,h}$ exists for $s \in (1-\epsilon,1].$
	\end{prop}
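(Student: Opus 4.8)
The plan is to combine the rescaling device from the proof of Lemma~\ref{lemma: interior and end regular parameterization compatible with harmonics}\ref{end regular parameterization compatible with harmonics} with the Hamiltonian-perturbation and implicit function theorem argument of Proposition~\ref{proposition: interior big family of cylinders}. First I would identify a neighborhood of $q_0$ in $X$ with a ball $V\subset\C^n$ by Darboux coordinates carrying $q_0$ to the origin, the Calabi--Yau structure to one whose value at $0$ is the standard structure, and $\Lambda_0,\Lambda_1$ to positive Lagrangian linear subspaces. Since $\Phi$ is a regular parameterization of $\mathcal{Z}$ about $q_0$ it satisfies $\Phi(\cdot,\cdot,0)\equiv q_0$, so after shrinking the $s$-interval Lemma~\ref{lemma: milnor lemma}, applied componentwise in the chart, gives $\Phi(p,t,s)=s\,\chi(p,t,s)$ with $\chi$ smooth and $\chi_0:=\chi(\cdot,\cdot,0)=\partial_s\Phi|_{s=0}$. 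As in the proof of Lemma~\ref{lemma: interior and end regular parameterization compatible with harmonics}\ref{end regular parameterization compatible with harmonics}, let $M_s:V_s\to V$ be rescaling by $s$ for $s\in[0,1)$ and put $J_s=M_s^*J$, $\Omega_s=s^{-n}M_s^*\Omega$ for $s>0$, with $J_0,\Omega_0$ the constant Euclidean limits; then each $(V_s,\omega,J_s,\Omega_s)$ is Calabi--Yau, the immersion $\chi_s:=\chi(\cdot,\cdot,s)$ represents an $\Omega_s$-imaginary special Lagrangian cylinder with boundary in $\Lambda_0$ and $\Lambda_1$, and $\chi_s\to\chi_0$ in $C^\infty$ on compact sets. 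By conditions~\ref{item:derivative Phi immersion} and~\ref{item:nowhere tangent Euler} of a regular parameterization, $\chi_0$ is an immersion nowhere tangent to the Euler field, equivalently, by Lemma~\ref{lemma: cylinder with regular harmonics in Euclidean space}, $[\chi_0]$ has regular harmonics.

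Next I would set up the perturbed problem. As in Notation~\ref{notation: Hamiltonian flow with cutoff} and Remark~\ref{remark: cutoff}, but with the cutoff equal to $1$ on a neighborhood of $\Lambda_1$ near $q_0$ and $q_1$, I obtain for $h$ in a $C^{3,\alpha}$-open $0\in\mathcal{A}\subset C^\infty(\Lambda_1)$ a $C^{2,\alpha}$ symplectomorphism $\varphi_h$ which near each $q_i$ is fiberwise translation by $dh$ in the Weinstein neighborhood of $\Lambda_1$, so that $\varphi_0=\id$, $\varphi_h$ preserves $\Lambda_0$ locally near $q_i$, carries $\Lambda_1$ there onto $\Lambda_{1,h}$, and $\varphi_h(q_i)=q_{i,h}$. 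Set $\Omega_s^h:=s^{-n}M_s^*\varphi_h^*\Omega$, with $J_s^h,g_s^h,\rho_s^h$ defined accordingly; since $\varphi_h^*\Omega$ is of regularity $C^{1,\alpha}$ and rescaling improves the H\"older seminorm of a $C^{1,\alpha}$ tensor, $\Omega_s^h\to\Omega_0$ in $C^{1,\alpha}$ on compact sets as $(s,h)\to(0,0)$, and by Lemma~\ref{lemma: Holder pullback} the map $h\mapsto\varphi_h^*\Omega$ is $C^1$ into $C^{1,\alpha}$-forms, uniformly in $s$. Fix a Weinstein neighborhood of $\chi_0$ in $T^*L$ compatible with $\Lambda_0,\Lambda_1$ (Lemma~\ref{lemma: Weinstein with boundary}), with graph map $u\mapsto j_u$ and $j_0=\chi_0$; by Lemma~\ref{lemma: local description of space of cylinders} one has $\chi_s=j_{w_s}$ for a unique $w_s\in\cob{\infty}(L)$ smooth in $s$ with $w_0=0$. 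For $v\in C^{2,\alpha}(L;\partial L)$ define
\[
\mathcal{G}(v,s,h):=*\,\bigl(j_{w_s+v}^*\,\real\Omega_s^h\bigr)\in C^\alpha(L),
\]
with $*$ a fixed Hodge star; the vanishing of $v$ on $\partial L$ and the compatibility of the Weinstein neighborhood force $[j_{w_s+v}]$ to have boundary in $\Lambda_0,\Lambda_1$, so $\mathcal{G}(v,s,h)=0$ iff $[j_{w_s+v}]$ is $\Omega_s^h$-imaginary special. Then $\mathcal{G}(0,s,0)=0$ since $j_{w_s}=\chi_s$, and by Lemma~\ref{lemma: linearized operator}\ref{more general lemma} and Lemma~\ref{lemma: Delta rho is a cool operator} the partial linearization $d_v\mathcal{G}(0,s,0)=\Delta_{\rho_s}:C^{2,\alpha}(L;\partial L)\to C^\alpha(L)$ is an isomorphism, uniformly for $s\in[0,\delta]$ because its coefficients vary continuously with $s$ and invertibility is open. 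The implicit function theorem, with parameters $(s,h)$ and only $C^1$ dependence on $h$, then yields $\epsilon\in(0,\delta]$, a $C^{3,\alpha}$-open $0\in\mathcal{W}\subset C^\infty(\Lambda_1)$ and a map $(s,h)\mapsto v(s,h)\in C^{2,\alpha}(L;\partial L)$, smooth in $s$, continuous in the $C^{3,\alpha}/C^{2,\alpha}$ topologies, with $v(s,0)=0$ and $\mathcal{G}(v(s,h),s,h)=0$; elliptic regularity makes $v(s,h)$ smooth when $h$ is. Putting $\chi_{s,h}:=j_{w_s+v(s,h)}$ I would finally define
\[
f_{s,h}:=\varphi_h\circ M_s\circ\chi_{s,h}:L\to X.
\]
Then $f_{s,0}=M_s\circ\chi_s=\Phi_s$, giving~\ref{item:h = 0 end}; for $s>0$, since $\varphi_h(M_s(\Lambda_i))$ is locally $\Lambda_0$ or $\Lambda_{1,h}$ and $(\varphi_h\circ M_s)^*\Omega=s^n\Omega_s^h$, the map $f_{s,h}$ is an immersion representing an imaginary special Lagrangian cylinder in $\mathcal{SLC}(S^{n-1};\Lambda_0,\Lambda_{1,h})$ (freeness by Remark~\ref{remark:automatic}\ref{item:free}, the boundary restrictions being embeddings for $h=0$ and hence, this being open, for small $h$), giving~\ref{item: special end}; $M_0$ is the constant map to the origin, so $f_{0,h}\equiv q_{0,h}$ and $\partial_s f_{s,h}|_{s=0}=d(\varphi_h)_{q_0}\circ\chi_{0,h}$, which is continuous in $h$ and equals $\chi_0$ at $h=0$, giving~\ref{item: end continuity}; finally the conditions of Definition~\ref{definition: interior regularity}\eqref{regular convergence to intersection point} for $\Phi^h(p,s):=f_{s,h}(p)$ are open and hold at $h=0$, so by Corollary~\ref{rem: easy regularity} they hold for $(s,h)\in[0,\epsilon)\times\mathcal{W}$ after shrinking $\epsilon$ and $\mathcal{W}$, giving~\ref{item: regularly converges}. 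The family for $s\in(1-\epsilon,1]$ near $q_1$ is obtained in the same way.

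I expect the main obstacle to be the uniformity of the implicit function theorem as $s\to 0$, namely that the linearizations $\Delta_{\rho_s}$ are invertible with uniformly bounded inverses and that the nonlinear remainders of $\mathcal{G}$ are uniformly controlled for $s$ in a compact interval. This rests on the $C^\infty$ convergence on compact sets $\chi_s,g_s,\rho_s\to\chi_0,g_0,\rho_0$ furnished by Lemma~\ref{lemma: milnor lemma} (as in the proof of Lemma~\ref{lemma: interior and end regular parameterization compatible with harmonics}\ref{end regular parameterization compatible with harmonics}), on Lemma~\ref{lemma: Delta rho is a cool operator} giving invertibility at $s=0$, and on the openness of invertibility in the space of bounded operators. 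A secondary point, handled exactly as in Proposition~\ref{proposition: interior big family of cylinders}, is the limited regularity of $\varphi_h$: it confines the construction to H\"older spaces and requires Lemma~\ref{lemma: Holder pullback} to track the regularity of $\Omega_s^h$ and of the pulled-back operator, which, combined with the rescaling, is what makes $\mathcal{G}$ a $C^1$ map in $(s,h)$.
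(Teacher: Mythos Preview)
Your plan is essentially the paper's: pass to Darboux coordinates near $q_0$, rescale via $M_s$ so that the collapsing cylinders $Z_s$ become a smooth family $[\chi_s]$ converging to a fixed Euclidean cylinder $[\chi_0]$, deform the holomorphic form through $\varphi_{h,\chi}^*$, and apply the implicit function theorem to the resulting special Lagrangian operator near $[\chi_0]$ with $(s,h)$ as parameters. The paper's bookkeeping differs only cosmetically: rather than carrying a path $w_s$ with $[j_{w_s}]=[\chi_s]$ and solving for the correction in $C^{2,\alpha}(L;\partial L)$, it splits $\cob{2,\alpha}(L)=\mathbb V\oplus\ell$ with $\ell$ one-dimensional, applies the IFT once at the origin to obtain $v(l,h,s)$, and afterwards identifies the original family with a path $s\mapsto l_s$ in $\ell$. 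Your organization is marginally more direct.

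There is one genuine, if small, gap. Lemma~\ref{lemma: local description of space of cylinders} gives only $[\chi_s]=[j_{w_s}]$ as immersed submanifolds, not $\chi_s=j_{w_s}$ as maps; the Weinstein parameterization $u\mapsto j_u$ picks a specific representative of each nearby cylinder, and there is no reason it should agree with $\chi_s$ pointwise. Consequently your $f_{s,0}=M_s\circ j_{w_s}$ represents $Z_s$ but is not literally $\Phi_s$, so property~\ref{item:h = 0 end} fails as written. The paper repairs this exactly as you would expect: it introduces $\zeta_s\in\diff(L)$ with $j_{u_s}\circ\zeta_s=\Psi_s$, checks $\zeta_0=\id_L$ (so the $s$-derivative at $0$, and hence~\ref{item: end continuity}, are unaffected), and replaces $\widetilde f_{s,h}$ by $\widetilde f_{s,h}\circ\zeta_s$.

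Your stated worry about ``uniformity of the IFT as $s\to 0$'' is resolved, as in the paper, by a \emph{single} application of the implicit function theorem at $(0,0,0)$ with $s$ among the parameters --- no pointwise-in-$s$ argument is needed. The one point requiring care is that the operator is jointly $C^1$ down to $s=0$, i.e.\ that $(u,s,\eta)\mapsto s^{-n}j_u^*M_s^*\eta$ is $C^1$ when $\eta$ is $C^{1,\alpha}$; the paper verifies this via the elementary identity $s^{-n}M_s^*\eta=\sum_I(\eta_I\circ M_s)\,dx^I$, after which Lemma~\ref{lemma: Holder pullback} applies directly and no separate invertibility bound at each $s$ is required.
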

	
	\begin{proof}
We prove the proposition for $s \in (0,\epsilon).$
	Let $\chi : X \to [0,1]$ be smooth with compact support in $W$ and equal to 1 in a neighborhood of the intersection point~$q_0.$ Then there exist open sets $q_0 \in U \subset X$ and $0 \in \mathcal{A}_1 \subset C^1(\Lambda_1)$ such that for $h \in \mathcal{A}_1$ we have
	\[
	\varphi_{h, \chi}(U \cap \Lambda_1) \subset \Lambda_{1, h}, \qquad \varphi_{h, \chi}(U \cap \Lambda_0) \subset \Lambda_0.
	\]
	Let $\delta > 0$ such that for $s \in (0, \delta)$ we have $Z_s \subset U.$ Thus, for $s \in (0,\delta)$ and $h \in \mathcal{A}_1$,
	\[
	[\varphi_{h, \chi}\circ \Phi_s] \in \mathcal{LC}\left(S^{n-1}; \Lambda_0, \Lambda_{1, h}\right).
	\]

After possibly shrinking $U,$ identify $U$ with a ball $V \subset \C^n$ via a Darboux parameterization
		\[
		\mathbf{X} : V \to U
		\]
		such that $\mathbf{X}^{-1}(\Lambda_0)$ and $\mathbf{X}^{-1}(\Lambda_1)$ are contained in real linear subspaces. For $s \geq 0,$ let $M_s : \C^n \to \C^n$ denote multiplication by $s,$ and write
		\[
		V_s := M_s^{-1}(V).
		\]

		Let $\mathcal{A} := \mathcal{A}_1 \cap C^{3,\alpha}(\Lambda_1).$
		For $h\in\mathcal{A},$ define a complex structure and an $n$-form on $V$ by
		\[
		J_h = J_{h,1} := \mathbf{X}^* \varphi_{h, \chi}^* J, \quad \Omega_h = \Omega_{h,1} := \mathbf{X}^* \varphi_{h, \chi}^*\Omega.
		\]
		For $h \in \mathcal{A}$ and $s \in (0, \delta),$ define a complex structure and an $n$-form on $V_s$ by
		\[
		J_{h,s}:=M_s^*J_h,\quad\Omega_{h,s}:=s^{-n}M_s^*\Omega_h.
		\]
		The complex structures and $n$-forms defined in this manner are of regularity $C^{1,\alpha}.$
For $h\in\mathcal{A}$ we have
		\[
		J_{h,s}\underset{s\searrow0}{\longrightarrow}J_{h,0},\quad\Omega_{h,s}\underset{s\searrow0}{\longrightarrow}\Omega_{h,0},
		\]
		where $J_{h, 0}$ and $\Omega_{h, 0}$ are a constant complex structure and a constant $n$-form on $V_0 = \C^n,$ and the convergence is with respect to the $C^{1, \alpha}$ topology on compact subsets. Moreover, writing $\Omega_{C^\alpha}$ for differential forms of regularity $C^\alpha,$ for $s_0 \in (0,\delta)$ the map
\[
[0,s_0) \to \Omega_{C^{\alpha}}(V_{s_0}), \qquad s \mapsto \Omega_{h,s}|_{V_{s_0}},
\]
is continuously differentiable.
		
		Recall the regular parameterization $\Phi : S^{n-1} \times [0, 1] \times [0, 1] \to X.$ By the choice of $\delta$, we have $\Phi \left(S^{n-1} \times [0,1] \times [0,\delta) \right) \subset U.$ By Lemma~\ref{lemma: milnor lemma}, we have
		\[
		\mathbf{X}^{-1}\circ\Phi(p,t,s) = s \cdot \Psi(p,t,s), \quad(p,t,s) \in S^{n-1} \times[0,1] \times [0,\delta),
		\]
		where $\Psi : S^{n-1} \times [0,1] \times [0,\delta)\to \C^n$ is smooth with
		\[
		\Psi(p,t,0) = \pderiv[(\mathbf{X}^{-1}\circ\Phi)]{s}(p,t,0), \quad (p,t) \in S^{n-1} \times [0,1].
		\]
For $s \in [0,\delta),$ write
\[
\Psi_s := \Psi|_{S^{n-1}\times[0,1]\times\{s\}}.
\]
For $s \in (0,\delta),$ the map $\Psi_s$ is an immersion representing an $\Omega_{0,s}$-imaginary special Lagrangian cylinder. As $\Phi$ is regular, it follows from Definition~\ref{definition: regularity} and Definition~\ref{definition: interior regularity} \eqref{regular convergence to intersection point} that the map
$
\Psi_0
$
is an immersion nowhere tangent to the Euler vector field. Thus, by continuity
\[
Z_{0,0}':=\left[\Psi_0\right]
\]
is an $\Omega_{0,0}$-imaginary special Lagrangian cylinder.
		
By Lemma~\ref{lemma: Weinstein with boundary}, choose an immersed Weinstein neighborhood $(Y,\psi)$ of $Z_{0,0}'$ compatible with $\mathbf{X}^{-1}(\Lambda_0)$ and $\mathbf{X}^{-1}(\Lambda_1),$ where $Y \subset T^*L$ and $\psi : Y \to \C^n$ with $\psi|_L = \Psi_{0}.$ Let $\pi_L : T^*L \to L$ denote the projection. For $u\in\cob{2,\alpha}(L),$ let $\mathrm{Graph}(du) \subset T^*L$ denote the graph.
Let $0\in\mathcal{U}\subset \cob{2,\alpha}(L)$ be open such that for $u\in\mathcal{U}$ we have $\mathrm{Graph}(du) \subset Y.$ For $u \in \mathcal{U},$ let $j_u : L \to X$ be given by
\[
j_u = \psi \circ \left( \pi_L|_{\mathrm{Graph}(du)}\right)^{-1}.
\]
If necessary, diminish $\delta$ so that $\psi(Y) \subset V_s$ for $s \in [0,\delta).$
Define a differential operator
		\[
		\mathcal{F} : \mathcal{U} \times \mathcal{A} \times[0,\delta) \to C^\alpha(L), \quad (u, h, s) \mapsto *j_u^* \real \Omega_{h,s}.
		\]
For $(u, h, s) \in \mathcal{U} \times \mathcal{A} \times (0,\delta),$ the immersion $\varphi_{h,\chi}\circ \mathbf{X} \circ M_s \circ j_u$ represents a Lagrangian cylinder in $\mathcal{LC}\left(S^{n-1}; \Lambda_0, \Lambda_{1, h}\right)$, and this cylinder is imaginary special Lagrangian if and only if $\mathcal{F}(u, h,s) = 0.$

We claim $\mathcal{F}$ is continuously differentiable, and for a fixed $h \in \mathcal{A} \cap C^\infty(\Lambda_1),$ the map $(u,s) \mapsto \mathcal{F}(u,h,s)$ is smooth. Indeed, recalling Remark~\ref{remark: cutoff}, since the map
\begin{equation}\label{equation: calA}
\mathcal{A} \to C^{2,\alpha}(V,X), \qquad h \mapsto \varphi_{h,\chi}\circ\mathbf{X},
\end{equation}
is continuously differentiable, it follows from Lemma~\ref{lemma: Holder pullback} that the map
\begin{equation}\label{equation: vhc*o}
\mathcal{A} \to \Omega^n_{C^{1,\alpha}}(V), \qquad h \mapsto \Omega_h = \mathbf{X}^*\varphi^*_{h,\chi}\real \Omega,
\end{equation}
is continuously differentiable. Similarly, since the map
\[
\mathcal{U}\times [0,\delta) \to C^{1,\alpha}(L,V), \qquad (u,s) \mapsto M_s \circ j_u,
\]
is smooth, it follows from Lemma~\ref{lemma: Holder pullback} that the map
\[
\Upsilon : \mathcal{U} \times [0,\delta)\times\Omega^*_{C^{l,\alpha}}(V) \to \Omega^*_{C^{\alpha}}(L), \qquad (u,s,\eta) \mapsto j_u^*M_s^* \eta,
\]
is of regularity $C^l.$ We claim that the map
\[
\widetilde{\Upsilon}: \mathcal{U} \times [0,\delta)\times\Omega^n_{C^{l,\alpha}}(V) \to \Omega^n_{C^{\alpha}}(L), \qquad (u,s,\eta) \mapsto s^{-n} j_u^*M_s^* \eta,
\]
is also of regularity $C^{l}.$ Indeed, letting $I$ run over multi-indices of length $n,$ write
\[
\eta = \sum_I \eta_I dx^I.
\]
Then,
\[
s^{-n} M_s^* \eta = \sum_I (\eta_I\circ M_s) dx^I.
\]
So,
\[
s^{-n}j_u^* M_s^* \eta = \sum_I (\eta_I \circ M_s \circ j_u) j_u^* dx^I =  \sum_I \Upsilon(u,s,\eta_I) \Upsilon(u,1,dx^I).
\]
It follows that $\widetilde \Upsilon$ is $C^l$ regular as the sum of products of $C^l$ regular maps.
So, the map $\mathcal{F}$ is continuously differentiable as the composition of the two continuously differentiable maps~\eqref{equation: vhc*o} and $\widetilde \Upsilon.$  Moreover, for fixed $h \in \mathcal{A}\cap C^\infty(\Lambda_1),$ we have $\Omega_h = \mathbf{X}^*\varphi^*_{h,\chi}\real \Omega \in \Omega^*_{C^{\infty}}(V),$ so the map $(u,s) \mapsto \mathcal{F}(u,h,s)$ is smooth.

		Consider the linearization of $\mathcal{F},$
		\[
		d \mathcal{F}_{(0, 0,0)} : \cob{2, \alpha}(L) \times C^{3, \alpha}(\Lambda_1)\times \R \to C^\alpha(L).
		\]
		By Lemma~\ref{lemma: linearized operator} and Lemma~\ref{lemma: Delta rho is a cool operator},  the restriction of $d \mathcal{F}_{(0, 0,0)}$ to the subspace
		\[
		C^{2, \alpha}(L; \partial L) \times \{0\}\times\{0\} \subset \cob{2, \alpha}(L) \times C^{3, \alpha}(\Lambda_1) \times \R
		\]
		is an isomorphism onto $C^\alpha(L).$ Abbreviate $\mathbb{V} := C^{2, \alpha}(L; \partial L).$
		Let $\ell \subset \cob{2, \alpha}(L)$ be a one dimensional subspace consisting of smooth functions such that
		\[
		\cob{2, \alpha}(L) = \mathbb{V} \oplus \ell.
		\]
		By the implicit function theorem, there exist open neighborhoods
		\[
		0 \in \mathbb{V}_0 \subset \mathbb{V}, \quad 0 \in \ell_0 \subset \ell, \quad 0 \in \mathcal{A}_0 \subset \mathcal{A},
		\]
and $\epsilon \leq \delta$ such that for $(l, h,s) \in \ell_0 \times \mathcal{A}_0 \times [0,\epsilon)$ there exists a unique
\[
v = v(l, h,s) \in \mathbb{V}_0
\]
with
		\[
		\mathcal{F}(v + l, h,s) = 0.
		\]
		By elliptic regularity (e.g.\ \cite[Chapter 17]{gilbarg-trudinger}), the function $v(l, h,s)$ is smooth if $h$ is. Write
		\[
		\mathcal{W} := C^\infty(\Lambda_1) \cap \mathcal{A}_0.
		\]
Since $\mathcal{F}$ is continuously differentiable, it follows that
\begin{equation}\label{equation: v}
v : \ell_0 \times \mathcal{A}_0 \times [0,\epsilon) \to \mathbb{V}_0
\end{equation}
is continuously differentiable. Moreover, since for fixed $h \in C^{\infty}(\Lambda_1) \cap \mathcal{A}$ the map $(u,s) \mapsto \mathcal{F}(u,h,s)$ is smooth, it follows that for fixed $h \in \mathcal{W}$ the map
\[
\ell_0 \times [0,\epsilon) \to \mathbb{V}_0, \qquad (l,s) \mapsto v(l,h,s),
\]
is smooth.

Since $\Psi_0 = j_0$ and $(\Psi_s)_{s \in [0,\delta)}$ is a smooth family of immersions, after possibly shrinking $\epsilon,$ for each $s \in [0,\epsilon)$ there exists a unique $u_s \in \mathbb{V}_0 + \ell_0$ such that $j_{u_s}$ and $\Psi_s$ represent the same immersed $\Omega_{0,s}$-imaginary special Lagrangian cylinder. In particular, $\mathcal{F}(u_s,0,s) = 0.$
Decompose
\[
u_s = v_s + l_s, \qquad v_s \in \mathbb{V}_0, \quad l_s \in \ell_0.
\]
Since $v = v(l_s,0,s)$ is the unique solution to $\mathcal{F}(v + l_s ,0,s) = 0,$ we conclude that
\begin{equation}\label{equation:v_s}
v(l_s,0,s) = v_s.
\end{equation}

Define a family of smooth maps
		\begin{equation}
		\label{another big family of cylinders}
		\widetilde f_{s, h} := \varphi_{h, \chi}\circ \mathbf{X}\circ M_s\circ j_{l_s + v(l_s,h, s)}, \qquad (s, h) \in [0, \epsilon) \times \mathcal{W}.
		\end{equation}
For $(s, h) \in (0,\epsilon) \times \mathcal{W},$ the maps $\widetilde f_{s,h}$ are immersions representing imaginary special Lagrangian cylinders in  $\mathcal{SLC}\left(S^{n-1}; \Lambda_0, \Lambda_{1, h}\right).$ Moreover, the map
\[
[0,\epsilon) \times \mathcal{W} \to C^{\infty}(L,X), \qquad (s,h) \mapsto \widetilde f_{s,h},
\]
is continuous with respect to the $C^{3,\alpha}$ topology on $\mathcal{W}$ and the $C^{1,\alpha}$ topology on $C^\infty(L,X).$ Also, for fixed $h \in \mathcal{W},$ the map $s \mapsto \widetilde f_{s,h}$ is smooth.

Recall that for $s \in [0,\epsilon),$ the immersion $j_{u_s}$ represents the same immersed cylinder as the immersion $\Psi_s.$
Let $\zeta_s \in \diff(L)$ be the diffeomorphism such that
\[
j_{u_s}\circ \zeta_s = \Psi_s.
\]
Observe that the map
\[
[0, \epsilon) \to C^\infty(L,L), \qquad s \mapsto \zeta_s,
\] 		
is smooth. Moreover, since $\Psi_0 = j_0,$ we have
\begin{equation}\label{equation:zeta0}
\zeta_0 = \id_L.
\end{equation}
By equation~\eqref{equation:v_s} we have $u_s = l_s + v(l_s,0,s),$ so it follows from equation~\eqref{another big family of cylinders} that
\[
\widetilde f_{s,h}\circ \zeta_s = \Phi_s.
\]	
Take
\[
f_{s,h} := \widetilde f_{s,h} \circ \zeta_s.
\]
Since the maps $(s,h) \mapsto \widetilde f_{s,h}$ and $s \mapsto \zeta_s$ are continuous in the topologies specified above, it follows that the map $(s,h) \mapsto f_{s,h}$ is continuous as desired. Moreover, for fixed $h \in \mathcal{W},$ the map $s \mapsto f_{s,h}$ is smooth.

Write $Z_{s,h} : = [f_{s,h}]$ for $(s,h) \in (0,\epsilon)\times \mathcal{W}.$ Properties~\ref{item: special end} and~\ref{item:h = 0 end} claimed in the proposition are immediate from the construction. We proceed with the proof of property~\ref{item: end continuity}. Indeed, by equation~\eqref{equation:zeta0}, we have
\[
\left.\pderiv[f_{s,h}]{s}\right|_{s=0} = d(\varphi_{h,\chi}\circ \mathbf{X})\circ j_{l_0 + v(l_0,h,0)}.
\]
So, the continuity of the map~\eqref{equation:pdsfsh0} follows from the continuity of the maps~\eqref{equation: calA} and~\eqref{equation: v}.

To establish property~\ref{item: regularly converges} claimed in the proposition, we argue as follows. The map $\Phi^h$ is smooth because the map $s \mapsto f_{s,h}$ is smooth. Condition~\ref{critical point} of Definition~\ref{definition: interior regularity}~\eqref{regular convergence to intersection point} is a consequence of the fact that $\varphi_{h,\chi} \circ \mathbf{X} \circ M_0$ is the constant map with image $q_{0,h}.$
Conditions~\ref{item:derivative Phi immersion} and~\ref{item:nowhere tangent Euler} of Definition~\ref{definition: interior regularity}~\eqref{regular convergence to intersection point} hold after possibly shrinking $\mathcal W$ by the following argument. Observe that
\[
\left.\pderiv[\Phi^0]{s}\right|_{s=0} = \left.\pderiv[\Phi]{s}\right|_{s=0},
\]
which satisfies Conditions~\ref{item:derivative Phi immersion} and~\ref{item:nowhere tangent Euler} of Definition~\ref{definition: interior regularity}~\eqref{regular convergence to intersection point} by assumption.
Since immersions, embeddings and transverse maps, are open in the $C^1$ topology, it suffices to show that the map
\begin{equation}\label{equation: calW}
\mathcal{W} \to C^{1,\alpha}(L,TX), \qquad h \mapsto \left.\pderiv[\Phi^h]{s}\right|_{s=0},
\end{equation}
is continuous. Since $\Phi(p,s) = f_{s,h}(p),$ this is equivalent to property~\ref{item: end continuity} of the proposition.
Condition~\ref{item:interior regular} of Definition~\ref{definition: interior regularity}~\eqref{regular convergence to intersection point} requires that $\Phi^h|_{L \times (0,\epsilon)}$ be an interior regular parameterization, which we prove as follows.  Conditions~\ref{item: rep} and~\ref{item: diffeo s} of Definition~\ref{definition: interior regularity}~\eqref{interior regularity first part} hold by construction. It remains to show that $\Phi^h|_{L \times (0,\epsilon)}$ is an immersion and $\Phi^h|_{\partial L \times (0,\epsilon)}$ is an embedding. Possibly after shrinking $\epsilon,$ this follows from Corollary~\ref{rem: easy regularity} for fixed $h.$ By the construction of $f_{s,h}$ and Lemma~\ref{lemma: immersion nowhere tangent to Euler}, we can choose $\epsilon$ uniformly in $h.$
	\end{proof}

\begin{dfn}\label{dfn:topology on geodesics}
Let $\mathcal{O}$ be a Hamiltonian isotopy class of positive Lagrangian spheres. For $\Lambda_0,\Lambda_1 \in \mathcal{O},$ we write $\Lambda_0 \pitchfork_2 \Lambda_1$ if $\Lambda_0$ and $\Lambda_1$ intersect transversally at exactly two points.  Let
\[
\mathfrak{Z}_\mathcal{O} := \left\{(\Lambda_0,\Lambda_1,\mathcal{Z})\left |
\begin{matrix}
\Lambda_i \in \mathcal{O},\; i = 0,1,\quad \Lambda_0 \pitchfork_2 \Lambda_1, \; \\
\mathcal{Z} \subset \mathcal{SLC}(\Lambda_0,\Lambda_1) \text{ a regular component}
\end{matrix}
\right.\right\}.
\]
We define the strong and weak $C^{k,\alpha}$ topologies on $\mathfrak{Z}_\mathcal{O}$ as follows.
For
\[
\mathcal{V} \subset C^\infty(S^{n-1}\times [0,1],X), \qquad \mathcal{U} \subset C^\infty(S^{n-1}\times [0,1],TX),
\]
open subsets in the $C^{k,\alpha}$ topology, write
\[
\mathcal{T}_{\mathcal{U},\mathcal{V}} := \left \{(\Lambda_0,\Lambda_1,\mathcal{Z}) \in \mathfrak{Z}_\mathcal{O} \left|
\begin{matrix}
\forall Z \in \mathcal{Z}, \; \exists f : S^{n-1}\times [0,1] \to X \text{ representing } Z \\
\text{such that } f \in \mathcal{V},\\
\forall E \text{ an end of }\mathcal{Z},\; \exists \Phi : [0,\epsilon) \to X \text{ a regular}\\
\text{parameterization of $E$ such that }\left.\pderiv[\Phi]{s}\right|_{s = 0} \in \mathcal{U}
\end{matrix}
\right.\right\}
\]
and
\[
\mathcal{X}_{\mathcal{V}} = \left \{(\Lambda_0,\Lambda_1,\mathcal{Z}) \in \mathfrak{Z}_\mathcal{O} \left|
\begin{matrix}
\exists Z \in \mathcal{Z}, \; \exists f : S^{n-1}\times [0,1] \to X \text{ representing } Z \\
\text{such that } f \in \mathcal{V}
\end{matrix}
\right.\right\}.
\]
Then, a basis for the strong $C^{k,\alpha}$ topology on $\mathfrak{Z}_\mathcal{O}$ is given by sets of the form $\mathcal{T}_{\mathcal{U},\mathcal{V}}$ and a sub-basis for the weak $C^{k,\alpha}$ topology on $\mathfrak{Z}_\mathcal{O}$ is given by sets of the form $\mathcal{X}_\mathcal{V}.$  Let
\[
\mathfrak{G}_\mathcal{O} : = \{(\Lambda_t)_{t \in [0,1]} | (\Lambda_t)_{t \in [0,1]} \text{ is a geodesic with } \Lambda_0,\Lambda_1 \in \mathcal{O}, \quad \Lambda_0 \pitchfork_2 \Lambda_1\}
\]
denote the space of geodesics with endpoints in $\mathcal{O}$ intersecting transversally at two points.
By Theorem~\ref{theorem: geodesic-cylinder correspondence}, the cylindrical transform gives a bijection
\[
\mathfrak{G}_\mathcal{O} \simeq \mathfrak{Z}_\mathcal{O}.
\]
So, the strong and weak $C^{k,\alpha}$ topologies on $\mathfrak{Z}_\mathcal{O}$ give rise to topologies on $\mathfrak{G}_\mathcal{O},$ which we also call the strong and weak $C^{k,\alpha}$ topologies respectively.
\end{dfn}

	\begin{proof}[Proof of Theorem~\ref{theorem: perturbation of geodesic}]
By Propositions~\ref{proposition: interior big family of cylinders} and~\ref{proposition: end big family of cylinders} and the compactness of $[0,1],$ we find a finite cover of $[0,1]$ by relatively open intervals $I_j,\, j = 0,\ldots,N,$ subsets $0 \in \mathcal{W}^j \subset C^\infty(\Lambda_1)$ open in the $C^{3,\alpha}$ topology, and families of smooth immersions
\[
f_{s,h}^j : L \to X, \qquad (s,h) \in I_j \times \mathcal{W}^j,
\]
continuous with respect to the $C^{3,\alpha}$ topology on $\mathcal{W}^j$ and the $C^{1,\alpha}$ topology on $C^\infty(L,X)$ that satisfy properties~\ref{item: special}-\ref{item: interior regular} of Proposition~\ref{proposition: interior big family of cylinders} if $0,1 \notin I_j$ and properties~\ref{item: special end}-\ref{item: end continuity} of Proposition~\ref{proposition: end big family of cylinders} otherwise. Moreover, if $0,1 \notin I_j,$ we have a $C^{1,\alpha}$ open set $\mathcal{V}^j \in C^\infty(L,X)$ with
\[
f_{s,h}^j \in \mathcal V^j, \qquad (s,h) \in I_j \times \mathcal{W}^j,
\]
such that for $h \in \mathcal{W}^j,$
\begin{equation}\label{equation: implication}
f \in \mathcal V^j, \quad
[f] \in \mathcal{SLC}(S^{n-1};\Lambda_0,\Lambda_{1,h}) \quad \Rightarrow \quad \exists s \in I_j, \quad [f] = [f_{s,h}^j].
\end{equation}
After possibly shrinking and relabeling the intervals $I_j,$ we can assume that $0 \in I_0,\, 1 \in I_N,$ and $I_j \cap I_k = \emptyset$ unless $k = j \pm 1.$ Moreover, we can assume that $N \geq 2.$

Let $\mathcal{W} = \cap_{j = 0}^N \mathcal{W}^j.$ For $h \in \mathcal{W},$ let
\[
U^h_j \subset \mathcal{SLC}(S^{n-1};\Lambda_0,\Lambda_{1,h})
\]
be the interval consisting of the cylinders $Z_{s,h}^j = [f_{s,h}^j]$ for $s \in I_j.$
Choose
\[
s_j \in I_j \cap I_{j+1}, \qquad j = 0,\ldots, N-1.
\]
Possibly shrinking~$\mathcal{W},$ we may assume by continuity that
\[
f^j_{s_j,h} \in \mathcal{V}^{j+1}, \qquad h \in \mathcal{W}, \quad j = 0,\ldots, N-2,
\]
and
\[
f^N_{s_{N-1},h} \in \mathcal{V}^{N-1}, \qquad h \in \mathcal{W}.
\]
It follows from implication~\eqref{equation: implication} that for $h \in \mathcal{W},$ we have
\[
U^h_j \cap U^h_{j+1} \neq \emptyset, \qquad j = 0,\ldots,N-1.
\]
Thus, the sets $\{U^h_j\}_{j=0}^N$ cover an open interval $\mathcal{Z}^h \subset \mathcal{SLC}(S^{n-1};\Lambda_0,\Lambda_{1,h}).$ For $j = 1,\ldots,N-1,$ the interval $U^h_j$ is interior regular by property~\ref{item: interior regular} of Proposition~\ref{proposition: interior big family of cylinders}. For $j = 0$ (resp. $N$) the interval $U^h_j$ converges regularly to $q_{0,h}$ (resp.~$q_{1,h}$) by property~\ref{item: regularly converges} of Proposition~\ref{proposition: end big family of cylinders}. So, the interval $\mathcal{Z}^h$ is a regular connected component by Remark~\ref{remark: regular connected component}\ref{equivalent definition of regularity}.  Take $\mathcal Y$ the $C^{2,\alpha}$ open neighborhood of $\Lambda_1$ in $\mathcal{O}$ corresponding to~$\mathcal{W}$ and take $\mathcal{X} := \mathcal{X}_{\mathcal{V}^1}.$ Let $(\Lambda_t^h)_{t \in [0,1]}$ be the geodesic corresponding to $\mathcal{Z}^h$ by Theorem~\ref{theorem: geodesic-cylinder correspondence}. By construction, $(\Lambda_t^h)_{t \in [0,1]} \in \mathcal{X}.$ For $h \in \mathcal{W}$ suppose $(\Lambda_t')_{t \in [0,1]}$ is a geodesic in $\mathcal{X}$ with $\Lambda_0' = \Lambda_0$ and $\Lambda_1' = \Lambda_{1,h}.$  Let $\mathcal{Z}' \subset \mathcal{SLC}(S^{n-1};\Lambda_0,\Lambda_{1,h})$ denote its cylindrical transform. It follows from implication~\eqref{equation: implication} that $\mathcal{Z}' \cap \mathcal{Z}^h \neq \emptyset$ and thus $\mathcal{Z}' = \mathcal{Z}^h.$ So, Theorem~\ref{theorem: geodesic-cylinder correspondence} gives $(\Lambda_t')_{t} = (\Lambda_t^h)_t.$ We have proven the existence and uniqueness part of Theorem~\ref{theorem: perturbation of geodesic}. The continuity claim follows from the continuity of the families $f_{s,h}^j$ and property~\ref{item: end continuity} of Proposition~\ref{proposition: end big family of cylinders}.
	\end{proof}

	\bibliographystyle{../../amsabbrvcnobysame}
	\bibliography{../../bibli}

\providecommand{\bysame}{\leavevmode\hbox to3em{\hrulefill}\thinspace}
\providecommand{\MR}{\relax\ifhmode\unskip\space\fi MR }
\providecommand{\MRhref}[2]{%
  \href{http://www.ams.org/mathscinet-getitem?mr=#1}{#2}
}
\providecommand{\href}[2]{#2}
\begin{thebibliography}{10}

\bibitem{akveld-salamon}
M.~Akveld and D.~Salamon, \emph{Loops of {L}agrangian submanifolds and
  pseudoholomorphic discs}, Geom. Funct. Anal. \textbf{11} (2001), no.~4,
  609--650, \href {http://dx.doi.org/10.1007/PL00001680}
  {\path{doi:10.1007/PL00001680}}.

\bibitem{caffarelli-nirenberg-spruck}
L.~Caffarelli, L.~Nirenberg, and J.~Spruck, \emph{The {D}irichlet problem for
  nonlinear second-order elliptic equations. {III}. {F}unctions of the
  eigenvalues of the {H}essian}, Acta Math. \textbf{155} (1985), no.~3-4,
  261--301, \href {http://dx.doi.org/10.1007/BF02392544}
  {\path{doi:10.1007/BF02392544}}.

\bibitem{calabi}
E.~Calabi, \emph{On the group of automorphisms of a symplectic manifold},
  Problems in analysis ({L}ectures at the {S}ympos. in honor of {S}alomon
  {B}ochner, {P}rinceton {U}niv., {P}rinceton, {N}.{J}., 1969), 1970,
  pp.~1--26.

\bibitem{cervera-mascaro-michor}
V.~Cervera, F.~Mascar\'{o}, and P.~W. Michor, \emph{The action of the
  diffeomorphism group on the space of immersions}, Differential Geom. Appl.
  \textbf{1} (1991), no.~4, 391--401, \href
  {http://dx.doi.org/10.1016/0926-2245(91)90015-2}
  {\path{doi:10.1016/0926-2245(91)90015-2}}.

\bibitem{chen-kaehlermetrics}
X.~X. Chen, \emph{The space of {K}\"ahler metrics}, J. Differential Geom.
  \textbf{56} (2000), no.~2, 189--234.

\bibitem{chen-tian}
X.~X. Chen and G.~Tian, \emph{Geometry of {K}\"{a}hler metrics and foliations
  by holomorphic discs}, Publ. Math. Inst. Hautes \'{E}tudes Sci. (2008),
  no.~107, 1--107, \href {http://dx.doi.org/10.1007/s10240-008-0013-4}
  {\path{doi:10.1007/s10240-008-0013-4}}.

\bibitem{chu2020space}
J.~Chu, T.~C. Collins, and M.-C. Lee, \emph{The space of almost calibrated
  $(1,1)$ forms on a compact {K}\"ahler manifold}, 2020, \href
  {http://arxiv.org/abs/2002.01922} {\path{arXiv:2002.01922}}.

\bibitem{collins2018moment}
T.~C. Collins and S.-T. Yau, \emph{Moment maps, nonlinear {PDE}, and stability
  in mirror symmetry}, 2018, \href {http://arxiv.org/abs/1811.04824}
  {\path{arXiv:1811.04824}}.

\bibitem{dellatorre}
M.~Dellatorre, \emph{{The Degenerate Special Lagrangian Equation on Riemannian
  Manifolds}}, International Mathematics Research Notices (2018), \href
  {http://dx.doi.org/10.1093/imrn/rny099} {\path{doi:10.1093/imrn/rny099}}.

\bibitem{Donaldson-symmetric}
S.~K. Donaldson, \emph{Symmetric spaces, {K}\"{a}hler geometry and
  {H}amiltonian dynamics}, Northern {C}alifornia {S}ymplectic {G}eometry
  {S}eminar, Amer. Math. Soc. Transl. Ser. 2, vol. 196, Amer. Math. Soc.,
  Providence, RI, 1999, pp.~13--33, \href
  {http://dx.doi.org/10.1090/trans2/196/02} {\path{doi:10.1090/trans2/196/02}}.

\bibitem{donaldson-holomorphicdiscs}
S.~K. Donaldson, \emph{Holomorphic discs and the complex {M}onge-{A}mp\`ere
  equation}, J. Symplectic Geom. \textbf{1} (2002), no.~2, 171--196.

\bibitem{freudenthal}
H.~Freudenthal, \emph{\"{U}ber die {E}nden topologischer {R}\"{a}ume und
  {G}ruppen}, Math. Z. \textbf{33} (1931), no.~1, 692--713, \href
  {http://dx.doi.org/10.1007/BF01174375} {\path{doi:10.1007/BF01174375}}.

\bibitem{fukaya}
K.~Fukaya, \emph{Mirror symmetry of abelian varieties and multi-theta
  functions}, J. Algebraic Geom. \textbf{11} (2002), no.~3, 393--512, \href
  {http://dx.doi.org/10.1090/S1056-3911-02-00329-6}
  {\path{doi:10.1090/S1056-3911-02-00329-6}}.

\bibitem{gilbarg-trudinger}
D.~Gilbarg and N.~S. Trudinger, \emph{Elliptic partial differential equations
  of second order}, Classics in Mathematics, Springer-Verlag, Berlin, 2001,
  Reprint of the 1998 edition.

\bibitem{harvey-lawson}
R.~Harvey and H.~B. Lawson, Jr., \emph{Calibrated geometries}, Acta Math.
  \textbf{148} (1982), 47--157, \href {http://dx.doi.org/10.1007/BF02392726}
  {\path{doi:10.1007/BF02392726}}.

\bibitem{kupferman-solomon}
R.~Kupferman and J.~P. Solomon, \emph{A {R}iemannian approach to reduced plate,
  shell, and rod theories}, J. Funct. Anal. \textbf{266} (2014), no.~5,
  2989--3039, \href {http://dx.doi.org/10.1016/j.jfa.2013.09.003}
  {\path{doi:10.1016/j.jfa.2013.09.003}}.

\bibitem{Lee}
J.~M. Lee, \emph{Introduction to smooth manifolds}, Graduate Texts in
  Mathematics, vol. 218, Springer-Verlag, New York, 2003, \href
  {http://dx.doi.org/10.1007/978-0-387-21752-9}
  {\path{doi:10.1007/978-0-387-21752-9}}.

\bibitem{Mabuchi-K-energy}
T.~Mabuchi, \emph{{$K$}-energy maps integrating {F}utaki invariants}, Tohoku
  Math. J. (2) \textbf{38} (1986), no.~4, 575--593, \href
  {http://dx.doi.org/10.2748/tmj/1178228410}
  {\path{doi:10.2748/tmj/1178228410}}.

\bibitem{Mabuchi}
T.~Mabuchi, \emph{Some symplectic geometry on compact {K}\"{a}hler manifolds.
  {I}}, Osaka J. Math. \textbf{24} (1987), no.~2, 227--252.

\bibitem{babymcduff-salamon}
D.~McDuff and D.~Salamon, \emph{Introduction to symplectic topology}, second
  ed., Oxford Mathematical Monographs, The Clarendon Press, Oxford University
  Press, New York, 1998.

\bibitem{mclean}
R.~C. McLean, \emph{Deformations of calibrated submanifolds}, Comm. Anal. Geom.
  \textbf{6} (1998), no.~4, 705--747, \href
  {http://dx.doi.org/10.4310/CAG.1998.v6.n4.a4}
  {\path{doi:10.4310/CAG.1998.v6.n4.a4}}.

\bibitem{milnor}
J.~Milnor, \emph{Morse theory}, Based on lecture notes by M. Spivak and R.
  Wells. Annals of Mathematics Studies, No. 51, Princeton University Press,
  Princeton, N.J., 1963.

\bibitem{Pi25a}
V.~Pidaparthy, \emph{Moduli of special {L}agrangians with boundary, {I}:
  Unobstructed deformations}, 2025, \href {http://arxiv.org/abs/2503.23525}
  {\path{arXiv:2503.23525}}.

\bibitem{Pi25b}
V.~Pidaparthy, \emph{Moduli of special {L}agrangians with boundary, {II}:
  {L}agrangian flux and affine structures}, 2025, \href
  {http://arxiv.org/abs/2503.23532} {\path{arXiv:2503.23532}}.

\bibitem{ross-witt-nystrom}
J.~Ross and D.~W. Nystr\"{o}m, \emph{Harmonic discs of solutions to the complex
  homogeneous {M}onge-{A}mp\`ere equation}, Publ. Math. Inst. Hautes \'{E}tudes
  Sci. \textbf{122} (2015), 315--335, \href
  {http://dx.doi.org/10.1007/s10240-015-0074-0}
  {\path{doi:10.1007/s10240-015-0074-0}}.

\bibitem{rubinstein-solomon}
Y.~A. Rubinstein and J.~P. Solomon, \emph{The degenerate special {L}agrangian
  equation}, Adv. Math. \textbf{310} (2017), 889--939.

\bibitem{salur}
S.~Salur, \emph{Deformations of special {L}agrangian submanifolds}, Commun.
  Contemp. Math. \textbf{2} (2000), no.~3, 365--372, \href
  {http://dx.doi.org/10.1142/S0219199700000177}
  {\path{doi:10.1142/S0219199700000177}}.

\bibitem{gunterschwartz}
G.~Schwarz, \emph{Hodge decomposition---a method for solving boundary value
  problems}, Lecture Notes in Mathematics, vol. 1607, Springer-Verlag, Berlin,
  1995, \href {http://dx.doi.org/10.1007/BFb0095978}
  {\path{doi:10.1007/BFb0095978}}.

\bibitem{semmes}
S.~Semmes, \emph{Complex {M}onge-{A}mp\`ere and symplectic manifolds}, Amer. J.
  Math. \textbf{114} (1992), no.~3, 495--550, \href
  {http://dx.doi.org/10.2307/2374768} {\path{doi:10.2307/2374768}}.

\bibitem{solomon}
J.~P. Solomon, \emph{The {C}alabi homomorphism, {L}agrangian paths and special
  {L}agrangians}, Math. Ann. \textbf{357} (2013), no.~4, 1389--1424, \href
  {http://dx.doi.org/10.1007/s00208-013-0946-x}
  {\path{doi:10.1007/s00208-013-0946-x}}.

\bibitem{solomon-curv}
J.~P. Solomon, \emph{Curvature of the space of positive {L}agrangians}, Geom.
  Funct. Anal. \textbf{24} (2014), no.~2, 670--689, \href
  {http://dx.doi.org/10.1007/s00039-014-0267-6}
  {\path{doi:10.1007/s00039-014-0267-6}}.

\bibitem{solomon-yuval}
J.~P. Solomon and A.~M. Yuval, \emph{Geodesics of positive {L}agrangians in
  {M}ilnor fibers}, Int. Math. Res. Not. IMRN (2017), no.~3, 830--868, \href
  {http://dx.doi.org/10.1093/imrn/rnw052} {\path{doi:10.1093/imrn/rnw052}}.

\bibitem{spider}
J.~P. Solomon and A.~M. Yuval, \emph{Special {L}agrangian webbing}, 2020, \href
  {http://arxiv.org/abs/2010.12293} {\path{arXiv:2010.12293}}.

\bibitem{taylor1}
M.~E. Taylor, \emph{Partial differential equations {I}. {B}asic theory}, second
  ed., Applied Mathematical Sciences, vol. 115, Springer, New York, 2011, \href
  {http://dx.doi.org/10.1007/978-1-4419-7055-8}
  {\path{doi:10.1007/978-1-4419-7055-8}}.

\bibitem{xu}
Y.~{Xu}, \emph{{On the existence of geodesic connecting Lagrangian graphs in
  $\mathbb{C}^n$}}, arXiv e-prints (2015), \href
  {http://arxiv.org/abs/1512.08423} {\path{arXiv:1512.08423}}.

\end{thebibliography}

\end{document}